\def\thm@space@setup{%
  \thm@preskip=\parskip \thm@postskip=0pt
}
\edef\restoreparindent{\parindent=\the\parindent\relax}
\let\svpar\par
\edef\svparskip{\the\parskip}
\def\revertpar{\svpar\setlength\parskip{\svparskip}\let\par\svpar}
\def\noparskip{\leavevmode\setlength\parskip{0pt}%
  \def\par{\svpar\let\par\revertpar}}
 \titleformat{\subparagraph}[hang]{\normalfont}{\thesubparagraph}{0pt}{\underline}
 \titleformat{\paragraph}[hang]{\normalfont}{\theparagraph}{0pt}{\myuline}
\renewcommand{\todo}[2][]{\tikzexternaldisable\@todo[#1]{#2}\tikzexternalenable}
\setlist[enumerate]{itemsep=2.0pt plus 1.0 pt minus 0.5pt, topsep=4.0pt plus 2.0 pt minus 1.0pt}
\setlist[itemize]{itemsep=2.0pt plus 1.0 pt minus 0.5pt, topsep=4.0pt plus 2.0 pt minus 1.0pt}
\newcolumntype{M}[1]{>{\centering\arraybackslash}m{#1}}
\pgfplotsset{compat=1.16}
\newcommand{\myuline}[1]{%
  \uline{\phantom{#1}}%
  \llap{\contour{white}{#1}}%
}
\DeclareFontShape{U}{wasy}{b}{n}{ <-10> ssub * wasy/m/n
 <10> <10.95> <12> <14.4> <17.28> <20.74> <24.88>wasyb10 }{}
\DeclareFontShape{U}{wasy}{b}{it}{ <-10> ssub * wasy/m/n
 <10> <10.95> <12> <14.4> <17.28> <20.74> <24.88>wasyb10 }{}
\DeclareMathAlphabet\mathbfcal{OMS}{cmsy}{b}{n}
\newcommand\numberthis{\addtocounter{equation}{1}\tag{\theequation}}
\renewcommand{\Re}{\operatorname{Re}}
\renewcommand{\Im}{\operatorname{Im}}
\numberwithin{equation}{section}
\theoremstyle{plain}
\newtheorem*{namedthm}{\namedthmname}
\newcounter{namedthm}
\newtheorem{theorem}{Theorem}[section]
\newtheorem*{theorem*}{Theorem}
\newtheorem{conjecture}[theorem]{Conjecture}
\newtheorem*{conjecture*}{Conjecture}
\newtheorem*{corollary*}{Corollary}
\newtheorem{proposition}[theorem]{Proposition}
\newtheorem{lemma}[theorem]{Lemma}
\theoremstyle{definition}
\newtheorem{definition}[theorem]{Definition}
\newtheorem{remark}[theorem]{Remark}
\newtheorem*{remark*}{Remark}
\let\OLDthebibliography\thebibliography
\renewcommand\thebibliography[1]{
  \OLDthebibliography{#1}
  \setlength{\parskip}{0pt}
  \setlength{\itemsep}{2pt plus 0.3ex}
  \setlength{\labelsep}{0em} 
}
\newcommand{\mc}[1]{\mathcal{#1}}
\newcommand{\p}{\partial}
\newcommand{\lp}{\left}
\newcommand{\rp}{\right}
\newlength{\dhatheight}
\newcommand{\nablasl}{\slashed{\nabla}}
\DeclareMathOperator{\divsl}{\slashed{\mathrm{div}}}
\DeclareMathOperator{\curlsl}{\slashed{\mathrm{curl}}}
\DeclareMathOperator{\tr}{\mathrm{tr}}
\newcommand{\ueta}{\underline{\eta}}
\newcommand{\uchi}{\underline{\chi}}
\newcommand{\ualpha}{\underline{\alpha}}
\newcommand{\trchi}{\tr\chi}
\newcommand{\truchi}{\tr\uchi}
\newcommand{\epschi}{(\slashed{\varepsilon}\cdot {\chi})}
\newcommand{\epsuchi}{(\slashed{\varepsilon}\cdot \uchi)}
\newcommand{\uL}{\underline{L}}
\newcommand{\swei}[2]{#1^{[{#2}]}}
\newcommand{\bsy}{\ensuremath{\boldsymbol}}
\newcommand{\chih}{\widehat{\chi}}
\newcommand{\omegah}{\hat{\omega}}
\newcommand{\eo}{\bsy{e}_{\bsy{1}}}
\newcommand{\etw}{\bsy{e}_{\bsy{2}}}
\newcommand{\et}{\bsy{e}_{\bsy{3}}}
\newcommand{\ef}{\bsy{e}_{\bsy{4}}}
\newcommand{\chibh}{\widehat{\underline{\chi}}}
\newcommand{\omegabh}{\hat{\underline{\omega}}}
\newcommand{\etab}{\underline{\eta}}
\newcommand{\yb}{\underline{\xi}}
\newcommand{\alphab}{\underline{\alpha}}
\newcommand{\chib}{\underline{\chi}}
\newcommand{\xib}{\underline{\xi}}
\begin{document}
 \title{\LARGE \textbf{The Maxwell equations on full sub-extremal and extremal Kerr spacetimes}}

\author[1]{{\Large Gabriele Benomio}}
\author[2]{{\Large  Rita \mbox{Teixeira da Costa}\vspace{0.4cm}}}

\affil[1]{\small  Gran Sasso Science Institute, Viale Francesco Crispi 7, L'Aquila (AQ), 67100,
Italy }
\affil[2]{\small 
University of Cambridge, Department of Pure Mathematics and Mathematical Statistics, Wilberforce Road, Cambridge CB3 0WA, United Kingdom}

\date{\today}

\maketitle

\begin{abstract}
We study the Cauchy problem for the Maxwell equations in the exterior region of Kerr black hole spacetimes. The equations are formulated for components of the Maxwell field relative to the algebraically special frame of Kerr, with the unknowns treated as tensorial quantities associated with a non-integrable horizontal distribution. The extremal Maxwell components \emph{decouple} into Teukolsky equations, whereas the middle Maxwell components form a \emph{coupled} system of transport and elliptic equations. Assuming control over the extremal components, we prove uniform boundedness (without loss of derivatives) and decay estimates for the middle components in the full $|a|\leq M$ range of spacetime parameters. Our analysis relies on (i) deriving a \emph{decoupled} system of transport and elliptic equations for two \emph{modified} middle Maxwell components and (ii) decomposing general solutions into a dynamical and stationary part, the latter determined by two real (electric and magnetic) charges which are entirely read off from the initial data at the event horizon. 

In the sub-extremal $|a|<M$ case, works of Shlapentokh-Rothman and the second author provide the necessary control over the extremal components, yielding \textit{unconditional} boundedness and decay results for all the unknowns in the equations.

In the extremal $|a|=M$ case, we formulate a conjectural boundedness and decay statement for the extremal components, motivated by work of Casals, Gralla and Zimmerman on fixed azimuthal mode solutions compactly supported away from the event horizon. Our boundedness and decay results for all the unknowns in the equations remain, therefore, \emph{conditional}. We show that the complicated dynamics of the extremal components at the event horizon is inherited by the middle components; in particular, we uncover novel conservation laws for the middle components of axisymmetric solutions.
\end{abstract}

\bigskip

\setcounter{tocdepth}{3}
\tableofcontents
\setcounter{tocdepth}{3}


\section{Introduction}

The Kerr black hole family is, conjecturally, the unique family of regular stationary black hole solutions to the Einstein vacuum equations
\begin{align}
    \mathrm{Ric}(g)=0. \label{eq:EVE}
\end{align}
It is is parametrized by two real numbers, $M>0$ and $|a|\leq M$. In this paper, we study the dynamics of electromagnetic waves on any fixed member of this family. Concretely, we consider the Maxwell equations
\begin{align}
    \mathrm{d}\, \bsy F&=0 \, , & \mathrm{d}\star \bsy F&=0 \, , \label{eq:Maxwell-eqs-intro}
\end{align}
where $\mathrm{d}$ and $\star$ represent the exterior derivative and Hodge dual, respectively, and where $\bsy F$ is the Faraday electromagnetic tensor, as a system of evolutionary PDEs on a fixed Kerr black hole background. Our first result concerns the sub-extremal $|a|<M$ Kerr sub-family:

\begin{theorem}  \label{thm:main-subextremal} Fix $M>0$. For any $|a|< M$, solutions of the Maxwell equations \eqref{eq:Maxwell-eqs-intro}  on the Kerr exterior arising from suitably regular initial data 
\begin{enumerate}[label=(\alph*),noitemsep]
    \item remain uniformly bounded in time in terms of initial data and,
    \item decay in time along a suitable spacelike foliation, after subtracting a  stationary solution itself determined by initial data.
\end{enumerate}
\end{theorem}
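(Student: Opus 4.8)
The plan is to reduce the full Maxwell system to two pieces: the extremal components, for which the required estimates are already in the literature, and the middle components, which I would handle through a transport--elliptic analysis. First I would decompose $\bsy F$ with respect to the algebraically special (principal null) frame $(\bsy L,\Lb,\bsy e_A)$ of Kerr, obtaining the extremal null components $\alpha,\alphab$ (horizontal one-forms) and the middle component, which I package as a complex horizontal scalar $\Phi=\rho+i\sigma$. A direct computation from \eqref{eq:Maxwell-eqs-intro} shows that $\alpha$ and $\alphab$ satisfy (decoupled) spin $\pm 1$ Teukolsky equations, while $\Phi$ solves a coupled system: two transport equations along $\bsy L$ and $\Lb$ together with a first-order elliptic ($\divv$/$\curl$, i.e.\ $\Dsl$-type) system on the horizontal distribution, all with right-hand sides built linearly from $\alpha,\alphab$ and their angular derivatives. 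In the subextremal range $|a|<M$, the work of Shlapentokh-Rothman and the second author supplies uniform boundedness (without loss of derivatives) and a full decay hierarchy for $\alpha,\alphab$ along the relevant spacelike foliation; these are the inputs I would take as given.

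Second --- and this is the technical heart --- I would construct \emph{modified} middle quantities $\widetilde\Phi$ by adding to $\Phi$ explicit correction terms, linear in $\alpha,\alphab$ and lower order (morally physical-space Teukolsky--Starobinsky corrections), chosen so that the elliptic coupling is absorbed and $\widetilde\Phi$ satisfies \emph{genuine} transport equations along $\bsy L$ (resp.\ $\Lb$) with sources controlled purely by the already-understood extremal components. Integrating these transport equations along the principal null directions, starting from the initial data and from $\hor$, the contribution of the decaying sources converges; separating the limiting horizon value isolates a \emph{stationary} piece, parametrised by two real constants --- the electric and magnetic charges $\mathbf{q}_e,\mathbf{q}_m$ --- which, because the transport operator degenerates there, are read off directly from the restriction of the initial data to $\hor$ and are conserved. (The same computation in the axisymmetric case exposes the additional conservation laws announced in the introduction.) I would then check that $\mathbf{q}_e,\mathbf{q}_m$ reconstruct an explicit stationary Maxwell solution $\bsy F_{\mathrm{stat}}$ on the Kerr exterior.

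Third, subtracting $\bsy F_{\mathrm{stat}}$, the middle components of the remainder carry vanishing charges, so the transport integrals now decay at the rate dictated by the extremal sources; feeding this back through the torsion-corrected elliptic estimates on the (non-integrable) horizontal distribution --- which gain a derivative and hence cost nothing relative to the data --- yields decay of all components of $\bsy F-\bsy F_{\mathrm{stat}}$ along the foliation, while boundedness is immediate since $\bsy F_{\mathrm{stat}}$ is manifestly regular and stationary. I expect the main obstacle to be twofold: pinning down the precise algebraic form of the modification that simultaneously decouples transport from elliptic parts \emph{and} preserves the regularity/weight structure near $\hor$ (redshift) and $\scri$ (the $r^p$-hierarchy), so that no derivatives are lost; and establishing that the transport integrals of the \emph{commuted} extremal components are uniformly bounded in time, which genuinely requires the full decay hierarchy for $\alpha,\alphab$ rather than mere boundedness.
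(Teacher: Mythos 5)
Your overall strategy is the paper's: decompose $\bsy F$ in the algebraically special frame, take the Shlapentokh-Rothman--Teixeira da Costa estimates for the decoupled Teukolsky fields $\bsy\alpha,\bsy\alphab$ as a black box, pass to modified middle quantities whose derivatives are sourced purely by the extremal components, identify two charges at the event horizon, subtract the corresponding stationary solution, and propagate horizon decay outwards by transversal integration. One remark on the modification before the main objection: the paper does not add corrections linear in $\bsy\alpha,\bsy\alphab$ to the middle components. It applies the pointwise rotation $\widehat{\bsy\rho}=(r^2-a^2\cos^2\theta)\bsy\rho-2ar\cos\theta\,\bsy\sigma$, $\widehat{\bsy\sigma}=2ar\cos\theta\,\bsy\rho+(r^2-a^2\cos^2\theta)\bsy\sigma$, after which \emph{all} first derivatives of $\widehat{\bsy\rho}$ and of $\widehat{\bsy\sigma}$ (the two null transport derivatives \emph{and} the angular gradient) are explicit in $\bsy\alpha,\bsy\alphab$ and one derivative thereof, and the $\widehat{\bsy\rho}$-equations decouple from the $\widehat{\bsy\sigma}$-equations. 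Consequently no elliptic inversion is needed for the first- and higher-order fluxes; only the zeroth-order flux requires extra input (a Poincar\'e inequality plus control of the zero mode). Your additive ``Teukolsky--Starobinsky-type'' correction is a different and unverified device; you would still have to exhibit it and check it preserves the weight structure at $\hor$ and $\scri$.

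The genuine gap is your justification of the charge identification. You assert the charges are conserved along $\hor$ and readable from the data there ``because the transport operator degenerates there.'' The transport direction relevant for conservation \emph{in time} along the horizon is $e_4^{\rm as}$, which is tangent to $\hor$ and does not degenerate; the conservation law comes instead from integrating the $\nablasl_4$-equations over the horizon spheres, whose right-hand sides are $\divsl$ and $\curlsl$ of $\Sigma\bsy\alpha$. For $a\neq 0$ the horizontal distribution is non-integrable, so these are \emph{not} intrinsic angular operators on the foliation spheres and their sphere-integrals do not vanish naively. The paper resolves this with the $\mathbb S^2$-projection procedure at $\hor$ (the one-form $\mathfrak k$ with $\nablasl_4\mathfrak k|_{\hor}=0$, together with the coincidence of the projected and induced metrics there), which converts the sources into exact angular divergences and curls at the price that the conserved quantities become the zero modes of $\Sigma\bsy\rho-\tfrac12(\mathfrak k,\Sigma\widetilde{\bsy\alpha})$ and $\Sigma\bsy\sigma+\tfrac12\,\mathfrak k\wedge\Sigma\widetilde{\bsy\alpha}$, i.e.\ they carry an explicit $\bsy\alpha$-dependent correction absent from your proposal. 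Without this step your charges are neither conserved nor identifiable from initial data for $a\neq 0$, and the decay-to-stationarity argument loses its anchor. (Your remaining mechanism is sound: decay of the zero modes along $\hor$ via dyadic summation of the horizon flux of $\bsy\alpha$, and you correctly note this needs the integrated decay of that flux rather than mere boundedness.)
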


The boundedness statement we obtain \textit{does not lose derivatives}: we show that a coercive $L^2$-based energy flux of the Maxwell field through a suitable foliation of the Kerr exterior is controlled by an energy flux, at the same level of differentiability, of the data. In that sense, our result contains not only a suitable asymptotic stability statement in Theorem~\ref{thm:main-subextremal}(b), but also the orbital stability for \eqref{eq:Maxwell-eqs-intro} in Theorem~\ref{thm:main-subextremal}(a). Our results were previously known for the Schwarzschild ($a=0$) and very slowly rotating Kerr ($|a|\ll M$) sub-families by work of other authors \cite{Blue2008,Andersson2015a,Pasqualotto2016}, see also \cite{Ghanem2024}. In the full sub-extremal $|a|<M$ range, Theorem~\ref{thm:main-subextremal} contains, to our knowledge, the first orbital stability for \eqref{eq:Maxwell-eqs-intro}. Prior to this, \cite{Ma2022} obtained a uniform sharp decay result for the Maxwell equations \eqref{eq:Maxwell-eqs-intro} on Kerr in the full sub-extremal $|a|<M$ range, i.e.\ the fastest possible decay for nontrivial compactly supported initial data. The decay rates in Theorem~\ref{thm:main-subextremal}(b) are weaker than those of \cite{Ma2022} as, motivated by nonlinear applications, we allow for less regular data.

As we shall describe in Section \ref{sec_intro_proof}, our strategy to prove Theorem~\ref{thm:main-subextremal} relies on energy boundedness and integrated local energy decay for the so-called spin $\pm 1$ Teukolsky equations \cite{Teukolsky1973}. The spin $\pm 1$ Teukolsky equations are wave-type equations for certain components of $\bsy F$, known as \emph{extremal} components, which are implied by \eqref{eq:Maxwell-eqs-intro} and decouple from the system of Maxwell equations. The analysis of the Teukolsky equations has been recently carried out in the full sub-extremal range in \cite{SRTdC2020,SRTdC2023}, and we shall make use of those results here.\footnote{It should be noted that the earlier results of \cite{Ma2022} for the full sub-extremal range $|a|\leq M$ are \textit{conditional} on energy boundedness and integrated local energy decay for the spin $\pm 1$ Teukolsky equations. In the case $|a|\ll M$, the latter was shown by the same author in \cite{Ma2017}.} Indeed, as in the physics tradition (see e.g.\ \cite{Chandrasekhar}), we interpret the Maxwell equations \eqref{eq:Maxwell-eqs-intro} as \emph{transport} equations for the non-extremal, or \textit{middle}, Maxwell components with forcing terms given by the extremal (Teukolsky) components. This should be contrasted with earlier works \cite{Blue2008,Andersson2015a} for $|a|\ll M$, which are based on the Fackerell--Ipser equations \cite{Fackerell1972}, and precede the introduction in \cite{Dafermos2016a} of the toolbox required to tackle the Teukolsky equations. We also mention an alternative approach in the $a=0$ case in \cite{Andersson2016}, based on a superenergy tensor.

Before turning to a careful description of the above strategy, let us remark that our analysis of the middle Maxwell components is implemented so as to extend to the $|a|=M$ extremal Kerr case. Indeed, we show:
\begin{theorem}\label{thm:main-extremal} Fix $M>0$. For any $|a|\leq  M$, solutions of the Maxwell equations on \eqref{eq:Maxwell-eqs-intro} on the Kerr exterior arising from suitably regular initial data {\normalfont supported on a fixed azimuthal mode}
\begin{enumerate}[label=(\alph*),noitemsep]
    \item remain uniformly bounded in time, {\normalfont away from the event horizon}, in terms of initial data and,
    \item decay in time, {\normalfont away from the event horizon}, along a suitable spacelike foliation, after subtracting a stationary solution itself determined by initial data,
\end{enumerate}
{\normalfont assuming a conjectural statement for solutions of the spin $\pm 1$ Teukolsky equations (see already Conjecture~\ref{conj:teukolsky-EB-ILED-ext}).}
\end{theorem}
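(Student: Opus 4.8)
The plan is to run the same argument as for Theorem~\ref{thm:main-subextremal}: our treatment of the middle Maxwell components is set up so that the only inputs sensitive to whether $|a|<M$ or $|a|=M$ are the control assumed for the extremal (Teukolsky) components and --- because of the degeneration of the horizon --- the region in which estimates can be closed. First I would record the system satisfied by the middle components. Writing the Maxwell equations \eqref{eq:Maxwell-eqs-intro} relative to the algebraically special frame, one obtains for the middle components (real and imaginary parts $\rho$, $\sigma$) two transport equations --- one along the outgoing null direction $\ef$, one along the ingoing direction $\et$ --- together with an elliptic, Hodge-type equation on the horizontal sections, with forcing built from the extremal components $\alpha$, $\alphab$ and their horizontal derivatives. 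The first step is then to feed in Conjecture~\ref{conj:teukolsky-EB-ILED-ext}, which for data on a fixed azimuthal mode supplies uniform energy boundedness and integrated local energy decay for the spin $\pm1$ Teukolsky quantities --- precisely the role played unconditionally by \cite{SRTdC2020,SRTdC2023} in the sub-extremal case --- yielding bounded and time-integrable control of $\alpha$, $\alphab$ in every region $\{r\geq R_0\}$ with $R_0>r_+$.

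Next I would decouple the system and peel off the stationary part. The two transport equations couple $\rho$ and $\sigma$ through lower-order, a priori derivative-losing terms; replacing $\rho$, $\sigma$ by modified quantities --- each obtained by adding explicit multiples of the extremal components --- one arrives at a decoupled system of transport and elliptic equations in which $\alpha$, $\alphab$ enter, if at all, only undifferentiated, which is what permits a boundedness statement with no loss of derivatives. The stationary part is then isolated using that the horizontal integrals of $\rho$ and of $\sigma$ over sections of the outgoing cones are conserved by the transport equations: these define two real constants --- the electric and magnetic charges --- which, upon integrating the $\et$-equation down to $\hor$, are read off from the restriction of the data to $\hor\cap\Sigma_0$. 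Subtracting the explicit stationary Maxwell field carrying these charges reduces the problem to its chargeless dynamical part, for which the spherical averages of $\rho$, $\sigma$ vanish and the Hodge-type equation becomes coercive.

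For the chargeless modified quantities I would close transport estimates in $\{r\geq R_0\}$: contracting each transport equation with its unknown and integrating over the spacetime slab between two leaves $\Sigma_{\tau_1}$, $\Sigma_{\tau_2}$ of the spacelike foliation, the bulk errors are absorbed by a Gr\"onwall argument while the remaining contributions are dominated by the Teukolsky fluxes of the first step; the coercive elliptic equation then upgrades this to control of all horizontal derivatives. This yields part (a) of Theorem~\ref{thm:main-extremal}, a uniform flux bound away from $\hor$ without loss of derivatives. For part (b), integrated local energy decay for $\alpha$, $\alphab$ makes those contributions integrable in $\tau$; combining this with the quasi-monotonicity of the middle-component flux and a hierarchy of $r$-weighted transport estimates in the spirit of the $r^p$-method gives polynomial decay of the dynamical middle flux along $\Sigma_\tau\cap\{r\geq R_0\}$, which is the asserted decay after the stationary subtraction.

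The hard part --- and the reason the statement is restricted to $\{r\geq R_0\}$ --- is the event horizon. At $|a|=M$ the extremal components of an extremal-Kerr solution do not in general decay on $\hor$, and their transversal derivatives grow (Aretakis-type behaviour), while the red-shift positivity that allows one to close estimates up to $\hor$ in the sub-extremal case degenerates; through the transport equations this pathology is transmitted to the middle components, so the estimates above must break down as $r\downarrow r_+$. A sharp manifestation of this, which has to be identified by a separate argument, is the existence of exact conservation laws for the middle components of axisymmetric solutions on $\hor$, precluding any decay there; pinning down precisely which quantities survive on $\hor$ in the fixed-azimuthal-mode reduction, and organising the proof so that the interior region $\{r_+\leq r\leq R_0\}$ is simply never touched, is the principal new technical point relative to the sub-extremal argument.
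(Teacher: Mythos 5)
Your high-level architecture (black-box Teukolsky input via Conjecture~\ref{conj:teukolsky-EB-ILED-ext}, a modified decoupled system for the middle components, stationary subtraction via two charges) matches the paper, but there is a genuine gap at exactly the point where the extremal case differs from the sub-extremal one, and two of your structural claims are wrong. First, the sphere integrals of $\bsy\rho$ and $\bsy\sigma$ are \emph{not} conserved along outgoing cones when $a\neq 0$: they are dynamical away from $\mc H^+$, and even on $\mc H^+$ the conserved quantities are $\int_{\mathbb S^2}\big(\Sigma\bsy\rho-\tfrac12(\mathfrak k,\Sigma\widetilde{\bsy\alpha})\big)$ and its magnetic analogue, whose derivation requires the $\mathbb S^2$-projection formulae and the special tangency properties of the algebraically special frame on the horizon. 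Consequently your plan to read the charges off at $\hor\cap\Sigma_0$ while ``never touching'' the region $r_+\leq r\leq R_0$ is internally inconsistent: the only available anchor for the zeroth-order flux of the middle components is the horizon itself, and the paper's strategy is to establish boundedness and decay of $\widehat{\bsy\rho},\widehat{\bsy\sigma}$ \emph{on} $\mc H^+$ first and then propagate them outward by integrating the $\nablasl_3$-transport equation; the resulting estimates hold on the whole exterior in norms that merely \emph{degenerate} at $r=r_+$, which is what ``away from the event horizon'' means in the statement. (Also: the modified quantities $\widehat{\bsy\rho},\widehat{\bsy\sigma}$ are a linear transformation of $(\bsy\rho,\bsy\sigma)$ alone, not corrections by multiples of the extremal components, and no Gr\"onwall argument is used --- the decoupled system expresses every derivative of $\widehat{\bsy\rho}$ directly in terms of $\bsy\alpha,\bsy\alphab$ with no $\widehat{\bsy\rho}$ on the right-hand side, so all higher-order fluxes are read off immediately.)

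The missing idea is the following. In the extremal case the obstruction is not that one must avoid the horizon; it is that the outward integration of $\nablasl_3\widehat{\bsy\rho}\sim\divsl\bsy\alphab$ fails because $\bsy\alphab$ is not expected to decay on $\mc H^+$. The paper resolves this by combining the Maxwell equation for $e_3^{\rm as}\swei{\upupsilon}{0}$ with the spin $-1$ Teukolsky equation: after inverting an angular operator $\mathring{\slashed\triangle}^{[-1]}_{\mc H}$ (shown to be uniformly invertible for all $|a|\leq M$), one obtains a pointwise identity of the schematic form $\p_r(\widehat{\bsy\rho},\widehat{\bsy\sigma})=\p_r\underline{\bsy\Phi}+\text{(terms vanishing at }r=r_+)$, where $\underline{\bsy\Phi}$ is the transformed quantity $\swei{\Phi}{-1}$. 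The dangerous near-horizon integral of $\nablasl_3\widehat{\bsy\rho}$ thereby becomes a total derivative plus terms controlled by \emph{degenerate} energy norms of $\bsy\alphab$, with a horizon boundary term controlled by $\swei{\Phi}{-1}|_{\mc H^+}$, whose decay is exactly what the conjecture supplies. This is what allows the estimates to close up to the horizon rather than only on $\{r\geq R_0\}$, and it is also what yields the axisymmetric conservation laws as a corollary rather than leaving them as an obstruction. Without this step your argument does not close: you have no way to control the zeroth-order term of the middle components anywhere in the exterior.
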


The added emphasis allows one to easily compare the $|a|\leq M$ case with the $|a|<M$ case in the previous Theorem~\ref{thm:main-subextremal}; evidently, Theorem \ref{thm:main-extremal} is much weaker. Indeed, even though our approach in Theorem~\ref{thm:main-subextremal} to deduce estimates for Maxwell \eqref{eq:Maxwell-eqs-intro} from estimates  for Teukolsky fully generalizes to the entire Kerr black hole family $|a|\leq M$, the approach of \cite{SRTdC2020,SRTdC2023} to the Teukolsky equations \textit{does not}: in view of the growing body of work concerning waves on extremal Kerr black holes and their spherically symmetric charged analogues, we expect the behavior of solutions to the Teukolsky equation to be far worse for $|a|=M$ than for $|a|<M$. The conjectured statement on which we rely in Theorem~\ref{thm:main-extremal} is based on this literature, in particular on the heuristics in \cite{Casals2016,Casals2019a,Gralla2018} and on the rigorous results in \cite{Gajic2023}. We direct the reader to Section~\ref{sec:teukoslsky-extremal} for further details and a more comprehensive comparison with the literature.

\subsection{Overview of the proofs} \label{sec_intro_proof}

Let us now turn to an overview of the main ideas involved in our proofs of Theorems~\ref{thm:main-subextremal} and \ref{thm:main-extremal}. As mentioned above, following the physics tradition, we base our analysis of the Maxwell equations \eqref{eq:Maxwell-eqs-intro} on a black-box understanding of the spin $\pm 1$ Teukolsky equations. To do so, one would like to rewrite \eqref{eq:Maxwell-eqs-intro} in a geometric formalism where the Teukolsky equations naturally emerge and can be directly exploited. 

In \cite{Benomio2020,Benomio2022}, the first author develops a formalism for casting geometric equations, such as \eqref{eq:EVE} or \eqref{eq:Maxwell-eqs-intro}, on $1+3$-dimensional Lorentzian manifolds in terms of local frames $\mathcal{N}=(e_1,e_2,e_3,e_4)$ where $(e_3,e_4)$ are null vectors and the associated horizontal distribution $\mathfrak D_{\mc N}=\langle e_3, e_4\rangle^\bot$, for which $(e_1,e_2)$ form a local basis, is allowed to be non-integrable. This is inspired by the mathematical literature, in particular the seminal works \cite{Christodoulou1990, Christodoulou1993}, but also connects naturally to the physics literature. Indeed, non-integrable local frames are often used in classical physics textbooks such as \cite{Chandrasekhar} in the study of equations \eqref{eq:EVE} or \eqref{eq:Maxwell-eqs-intro} on black hole spacetimes. More specifically, the \textit{algebraically special frame} $\mc N_{\rm as}$ of Kerr, i.e.\ the choice $(e_3,e_4)=(e_3^{\rm as},e_4^{\rm as})$ of principal null vectors, is traditionally used in the study of equations \eqref{eq:EVE} (typically its linearisation) and \eqref{eq:Maxwell-eqs-intro} on Kerr spacetimes and is well-known to yield a non-integrable horizontal distribution $\mathfrak D_{\mc N_{\rm as}}=\langle e_3^{\rm as}, e_4^{\rm as}\rangle^\bot$. The complex $s$-spin-weighted functions arising in the Newman--Penrose and Geroch--Held--Penrose formalism \cite{Newman1962, Geroch1973} in which these works are formulated, see e.g.~the seminal papers \cite{Fackerell1972,Teukolsky1973}, are in $1$-to-$1$ correspondence with the tensors over $\mathfrak D_{\mc N_{\rm as}}$ introduced in \cite{Benomio2020,Benomio2022}.

The formalism of \cite{Benomio2020,Benomio2022} showcases the remarkable hidden structure of \eqref{eq:EVE} and \eqref{eq:Maxwell-eqs-intro} around Kerr spacetimes which are displayed by classical physics works while rewriting them in the terms of modern mathematical literature. The Maxwell equations~\eqref{eq:Maxwell-eqs-intro} are cast into a system of first order partial differential equations for 4 unknowns---two 1-tensors, $\bsy{\alpha}$ and $\bsy\alphab$, and two scalar functions, ${\bsy{\rho}}$ and ${\bsy{\sigma}}$, on $\mathfrak D_{\mc N_{\rm as}}$---which are frame components of the Faraday tensor $\bsy F$ relative to the frame $\mc N_{\rm as}$ and make up the entirety of the degrees of freedom of $\bsy F$. We show that the 1-tensors $\bsy{\alpha}$ and $\bsy\alphab$ satisfy the tensorial analogue of the spin $\pm 1$ Teukolsky equations, as well as the tensorial analogue of the so-called Teukolsky--Starobinsky identities; schematically, we obtain
\begin{align*}
   \text{\eqref{eq:Maxwell-eqs-intro}} &\implies (\Box_g +\partial)\bsy\alpha=0, \quad (\Box_g -\partial)\bsy \alphab=0, \quad \partial^2_{\rm null} (\bsy \alpha,\bsy \alphab) = \partial^2_{\rm angular} (\bsy \alphab, \bsy \alpha) \, .
\end{align*}
We demonstrate that, after a very simple linear transformation $(\bsy \rho, \bsy\sigma)\mapsto (\widehat{\bsy{\rho}},\widehat{\bsy{\sigma}})$, the Maxwell equations are equivalent to the statement that all derivatives of the scalars $(\widehat{\bsy{\rho}},\widehat{\bsy{\sigma}})$ are decoupled from each other (as opposed to those of the original quantities $(\bsy \rho, \bsy\sigma)$) and are fully determined by $\bsy{\alpha}$ and $\bsy\alphab$ and their derivatives; schematically, we have
\begin{align}
   \text{\eqref{eq:Maxwell-eqs-intro}} \iff \partial \widehat{\bsy{\rho}}=\partial^{\leq 1}\bsy\alpha+\partial^{\leq 1}\bsy\alphab, \quad  \partial \widehat{\bsy{\sigma}}=\partial^{\leq 1}\bsy\alpha+\partial^{\leq 1}\bsy\alphab. \label{eq:Maxwell-schematic}
\end{align}
These observations, with the input of \cite{SRTdC2020,SRTdC2023} for $|a|<M$, and the conjectural statement for the dynamics $\bsy{\alpha}$ and $\bsy\alphab$ for $|a|\approx M$, already imply boundedness and decay properties for (suitably weighted) derivatives of $\bsy F$ as per Theorems~\ref{thm:main-subextremal} and~\ref{thm:main-extremal}.

The system of equations~\eqref{eq:Maxwell-eqs-intro} admits stationary (thus non-decaying) solutions $\bsy F_{\textup{stat}}$. In general, one expects that dynamical solutions to \eqref{eq:Maxwell-eqs-intro} converge to a stationary solution $\bsy F_{\textup{stat}}$, and therefore decay \emph{to zero} of $\bsy F$ itself can only hold after subtracting a suitable $\bsy F_{\textup{stat}}$. Stationary solutions can be fully parametrized by two topological charges\footnote{By standard topological arguments, for any (possibly dynamical) solution $\bsy F$ to \eqref{eq:Maxwell-eqs-intro}, the integrals $\int_{\mathbb{S}^2} \star \bsy F$ and $\int_{\mathbb{S}^2} \bsy F$ are independent of the foliation $\mathbb{S}^2$-spheres on which are computed, and therefore always constant.}
\begin{align*}
  \mathfrak  q_E&= \frac{1}{4\pi}\int_{\mathbb{S}^2} \star \bsy F_{\textup{stat}} \, ,  &     \mathfrak q_B&= \frac{1}{4\pi}\int_{\mathbb{S}^2} \bsy F_{\textup{stat}}
\end{align*}
computed over (any) foliation $\mathbb{S}^2$-spheres, see e.g.\ \cite[Section 3.1]{Andersson2015a} and \cite{Ma2022} for a discussion and use of these charges. For our system of equations \eqref{eq:Maxwell-eqs-intro}, with unknowns $(\bsy{\alpha},\bsy\alphab,\widehat{\bsy{\rho}},\widehat{\bsy{\sigma}})$, stationary solutions correspond to the family of solutions such that
\begin{align} \label{intro_stationary_solns}
    \bsy{\alpha_{\textup{stat}}}&=0 \, , & \bsy{\alphab}_{\textup{stat}}&=0 \, , & \widehat{\bsy{\rho}}_{\textup{stat}}&=\mathfrak{q}_1 \, , & \widehat{\bsy{\sigma}}_{\textup{stat}}&=\mathfrak{q}_2 
\end{align}
on the entire Kerr exterior spacetime, where the real constants $\mathfrak{q}_1,\mathfrak{q}_2$ are interpreted as the two charges of the stationary solution.%

It is natural to choose the stationary charges $\mathfrak{q}_1,\mathfrak{q}_2$ so as to coincide with the zero modes
\begin{align} \label{intro_zero_modes}
    &\int_{\mathbb{S}^2}\widehat{\bsy{\rho}} \, , & &\int_{\mathbb{S}^2}\widehat{\bsy{\sigma}} \, .
\end{align}
However, contrary to the integrals of $\bsy{F}$ and $\star \bsy F$ over the foliation $\mathbb{S}^2$-spheres, the integrals \eqref{intro_zero_modes} are, for $|a|>0$, dynamical quantities,\footnote{The zero modes of $\widehat{\bsy{\rho}}$ and $\widehat{\bsy{\sigma}}$ are constant for $a=0$.} thus requiring a \emph{choice of sphere} in \eqref{intro_zero_modes} where the charges are computed. 

Given a general solution $(\bsy{\alpha},\bsy\alphab,\widehat{\bsy{\rho}},\widehat{\bsy{\sigma}})$ arising from initial data prescribed on a spacelike (hyperboloidal) hypersurface $\Sigma_0$, in order to subtract the non-zero constant to which the general $\widehat{\bsy{\rho}}$ and $\widehat{\bsy{\sigma}}$ are converging (i.e.\ proving decay to zero), one may compute the charges on the \emph{final horizon sphere}, that is 
\begin{align*}
{\mathfrak q}_1&=\lim_{\tau\to \infty} \int_{\mathbb S^2_{\tau,r_+}}\widehat{\bsy\rho}\, d\Omega\,, & {\mathfrak q}_2&=\lim_{\tau\to \infty} \int_{\mathbb S^2_{\tau,r_+}}\widehat{\bsy\rho}\, d\Omega\,,
\end{align*}
where $\mathbb{S}^2_{\tau,r_+}=\mathcal{H}^+\cap\Sigma_\tau$ for spacelike hyperboloidal foliation $\Sigma_{\tau\geq 0}$. Such a choice of stationary charges immediately implies that the zero modes of $(\widehat{\bsy{\rho}}-\widehat{\bsy{\rho}}_{\textup{stat}},\widehat{\bsy{\sigma}}-\widehat{\bsy{\sigma}}_{\textup{stat}})$ decay to zero along the event horizon, and thus $(\widehat{\bsy{\rho}}-\widehat{\bsy{\rho}}_{\textup{stat}},\widehat{\bsy{\sigma}}-\widehat{\bsy{\sigma}}_{\textup{stat}})$ themselves decay to zero along the event horizon in the full $|a|\leq M$ range. The a-priori control over $\widehat{\bsy\rho}$ and $\widehat{\bsy\sigma}$ along the event horizon provided by our approach may be compared to possibly different approaches to the analysis of the Maxwell equations on black hole spacetimes in which control over the middle Maxwell components along the event horizon first requires control of these quantities along hypersurfaces located far from the event horizon (see e.g.~\cite{Ma2022} in the context of establishing sharp decay rates for the Maxwell equations on Kerr spacetimes).

Let us pause to highlight an important byproduct of our choice to identify $(\widehat{\bsy\rho}_{\rm stat},\widehat{\bsy\sigma}_{\rm stat})$ at the event horizon. 
In linear problems, one expects to be able to choose stationary charges entirely at the level of initial data (as opposed to nonlinear problems, where modulation procedures are typically necessary). To that end, we prove that our charges $(\mathfrak q_1,\mathfrak q_2)$, originally identified teleologically, can actually be read off from the initial data for the Maxwell system:
\begin{align*} 
   \mathfrak q_1&= \mathfrak{Q}_1(0) \, , & \mathfrak q_2&= \mathfrak{Q}_2(0) \,,
\end{align*}
where we have introduced scalar functions $\mathfrak{Q}_{1,2}:[0,\infty)\rightarrow\mathbb{R}$ defined through
\begin{align*}
\mathfrak{Q}_1(\tau) &=\frac{1}{I_{a,M}}\int_{\mathbb{S}^2_{\tau,r_+}}\left( \frac{r_+^2-a^2\cos^2\theta}{r_+^2+a^2\cos^2\theta}\,\widehat{\bsy{\rho}}+\frac{2ar_+\cos\theta}{r_+^2+a^2\cos^2\theta}\, \widehat{\bsy{\sigma}}-\frac{1}{2}(r_+^2+a^2\cos^2\theta) \left(\mathfrak k,\widetilde{\bsy{\alpha}}\right)\right)d\Omega\,, \\
 \mathfrak{Q}_2(\tau) &=\frac{1}{I_{a,M}}\int_{\mathbb{S}^2_{\tau,r_+}}\left( \frac{r_+^2-a^2\cos^2\theta}{r_+^2+a^2\cos^2\theta}\, \widehat{\bsy{\sigma}}-\frac{2ar_+\cos\theta}{r_+^2+a^2\cos^2\theta}\,\widehat{\bsy{\rho}}+\frac{1}{2}(r_+^2+a^2\cos^2\theta) \mathfrak k\wedge \widetilde{\bsy{\alpha}}\right)d\Omega\,.
\end{align*}
Here $I_{a,M}\neq 0$ is a constant and $\mathfrak k$ is an explicit $\mathfrak D_{\mc N_{\rm as}}$ one-form depending only $a,M$, and such that $\mathfrak k=0$ for $a=0$; $\widetilde{\bsy\alpha}$ is a $\mathbb S^2_{\tau,r_+}$ tensor obtained as the projection of $\bsy\alpha$ over the $\mathbb S^2_{\tau,r_+}$-spheres via the $\mathbb{S}^2$-projection procedure of \cite{Benomio2022}. Note that, for $a=0$, $\mathfrak Q_1$ and $\mathfrak Q_2$ reduce to the  zero modes of $\widehat{\bsy\rho}$ and $\widehat{\bsy\sigma}$. The a-posteriori retracing of our charges to the initial data relies on two horizon conservation laws 
\begin{align*}
    \p_\tau\mathfrak{Q}_{1}&=0 \, , & \p_\tau\mathfrak{Q}_{2}&=0 \,.
\end{align*}
That these hold crucially depends on the enhanced properties of the algebraically special frame and 
$\mathbb{S}^2$-projection procedure at the event horizon (see Sections~\ref{sec:algebraically-special-prelims} and \ref{sec:projection_formulae} below): no such conservation laws hold away from the event horizon.

In the sub-extremal $|a|<M$ case, the horizon decay of $\widehat{\bsy\rho}$ and $\widehat{\bsy\sigma}$ can be extended to decay off the horizon by a standard integration of the transport equations \eqref{eq:Maxwell-schematic} in the direction transversal to $\mathcal{H}^+$. This extension exploits the decay of \emph{both} $\bsy\alpha$ and $\bsy\alphab$ (and their derivatives) along $\mathcal{H}^+$. On the other hand, in the extremal $|a|=M$ case, one does \emph{not} expect decay of both $\bsy\alpha$ and $\bsy\alphab$ along $\mathcal{H}^+$. In particular, $\bsy\alphab$ is expected to grow in time along $\mc H^+$ (see e.g.\ \cite{Casals2016,Gralla2018,Gajic2023}), thus preventing one from extending decay of $\widehat{\bsy\rho}$ and $\widehat{\bsy\sigma}$ off $\mc H^+$ via the standard integration procedure adopted for the $|a|<M$ case.

To circumvent this obstruction, we develop a strategy to integrate the transport equations \eqref{eq:Maxwell-schematic} which only requires \emph{degenerate}-at-the-horizon control of $\bsy\alphab$. The crucial observation is that the transport equations \eqref{eq:Maxwell-schematic} can be combined with the Teukolsky equations to obtain, schematically,
\begin{align*}
\p_r (\widehat{\bsy\rho},\widehat{\bsy\sigma}) = \p_r \underline{\bsy\Phi}+ \text{faster decaying terms}, \numberthis\label{eq:crucial-identity-intro}
\end{align*}
at the future event horizon $\mc H^+$ (see already Section~\ref{sec:warm-up-spherically-symmetric} for the precise computation in a simpler setting) and, for $|a|=M$, $\underline{\bsy\Phi}$ denotes the derivative of $\bsy\alphab$ along the null generator of $\mc H^+$. Note that, in view of the work in \cite{Aretakis2012,Lucietti2012,Apetroaie2022} on conservation laws for axisymmetric solutions of the Teukolsky equations along $\mc H^+$, this pointwise identity immediately implies 

\begin{proposition} For axisymmetric solutions of the Maxwell system on extremal Kerr ($a=M$), there are conservation laws for transversal derivatives of $(\widehat{\bsy\rho},\widehat{\bsy\sigma})$ along $\mc H^+$.
\end{proposition}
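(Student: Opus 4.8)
The plan is to obtain these conservation laws as an essentially algebraic consequence of the pointwise identity~\eqref{eq:crucial-identity-intro}, evaluated at $\mc H^+$ in the extremal case $a=M$, once it is fed the Aretakis-type conservation laws for the spin $\pm1$ Teukolsky equations. Note at the outset that, unlike Theorem~\ref{thm:main-extremal}, this statement should be \emph{unconditional}: the horizon conservation laws for axisymmetric Teukolsky solutions supplied by \cite{Aretakis2012,Lucietti2012,Apetroaie2022} hold for all suitably regular solutions, independently of Conjecture~\ref{conj:teukolsky-EB-ILED-ext}. First I would restrict the Maxwell system $(\bsy\alpha,\bsy\alphab,\widehat{\bsy\rho},\widehat{\bsy\sigma})$ to the axisymmetric ($\p_\phi$-invariant) sector; this is a consistent reduction, since the algebraically special frame and the $\mathbb{S}^2$-projection of \cite{Benomio2022} are built from $\p_\phi$-invariant data. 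In this sector $\bsy\alpha$ and $\bsy\alphab$ satisfy the (tensorial) axisymmetric spin $\pm1$ Teukolsky equations on extremal Kerr, for which \cite{Aretakis2012,Lucietti2012,Apetroaie2022} provide, for each $j\geq 0$, explicit quantities $A_j[\bsy\alpha]$ and $\underline{A}_j[\bsy\alphab]$ — each built from $\bsy\alpha$ (resp.\ $\bsy\alphab$) and its transversal derivatives of order $\leq j$, restricted to $\mc H^+$ — which are conserved along the null generator, $\p_\tau A_j[\bsy\alpha]=\p_\tau\underline{A}_j[\bsy\alphab]=0$ on $\mc H^+$.

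Next I would upgrade \eqref{eq:crucial-identity-intro} from an asymptotic relation to an \emph{exact} pointwise identity on $\mc H^+$, following the model computation in Section~\ref{sec:warm-up-spherically-symmetric}. Concretely, I would commute the transversal Maxwell transport equations for $\widehat{\bsy\rho}$ and $\widehat{\bsy\sigma}$ in \eqref{eq:Maxwell-schematic} with $\p_r$, restrict to $\mc H^+$, and use the spin $\pm1$ Teukolsky equations to trade every $\p_r$-derivative of $\bsy\alpha$ and $\bsy\alphab$ on the right-hand side for derivatives along the null generator and along the horizon spheres. The output should be the refined, exact version of \eqref{eq:crucial-identity-intro}: on $\mc H^+$,
\begin{align*}
\p_r(\widehat{\bsy\rho},\widehat{\bsy\sigma})=\p_r\underline{\bsy\Phi}+\mc{E}\,,
\end{align*}
where $\underline{\bsy\Phi}$ is built from the null-generator derivative of $\bsy\alphab$ and $\mc{E}=\mc{E}[\bsy\alpha,\bsy\alphab,\widehat{\bsy\rho},\widehat{\bsy\sigma}]$ is an \emph{explicit} linear expression in $\bsy\alpha,\bsy\alphab,\widehat{\bsy\rho},\widehat{\bsy\sigma}$ and their transversal derivatives of bounded order — these are the ``faster decaying terms'' of \eqref{eq:crucial-identity-intro}, needed here in closed form rather than merely through their decay.

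To conclude, I would apply $\p_\tau$ to this identity along $\mc H^+$. Every resulting term then falls into one of three classes: (i) $\p_\tau$ of the conserved Teukolsky quantities $A_j[\bsy\alpha],\underline{A}_j[\bsy\alphab]$ (including the one into which $\p_r\underline{\bsy\Phi}$ is reorganised), which vanishes by the first step; (ii) $\p_\tau$ applied to transversal derivatives of $\widehat{\bsy\rho},\widehat{\bsy\sigma}$, which the transport equations \eqref{eq:Maxwell-schematic} re-express through transversal derivatives of $\bsy\alpha,\bsy\alphab$, returning to class (i); and (iii) a residual explicit combination which a direct computation shows to vanish identically on $\mc H^+$. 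This produces explicit quantities $\mathfrak{H}_j$ — each a linear combination of $\p_r^{\,j}\widehat{\bsy\rho}$ (resp.\ $\p_r^{\,j}\widehat{\bsy\sigma}$), lower transversal derivatives of $\widehat{\bsy\rho},\widehat{\bsy\sigma}$, and transversal derivatives of $\bsy\alpha,\bsy\alphab$, all on $\mc H^+$ — with $\p_\tau\mathfrak{H}_j=0$; running the scheme through the full Teukolsky hierarchy $\{A_j,\underline{A}_j\}$ yields the corresponding hierarchy of conservation laws for the transversal derivatives of $(\widehat{\bsy\rho},\widehat{\bsy\sigma})$.

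The crux of the argument is the exact horizon identity of the second step: it hinges on the coupling between the Maxwell transport equations and the Teukolsky equations collapsing, \emph{at $r=r_+$ only}, into a total null-generator derivative of $\bsy\alphab$ plus lower-order terms with exactly the right structure. This is the point at which the enhanced behaviour of the algebraically special frame and of the $\mathbb{S}^2$-projection at the event horizon (Sections~\ref{sec:algebraically-special-prelims} and~\ref{sec:projection_formulae}) is indispensable — the same mechanism that underlies the horizon conservation laws $\p_\tau\mathfrak{Q}_{1,2}=0$ — and which has no analogue away from $\mc H^+$. The secondary difficulty is bookkeeping: one must match the normalisation of $\underline{\bsy\Phi}$ and of the remainder $\mc{E}$ to the precise transversal-derivative combinations entering the Teukolsky Aretakis constants of \cite{Lucietti2012,Apetroaie2022}, so that the cancellations in the third step are exact rather than merely asymptotic.
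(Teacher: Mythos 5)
Your overall architecture coincides with the paper's own argument (Section~\ref{rmk:extremal-axisym}): an exact horizon version of \eqref{eq:crucial-identity-intro} expresses the transversal derivative of $(\widehat{\bsy\rho},\widehat{\bsy\sigma})$ on $\mc H^+$ through the Teukolsky quantities, and the conservation laws are then inherited from the Aretakis-type constants of \cite{Lucietti2012}. However, the procedure you give for producing that identity would not deliver it. There is no need to commute the transport equations with $\p_r$: the $\nablasl_3$-equations \eqref{D3_sigma}--\eqref{D3_rho} already \emph{are} the first transversal derivative of $(\widehat{\bsy\rho},\widehat{\bsy\sigma})$, and their right-hand sides contain \emph{no} $\p_r$-derivatives of $\bsy\alphab$ to trade away --- only first-order \emph{angular} derivatives of $\bsy\alphab$ (equivalently, of $\swei{\tilde\upphi}{-1}_0$). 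The difficulty is that angular derivatives of $\bsy\alphab$ on $\mc H^+$ do not themselves satisfy conservation laws; growth of $\bsy\alphab$ along the extremal horizon is precisely what is expected. The actual mechanism (Lemmas~\ref{lemma:Teukolsky-invertible-identity} and \ref{lemma:Maxwell-n-identity}) runs in the \emph{reverse} direction to the trade you describe: the spin $-1$ Teukolsky equation is read as an elliptic equation on the horizon spheres, $\mathring{\slashed\triangle}^{[-1]}_{\mc H}\swei{\tilde\upphi}{-1}_0=-e_3^{\rm as}\swei{\Phi}{-1}+\dots$, and one must \emph{invert} the angular operator --- mode by mode in spin-weighted spherical harmonics --- so as to express $\swei{\tilde\upphi}{-1}_0$, and hence the right-hand side of the transport equations, in terms of $e_3^{\rm as}\swei{\Phi}{-1}$. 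That inversion (which in axisymmetry reduces to dividing by $L(L+1)\geq 2$, cf.\ the warm-up Proposition~\ref{prop:upupsilon-Schwarzschild}) is the entire content of the exact identity, and it is absent from your plan; without it, the ``reorganisation of $\p_r\underline{\bsy\Phi}$ into a conserved Teukolsky quantity'' in your final step has nothing to act on.

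Two smaller points. First, the remainder in the paper's identity contains no $\widehat{\bsy\rho},\widehat{\bsy\sigma}$ terms --- the modified system \eqref{D3_sigma}--\eqref{DA_rho} is built precisely so that all derivatives of $(\widehat{\bsy\rho},\widehat{\bsy\sigma})$ are sourced only by $\bsy\alpha,\bsy\alphab$ --- so your class (ii) is empty and the circularity it threatens does not arise. Second, your assertion that the full hierarchy of Teukolsky Aretakis constants is unconditionally available overstates the literature as the paper reports it: \cite{Lucietti2012} establishes the conservation laws for the $L=1$ mode, while the $L\geq 2$ analogues are only stated as expected (cf.\ \cite{Angelopoulos2018a}), so the resulting proposition is rigorous only to the extent that this input is.
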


See Remark~\ref{rmk:extremal-axisym} for a precise statement. On the other hand, for general solutions, we recall  from the heuristics \cite{Casals2016,Gralla2018} that while one expects $\p_r\underline{\bsy\Phi}$ to \textit{grow} along $\mc H^+$, one expects $\underline{\bsy\Phi}$ to decay. Integrating the pointwise identity \eqref{eq:crucial-identity-intro}, we deduce
\begin{align*}
(\widehat{\bsy\rho},\widehat{\bsy\sigma}) 
= (\widehat{\bsy\rho},\widehat{\bsy\sigma}) |_{\mc H^+} + \underline{\bsy\Phi} + \text{faster decaying terms}.
\end{align*} 
Thus, decay of $(\widehat{\bsy\rho},\widehat{\bsy\sigma})$ along $\mc H^+$ is propagated to the entire Kerr exterior manifold. 

\subsection{Related works and outlook}

We conclude this introduction by making contact with related problems in the literature. We have briefly mentioned applications of Theorems~\ref{thm:main-subextremal} and \ref{thm:main-extremal}  to nonlinear problems. The simplest such problems are nonlinear theories of electromagnetism, such as the Maxwell--Born--Infeld theory, on Kerr spacetimes. This was previously considered by \cite{Pasqualotto2017} in the $a=0$ case.

A more challenging source of nonlinear problems comes from coupling electromagnetism to gravity through the Einstein--Maxwell system
\begin{gather*}
\begin{gathered}
\mathrm{Ric}(\bsy g)=2\,\bsy F\cdot \bsy F^{\bsy \sharp_2}-\frac12 |\bsy F|^2_{\bsy g}\, \bsy g\,,\\
\mathrm{d}\, \bsy F=0, \quad \mathrm{d}\star \bsy F=0\,. \label{eq:EM-system}
\end{gathered}
\end{gather*}
Within the Einstein--Maxwell system, Kerr black holes sit as the $Q=0$ sub-family of the wider Kerr--Newman family of black hole solutions, which is parametrized by $(a,M,Q)$ with $Q>0$ and $a^2+Q^2\leq M^2$. While it might be tempting to extrapolate from Theorems~\ref{thm:main-subextremal} and \ref{thm:main-extremal} consequences for the Einstein--Maxwell system \eqref{eq:EM-system}, we caution the reader that the Maxwell equations \eqref{eq:Maxwell-eqs-intro} are \textit{not} the linearisation of \eqref{eq:EM-system} around Kerr(--Newmann). Indeed, by linearising around the Kerr--Newman family, one obtains an intricate system of coupled gravitational and electromagnetic perturbations. In the $a=0$ case, this linearised system has been extensively studied in \cite{Giorgi2019a} ($|Q|<M$) and \cite{Apetroaie2022} ($|Q|=M$, dynamics of the extremal components). For the nonlinear dynamics in the $|Q|=M$ case under spherical symmetry---where, in view of the rigidity of \eqref{eq:EM-system}, one needs to add a scalar field to the system as an additional degree of freedom--- a rather complete picture has been obtained in \cite{Angelopoulos2024}; see also the earlier numerics \cite{Murata2013}.  For $a\neq 0$, though much less is known, we direct the reader to the works \cite{Giorgi2023,Giorgi2024} for substantial recent progress. In this direction, we also mention the works \cite{Sterbenz2014,Metcalfe2017} which consider the Maxwell equations on more general spacetimes motivated by perturbations of black holes.

\subsection{Outline of the paper}

We briefly outline the structure of the paper:
\begin{itemize}
\item \textbf{Sections \ref{sec:Kerr-prelims}.} We introduce the Kerr exterior manifold, the algebraically special frame and the differentiable structures which will be useful for the sequel.

\item \textbf{Sections \ref{sec:maxwell_eqns_head}.} We derive the Maxwell equations on a general spacetime. The equations are formulated for components of the Maxwell field relative to a general (possibly non-integrable) null frame. We then specialize the equations to the Kerr exterior manifold and discuss the notion of seed initial data and prove well-posedness of the Cauchy problem. 

\item \textbf{Section \ref{sec:energy_norms_head}.} We introduce the energy flux norms which will be used in the analysis.

\item \textbf{Section \ref{sec:statement_theorems}.} We state the main theorems of the paper.

\item \textbf{Section \ref{sec:analysis-middle}.} This is the core of the paper. We prove boundedness and decay estimates on the middle Maxwell components in terms of suitable energy flux norms for the extremal Maxwell components. We also derive a precise version of the identity \eqref{eq:crucial-identity-intro} valid in the full $|a|\leq M$ range. As a direct application, we obtain conservation laws along the event horizon for axisymmetric solutions to the Maxwell equations in the extremal $|a|=M$ case.

\item \textbf{Section \ref{sec:analysis-teukolsky}.} We state the boundedness and decay estimates for the extremal Maxwell components. In the $|a|<M$ case, these are taken from \cite{SRTdC2020,SRTdC2023} as a black box. In the $|a|=M$ case, these were previously formulated as a conjecture in Section \ref{sec:statement_theorems} that we now motivate.

\item \textbf{Section \ref{sec:proof_main_theorems}.} We combine the results of Sections \ref{sec:analysis-middle}--\ref{sec:analysis-teukolsky} to obtain the proof of our main theorems stated in Section \ref{sec:statement_theorems}.

\item \textbf{Appendix \ref{sec:appendix_head}.} We derive several identities from the Maxwell equations, such as the tensorial Teukolsky--Starobinsky identities, the tensorial Teukolsky equations and the tensorial Fackerell--Ipser equations. We also rewrite the Maxwell equations for transformed spin-weighted quantities.
\end{itemize}

\subsection*{Acknowledgments}

RTdC acknowledges support from the National Science Foundation (US) award DMS2103173, as well as the Engineering and Physical Sciences Research Council (UK) award UKRI1793. The authors thank D.\ Gajic and R.\ Unger for comments on a preliminary version of this manuscript.


%
%
\section{The Kerr exterior manifold}
\label{sec:Kerr-prelims}

In this section, we introduce the Kerr exterior manifold, its algebraically special frame and the spacetime foliations which will be necessary in our analysis.

\subsection{Definition}

We define the manifold-with-boundary 
\begin{equation}  \label{ambient_manifold}
\mathcal{M}:= (-\infty,\infty) \times [0,\infty) \times \mathbb{S}^2 
\end{equation}
with coordinates $\bar{t}\in (-\infty,\infty)$, $y\in [0,\infty)$ and standard (local) spherical coordinates $(\bar{\theta},\bar{\phi})\in\mathbb{S}^2$. We define the \emph{(future) event horizon} as the boundary of $\mathcal{M}$,
\begin{equation*}
\mathcal{H}^+:=\partial\mathcal{M} = \{y=0\}\,,  
\end{equation*} 
 and \emph{(future) null infinity} as the formal hypersurface
\begin{equation*}
\mathcal{I}^+ :=\left\lbrace \bar{t}=\infty \right\rbrace \, .
\end{equation*}

We define the vector fields
\begin{align}
T& :=\partial_{\bar{t}} \, ,  &  Z& :=\partial_{\bar{\phi}} \label{eq:TZ-def}
\end{align}
on $\mathcal{M}$. We will denote by $\widetilde{Z}$ the smooth (degenerate) extension of $Z$ to a global vector field on $\mathbb{S}^2$.

Given real parameters $a$ and $M$, with $|a|\leq M$, we define the positive constants
\begin{equation*}
r_{\pm}:=M\pm\sqrt{M^2-a^2}
\end{equation*}
and a new coordinate $\bar{r}=\bar{r}(y)$ such that
\begin{equation*}
\bar{r}:[0,\infty)\rightarrow [r_+,\infty) \, ,
\end{equation*}
with $\bar{r}(3M)=y(3M)$. The refer to the coordinates $$(\bar{t},\bar{r},\bar{\theta},\bar{\phi})$$ as \emph{Kerr coordinates}. We note that $\mathcal{H}^+=\left\lbrace \bar{r}=r_+ \right\rbrace$ in Kerr coordinates.

For fixed $|a|\leq M$, we define the smooth scalar functions
\begin{align*}
\Delta(\bar{r}) &:= (\bar{r}-r_+)(\bar{r}-r_-) \, , &
\Sigma(\bar{r},\bar{\theta}) &:= \bar{r}^2+a^2\cos^2\bar{\theta} 
\end{align*}
on $\mathcal{M}$ and define the \emph{Kerr family of metrics} as the two-parameter family of Lorentzian metrics $g_{a,M}$ on $\mathcal{M}$ such that 
\begin{align}
g_{a,M}= &-\left(1-\frac{2M\bar{r}}{\Sigma}\right){d\bar{t}}^2+2\,d\bar{t}\, d\bar{r}+\Sigma\, {d\bar{\theta}}{}^2 +\frac{(\bar{r}^2+a^2)^2-\Delta\, a^2\sin^2\bar{\theta}}{\Sigma}\,\sin^2\bar{\theta} \, {d\bar{\phi}}{}^{2}   \label{def_kerr_metric} \\
&-2\,a\sin^2\bar{\theta} \, d\bar{r} \,d\bar{\phi}-\frac{4aM\bar{r}}{\Sigma}\,\sin^2\bar{\theta} \, d\bar{t}\, d\bar{\phi} \, .   \nonumber
\end{align}
The metric $g_{a,M}$ in \eqref{def_kerr_metric} is manifestly \emph{smooth} on $\mathcal{M}$, including on $\mathcal{H}^+$. The event horizon $\mathcal{H}^+$ can be checked to be a \emph{null} hypersurface relative to $g_{a,M}$, whereas the level sets of $r_+<\bar{r}<\infty$ are \emph{timelike} hypersurfaces. The smooth Lorentzian manifold $(\mathcal{M},g_{a,M})$ will be referred to as the \emph{Kerr exterior manifold}. One can check that $(\mathcal{M},g_{a,M})$ solves the vacuum Einstein equations, i.e.\ $g_{a,M}$ is Ricci-flat. In the sequel, the Kerr metric will be denoted by $g$.

\subsubsection{The Kerr-star and Boyer--Lindquist differentiable structures} \label{sec_Kerr_star_coords}

For fixed $|a|\leq M$, we consider a smooth scalar function $\iota=\iota(\bar{r})$ such that $$\iota:[r_+,\infty)\rightarrow [0,1]$$ and
\begin{equation}
\iota(\bar{r})=
\begin{cases}
1, \quad r_+\leq \bar{r} \leq 17M/8 \\
0, \quad \bar{r}\geq 9M/4 
\end{cases} \, ,
\end{equation}
with
\begin{gather*}
\frac{\bar{r}^2+a^2}{\Delta}-\iota(\bar{r})\left(\frac{\bar{r}^2+a^2}{\Delta}-1 \right) >0 \, , \\
2-\left( 1-\frac{2M\bar{r}}{\Sigma}\right) \left( \frac{\bar{r}^2+a^2}{\Delta}-\iota(\bar{r})\left(\frac{\bar{r}^2+a^2}{\Delta}-1 \right) \right) >0 \, .
\end{gather*}

We define \emph{Kerr-star coordinates}\footnote{With a slight abuse of notation, we do not rename the coordinates $(\bar{r},\bar{\theta})$.}
\begin{equation} \label{kerr_star_coords}
(t^*,\bar{r},\bar{\theta},\phi^*)
\end{equation}
on $\mathcal{M}$ such that
\begin{align*}
t^*&= \bar{t}-\int_{r_0}^{\bar{r}}\lp[(1-\iota(\bar{r}{}^{\prime}))\,\frac{{\bar{r}{}^{\prime}}{}^2+a^2}{\Delta(\bar{r}{}^{\prime})}+\iota(\bar{r}{}^{\prime})\rp]d\bar{r}{}^{\prime} \, , \\
\phi^*&=\bar{\phi}-\int_{r_0}^{\bar{r}}(1-\iota(\bar{r}{}^{\prime}))\,\frac{a}{\Delta(\bar{r}{}^{\prime})}\,d\bar{r}{}^{\prime} \quad \text{mod} \, 2\pi \, ,
\end{align*}
with $r_0=9M/4$. It follows that, for the vector fields introduced in \eqref{eq:TZ-def}, we have
\begin{align*}
T& =\partial_{{t}^*} \, ,  &  Z& =\partial_{{\phi}^*} \, .
\end{align*}
The Kerr-star coordinates \eqref{kerr_star_coords} are \emph{global} (modulo the usual degeneration of the angular coordinates $(\bar{\theta},\phi^*)\in\mathbb{S}^2$) coordinates on $\mathcal{M}$, including on $\mathcal{H}^+$. We define the foliation Kerr-star two-spheres $$\mathbb{S}^2_{t^*,\bar{r}} :=\left\lbrace t^*,\bar{r}  \right\rbrace \times \mathbb{S}^2 \, .$$

For any finite $\tau\in\mathbb{R}$, the hypersurface 
\begin{equation} \label{def_S_tau}
\mathcal{S}_{\tau} :=\left\lbrace t^*=\tau \right\rbrace
\end{equation}
is a \emph{spacelike},\footnote{This property can be easily checked and it holds in view of the definition of the function $\iota(\bar{r})$.} asymptotically flat hypersurface intersecting $\mathcal{H}^+$. Future null infinity corresponds to the formal hypersurface $\left\lbrace t^*=\infty \right\rbrace=\mathcal{I}^+ $.

The Kerr-star coordinates \eqref{kerr_star_coords} can be related to \emph{Boyer--Lindquist coordinates} $$(t,r,\theta,\phi)$$ on $\mathcal{M}\setminus\mathcal{H}^+$ by the coordinate transformation
\begin{align*}
t^*&= t+\int_{r_0}^{r}\iota(r^{\prime})\,\left(\frac{{r^{\prime}}{}^2+a^2}{\Delta(r^{\prime})}-1\right)\,dr^{\prime} \, , & \bar{r}&=r \, , \\
\bar{\theta}&=\theta \, , & \phi^*&=\phi+\int_{r_0}^{r}\iota(r^{\prime})\, \frac{a}{\Delta(r^{\prime})}\,dr^{\prime} \quad \textup{mod} \, 2\pi \, .
\end{align*}
In Boyer--Lindquist coordinates, the Kerr metric takes the familiar form
\begin{equation}
g=-\frac{\Delta}{\Sigma}\,(dt-a\sin^2\theta\,d\phi)^2+\frac{\Sigma}{\Delta}\,dr^2+\Sigma \,d\theta^2+\frac{\sin^2\theta}{\Sigma}\,(a\, dt-(r^2+a^2)\,d\phi)^2 \, . \label{kerr_metric_bl}
\end{equation}

\subsubsection{A hyperboloidal differentiable structure} \label{sec_hyperboloidal_coords}

Let $\tilde \zeta\colon [r_+,\infty)\to [0,1]$ is a smooth cut-off function which vanishes for $r\leq 3M$ and is equal to 1 for $r\geq 4M$. We define  \textit{hyperboloidal Kerr-star coordinates}\footnote{With a slight abuse of notation, we do not rename the coordinates $(r,\theta)$.} $$(\tilde{t}{}^*,r,\theta,\tilde\phi{}^*)$$ on $\mathcal{M}$ such that 
\begin{align*}
\tilde{t}{}^*&=t^*-f(r) \,,  &\tilde\phi{}^*&=\phi^*
\end{align*}
for
\begin{align*}
f(r):=\tilde\zeta(r)\lp(r+2M \log r-\frac{3M^2}{r}\rp).
\end{align*}
In hyperboloidal Kerr-star coordinates, we have
\begin{align*}
T&=\p_{\tilde t{}^*}, & Z&=\p_{\tilde\phi{}^*}.
\end{align*}
For future convenience, we define the vector field
\begin{equation}
\tilde X:=\p_{r}|_{(\tilde{t}{}^*,r,\theta,\tilde\phi{}^*)} \, ,  \label{eq:tilde-X}
\end{equation}
for which one can check that $\tilde X=\p_{r}|_{(t,r,\theta,\phi)} + \frac{df}{dr}T$ for $r\geq 9M/4$. We also define the foliation hyperboloidal two-spheres $$\mathbb S^2_{\tilde t{}^*, r}:=\{\tilde t{}^*,r\}\times \mathbb{S}^2 \, .$$ 

For any finite $\tau\in \mathbb R$, the hypersurface
\begin{equation} \label{def_hyperboloidal_hypers}
\Sigma_\tau:=\{\tilde t{}^*=\tau \}
\end{equation}
is a spacelike hypersurface intersecting $\mc H^+$ which becomes \emph{asymptotically null} as it approaches $\mc I^+$.

\begin{figure}[htbp]
\centering
\includegraphics[scale=1.1]{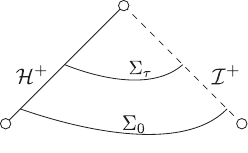}
\caption{$\Sigma_\tau$ are asymptotically null, smooth and foliate $\mc M$.}
\label{fig:hyperboloidal}
\end{figure}

\subsection{The algebraically special horizontal distribution}
\label{sec:algebraically-special-prelims}

\subsubsection{The algebraically special frame}   \label{sec_Kerr_algebr_special_frame}

In Boyer--Lindquist coordinates, we define the \emph{algebraically special} null vector fields
\begin{align}  
e_4^{\text{as}}&=\frac{r^2+a^2}{\Sigma}\,\partial_{t} +\frac{\Delta}{\Sigma}\,\partial_r+\frac{a}{\Sigma}\,\partial_{\phi} \, ,  \label{as_null_frame_vectors_1} \\  
e_3^{\text{as}}&=\frac{r^2+a^2}{\Delta}\,\partial_{t}-\partial_r +\frac{a}{\Delta}\,\partial_{\phi} \, . \label{as_null_frame_vectors_2}
\end{align}
The vector fields $e_4^{\text{as}}$ and $e_3^{\text{as}}$ are regular and non-degenerate vector fields on the entire manifold $\mathcal{M}$, including on $\mathcal{H}^+$. We also define the induced (global, regular) horizontal distribution
\begin{equation*}
\mathfrak{D}_{\mathcal{N}_{\text{as}}}:=\left\langle e^{\text{as}}_3,e^{\text{as}}_4 \right\rangle^{\perp} 
\end{equation*}
on $\mathcal{M}$. One can complete the null vector fields \eqref{as_null_frame_vectors_1}--\eqref{as_null_frame_vectors_2} with a local orthonormal basis
\begin{equation*}
(e^{\text{as}}_1,e^{\text{as}}_2)
\end{equation*}
of $\mathfrak{D}_{\mathcal{N}_{\text{as}}}$ to form the local null frame $$\mathcal{N}_{\text{as}}=(e^{\text{as}}_1,e^{\text{as}}_2,e^{\text{as}}_3,e^{\text{as}}_4) \, ,$$ called the \emph{algebraically special frame} of the Kerr exterior manifold $(\mathcal{M},g)$.

The frame $\mathcal{N}_{\text{as}}$ is \emph{non-integrable} for $|a|>0$, with $\mathfrak{D}_{\mathcal{N}_{\text{as}}}$ a \emph{non-integrable} distribution, see for instance \cite[Definition 4.7]{Benomio2022}.  Along the event horizon $\mathcal{H}^+$, the frame $\mathcal{N}_{\text{as}}$ has some noteworthy properties:
\begin{itemize}[noitemsep]
\item We have
\begin{equation} \label{intro_e4_horizon_killing}
e_4^{\text{as}}|_{\mathcal{H}^+} = 2\,\frac{r_+^2+a^2}{\Sigma(r_+)}\,T +2\,\frac{a}{\Sigma(r_+)}\,\widetilde{Z} \, , 
\end{equation}
i.e.\ the vector field $e_4^{\text{as}}$ is tangent to $\mathcal{H}^+$ and spanned by the Killing vector fields $T$ and $\widetilde{Z}$. 
\item We have
\begin{equation*}
\mathfrak{D}_{\mathcal{N}_{\text{as}}}|_{\mathcal{H}^+}\subset T\mathcal{H}^+ \, ,
\end{equation*}
i.e.\ the arbitrary local frame $(e^{\text{as}}_1,e^{\text{as}}_2)$ of $\mathfrak{D}_{\mathcal{N}_{\text{as}}}$ is tangent to $\mathcal{H}^+$. Thus, the vector fields $$(e^{\text{as}}_4,e^{\text{as}}_1,e^{\text{as}}_2)$$ form a local basis of the \emph{integrable} distribution $T\mathcal{H}^+\subset T\mathcal{M}$.
\end{itemize}

For future convenience, we note a possible explicit choice of $(e^{\text{as}}_1,e^{\text{as}}_2)$ which reads
\begin{align} \label{explicit_e1_e2}
e_1^{\text{as}}&= \frac{a^2\sin\theta\cos\theta}{\Sigma}\,\partial_{t}+\frac{r}{\Sigma}\,\partial_{\theta}+\frac{a\cos\theta}{\Sigma\sin\theta}\,\partial_{\phi} \, , & e_2^{\text{as}}&= \frac{ar\sin \theta}{\Sigma}\,\partial_{t}-\frac{a\cos\theta}{\Sigma}\,\partial_{\theta}+\frac{r}{\Sigma \sin\theta}\,\partial_{\phi}
\end{align}
and we define
\begin{align}
L &=\frac{\Sigma}{r^2+a^2}e_4^{\rm as} \, , & \uL &=\frac{\Delta}{r^2+a^2}e_3^{\rm as} \, . \label{def_L_Lbar}
\end{align}

\subsubsection{Induced metric and connection coefficients} 
\label{sec_Kerr_connection_coeff_curv_comps} 

The induced metric $\slashed{g}$ (together with its inverse $\slashed{g}^{-1}$ and associated standard volume form $\slashed{\varepsilon}$) over $\mathfrak{D}_{\mathcal{N}_{\text{as}}}$ and the connection coefficients of $g$ relative to $\mathcal{N}_{\text{as}}$ are defined as \emph{$\mathfrak{D}_{\mathcal{N}_{\text{as}}}$ tensors} (see \cite[Definition 4.9]{Benomio2022}). The $\mathfrak{D}_{\mathcal{N}_{\text{as}}}$ tensors are defined globally on $\mathcal{M}$ and are regular quantities on the entire manifold $\mathcal{M}$, including on $\mathcal{H}^+$.

In Boyer--Lindquist coordinates, the induced metric reads\footnote{Strictly speaking, the quantities \eqref{kerr_induced_metric}--\eqref{kerr_zeta} below are canonical extensions of the induced metric and connection coefficients (see \cite[Definition 4.17]{Benomio2022}).}
\begin{equation} 
\slashed{g}=  \frac{a^2}{\Sigma}\,\sin^2\theta\, dt^2 -2\,\frac{a\,(r^2+a^2)}{\Sigma}\,\sin^2\theta\, dt\,d\phi +\Sigma\,d\theta^2+\frac{(r^2+a^2)^2}{\Sigma}\,\sin^2\theta\,d\phi^2 \label{kerr_induced_metric}
\end{equation}
and the connection coefficients $\eta$, $\etab$ and $\zeta$ of $g$ relative to $\mathcal{N}_{\text{as}}$ read
\begin{align}
\eta=& \, -\frac{a^2r}{\Sigma^2}\,\sin^2\theta\, dt 
-\frac{a^2\sin(2\theta)}{2\,\Sigma}\,d\theta+\frac{ar(r^2+a^2)}{\Sigma^2}\,\sin^2\theta\,d\phi \, , \label{kerr_eta}\\
\etab=& \, \frac{a^2r}{\Sigma^2}\,\sin^2\theta\, dt -\frac{a^2\sin(2\theta)}{2\,\Sigma}\,d\theta- \frac{ar(r^2+a^2)}{\Sigma^2}\,\sin^2\theta\,d\phi \, , \label{kerr_etab}\\
\zeta=& \, \eta \, . \label{kerr_zeta}
\end{align}
The remaining connection coefficients of $g$ relative to $\mathcal{N}_{\text{as}}$ read
\begin{align*}
{\chih} &=0 \, , & {\chibh} &= 0 \, ,\\
(\slashed{\varepsilon}\cdot\chi) &= \frac{2a \Delta\cos \theta }{\Sigma^2} \, ,   & (\slashed{\varepsilon}\cdot\chib)&= \frac{ 2a \cos \theta}{\Sigma} \, , \\
(\text{tr}\chi) &= \frac{2r \Delta}{\Sigma^2} \, ,  &  (\text{tr}\chib) &= -\frac{2 r }{\Sigma} 
\end{align*}
and
\begin{align*}
\hat{\omega} &= -\frac{ 2\left(a^2   (M-r)\cos^2  \theta +a^2 r- M r^2\right)}{\Sigma^2} \, , &
\omegabh &= 0 \, , \\
\xi &=0 \, , &  \yb &=0 \, .
\end{align*}

The Levi-Civita connection $\nabla$ of $g$ induces a connection $\nablasl$ over $\mathfrak{D}_{\mathcal{N}_{\text{as}}}$ (see \cite[Section 4.4.1]{Benomio2022} for its rigorous definition). The induced connection $\nablasl_{a,M}$ is compatible with the induced metric $\slashed{g}$, however it fails to be torsion-free (and thus Levi-Civita relative to $\slashed{g}$) because the distribution $\mathfrak{D}_{\mathcal{N}_{\text{as}}}$ is non-integrable.

\subsubsection{Tensor products and horizontal differential operators}

In this section, we recall the definition of contractions and products of $\mathfrak{D}_{\mathcal{N}_{\text{as}}}$ tensors and the differential operators acting on $\mathfrak{D}_{\mathcal{N}_{\text{as}}}$ tensors. In our notation, the Latin capital letters are horizontal frame indices.

Given a $\mathfrak{D}_{\mathcal{N}_{\text{as}}}$ one-form $\varsigma$ and $\mathfrak{D}_{\mathcal{N}_{\text{as}}}$ two-tensor $\theta$, we introduce the notation $\varsigma^{\sharp}$, $\theta^{\sharp_1}$ and $\theta^{\sharp_2}$ such that
\begin{align*}
{\varsigma}^{\sharp}{}^{A}&=\slashed{g}^{AB}\varsigma_B \, , & {\theta}^{\sharp_1}{}_{A}^B&=\slashed{g}^{BC}\theta_{CA} \, , & {\theta}^{\sharp_2}{}_{A}^B&=\slashed{g}^{BC}\theta_{AC} \, .
\end{align*}
We define the duality relations
\begin{align*}
{^{\star}\varsigma}_{A}&={\slashed{\varepsilon}^{\sharp_2}}{}^B_A\,\varsigma_B \, , &  {^{\star}\theta}_{AB}&={\slashed{\varepsilon}^{\sharp_2}}{}^C_A\,\theta_{BC} \, .
\end{align*} 
By definition, we have ${{^{\star}}{^{\star}}\varsigma}=-\varsigma$.

Given the $\mathfrak{D}_{\mathcal{N}_{\text{as}}}$ one-forms $\varsigma$ and $\tilde{\varsigma}$, we define the products
\begin{align*}
\varsigma\cdot\tilde{\varsigma}=(\varsigma,\tilde{\varsigma})&=\slashed{g}^{AB}\varsigma_A\,\tilde{\varsigma}_B \, , & \varsigma \wedge \tilde{\varsigma}&=\slashed{\varepsilon}^{AB}\varsigma_A\,\tilde{\varsigma}_B \, .
\end{align*}

The  connection $\nablasl$  can be employed to introduce a set of differential operators acting on $\mathfrak{D}_{\mathcal{N}_{\text{as}}}$ tensors, as follows. For any smooth scalar function $f$, $\mathfrak{D}_{\mathcal{N}_{\text{as}}}$ one-form $\varsigma$ and $\mathfrak{D}_{\mathcal{N}_{\text{as}}}$ two-tensor $\theta$, we define the divergence operator 
\begin{align*}
\slashed{\text{div}}\,\varsigma&=\slashed{g}^{AB}(\nablasl_{A}\varsigma)_{B}  \, , &
(\slashed{\text{div}}\,\theta)_{A}&=\slashed{g}^{CB}(\nablasl_C \theta)_{AB} \, , 
\end{align*}
the curl operator 
\begin{equation*}
\slashed{\text{curl}}\,\varsigma=\slashed{\varepsilon}^{AB}(\nablasl_A \varsigma)_{B} 
\end{equation*}
and the horizontal scalar and covariant Laplacian
\begin{align*}
    \slashed{\Delta}f&=\slashed{\text{div}}\nablasl f \, , & \slashed{\Delta}\varsigma&=\slashed{\text{div}}\nablasl \varsigma \, .
\end{align*}
The identities $\slashed{\Delta}f=-\slashed{\text{curl}}{}^{\star}\nablasl f $ and $\slashed{\Delta}\varsigma=-\slashed{\text{curl}}{}^{\star}\nablasl \varsigma$ hold.

\subsubsection{Notable geometric identities}
\label{sec:algebraically-special-props}

In this section, we collect some geometric identities which will be useful for the sequel.

On $\mathcal{H}^+$, we have 
\begin{equation*}
\nablasl_4\Gamma|_{\mathcal{H}^+}=0 
\end{equation*}
for any connection coefficient $\Gamma$ of $g$ relative to $\mathcal{N}_{\text{as}}$. In particular,
\begin{equation} \label{omegah_extremal}
\omegah|_{\mathcal{H}^+}=0 \quad\quad \text{when $|a|=M$.}
\end{equation}
The identity \eqref{omegah_extremal} encodes the fact that the surface gravity of the extremal Kerr event horizon is zero.

On $\mathcal{M}$, by using the explicit formulas of Section~\ref{sec_Kerr_connection_coeff_curv_comps}, we can compute 
\begin{gather*}
\nablasl\Sigma = \Sigma (\eta+\etab)\,,\qquad \nablasl(r^2-a^2\cos^2\theta) = -\nablasl\Sigma\,, \numberthis\label{eq:ang-derivative-Sigma}\\
 \nablasl_4 \lp(\Sigma^{-1}\nablasl\Sigma\rp) = -\frac12\tr\chi(\eta+\etab)+\frac12\epschi{}^\star(\eta+\etab)\,,\\
\nablasl\tr \chib = -\epsuchi{}^\star (\eta-\etab)\,,\qquad
\nablasl\tr \chi= \nablasl\lp(-\frac{\Delta}{\Sigma}\tr \chib\rp) = \epsuchi{}^\star (\eta-\etab)-\tr\chi(\eta+\etab)\,. 
\end{gather*}
The Codazzi equations \cite[Eqs.\ (96)--(97)]{Benomio2022} yield
\begin{alignat}{3}
\nablasl \epsuchi&={}^\star\nablasl (-\tr \chib) -\epsuchi (2\etab-\eta) +\tr \chib{}^\star\eta &&= \tr \chib{}^\star \eta-\epsuchi\etab \, , \label{formula_nablasl_atrchib}\\
\nablasl \epschi&= {}^\star\nablasl(- \tr \chi) -3\epschi \eta -\tr \chi{}^{\star}\eta &&= \tr \chi {}^\star\etab -\epschi(2\eta+\etab) \, .
\end{alignat}
The transport equations  \cite[Eqs.\ (90), (91) and (93)]{Benomio2022} for $\eta$ and $\etab$ give
\begin{align*}
\nablasl_3(\eta-\etab) &= \frac{1}{2}(\text{tr}\chib)(-3\eta+\etab)-\frac{1}{2}(\slashed{\varepsilon}\cdot\chib)({}^{\star}\eta+{}^{\star}\etab) \, , \\
\nablasl_4 \eta &=-\frac12 \tr\chi (\eta-\etab)+\frac12 \epschi {}^\star(\eta-\etab) \, ,
\end{align*}
from which we obtain by direct computation
\begin{align*}
\nablasl_4 \etab = - \nablasl_4 \eta+\nablasl_4 \lp(\Sigma^{-1}\nablasl\Sigma\rp)=- \tr\chi \etab-\epschi {}^\star\etab = -2\chi^{\sharp_2}\cdot \etab \, .
\end{align*}
The transport equations  \cite[Eqs.\ (84), (85), (88) and (89)]{Benomio2022} for  $\tr\chib$, $\tr\chi$, $\epsuchi$ and $\epschi$ yield
\begin{align*}
2\curlsl \eta&= \nablasl_3\epschi +2\sigma_G \, ,\\
2\divsl \eta&= \nablasl_3\tr\chi +\frac12\tr\chi\tr\chib-\frac12\epschi\epsuchi-2|\eta|^2-2\rho_G  \, ,
\end{align*}
where 
\begin{align*}
\rho_G&= \frac{2Mr}{\Sigma^3}(3a^2\cos^2\theta-r^2), & \sigma_G&= \frac{2Ma\cos\theta}{\Sigma^3}(3r^2-a^2\cos^2\theta) 
\end{align*}
are components of the Weyl tensor of $g$ \cite[Section 5.4]{Benomio2022}, and
\begin{alignat*}{3}
\curlsl (\eta+\etab)&= \frac12\lp[\nablasl_3\epschi+\nablasl_4\epsuchi+\hat\omega \epsuchi\rp] &&=0 \, ,\\
\divsl (\eta-\etab)&= \frac12\lp[\nablasl_3\tr\chi-\nablasl_4\tr\chib-\hat\omega \tr\chib\rp] &&=0 \, , \numberthis \label{eq:div-eta-etab}
\end{alignat*}
where the final equality is obtained by direct computation. Finally, we note the identity
\begin{equation*}
(\eta,\eta)-(\etab,\etab)=0 \, ,
\end{equation*}
which can be checked by direct computation.

For the vector field \eqref{eq:tilde-X}, an easy computation yields
\begin{align}
\tilde X=& \, \frac12 \frac{\Sigma}{\Delta} e_4^{\rm as}-\frac12\lp(1-\frac{\Delta}{\Sigma}\rp)e_3^{\rm as} -\frac{2r}{\Sigma}(\eta-\etab)\cdot \nablasl \label{eq:tilde-X_bis}\\
&+\lp(\frac{df}{dr}-1\rp)\lp[\frac12 \frac{\Delta}{\Sigma}e_3^{\rm as}+\frac12e_4^{\rm as}-\frac{2r}{\Sigma}(\eta-\etab)\cdot \nablasl\rp]  \nonumber 
\end{align}
for $r\geq 9M/4$.

\subsection{Induced structure and norms over foliation spheres}

In this section, we introduce angular operators and norms over the foliation spheres $\mathbb{S}^2_{t^*,\bar{r}}$ and $\mathbb{S}^2_{\tilde t^*,r}$. We take $\bigcirc$ to be a placeholder for $(t^*,\bar{r})$ or $(\tilde t^*,r)$, particularizing to each case only when relevant.

We denote by $(\slashed{g}_{\mathbb{S}^2_{\bigcirc}},\nablasl_{\mathbb{S}^2_{\bigcirc}})$ the metric and associated Levi-Civita connection induced by the Kerr metric and Levi-Civita connection $(g,\nabla)$ over the foliation spheres $\mathbb{S}^2_{\bigcirc}$. We associate the standard volume forms 
\begin{align*}
\slashed{\varepsilon}_{\mathbb{S}^2_{t^*,\bar{r}}}&=\sqrt{\det\slashed{g}_{\mathbb{S}^2_{t^*,\bar{r}}}}d\bar{\theta} \, d\phi^*, & \slashed{\varepsilon}_{\mathbb{S}^2_{\tilde t^*,r}}&=\sqrt{\det\slashed{g}_{\mathbb{S}^2_{\tilde t^*,r}}}d\theta \, d\tilde\phi^*.
\end{align*}
For later convenience, we also define the rescaled volume forms
\begin{align*}
\mathring{\slashed{\varepsilon}}_{\mathbb{S}^2_{\bigcirc}} &:= \frac{4\pi}{\text{Area}(\mathbb{S}^2_{\bigcirc})}\, \slashed{\varepsilon}_{\mathbb{S}^2_{\bigcirc}} \, .
\end{align*}

For any $\mathbb{S}^2_{\bigcirc}$ $k$-tensor $\varsigma$,\footnote{The notion of a $\mathbb{S}^2$ $k$-tensor is here understood in the sense of \cite{Christodoulou1993} (i.e.~as a smooth section of $\otimes_k(T\mathbb{S}^2)^{\star}$).} we define the pointwise norm
\begin{align*}
|\varsigma|_{{\slashed{g}}_{\mathbb{S}^2_{\bigcirc}}}^2 &:= {\slashed{g}}_{\mathbb{S}^2_{\bigcirc}}^{A_1B_1}\cdots{\slashed{g}}_{\mathbb{S}^2_{\bigcirc}}^{A_k B_k}\varsigma_{A_1\cdots A_k}\varsigma_{B_1\cdots B_k} \, ,
\end{align*}
where the contracting indices are frame indices relative to an orthonormal (relative to ${\slashed{g}}_{\mathbb{S}^2_{\bigcirc}}$) frame of $T\mathbb{S}^2_{\bigcirc}$, and the $L^2(\mathbb{S}^2_{\bigcirc})$-norm
\begin{equation}
\left\lVert \varsigma \right\rVert^2_{L^2(\mathbb{S}^2_{\bigcirc})} :=\int_{\mathbb{S}^2_{\bigcirc}} |\varsigma|_{{\slashed{g}}_{\mathbb{S}^2_{\bigcirc}}}^2    \mathring{\slashed\varepsilon}_{\mathbb{S}^2_{\bigcirc}} \, . \label{L2_norm_sphere_foll}
\end{equation}
For any $\mathfrak D_{\mc N_{\rm as}}$ $k$-tensor $\varpi$, we define the pointwise norm
\begin{align*}
|\varpi|_{{\slashed{g}}}^2 &:= {\slashed{g}}^{A_1B_1}\cdots{\slashed{g}}^{A_k B_k}\varpi_{A_1\cdots A_k}\varpi_{B_1\cdots B_k} \, ,
\end{align*}
where the contracting indices are frame indices relative to an orthonormal (relative to $\slashed{g}$) frame of $\mathfrak D_{\mc N_{\rm as}}$, and the $L^2(\mathbb{S}^2_{\bigcirc})$-norm
\begin{equation}
    \left\lVert \varpi \right\rVert^2_{L^2(\mathbb{S}^2_{\bigcirc})} :=\int_{\mathbb{S}^2_{\bigcirc}} |\varpi|_{{\slashed{g}}}^2 \,   \mathring{\slashed\varepsilon}_{\mathbb{S}^2_{\bigcirc}} \, . \label{L2_norm_sphere_nonint}
\end{equation}
We note that the norms \eqref{L2_norm_sphere_foll}--\eqref{L2_norm_sphere_nonint} do not introduce any additional (with respect to the pointwise norm inside the integral) weight in $\bar{r},r$ as $\bar{r},r\rightarrow\infty$. Indeed, if we let  $d\Omega_{\mathbb{S}^2_{\bigcirc}}$ denote the volume form induced by the round metric over the spheres $\mathbb{S}_{\bigcirc}^2$ once the round metric is rescaled by the inverse square of the (area-)radius of the sphere (informally, consider $d\Omega_{\mathbb{S}^2_{\bigcirc}}$ to be the volume form induced by the round metric on the \textit{unit} sphere), then there exist universal constants $C,c>0$ such that
\begin{align}
c \int_{\mathbb{S}^2_{\bigcirc}} |\varsigma|_{\slashed{g}_{\mathbb{S}^2_{\bigcirc}}}^2   d\Omega_{\mathbb{S}^2_{\bigcirc}} \leq \left\lVert \varsigma \right\rVert^2_{L^2(\mathbb{S}^2_{\bigcirc})} &\leq C  \int_{\mathbb{S}^2_{\bigcirc}} |\varsigma|_{\slashed{g}_{\mathbb{S}^2_{\bigcirc}}}^2   d\Omega_{\mathbb{S}^2_{\bigcirc}} \, , \label{equivalence_norm_spheres} \\
c \int_{\mathbb{S}^2_{\bigcirc}} |\varpi|_{\slashed{g}}^2    \, d\Omega_{\mathbb{S}^2_{\bigcirc}} \leq \left\lVert \varpi \right\rVert^2_{L^2(\mathbb{S}^2_{\bigcirc})} &\leq C  \int_{\mathbb{S}^2_{\bigcirc}}|\varpi|_{\slashed{g}}^2  \,  d\Omega_{\mathbb{S}^2_{\bigcirc}} \, . \label{equivalence_norm_spheres_bis}
\end{align}
In particular, we note that 
\begin{align*}
d\Omega_{\mathbb{S}^2_{t^*,\bar{r}}}&= \sin\bar{\theta} \, d\bar{\theta} d\phi^*, & d\Omega_{\mathbb{S}^2_{\tilde{t}{}^*,r}}&= \sin\theta \, d\theta d\tilde{\phi}^*.
\end{align*}
For later convenience, one can check that, at the event horizon, the identity
\begin{equation} \label{id_event_horizon_volume_forms}
    \slashed{\varepsilon}_{\mathbb{S}^2_{t^*,r_+}}=(r_+^2+a^2) \, d\Omega_{\mathbb{S}^2_{t^*,r_+}}
\end{equation}
holds. In what follows, we will often drop the subscripts from the pointwise norms and volume forms whenever doing so does not cause ambiguity.

\subsection{\texorpdfstring{$\mathbb{S}^2$}{Foliation sphere}-projection on the event horizon} \label{sec:projection_formulae}

In the previous section, we introduced the metric and associated Levi-Civita connection $(\slashed g_{\mathbb S^2_{t^*,\bar{r}}},\nablasl_{\mathbb S^2_{t^*,\bar{r}}})$ induced by $(g,\nabla)$ over the foliation spheres $\mathbb{S}^2_{t^*,\bar{r}}$. We now let $\check{\slashed g}$ be the metric induced by $\slashed g$ over the foliation spheres $\mathbb{S}^2_{t^*,\bar{r}}$ via the projection procedure in \cite[Section 7]{Benomio2022}, and denote by $\slashed{\varepsilon}_{\check{\slashed{g}}}$ its associated standard volume form. We remark that
\begin{equation*}
    \check{\slashed g}\neq\slashed g_{\mathbb S^2_{t^*,\bar{r}}}
\end{equation*}
on $\mathcal{M}$. We denote the Levi-Civita connection of $\check{\slashed g}$ by $\check{\slashed\nabla}$. We quote from \cite[Section 7]{Benomio2022} the properties of $(\check{\slashed g},\check{\slashed\nabla})$ which will be exploited in our analysis:
\begin{proposition}[$\mathbb{S}^2$-projection formulae]\label{prop:proj-formulas} For any $\mathfrak D_{\mc N_{\rm as}}$ one-form $\varsigma$, let $\tilde\varsigma$ be the associated $\mathbb{S}^2_{t^*,\bar{r}}$ one-form such that
\begin{align*}
\tilde\varsigma(e_A^{\textup{ad}})&=g(e_A^{\textup{ad}},e_B^{\textup{as}})\varsigma^B \, ,  &  A&=\left\lbrace 1,2 \right\rbrace \, ,
\end{align*}
with $(e_1^{\textup{ad}},e_2^{\textup{ad}})$ an arbitrary local frame of $\mathbb{S}^2_{t^*,\bar{r}}$. Then, $\tilde\varsigma$ satisfies the property 
\begin{equation}
|\tilde\varsigma|_{\check{\slashed g}} = |\varsigma|_{\slashed g} \, . \label{equality_proj_norms} 
\end{equation}
Furthermore, \underline{along $\mc H^+$}, we have 
\begin{equation} \label{equality_proj_metrics_horizon}
    \check{\slashed g}=\slashed g_{\mathbb S^2_{t^*,r_+}} 
\end{equation}
and the projection formulae 
\begin{align*}
\widetilde{\nablasl_4 f} &=\check{\nablasl}_4f\,, & \widetilde{\nablasl_3 f} &=\check{\nablasl}_3f\,, & \widetilde{\nablasl f} &=\check{\nablasl}f+\frac12 (\check{\nablasl}_4f)\mathfrak k
\end{align*}
for any smooth scalar function $f$ and 
\begin{align*}
\widetilde{\nablasl \varsigma} &=  \check{\nablasl}\, \widetilde{\varsigma}+\frac{1}{2}\, \mathfrak{k}\otimes \check{\nablasl}_4\,\widetilde{\varsigma}\implies 
\begin{dcases}
\widetilde{\slashed{\textup{div}}\, \varsigma} &= \check{\slashed{\textup{div}}}\, \widetilde{\varsigma} +\frac{1}{2}\, \left(\mathfrak{k}, \check{\nablasl}_4\,\widetilde{\varsigma}\right)_{\check{\slashed{g}}} \, , \\
\widetilde{\slashed{\textup{curl}}\, \varsigma} &= \check{\slashed{\textup{curl}}}\, \widetilde{\varsigma} +\frac{1}{2}\, \mathfrak{k}\wedge_{\check{\slashed{g}}} \check{\nablasl}_4\,\widetilde{\varsigma} \,,
\end{dcases}
\end{align*}
for any $\mathfrak D_{\mc N_{\rm as}}$ one-form $\varsigma$. Here, $\mathfrak{k}$ is a $\mathbb{S}^2_{t^*,\bar{r}}$ one-form such that
\begin{align*}
\mathfrak{k}(e_A^{\rm ad})&= g(e_3^{\rm as},e_A^{\rm ad})\, ,   &  A&=\left\lbrace 1,2 \right\rbrace \, ,
\end{align*}
with $(e_1^{\textup{ad}},e_2^{\textup{ad}})$ an arbitrary local frame of $\mathbb{S}^2_{t^*,\bar{r}}$. It can be checked that $\nablasl_4\mathfrak k|_{\mc H^+}=0$. One has $\mathfrak{k}=0$ when $|a|=0$.
\end{proposition}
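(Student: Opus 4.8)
The plan is to obtain the general assertions---the norm identity \eqref{equality_proj_norms} and the derivative projection formulae---from the abstract construction of \cite[Section~7]{Benomio2022}, and then to verify the Kerr-specific statements \eqref{equality_proj_metrics_horizon}, $\nablasl_4\mathfrak k|_{\mc H^+}=0$ and $\mathfrak k|_{a=0}=0$ by direct computation. The object underlying everything is the linear map $\Pi\colon\mathfrak D_{\mc N_{\rm as}}\to T\mathbb S^2_{t^*,\bar r}$ defined by projection along the two-plane $\langle e_3^{\rm as},e_4^{\rm as}\rangle=\mathfrak D_{\mc N_{\rm as}}^{\perp}$: this is a well-defined isomorphism because, from \eqref{as_null_frame_vectors_1}--\eqref{as_null_frame_vectors_2} and the definition of the Kerr-star coordinates, one checks that $T\mathcal M=T\mathbb S^2_{t^*,\bar r}\oplus\langle e_3^{\rm as},e_4^{\rm as}\rangle$. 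Since $e_3^{\rm as},e_4^{\rm as}$ are $g$-orthogonal to $\mathfrak D_{\mc N_{\rm as}}$, the one-form $\tilde\varsigma$ of the statement is exactly the push-forward $(\Pi^{-1})^{*}\varsigma$, and $\check{\slashed g}$ is characterised as the unique metric on the foliation spheres for which $\Pi$ is an isometry out of $(\mathfrak D_{\mc N_{\rm as}},\slashed g)$; then \eqref{equality_proj_norms} is immediate. The same transversality computation shows $\langle e_3^{\rm as},e_4^{\rm as}\rangle$ fails to be $g$-orthogonal to $T\mathbb S^2_{t^*,\bar r}$ away from $\mc H^+$, which is the source of the discrepancy $\check{\slashed g}\neq\slashed g_{\mathbb S^2_{t^*,\bar r}}$.

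For the derivative formulae I would compute the defect of $\Pi^{-1}$ directly. Using the normalisation $g(e_3^{\rm as},e_4^{\rm as})=-2$ of the algebraically special frame, one finds $\Pi^{-1}w-w=\tfrac12 g(w,e_4^{\rm as})\,e_3^{\rm as}+\tfrac12 g(w,e_3^{\rm as})\,e_4^{\rm as}$ for $w\in T\mathbb S^2_{t^*,\bar r}$, whence, for any scalar $f$, $(\widetilde{\nablasl f}-\check{\nablasl}f)(w)=(\Pi^{-1}w-w)(f)=\tfrac12 g(w,e_4^{\rm as})\,\nablasl_3 f+\tfrac12\mathfrak k(w)\,\nablasl_4 f$. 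Along $\mc H^+$ the vector $e_4^{\rm as}$ is tangent to $\mc H^+$ and, by \eqref{intro_e4_horizon_killing}, a nonzero multiple of the null generator, hence $g$-orthogonal to $T\mc H^+\supset T\mathbb S^2_{t^*,r_+}$; the first term therefore drops, giving $\widetilde{\nablasl f}=\check{\nablasl}f+\tfrac12(\check{\nablasl}_4 f)\mathfrak k$ there, while $\widetilde{\nablasl_4 f}=\nablasl_4 f=\check{\nablasl}_4 f$ and similarly for $\nablasl_3$. The one-form formulae follow by running the same argument on $\nablasl\varsigma$, the only extra input being the comparison of the connection $\nablasl$ on the non-integrable $\mathfrak D_{\mc N_{\rm as}}$ (which carries torsion) with $\check{\nablasl}$; keeping track of this is the technical heart of \cite[Section~7]{Benomio2022}, and I expect it to be the main obstacle if one wishes to reprove the statement from scratch, so I would quote it rather than redo it.

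To prove \eqref{equality_proj_metrics_horizon} I would argue as follows. Along $\mc H^+$ one has $\mathfrak D_{\mc N_{\rm as}}\subset T\mc H^+$ (Section~\ref{sec:algebraically-special-prelims}), while $e_3^{\rm as}$ is transverse to $\mc H^+$ (Section~\ref{sec:algebraically-special-prelims}; concretely $e_3^{\rm as}=-\partial_{\bar r}$ in Kerr coordinates). Hence for $v\in\mathfrak D_{\mc N_{\rm as}}$ the defect $v-\Pi v$, a priori only known to lie in $\langle e_3^{\rm as},e_4^{\rm as}\rangle$, in fact lies in $T\mc H^+\cap\langle e_3^{\rm as},e_4^{\rm as}\rangle=\langle e_4^{\rm as}\rangle$, i.e.\ is a multiple of the null generator. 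Since $\Pi v,\Pi w\in T\mathbb S^2_{t^*,r_+}\subset T\mc H^+$ and a null generator annihilates all of $T\mc H^+$ in the $g$-pairing, every cross term vanishes and $g(v,w)=g(\Pi v,\Pi w)$ on $\mc H^+$; by the defining property of $\check{\slashed g}$ this is exactly $\check{\slashed g}=\slashed g_{\mathbb S^2_{t^*,r_+}}$ there.

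Finally, substituting the expressions of Section~\ref{sec_Kerr_algebr_special_frame} (equivalently $e_3^{\rm as}=-\partial_{\bar r}$) and the Kerr metric \eqref{def_kerr_metric} into the definition of $\mathfrak k$ gives the explicit form $\mathfrak k=a\sin^2\theta\,d\phi^*$ on every foliation sphere; in particular $\mathfrak k\equiv 0$ for $a=0$, consistently with $\mathfrak D_{\mc N_{\rm as}}=T\mathbb S^2_{t^*,\bar r}$ in Schwarzschild. Since the coordinate components of $\mathfrak k$ are independent of $\bar t$ and $\bar r$ and $e_4^{\rm as}|_{\mc H^+}$ is spanned by the Killing fields $T,\widetilde Z$, the identity $\nablasl_4\mathfrak k|_{\mc H^+}=0$ then follows by a direct computation using the connection coefficients listed in Section~\ref{sec_Kerr_connection_coeff_curv_comps} and the general horizon relation $\nablasl_4\Gamma|_{\mc H^+}=0$.
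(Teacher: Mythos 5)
The paper does not actually prove this proposition: it is quoted verbatim from \cite[Section 7]{Benomio2022} (``We quote from \cite[Section 7]{Benomio2022} the properties\dots''), so there is no in-paper argument to match yours against. Your reconstruction is, as far as I can check, correct and self-contained where it claims to be. The identification of $\tilde\varsigma$ with $(\Pi^{-1})^{*}\varsigma$ for the projection $\Pi$ along $\langle e_3^{\rm as},e_4^{\rm as}\rangle$ is right (the cross term $g(\Pi^{-1}X-X,\varsigma^{\sharp})$ vanishes by horizontality of $\varsigma^{\sharp}$), the defect formula $\Pi^{-1}w-w=\tfrac12 g(w,e_4^{\rm as})e_3^{\rm as}+\tfrac12 g(w,e_3^{\rm as})e_4^{\rm as}$ follows from $g(e_3^{\rm as},e_4^{\rm as})=-2$, and the horizon simplifications (vanishing of $g(w,e_4^{\rm as})$ for $w\in T\mathbb S^2_{t^*,r_+}$, and $T\mc H^+\cap\langle e_3^{\rm as},e_4^{\rm as}\rangle=\langle e_4^{\rm as}\rangle$ via $e_3^{\rm as}=-\partial_{\bar r}$) give clean proofs of the scalar formulae and of \eqref{equality_proj_metrics_horizon}; the explicit form $\mathfrak k=a\sin^2\theta\,d\phi^*$ is also what one gets from \eqref{def_kerr_metric}. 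Two caveats. First, for the one-form identity $\widetilde{\nablasl\varsigma}=\check{\nablasl}\widetilde\varsigma+\tfrac12\mathfrak k\otimes\check{\nablasl}_4\widetilde\varsigma$ you explicitly fall back on \cite[Section 7]{Benomio2022} for the comparison between the torsion-carrying $\nablasl$ and $\check{\nablasl}$; that is the genuinely nontrivial part of the proposition, so your proof is only a partial replacement of the citation, not a full one --- which is acceptable here since the paper itself cites rather than proves, but you should say so explicitly if this were to stand as a proof. Second, $\nablasl_4\mathfrak k|_{\mc H^+}=0$ is asserted to ``follow by a direct computation''; the Lie-derivative observation ($\mathfrak k$ is $T$- and $Z$-invariant) does not by itself give the covariant statement, since $e_4^{\rm as}|_{\mc H^+}$ has $\theta$-dependent coefficients and one must still track the connection terms $\mathfrak k(\check{\nablasl}_4 e_A^{\rm ad})$, so that step should be carried out rather than merely announced. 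Net effect: your approach buys a transparent geometric explanation (everything reduces to where the defect $\Pi^{-1}w-w$ lies relative to $T\mc H^+$) at the cost of re-deriving, or still citing, the connection comparison that \cite{Benomio2022} packages once and for all.
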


We remark that, in view of the identity \eqref{equality_proj_metrics_horizon}, one can replace the connection $\check{\nablasl}$ with $\nablasl_{\mathbb S^2_{t^*,\bar{r}}}$ (and similarly for the associated angular operators) and the tensor products relative to $\check{\slashed{g}}$ with tensor products relative to $\slashed g_{\mathbb S^2_{t^*,\bar{r}}}$ in the horizon projection formulae stated in Proposition \ref{prop:proj-formulas}. Combining the identities \eqref{id_event_horizon_volume_forms} and \eqref{equality_proj_metrics_horizon}, it also follows that
\begin{equation} \label{identity_event_horizon_volume_forms_bis}
    \slashed{\varepsilon}_{\check{\slashed{g}}}|_{\mathcal{H}^+}=(r_+^2+a^2) \, d\Omega_{\mathbb{S}^2_{t^*,r_+}} \, .
\end{equation}

More complicated projection formulae hold off of $\mc H^+$, see e.g.\ \cite{Benomio2022,BTdC2025}. On  $\mc H^+$, the properties of $\mc N_{\rm as}|_{\mc H^+}$ noted in Section~\ref{sec_Kerr_algebr_special_frame} lead to the simpler formulae stated in Proposition \ref{prop:proj-formulas}.

%
%

\section{The Maxwell equations} \label{sec:maxwell_eqns_head}

In this section, we first derive the Maxwell equations on a general spacetime. We then specialise the system of equations to the Kerr exterior manifold and discuss special classes of solutions, the notion of initial data and its well-posedness.

\subsection{The equations on a general spacetime} \label{sec:maxwell_general}

In this section, we derive the (source-free) Maxwell equations for the Maxwell field components relative to a general null frame of a general Lorentzian manifold. In particular, the frame is not assumed to be integrable. To derive the equations, we apply the formalism for non-integrable horizontal distributions from \cite[Section 4]{Benomio2022}. Here, we adopt the boldface notation to denote both the unknowns and the background quantities in the system of equations.

Let $(\bsy{\mathcal{M}},\bsy{g})$ be a $(3+1)$-dimensional, smooth, time-oriented Lorentzian manifold endowed with a local null frame $\bsy{\mathcal{N}}=(\eo,\etw,\et,\ef)$, with associated horizontal distribution $\bsy{\mathfrak{D}_{\mathcal{N}}}$. We denote by $\bsy{\nabla}$ the Levi-Civita connection of $\bsy{g}$ on $\bsy{\mathcal{M}}$. Let $\bsy{F}\in\Lambda^2(\bsy{\mathcal{M}})$. Consider the $\bsy{\mathfrak{D}_{\mathcal{N}}}$ one-forms $\bsy{\alpha},\bsy{\alphab}$ and the smooth scalar functions $\bsy{\rho},\bsy{\sigma}$ on $\bsy{\mathcal{M}}$ defined by
\begin{align}
\bsy{\alpha}(\bsy{e_A}) &:= \bsy{F}(\bsy{e_A},\bsy{e_4}) ,\label{eq:def-alpha}\\
\bsy{\alphab}(\bsy{e_A}) &:= \bsy{F}(\bsy{e_A},\bsy{e_3}) , \label{eq:def-alphab}\\
\bsy{\rho} &:= \frac{1}{2}\,\bsy{F}(\bsy{e_3},\bsy{e_4}) , \label{eq:def-rho}\\
\bsy{\sigma} &:=\frac{1}{2}\,\bsy{\slashed{\varepsilon}^{AB}}\bsy{F}(\bsy{e_A},\bsy{e_B})    \label{eq:def-sigma}
\end{align}
with $\bsy{A}=\left\lbrace \bsy{1},\bsy{2}\right\rbrace$, where $\bsy{\slashed{\varepsilon}}$ is the standard volume form associated to the metric $\bsy{\slashed{g}}$ induced by $\bsy{g}$ over $\bsy{\mathfrak{D}_{\mathcal{N}}}$. We note that, from \eqref{eq:def-sigma}, we have $\bsy{F}_{\bsy{AB}}=\bsy{\sigma}\bsy{\slashed{\varepsilon}_{AB}}$. We will refer to $\bsy{\alpha}, \bsy{\alphab}$ as the \emph{extremal} Maxwell components and to $\bsy{\rho},\bsy{\sigma}$ as the \emph{middle} Maxwell components.

Making use of the formulae in \cite[Section 4.6]{Benomio2022}, we compute the following identities\footnote{In our notation, the Latin capital letters are frame indices, with the first letters of the alphabet running over the horizontal indices only. Note also that one needs to keep track of the order of the indices of the second fundamental forms, which are not in general symmetric tensors.}
\begin{align*}
(\bsy{\nabla_A}\bsy{F})_{\bsy{B4}} &= (\bsy{\nablasl_A}\bsy{\alpha})_{\bsy{B}}+\bsy{\zeta_A}\bsy{\alpha_B}-\bsy{\chi_{AB}}\bsy{\rho}-\bsy{{}^{\star}\chi_{BA}}\bsy{\sigma}\, ,\\
(\bsy{\nabla_A}\bsy{F})_{\bsy{B3}} &= (\bsy{\nablasl_A}\bsy{\alphab})_{\bsy{B}}-\bsy{\zeta_A}\bsy{\alphab_B}+\bsy{\chib_{AB}}\bsy{\rho}-\bsy{{}^{\star}\chib_{BA}}\bsy{\sigma} \, , \\
(\bsy{\nabla_4}\bsy{F})_{\bsy{34}} &= 2\bsy{\nablasl_4}\bsy{\rho}-2\, \bsy{\etab^A}\bsy{\alpha_A}+2\, \bsy{\xi^A}\bsy{\alphab_A} \, , \\
(\bsy{\nabla_3}\bsy{F})_{\bsy{43}} &= -2\bsy{\nablasl_3}\bsy{\rho}-2\, \bsy{\eta^A}\bsy{\alphab_A}+2\, \bsy{\yb^A}\bsy{\alpha_A} \, ,\\
(\bsy{\nabla_4}\bsy{F})_{\bsy{AB}} &= (\bsy{\nablasl_4}\bsy{\sigma})\bsy{\slashed{\varepsilon}_{AB}}+\bsy{\etab_A}\bsy{\alpha_B}-\bsy{\etab_B}\bsy{\alpha_A} +\bsy{\xi_A}\bsy{\alphab_B} -\bsy{\xi_B}\bsy{\alphab_A} \, , \\
(\bsy{\nabla_3}\bsy{F})_{\bsy{AB}} &= (\bsy{\nablasl_3}\bsy{\sigma})\bsy{\slashed{\varepsilon}_{AB}}+\bsy{\eta_A}\bsy{\alphab_B}-\bsy{\eta_B}\bsy{\alphab_A}+\bsy{\yb_A}\bsy{\alpha_B} -\bsy{\yb_B}\bsy{\alpha_A} \, , \\
(\bsy{\nabla_A}\bsy{F})_{\bsy{BC}} &= (\bsy{\nablasl_A}\bsy{\sigma})\bsy{\slashed{\varepsilon}_{BC}} +\frac{1}{2}\,\bsy{\chi_{AB}}\bsy{\alphab_C}+\frac{1}{2}\,\bsy{\chib_{AB}}\bsy{\alpha_C}-\frac{1}{2}\,\bsy{\chi_{AC}}\bsy{\alphab_B}-\frac{1}{2}\,\bsy{\chib_{AC}}\bsy{\alpha_B} \, , \\
(\bsy{\nabla_4}\bsy{F})_{\bsy{3A}} &= -(\bsy{\nablasl_4}\bsy{\alphab})_{\bsy{A}}-\hat{\bsy{\omega}}\bsy{\alphab}_{\bsy{A}} -2\, \bsy{\etab_A}\bsy{\rho}+2\, {}^{\bsy{\star}}\bsy{\etab_A}\bsy{\sigma} \, , \\
(\bsy{\nabla_3}\bsy{F})_{\bsy{A4}} &= (\bsy{\nablasl_3}\bsy{\alpha})_{\bsy{A}}+\underline{\hat{\bsy{\omega}}}\bsy{\alpha}_{\bsy{A}}-2\, \bsy{\eta_A}\bsy{\rho}-2 \, \bsy{{}^{\star}\eta_A} \bsy{\sigma} \, , \\
(\bsy{\nabla_A}\bsy{F})_{\bsy{34}} &= 2\bsy{\nablasl_A}\bsy{\rho}-\bsy{\chib^{\sharp_2}{}_A^B}\bsy{\alpha_B}+\bsy{\chi^{\sharp_2}{}_A^B}\bsy{\alphab_B} \, , 
\end{align*}
where ${}^{\bsy{\star}}$ denotes the horizontal Hodge dual as defined in \cite[Section 4.3]{Benomio2022}. We recall that $\bsy{\nablasl}$ is defined as the connection induced by $\bsy{\nabla}$ over $\bsy{\mathfrak{D}_{\mathcal{N}}}$. In general, $\bsy{\nablasl}$ is not the Levi-Civita connection of $\bsy{\slashed{g}}$. Nonetheless, one has $\bsy{\nablasl_X\slashed{g}}=\bsy{\nablasl_X\slashed{\varepsilon}}=0$ for any $\bsy{X}\in \Gamma(T\bsy{\mathcal{M}})$. These metric compatibility properties have been used repeatedly in the computations above.

We now assume that $\bsy{F}$ satisfies the (source-free) Maxwell equations 
\begin{align}
(\bsy{\nabla_{I}}\bsy{F})_{\bsy{J K}}+(\bsy{\nabla_{J}}\bsy{F})_{\bsy{K I}}+(\bsy{\nabla_{K}}\bsy{F})_{\bsy{I J}} &=0 \, , & \bsy{g^{JK}}(\bsy{\nabla_J}\bsy{F})_{\bsy{K I}} &=0  \label{eq:Maxwell}
\end{align}
on $\bsy{\mathcal{M}}$, with $\bsy{I},\bsy{J},\bsy{K}\in\left\lbrace \bsy{1},\bsy{2},\bsy{3},\bsy{4}\right\rbrace$. We can expand the second set of equations in \eqref{eq:Maxwell} as 
\begin{equation}
\bsy{\slashed{g}^{AB}}(\bsy{\nabla_A}\bsy{F})_{\bsy{BI}}-\frac{1}{2}\,(\bsy{\nabla_3}\bsy{F})_{\bsy{4I}}-\frac{1}{2}\,(\bsy{\nabla_4}\bsy{F})_{\bsy{3I}}=0 \, . \label{eq:Maxwell-2}
\end{equation}
Taking $\bsy{I}\in\left\lbrace \bsy{3},\bsy{4} \right\rbrace$ in \eqref{eq:Maxwell-2}, we obtain
\begin{align*}
0&=\bsy{\slashed{g}^{AB}}(\bsy{\nabla_A}\bsy{F})_{\bsy{B4}}-\frac{1}{2}\,(\bsy{\nabla_4}\bsy{F})_{\bsy{34}} \, ,  \\
0&=\bsy{\slashed{g}^{AB}}(\bsy{\nabla_A}\bsy{F})_{\bsy{B3}}-\frac{1}{2}\,(\bsy{\nabla_3}\bsy{F})_{\bsy{43}} \, ,
\end{align*}
which, noting that $(\bsy{\textbf{tr}{}^{\star}\chi})=(\bsy{\slashed{\varepsilon}\cdot\chi})$ and $(\textbf{tr}\bsy{{}^{\star}\chib})=(\bsy{\slashed{\varepsilon}\cdot\chib})$, can be written as 
\begin{align} 
\bsy{\nablasl_4}\bsy{\rho}+(\bsy{\textbf{tr}\chi})\bsy{\rho} &=\slashed{\textbf{div}}\,\bsy{\alpha}+(\bsy{\zeta}+\bsy{\etab},\bsy{\alpha})-(\bsy{\xi},\bsy{\alphab})-(\bsy{\slashed{\varepsilon}\cdot\chi})\bsy{\sigma} \, , \label{4_rho}
\\
\bsy{\nablasl_3}\bsy{\rho}+(\textbf{tr}\bsy{\chib})\bsy{\rho} &=-\slashed{\textbf{div}}\,\bsy{\alphab}+(\bsy{\zeta}-\bsy{\eta},\bsy{\alphab})+(\bsy{\xib},\bsy{\alpha})+(\bsy{\slashed{\varepsilon}\cdot\chib})\bsy{\sigma} \, . \label{3_rho}
\end{align}
We now consider the first set of equations in \eqref{eq:Maxwell}. Taking $\bsy{I}=\bsy{A}$, $\bsy{J}=\bsy{B}$ and $\bsy{K}\in \left\lbrace \bsy{3},\bsy{4} \right\rbrace$, we obtain
\begin{align*}
0&=(\bsy{\nabla_4}\bsy{F})_{\bsy{AB}}+(\bsy{\nabla_A}\bsy{F})_{\bsy{B4}}-(\bsy{\nabla_B}\bsy{F})_{\bsy{A4}} \,,\\
0&=(\bsy{\nabla_3}\bsy{F})_{\bsy{AB}}+(\bsy{\nabla_A}\bsy{F})_{\bsy{B3}}-(\bsy{\nabla_B}\bsy{F})_{\bsy{A3}}\,, 
\end{align*}
which, noting that $(\bsy{\slashed{\varepsilon}\cdot{}^{\star}\chi})=(\textbf{tr}\bsy{\chi})$ and $(\bsy{\slashed{\varepsilon}\cdot{}^{\star}\chib})=(\textbf{tr}\bsy{\chib})$, can be rewritten as
\begin{align} 
\bsy{\nablasl_4}\bsy{\sigma}+(\textbf{tr}\bsy{\chi})\bsy{\sigma} &=-\bsy{\slashed{\textbf{curl}}}\,\bsy{\alpha}-(\bsy{\zeta}+\bsy{\etab})\bsy{\wedge}\bsy{\alpha}-\bsy{\xi} \bsy{\wedge} \bsy{\alphab}+(\bsy{\slashed{\varepsilon}\cdot\chi})\bsy{\rho} \, , \label{4_sigma} \\
\bsy{\nablasl_3}\bsy{\sigma}+(\textbf{tr}\bsy{\chib})\bsy{\sigma} &=-\bsy{\slashed{\textbf{curl}}}\,\bsy{\alphab}+(\bsy{\zeta}-\bsy{\eta})\bsy{\wedge}\bsy{\alphab}-\bsy{\yb} \bsy{\wedge} \bsy{\alpha}-(\bsy{\slashed{\varepsilon}\cdot\chib})\bsy{\rho} \, . \label{3_sigma}
\end{align}
Finally, we consider the first set of equations in \eqref{eq:Maxwell} with $\bsy{I}=\bsy{4}$, $\bsy{J}= \bsy{3}$ and $\bsy{K}=\bsy{A}$ and the second set of equations in \eqref{eq:Maxwell} with $\bsy{I}=\bsy{A}$ to obtain
\begin{align*}
0&=(\bsy{\nabla_4}\bsy{F})_{\bsy{3A}}-(\bsy{\nabla_A}\bsy{F})_{\bsy{34}}+(\bsy{\nabla_3}\bsy{F})_{\bsy{A4}} \, , \\
0&=\bsy{\slashed{g}^{BC}}(\bsy{\nabla_B}\bsy{F})_{\bsy{CA}}+\frac{1}{2}\,(\bsy{\nabla_3}\bsy{F})_{\bsy{A4}}-\frac{1}{2}\,(\bsy{\nabla_4}\bsy{F})_{\bsy{3A}} \, .
\end{align*}
By summing these two equations, and repeating the procedure for the conjugate equations under the exchange of the frame indices $\bsy{3}$ and $\bsy{4}$, we obtain
\begin{align*}
0 &=\bsy{\slashed{g}^{BC}}(\bsy{\nabla_B}\bsy{F})_{\bsy{CA}}-(\bsy{\nabla_4}\bsy{F})_{\bsy{3A}}+\frac{1}{2}\,(\bsy{\nabla_A}\bsy{F})_{\bsy{34}} \, ,\\
0 &=\bsy{\slashed{g}^{BC}}(\bsy{\nabla_B}\bsy{F})_{\bsy{CA}}+(\bsy{\nabla_3}\bsy{F})_{\bsy{A4}}-\frac{1}{2}\,(\bsy{\nabla_A}\bsy{F})_{\bsy{34}} \, ,
\end{align*}
which can be written as
\begin{align}
\bsy{\nablasl_4}\bsy{\alphab}+\bsy{\chi^{\sharp_2}\cdot}\bsy{\alphab}-\hat{\bsy{\chi}}{}^{\bsy{\sharp}} \bsy{\cdot} \bsy{\alphab}+\hat{\bsy{\omega}}\,\bsy{\alphab} &= \hat{\bsy{\chib}}{}^{\bsy{\sharp}}\bsy{\cdot} \bsy{\alpha}-\bsy{\nablasl}\bsy{\rho} +{}^{\bsy{\star}}\bsy{\nablasl}\bsy{\sigma} -2\,\bsy{\etab}\,\bsy{\rho}+2\, {}^{\bsy{\star}}\bsy{\etab}\,\bsy{\sigma} \, , \label{4_alphab}\\
\bsy{\nablasl_3}\bsy{\alpha}+\bsy{\chib}^{\bsy{\sharp_2}}\bsy{\cdot}\bsy{\alpha}-\hat{\bsy{\chib}}{}^{\bsy{\sharp}} \bsy{\cdot} \bsy{\alpha}+\underline{\hat{\bsy{\omega}}}\,\bsy{\alpha} &= \hat{\bsy{\chi}}{}^{\bsy{\sharp}}\bsy{\cdot} \bsy{\alphab} +\bsy{\nablasl}\bsy{\rho}+{}^{\bsy{\star}}\bsy{\nablasl}\bsy{\sigma} +2\,\bsy{\eta}\,\bsy{\rho}+2\,{}^{\bsy{\star}}\bsy{\eta}\,\bsy{\sigma} \, . \label{3_alpha}
\end{align}
The equations \eqref{4_rho}--\eqref{3_alpha} constitute the system of Maxwell equations on $(\bsy{\mathcal{M}},\bsy{g})$ relative to $\bsy{\mathcal{N}}$.

\subsection{The equations on the Kerr spacetime relative to the algebraically special frame}
\label{sec:Maxwell-Kerr}

In this section, we specialise the general system of Maxwell equations \eqref{4_rho}--\eqref{3_alpha} derived in Section \ref{sec:maxwell_general} by identifying $(\bsy{\mathcal{M}},\bsy{g})$ with the Kerr exterior manifold $(\mathcal{M},g)$ and $\bsy{\mathcal{N}}$ with the algebraically special frame $\mathcal{N}_{\text{as}}$ of $(\mathcal{M},g)$ (see Section~\ref{sec:Kerr-prelims}). In the notation, the unknowns in the equations remain boldfaced, whereas the background quantities are now unbolded.

The equations \eqref{4_rho}--\eqref{3_alpha} take the form
\begin{align}
\nablasl_3 \bsy\sigma +(\truchi)\bsy\sigma &= -\curlsl\bsy\ualpha-\epsuchi \bsy\rho\,, \label{eq:del3-sigma}\\
\nablasl_3 \bsy\rho +(\truchi) \bsy\rho &= -\divsl\bsy\ualpha+\epsuchi \bsy\sigma\,, \label{eq:del3-rho}
\end{align}
\begin{align}
\nablasl_4 \bsy\sigma +(\trchi)\bsy\sigma &= -\curlsl\bsy\alpha-(\eta+\ueta)\wedge \bsy\alpha+\epschi \bsy\rho\,, \label{eq:del4-sigma}\\
\nablasl_4 \bsy\rho +(\trchi) \bsy\rho &= \divsl\bsy\alpha+(\eta+\ueta,\bsy\alpha)-\epschi \bsy\sigma\,, \label{eq:del4-rho}
\end{align}
\begin{align}
\nablasl_3 \bsy\alpha +\uchi^{\sharp_2}\cdot \bsy\alpha &= \nablasl\bsy\rho +{}^\star \nablasl \bsy\sigma +2\, \eta \, \bsy\rho+2\,{}^\star\eta \, \bsy\sigma\,, \label{eq:del3-alpha}\\
\nablasl_4 \bsy\ualpha +\chi^{\sharp_2}\cdot \bsy\ualpha +\omegah \, \bsy\ualpha  &= -\nablasl\bsy\rho +{}^\star \nablasl \bsy\sigma -2\,\ueta \, \bsy\rho+2 \, {}^\star\ueta \, \bsy\sigma\,. \label{eq:del4-ualpha}
\end{align}
The equations \eqref{eq:del3-sigma}--\eqref{eq:del4-ualpha} constitute the system of Maxwell equations on $(\mathcal{M},g)$ relative to $\mathcal{N}_{\text{as}}$. For future convenience, we note that, in view of the identities of Section~\ref{sec:algebraically-special-props}, the system can be rewritten in the form 
\begin{align}
\nablasl_3(\Sigma\bsy{\sigma})&=-\curlsl(\Sigma\bsy{\alphab})+(\eta+\etab)\wedge(\Sigma\bsy{\alphab})-(\slashed{\varepsilon}\cdot\chib)\Sigma\bsy{\rho} \, , \label{eq:del3-sigma'}\\
\nablasl_3(\Sigma\bsy{\rho})&=-\divsl(\Sigma\bsy{\alphab})+(\eta+\etab,\Sigma\bsy{\alphab})+(\slashed{\varepsilon}\cdot\chib)\Sigma\bsy{\sigma} \, , \label{eq:del3-rho'}
\end{align}
\begin{align}
\nablasl_4 (\Sigma\bsy\sigma) &= -\curlsl(\Sigma\bsy\alpha)+\epschi \Sigma \bsy\rho \, , \label{eq:del4-sigma'}\\
\nablasl_4 (\Sigma \bsy\rho) &= \divsl(\Sigma\bsy\alpha)-\epschi \Sigma \bsy\sigma \, , \label{eq:del4-rho'}
\end{align}
\begin{align}
\Sigma(\nablasl_3\bsy{\alpha}+\chib^{\sharp_2}\cdot\,\bsy{\alpha}) &= \nablasl(\Sigma\bsy{\rho})+{}^{\star}\nablasl(\Sigma\bsy{\sigma})+(\eta-\etab)\Sigma\bsy{\rho}+{}^{\star}(\eta-\etab)\Sigma\bsy{\sigma} \, , \label{eq:del3-alpha'} \\
\Sigma(\nablasl_4\bsy{\alphab}+\chi^{\sharp_2}\cdot\,\bsy{\alphab}+\omegah \, \bsy\alphab) &= -\nablasl(\Sigma\bsy{\rho})+{}^{\star}\nablasl(\Sigma\bsy{\sigma})+(\eta-\etab)\Sigma\bsy{\rho}-{}^{\star}(\eta-\etab)\Sigma\bsy{\sigma} \, . \label{eq:del4-ualpha'} 
\end{align}

\subsubsection{The Teukolsky equations}
\label{sec:Teukolsky-equations}

From the Maxwell equations \eqref{eq:del3-sigma}--\eqref{eq:del4-ualpha}, one can show that the extremal Maxwell components $\bsy \alpha$ and $\bsy \alphab$ each satisfy a decoupled wave equation:
\begin{proposition}[Tensorial Teukolsky equations] \label{prop:Teukolsky-tensor} We have the identities
\begin{align}
\begin{split}
&\Sigma \lp(\nablasl_4+\frac32 \tr \chi -\frac12 \epschi{}^\star \rp)\!\lp(\nablasl_3+\frac12 \tr \chib +\frac12 \epsuchi{}^\star \rp)\bsy\alpha\\
&\qquad-  (\nablasl+2\eta)\divsl(\Sigma\bsy\alpha)+{}^\star (\nablasl+2\eta)\curlsl (\Sigma\bsy\alpha)=0\,,
\end{split}\label{eq:Teukolsky-plus}\\
\begin{split}
&\lp(\nablasl_3+\frac32 \tr \chib -\frac12 \epsuchi{}^\star \rp)\!\lp(\nablasl_4+\frac12 \tr \chi +\frac12 \epschi{}^\star +\hat\omega\rp)\bsy\alphab\\
&\qquad-  (\nablasl+2\etab)\divsl\bsy\alphab +{}^\star (\nablasl+2\etab)\curlsl \bsy\alphab =0\,.
\end{split}\label{eq:Teukolsky-minus}
\end{align}
\end{proposition}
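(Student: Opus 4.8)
The plan is to derive \eqref{eq:Teukolsky-plus} by combining the single $e_3$-transport equation for $\bsy\alpha$ with the two $e_4$-transport equations for the middle components $\bsy\rho,\bsy\sigma$; this is the tensorial counterpart of the classical Newman--Penrose derivation, in which the Teukolsky equation for the extremal quantity emerges after the middle quantity drops out of a commuted pair. First I would recast \eqref{eq:del3-alpha'}: since $\chibh=0$ on Kerr one has $\chib^{\sharp_2}\cdot\bsy\alpha=\tfrac12(\tr\chib)\bsy\alpha+\tfrac12\epsuchi{}^\star\bsy\alpha$, and using $\nablasl\Sigma=\Sigma(\eta+\etab)$ to absorb the $(\eta-\etab)$-terms the equation becomes
\[
\lp(\nablasl_3+\tfrac12\tr\chib+\tfrac12\epsuchi{}^\star\rp)\bsy\alpha=(\nablasl+2\eta)\bsy\rho+{}^\star(\nablasl+2\eta)\bsy\sigma\,,
\]
where $(\nablasl+2\eta)f:=\nablasl f+2\eta f$ on scalars. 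Call the operator on the left $\mathcal T_3$. The strategy is then to apply $\mathcal T_4:=\nablasl_4+\tfrac32\tr\chi-\tfrac12\epschi{}^\star$ to $\mathcal T_3\bsy\alpha$, multiply by $\Sigma$, and massage the outcome into $(\nablasl+2\eta)\divsl(\Sigma\bsy\alpha)-{}^\star(\nablasl+2\eta)\curlsl(\Sigma\bsy\alpha)$, which is exactly the right-hand side appearing (with opposite sign) in \eqref{eq:Teukolsky-plus}. Since $\nablasl_4\Sigma=\Sigma\tr\chi$ on Kerr, one has $\Sigma\mathcal T_4\mathcal T_3\bsy\alpha=\nablasl_4(\Sigma\mathcal T_3\bsy\alpha)+\tfrac12\tr\chi\,(\Sigma\mathcal T_3\bsy\alpha)-\tfrac12\epschi{}^\star(\Sigma\mathcal T_3\bsy\alpha)$, so it suffices to work with $\Sigma\mathcal T_3\bsy\alpha=\nablasl(\Sigma\bsy\rho)+{}^\star\nablasl(\Sigma\bsy\sigma)+(\eta-\etab)\Sigma\bsy\rho+{}^\star(\eta-\etab)\Sigma\bsy\sigma$.

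The core computation is to push $\nablasl_4$ through the horizontal gradients $\nablasl(\Sigma\bsy\rho),\nablasl(\Sigma\bsy\sigma)$. For a scalar $f$ the relevant commutator on the Kerr algebraically special frame has the schematic form $[\nablasl_4,\nablasl]f=-\chi^{\sharp}\cdot\nablasl f+(\text{torsion})\,\nablasl_4 f$, with \emph{no} $\nablasl_3 f$-component: this is where one uses that $e_4^{\rm as}$ is geodesic, i.e.\ $\xi=0$, and it is precisely the absence of the $\nablasl_3$-term that keeps the resulting equation \emph{decoupled} in $\bsy\alpha$. After commuting I would substitute $\nablasl_4(\Sigma\bsy\rho)$ and $\nablasl_4(\Sigma\bsy\sigma)$ using \eqref{eq:del4-rho'} and \eqref{eq:del4-sigma'}, turning them into $\divsl(\Sigma\bsy\alpha)$, $-\curlsl(\Sigma\bsy\alpha)$ and $\epschi$-multiples of $\bsy\rho,\bsy\sigma$. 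The $\divsl(\Sigma\bsy\alpha),\curlsl(\Sigma\bsy\alpha)$ produced by the principal commutation, together with those arising from the torsion term acting on $\nablasl_4(\Sigma\bsy\rho),\nablasl_4(\Sigma\bsy\sigma)$, should assemble into $(\nablasl+2\eta)\divsl(\Sigma\bsy\alpha)-{}^\star(\nablasl+2\eta)\curlsl(\Sigma\bsy\alpha)$.

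What then remains is to verify that all the remaining terms --- those in $\bsy\rho,\bsy\sigma$ and their single horizontal derivatives, coming from the $-\chi^{\sharp}\cdot\nablasl$ piece of the commutator, from the $\tfrac12\tr\chi-\tfrac12\epschi{}^\star$ correction acting on $\Sigma\mathcal T_3\bsy\alpha$, from the $\epschi$-terms in \eqref{eq:del4-rho'}--\eqref{eq:del4-sigma'}, and from differentiating $\eta$ --- cancel identically. Here the geometric identities of Section~\ref{sec:algebraically-special-props} are essential: $\nablasl\Sigma=\Sigma(\eta+\etab)$ (to pass between $\nablasl(\Sigma\,\cdot)$ and $\Sigma(\nablasl+2\eta)(\cdot)$), $\nablasl_4\eta=-\tfrac12\tr\chi(\eta-\etab)+\tfrac12\epschi{}^\star(\eta-\etab)$, the Codazzi-type formula for $\nablasl\epschi$, the relation $\nablasl\tr\chi=\nablasl(-\tfrac{\Delta}{\Sigma}\tr\chib)$, and the algebraic identities $(\eta,\eta)=(\etab,\etab)$, $\divsl(\eta-\etab)=0$, $\curlsl(\eta+\etab)=0$. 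That these conspire to produce a clean cancellation is the expected outcome (this is the physicists' Teukolsky equation), but it is the step demanding the most care.

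The identity \eqref{eq:Teukolsky-minus} for $\bsy\alphab$ is then obtained by the entirely parallel argument with the roles of $e_3$ and $e_4$ interchanged: one starts from the $e_4$-transport equation \eqref{eq:del4-ualpha'} for $\bsy\alphab$ together with the $e_3$-transport equations \eqref{eq:del3-rho'}--\eqref{eq:del3-sigma'}, and uses $\yb=0$ to see that $[\nablasl_3,\nablasl]f$ carries no $\nablasl_4 f$-component, again keeping the equation decoupled. The single genuine asymmetry is that $\omegabh=0$ but $\omegah\neq0$ on Kerr, which is exactly why the inner operator acting on $\bsy\alphab$ in \eqref{eq:Teukolsky-minus} carries the extra $\hat\omega$ (already present in \eqref{eq:del4-ualpha'}) while the inner operator acting on $\bsy\alpha$ does not. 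The main obstacle throughout is not conceptual but the sheer bookkeeping of the lower-order terms, compounded by the need to pin down the commutators precisely --- in particular which transpose ($\sharp_1$ versus $\sharp_2$) of the non-symmetric tensor $\chi$ enters $[\nablasl_4,\nablasl]$, and the signs introduced by the Hodge star ${}^\star$ --- since a single misplaced sign propagates through the entire cancellation of the middle-component terms.
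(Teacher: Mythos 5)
Your proposal follows essentially the same route as the paper's proof in Appendix~\ref{app:Teukolsky-equations}: apply $\nablasl_4$ to the transport equation \eqref{eq:del3-alpha} for $\nablasl_3\bsy\alpha+\chib^{\sharp_2}\cdot\bsy\alpha$, commute the horizontal gradient past $\nablasl_4$ (using that $\xi=0$ keeps out any $\nablasl_3$-component and hence preserves decoupling), substitute \eqref{eq:del4-rho}--\eqref{eq:del4-sigma}, and verify via the identities of Section~\ref{sec:algebraically-special-props} that the leftover middle-component terms reassemble into $-\lp(\tfrac32\trchi-\tfrac12\epschi{}^\star\rp)$ acting on the right-hand side of \eqref{eq:del3-alpha}, with the conjugate argument (and the $\omegah\neq 0$ asymmetry) giving \eqref{eq:Teukolsky-minus}. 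The only difference is that you carry the $\Sigma$-weights from the start via \eqref{eq:del3-alpha'}, \eqref{eq:del4-rho'}--\eqref{eq:del4-sigma'}, whereas the paper converts $(\nablasl+3\eta+\etab)\Sigma^{-1}=\Sigma^{-1}(\nablasl+2\eta)$ at the end; this is purely cosmetic.
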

Equations~\eqref{eq:Teukolsky-plus}--\eqref{eq:Teukolsky-minus} are henceforth known as the (tensorial) Teukolsky equations. We direct the reader to Appendix~\ref{app:Teukolsky-equations} for a detailed derivation.\footnote{As in this paper we are not concerned with an in-depth analysis of the Teukolsky equations, either in tensorial or spinorial form, this derivation is included only as a reference for the reader.} We remark that the decoupling of the extremal Maxwell components is intimately tied to the fact that the frame components of the Maxwell field (i.e.~the unknowns of the Maxwell system) are defined relative to the algebraically special frame.

In the above,  $\bsy\alpha$ and $\bsy\alphab$ are one-forms over the horizontal distribution $\mathfrak D_{\mc N_{\rm as}}$, hence \eqref{eq:Teukolsky-plus}--\eqref{eq:Teukolsky-minus} are equations for smooth sections $\Gamma((\mathfrak D_{\mc N_{\rm as}})^\star)$. In contrast, the original derivation of the Teukolsky equations, due to Teukolsky \cite{Teukolsky1973}, in the Newman--Penrose formalism \cite{Newman1962} yielded equations
\begin{align}
\begin{split}
\bigg[\Box_{g} &+\frac{2(\pm 1)}{\Sigma^2}(r-M)\p_r +\frac{2(\pm 1)}{\Sigma^2}\lp(\frac{a(r-M)}{\Delta}+i\frac{\cos\theta}{\sin^2\theta}\rp)\p_\phi\\
&+\frac{2(\pm 1)}{\Sigma^2}\lp(\frac{M(r^2-a^2)}{\Delta}-r -ia\cos\theta\rp)\p_t +\frac{1}{\Sigma^2}\lp(\pm 1-\cot^2\theta\rp)\bigg]\upalpha^{[\pm 1]}  =0
\end{split}\label{eq:Teukolsky-spin}
\end{align} 
for complex-valued functions $\swei{\upalpha}{\pm 1}$ such that $\Delta^{\pm 1}\swei{\upalpha}{\pm 1}$ are  smooth, $(\pm 1)-$spin-weighted functions on $\mc M$.  We denote this function space by $\mathscr S^{[\pm 1]}_\infty(\mc M)$, and direct the reader to e.g.~\cite[Section 3.1.1]{SRTdC2023} for a precise definition. In \eqref{eq:Teukolsky-spin}, $\Box_{g}$ denotes the covariant wave operator (i.e.\ the Laplace--Beltrami operator) for scalar functions on the Kerr manifold $(\mc M,g)$.

Let $$\kappa(r,\theta):=r-ia\cos\theta$$ be a smooth complex-valued function on $\mathcal{M}$. The tensorial and spin-weighted quantities can be suitably identified:

\begin{lemma} \label{lemma:isomorphism} The maps $\mathrm{i}^{[\pm 1]}\colon \Gamma((\mathfrak D_{\mc N_{\rm as}})^{\star})\to \mathscr S^{[\pm 1]}_\infty(\mc M)$ given by
\begin{align}
\begin{split}
\bsy\alpha &\xmapsto{\mathrm{i}^{[+1]}}\Delta\swei{\upalpha}{+1}:=-\frac{\Sigma}{\sqrt{2}}\lp[\bsy\alpha(e_1^{\rm as})+i\bsy\alpha(e_2^{\rm as})\rp], \\ \bsy\alphab &\xmapsto{\mathrm{i}^{[-1]}} \Delta^{-1}\swei{\upalpha}{-1}:= \frac{1}{\sqrt{2}\Sigma}\kappa^2\lp[\bsy\alphab(e_1^{\rm as})-i\bsy\alphab(e_2^{\rm as})\rp],
\end{split} \label{eq:map-B-to-NP}
\end{align}
where $(e_1^{\rm as},e_2^{\rm as})$ are chosen to be given by the explicit formula \eqref{explicit_e1_e2}, define an isomorphism between $\Gamma((\mathfrak D_{\mc N_{\rm as}})^{\star})$ and $\mathscr S^{[\pm 1]}_\infty(\mc M)$ with the properties 
\begin{align*}
|\Delta\swei{\upalpha}{+1}|^2 &=\frac12 \lvert\Sigma \bsy\alpha\rvert_{\slashed g}^2, &
|(r^2+a^2)^{1/2}\Delta^{-1}\swei{\upalpha}{-1}|^2 
&= \frac12 \lvert (r^2+a^2)^{1/2}\bsy\alphab\rvert_{\slashed g}^2.
\end{align*}
The isomorphisms $\mathrm{i}^{[\pm 1]}$ descend to isomorphisms between the space of solutions to the tensorial Teukolsky equations \eqref{eq:Teukolsky-plus}--\eqref{eq:Teukolsky-minus} and, respectively, the space of solutions to the $(\pm 1)$-spin-weighted Teukolsky equations \eqref{eq:Teukolsky-spin}.
\end{lemma}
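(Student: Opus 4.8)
The plan is to verify directly that $\mathrm{i}^{[\pm 1]}$ are well-defined linear bijections intertwining the tensorial and spin-weighted Teukolsky operators, proceeding in three stages: (i) showing the maps land in the correct function spaces and establishing the pointwise norm identities; (ii) showing they are bijective; (iii) showing they conjugate the operators.

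\emph{Stage (i): targets and norms.} First I would write a general $\mathfrak D_{\mc N_{\rm as}}$ one-form as $\bsy\alpha = \bsy\alpha(e_1^{\rm as})\,(e^1_{\rm as}) + \bsy\alpha(e_2^{\rm as})\,(e^2_{\rm as})$ in the orthonormal coframe dual to \eqref{explicit_e1_e2}, so that $|\bsy\alpha|_{\slashed g}^2 = \bsy\alpha(e_1^{\rm as})^2 + \bsy\alpha(e_2^{\rm as})^2$; the norm identities $|\Delta\swei{\upalpha}{+1}|^2 = \tfrac12|\Sigma\bsy\alpha|_{\slashed g}^2$ and the analogous one for $\swei{\upalpha}{-1}$ then follow by a one-line computation, using $|\kappa|^2 = r^2 + a^2\cos^2\theta = \Sigma$ to cancel the $\kappa^2$ and $\Sigma$ weights in the definition of $\mathrm{i}^{[-1]}$. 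To see that the image sits in $\mathscr S^{[\pm1]}_\infty(\mc M)$, I would recall that the spin-weight transformation law is exactly the statement that, under a rotation $(e_1^{\rm as},e_2^{\rm as})\mapsto(\cos\psi\,e_1^{\rm as}+\sin\psi\,e_2^{\rm as},\,-\sin\psi\,e_1^{\rm as}+\cos\psi\,e_2^{\rm as})$ of the horizontal frame, the complex combination $\bsy\alpha(e_1^{\rm as}) + i\bsy\alpha(e_2^{\rm as})$ picks up a factor $e^{-i\psi}$ --- i.e.\ it is an $s=+1$ spin-weighted quantity --- and similarly $\bsy\alphab(e_1^{\rm as}) - i\bsy\alphab(e_2^{\rm as})$ has spin weight $-1$. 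The smoothness of $\Delta^{\pm 1}\swei{\upalpha}{\pm 1}$ as spin-weighted functions on $\mc M$ (including $\mc H^+$) follows since $\bsy\alpha,\bsy\alphab$ are smooth $\mathfrak D_{\mc N_{\rm as}}$ tensors on all of $\mc M$ and the weights $\Sigma$, $\Sigma^{-1}\kappa^2$ are smooth and nonvanishing there; the prefactors $\Delta^{\pm 1}$ in the definitions of $\swei{\upalpha}{\pm 1}$ are precisely what is needed to match the conventions defining $\mathscr S^{[\pm 1]}_\infty(\mc M)$.

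\emph{Stage (ii): bijectivity.} This is immediate: on each fibre $\mathrm{i}^{[\pm 1]}$ is a composition of (a) taking real and imaginary parts in a fixed orthonormal coframe --- an $\mathbb R$-linear isomorphism onto $\mathbb C$ --- with (b) multiplication by the smooth nonvanishing weight $-\Sigma/\sqrt 2$, resp.\ $(\sqrt 2\,\Sigma)^{-1}\kappa^2$. Hence $\mathrm{i}^{[\pm 1]}$ is a bundle isomorphism, and the inverse is given by the same formula with the reciprocal weight; in particular it maps $\mathscr S^{[\pm 1]}_\infty(\mc M)$ back into $\Gamma((\mathfrak D_{\mc N_{\rm as}})^\star)$.

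\emph{Stage (iii): intertwining the operators --- the main obstacle.} The substantive step is to show that the tensorial Teukolsky operators in \eqref{eq:Teukolsky-plus}--\eqref{eq:Teukolsky-minus}, conjugated by $\mathrm{i}^{[\pm 1]}$, become the scalar operators in \eqref{eq:Teukolsky-spin}. I would do this by translating each building block of the tensorial operator into spin-weighted language: the frame derivatives $\nablasl_3,\nablasl_4$ become (up to the explicit rescalings \eqref{def_L_Lbar} relating $e_3^{\rm as},e_4^{\rm as}$ to $L,\uL$ and to the Boyer--Lindquist $\partial_t,\partial_r,\partial_\phi$ via \eqref{as_null_frame_vectors_1}--\eqref{as_null_frame_vectors_2}) the standard spin-weighted null derivatives, picking up the connection coefficient $\zeta=\eta$ from \eqref{kerr_zeta}; the horizontal angular operators $\nablasl$, ${}^\star\nablasl$, $\divsl$, $\curlsl$ combine into the spin-raising/lowering operators $\eth,\bar\eth$, which on Kerr involve $\partial_\theta$, $\csc\theta\,\partial_\phi$ and the $\cot\theta$ terms; and the explicit connection coefficients $\trchi,\truchi,\epschi,\epsuchi,\omegah$ from Section~\ref{sec_Kerr_connection_coeff_curv_comps} supply the remaining first-order and zeroth-order coefficients. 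Carefully assembling these --- in particular tracking how the weights $\Sigma$, $\kappa^2$, $\Delta^{\pm 1}$ interact with the derivatives, since e.g.\ $\nablasl_4\Sigma$, $\nablasl_3\Sigma$ and $\partial_r\Delta = 2(r-M)$ generate exactly the $2(\pm1)\Sigma^{-2}(r-M)\partial_r$-type terms and the $2(\pm1)\Sigma^{-2}(\cdots)\partial_t$, $\partial_\phi$ terms in \eqref{eq:Teukolsky-spin} --- is where all the bookkeeping lies. I expect the cleanest route is not to expand \eqref{eq:Teukolsky-plus}--\eqref{eq:Teukolsky-minus} from scratch but to invoke the already-established dictionary between $\mathfrak D_{\mc N_{\rm as}}$ tensors and Newman--Penrose/GHP spin-weighted scalars (the correspondence asserted in the introduction and used throughout \cite{Benomio2022}), under which the second-order operators on the two sides are manifestly the images of one another; the remaining task is then just to confirm that the specific normalisations chosen in \eqref{eq:map-B-to-NP} match Teukolsky's normalisation in \eqref{eq:Teukolsky-spin}, which is a finite check of weight powers. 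Since the detailed derivation of \eqref{eq:Teukolsky-plus}--\eqref{eq:Teukolsky-minus} is in any case deferred to Appendix~\ref{app:Teukolsky-equations}, I would present the intertwining there alongside it, stating here only that it follows by direct computation using the formulae of Section~\ref{sec_Kerr_connection_coeff_curv_comps} and the identities of Section~\ref{sec:algebraically-special-props}.
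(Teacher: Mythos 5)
Your proposal is correct and follows essentially the same route as the paper: the paper's own proof simply asserts that $\bsy\alpha(e_1^{\rm as})+i\bsy\alpha(e_2^{\rm as})$ and $\bsy\alphab(e_1^{\rm as})-i\bsy\alphab(e_2^{\rm as})$ define smooth $(\pm 1)$-spin-weighted functions and checks invertibility by noting that the real and imaginary parts vanish iff the frame components do, leaving the norm identities (your one-line computation with $|\kappa|^2=\Sigma$) and the intertwining of the Teukolsky operators as direct computations. Your stages (i)--(iii) flesh out exactly these points, with stage (iii) deferred to the appendix in the same way the paper implicitly relies on its Appendix derivation of the tensorial Teukolsky equations.
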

\begin{proof} It is not hard to check that, for $\bsy{\alpha}, \bsy{\alphab}\in \Gamma((\mathfrak D_{\mc N_{\rm as}})^{\star})$, the expressions
\begin{align*}
&\bsy\alpha(e_1^{\rm as})+i\bsy\alpha(e_2^{\rm as}), & &\bsy\alphab(e_1^{\rm as})-i\bsy\alphab(e_2^{\rm as})
\end{align*}
with $(e_1^{\rm as},e_2^{\rm as})$ given in \eqref{explicit_e1_e2} indeed define, respectively, smooth $(\pm 1)$-spin-weighted functions on $\mc M$ as defined in e.g.\ \cite[Section 3.1.1]{SRTdC2023}. Furthermore, the maps \eqref{eq:map-B-to-NP} are clearly invertible: 
\begin{align*}
\swei{\upalpha}{+ 1}\equiv 0 \Leftrightarrow 
\lp\{\begin{array}{l}
\Re\swei{\upalpha}{+1} \equiv 0 \equiv\bsy\alpha(e_1^{\rm as})\\  \Im\swei{\upalpha}{+1}\equiv 0\equiv \bsy\alpha(e_2^{\rm as})
\end{array}\rp\}
 \Leftrightarrow \bsy\alpha\equiv 0,
\end{align*}
and similarly for the spin $-1$ and $\bsy\alphab$ case. 
\end{proof}

\subsubsection{The modified Maxwell equations}
\label{sec:reduced-Maxwell-system}

We introduce the modified quantities
\begin{align}
\widehat{\bsy{\rho}}&:=\frac14\Sigma[(\truchi)^2-\epsuchi^2]\,\Sigma\bsy{\rho}+\frac12\Sigma\epsuchi(\truchi)\, \Sigma\bsy{\sigma} \,,\label{eq:def-rho-hat}\\  
\widehat{\bsy{\sigma}}&:=-\frac12\Sigma\epsuchi(\truchi)\, \Sigma \bsy{\rho}+\frac14\Sigma[(\truchi)^2-\epsuchi^2]\,\Sigma\bsy\sigma \,,\label{eq:def-sigma-hat}
\end{align}
which, in Boyer--Lindquist coordinates, read
\begin{align*}
    \widehat{\bsy{\rho}}&=(r^2-a^2\cos^2\theta)\,\bsy{\rho}-2ar\cos\theta\, \bsy{\sigma}  \,, &
\widehat{\bsy{\sigma}}&= 2ar\cos\theta\,\bsy{\rho}+(r^2-a^2\cos^2\theta)\,\bsy{\sigma}\, .
\end{align*}
We note the identity
\begin{equation}
\widehat{\bsy{\rho}}^2+\widehat{\bsy{\sigma}}^2=(\Sigma\bsy{\rho})^2+(\Sigma\bsy{\sigma})^2 \, . \label{eq:middle-component-squares}
\end{equation}

In this section, we will show:

\begin{proposition}[Modified Maxwell equations] \label{prop:reduced-system} We have the identities
\begin{align}
\nablasl_3 \widehat{\bsy{\sigma}} =& \, \frac12\Sigma^2\epsuchi(\truchi) \divsl\bsy\ualpha- \frac14\Sigma^2[(\truchi)^2-\epsuchi^2]\curlsl\bsy\ualpha \, , \label{D3_sigma} \\
\nablasl_3 \widehat{\bsy{\rho}} =& \, -\frac14\Sigma^2[(\truchi)^2-\epsuchi^2]\,\divsl\,\bsy{\alphab}-\frac12\Sigma^2\epsuchi(\truchi) \curlsl\,\bsy{\alphab} \, , \label{D3_rho} 
\end{align}
\begin{align}
\nablasl_4 \widehat{\bsy{\sigma}} =& \, -\frac12\Sigma\epsuchi(\truchi) \divsl(\Sigma\bsy\alpha)-\frac14\Sigma[(\truchi)^2-\epsuchi^2] \curlsl(\Sigma\bsy\alpha) \, , \label{D4_sigma} \\
\nablasl_4 \widehat{\bsy{\rho}} =& \,  \frac14\Sigma[(\truchi)^2-\epsuchi^2]\,\divsl(\Sigma\bsy{\alpha})-\frac12\Sigma\epsuchi(\truchi)\,\curlsl(\Sigma\bsy{\alpha}) \, , \label{D4_rho} 
\end{align}
\begin{align}
  \nablasl \widehat{\bsy{\sigma}} =& -\frac12\Sigma\epsuchi(\truchi)\,(\mathfrak{T}(\bsy{\alpha})-\underline{\mathfrak{T}}(\bsy{\alphab}))-\frac14\Sigma[(\truchi)^2-\epsuchi^2]\,({}^{\star}\mathfrak{T}(\bsy{\alpha})+{}^{\star}\underline{\mathfrak{T}}(\bsy{\alphab})) \, , \label{DA_sigma} \\
  \nablasl \widehat{\bsy{\rho}} =& \frac14\Sigma[(\truchi)^2-\epsuchi^2]\,(\mathfrak{T}(\bsy{\alpha})-\underline{\mathfrak{T}}(\bsy{\alphab}))-\frac12\Sigma\epsuchi(\truchi)\,({}^{\star}\mathfrak{T}(\bsy{\alpha})+{}^{\star}\underline{\mathfrak{T}}(\bsy{\alphab}))  \, , \label{DA_rho}
\end{align}
where we have set
\begin{align}
\mathfrak{T}(\bsy{\alpha})&:=\frac{1}{2}\,\Sigma(\nablasl_3\bsy{\alpha}+\chib^{\sharp_2}\cdot\,\bsy{\alpha}) \, , &
\underline{\mathfrak{T}}(\bsy{\alphab})&:=\frac{1}{2}\,\Sigma(\nablasl_4\bsy{\alphab}+\chi^{\sharp_2}\cdot\bsy{\alphab}+\omegah\,\bsy{\alphab}) \, . \label{def_quantities_3_alpha_4_alphab}
\end{align}
\end{proposition}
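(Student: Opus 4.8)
The plan is to treat Proposition~\ref{prop:reduced-system} as a single direct computation, organized around the observation that the passage from $(\Sigma\bsy\rho,\Sigma\bsy\sigma)$ to $(\widehat{\bsy\rho},\widehat{\bsy\sigma})$ is a pointwise rotation whose angle is fixed by the geometry of Kerr. Recalling $\kappa=r-ia\cos\theta$, one checks from the connection coefficients of Section~\ref{sec_Kerr_connection_coeff_curv_comps} that $\truchi-i\epsuchi=-2\bar\kappa/\Sigma$, so that, setting $\lambda:=\tfrac14\Sigma(\truchi-i\epsuchi)^2=\tfrac14\Sigma[(\truchi)^2-\epsuchi^2]-\tfrac{i}{2}\Sigma\epsuchi(\truchi)=\bar\kappa^2/\Sigma$ and $\Phi:=\Sigma\bsy\rho+i\Sigma\bsy\sigma$, the definitions \eqref{eq:def-rho-hat}--\eqref{eq:def-sigma-hat} read simply $\widehat{\bsy\rho}+i\widehat{\bsy\sigma}=\lambda\Phi$. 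The six identities \eqref{D3_sigma}--\eqref{DA_rho} are then the real and imaginary parts of three complex identities for $\nablasl_3(\lambda\Phi)$, $\nablasl_4(\lambda\Phi)$ and $\nablasl(\lambda\Phi)$, each obtained by the Leibniz rule.

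The computation then relies on two independent inputs. The first is a set of \emph{purely geometric} identities for the rotation coefficient,
\[
\nablasl_3\lambda=i\,\epsuchi\,\lambda\,,\qquad \nablasl_4\lambda=-i\,\epschi\,\lambda\,,\qquad \nablasl\lambda=-i\,{}^\star(\eta-\etab)\,\lambda\,,
\]
which I would derive by Leibniz from $\lambda=\tfrac14\Sigma(\truchi-i\epsuchi)^2$ using $\nablasl\Sigma=\Sigma(\eta+\etab)$, the identity \eqref{formula_nablasl_atrchib} for $\nablasl\epsuchi$, and the transport equations of Section~\ref{sec:algebraically-special-props} --- or, equivalently, by computing $e_3^{\rm as},e_4^{\rm as},e_A^{\rm as}$ directly on $\bar\kappa^2/\Sigma$. (One should route this through the smooth quantities $\truchi,\epsuchi,\Sigma$, equivalently keeping the auxiliary relation $r(\eta+\etab)=-a\cos\theta\,{}^\star(\eta-\etab)$ in product form, so that regularity at the poles $\cos\theta=0$ is manifest.) The second input is the Maxwell system for $\Phi$ in complex form: from \eqref{eq:del3-rho'}--\eqref{eq:del3-sigma'}, converting $\divsl(\Sigma\bsy\alphab)=\Sigma\divsl\bsy\alphab+(\nablasl\Sigma,\bsy\alphab)$ and $\curlsl(\Sigma\bsy\alphab)=\Sigma\curlsl\bsy\alphab+\nablasl\Sigma\wedge\bsy\alphab$ with $\nablasl\Sigma=\Sigma(\eta+\etab)$ --- so the $(\eta+\etab)$-source terms cancel --- one gets $\nablasl_3\Phi=-i\epsuchi\Phi-\Sigma(\divsl\bsy\alphab+i\curlsl\bsy\alphab)$; from \eqref{eq:del4-rho'}--\eqref{eq:del4-sigma'} directly, $\nablasl_4\Phi=i\epschi\Phi+\divsl(\Sigma\bsy\alpha)-i\curlsl(\Sigma\bsy\alpha)$; and from \eqref{eq:del3-alpha'}--\eqref{eq:del4-ualpha'}, after solving for $\nablasl(\Sigma\bsy\rho)$ and $\nablasl(\Sigma\bsy\sigma)$ by adding and subtracting the two equations and using ${}^\star{}^\star=-1$, one gets $\nablasl\Phi=i\,{}^\star(\eta-\etab)\Phi+(\mathfrak{T}(\bsy\alpha)-\underline{\mathfrak{T}}(\bsy\alphab))-i\,{}^\star(\mathfrak{T}(\bsy\alpha)+\underline{\mathfrak{T}}(\bsy\alphab))$.

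With these in hand I would substitute into $\nablasl_3(\lambda\Phi)=(\nablasl_3\lambda)\Phi+\lambda\nablasl_3\Phi$ and its two analogues: by design the $\epsuchi\Phi$, $\epschi\Phi$ and ${}^\star(\eta-\etab)\Phi$ terms cancel against the derivatives of $\lambda$, leaving $\nablasl_3(\lambda\Phi)=-\lambda\Sigma(\divsl\bsy\alphab+i\curlsl\bsy\alphab)$, $\nablasl_4(\lambda\Phi)=\lambda(\divsl(\Sigma\bsy\alpha)-i\curlsl(\Sigma\bsy\alpha))$ and $\nablasl(\lambda\Phi)=\lambda\big((\mathfrak{T}(\bsy\alpha)-\underline{\mathfrak{T}}(\bsy\alphab))-i\,{}^\star(\mathfrak{T}(\bsy\alpha)+\underline{\mathfrak{T}}(\bsy\alphab))\big)$. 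Inserting $\lambda\Sigma=\bar\kappa^2$ and $\lambda=\tfrac14\Sigma[(\truchi)^2-\epsuchi^2]-\tfrac{i}{2}\Sigma\epsuchi(\truchi)$ and taking real and imaginary parts then reproduces \eqref{D3_sigma}--\eqref{DA_rho} exactly. There is no conceptual difficulty here; the work is entirely bookkeeping. The step I expect to require the most care is the rearrangement of \eqref{eq:del3-alpha'}--\eqref{eq:del4-ualpha'} for $\nablasl\Phi$, since this is where the coupling term $\,{}^\star(\eta-\etab)\Phi$ is produced --- and it must emerge in precisely the form absorbed by $\nablasl\lambda$ --- and, secondarily, verifying the coefficient identities of the second step in manifestly regular form near $\cos\theta=0$.
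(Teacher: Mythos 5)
Your argument is correct, and every identity you rely on checks out: with $\truchi-i\epsuchi=-2\bar\kappa/\Sigma$ one indeed has $\lambda=\bar\kappa^2/\Sigma=\bar\kappa/\kappa$, so $\widehat{\bsy\rho}+i\widehat{\bsy\sigma}=\lambda(\Sigma\bsy\rho+i\Sigma\bsy\sigma)$; the three logarithmic-derivative identities $\nablasl_3\lambda=i\epsuchi\lambda$, $\nablasl_4\lambda=-i\epschi\lambda$, $\nablasl\lambda=-i\,{}^\star(\eta-\etab)\lambda$ follow from a one-line computation on $\bar\kappa/\kappa$; and the complex form of \eqref{eq:del3-sigma'}--\eqref{eq:del4-ualpha'} is exactly as you state (in particular your rearrangement of \eqref{eq:del3-alpha'}--\eqref{eq:del4-ualpha'} reproduces the paper's intermediate identities \eqref{id_diff}--\eqref{id_sum}). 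The substance is the same direct verification the paper carries out, but the organization is genuinely different and arguably more transparent: where the paper expands $\nablasl_3\widehat{\bsy\sigma}$, $\nablasl\widehat{\bsy\rho}$, etc.\ by Leibniz in real notation and then checks by hand that certain explicit coefficient brackets (e.g.\ $2a\cos\theta+2ar\cos\theta\,\truchi+(r^2-a^2\cos^2\theta)\epsuchi$) vanish identically, you identify those vanishing brackets as the single statement that $\nablasl\log\lambda$ is minus the ``rotation part'' of the equations for $\Sigma\bsy\rho+i\Sigma\bsy\sigma$. This buys a structural explanation of why the decoupling happens, and connects directly to the $\kappa$-weights appearing in Remark~\ref{rmk:spin-weighted} and the Newman--Penrose form \eqref{eq:Maxwell-m}--\eqref{eq:Maxwell-n}; the price is a little pedantry about conventions, since multiplication by $i$ on complexified one-forms must commute with ${}^\star$, and the signs of ${}^\star$ and of $\epsuchi$ are tied to a choice of orientation, which should be fixed once and for all so that $\epsuchi=2a\cos\theta/\Sigma$. (Minor quibble: $\cos\theta=0$ is the equator, not the poles; in any case all quantities entering your coefficient identities are manifestly smooth $\mathfrak{D}_{\mathcal{N}_{\rm as}}$ tensors, so no regularity issue arises there.)
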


The equations \eqref{D3_sigma}--\eqref{DA_rho} constitute the \emph{modified system of Maxwell equations}. Crucially, in this system, the equations \eqref{D3_rho}, \eqref{D4_rho} and \eqref{DA_rho} for $\widehat{\bsy{\rho}}$ are \emph{decoupled} from the equations \eqref{D3_sigma}, \eqref{D4_sigma} and \eqref{DA_sigma} for $\widehat{\bsy{\sigma}}$. 

\begin{remark} \label{rmk:spin-weighted} The modified system of Maxwell equations \eqref{D3_sigma}--\eqref{DA_rho} has an analogue in the language of the spin-weighted functions introduced in the previous section. Indeed, with $\swei{\upalpha}{\pm 1}$ as defined in \eqref{eq:map-B-to-NP}, where we have fixed $(e_1^{\rm as},e_2^{\rm as})$ to be given by \eqref{explicit_e1_e2}, and letting
\begin{align*}
\swei{\upupsilon}{0}:=-\widehat{\bsy \rho}+i\widehat{\bsy \sigma}\numberthis\label{eq:def-upupsilon}
\end{align*}
be a smooth complex-valued function on $\mc M$, the Maxwell equations can be written in the form
\begin{align}
\frac{1}{\sqrt{2}}({e_1^{\rm as}}+i{e_2^{\rm as}})\big(\swei{\upupsilon}{0}\big)&= \frac{\kappa^3}{2\Sigma}e_3^{\rm as}\lp(\kappa^{-1}\Delta\swei{\upalpha}{+1}\rp),\label{eq:Maxwell-m}\\
\frac{1}{\sqrt{2}}({e_1^{\rm as}}-i{e_2^{\rm as}})\big(\swei{\upupsilon}{0}\big)&=\frac{\kappa\Sigma}{2\Delta}e_4^{\rm as}\lp(\kappa^{-1}\swei{\upalpha}{-1}\rp),\label{eq:Maxwell-m-bar}\\
\frac{\Sigma}{\Delta}e_4^{\rm as}\big(\swei{\upupsilon}{0}\big)&=\frac{\kappa^2}{\sqrt{2}}\lp(\kappa({e_1^{\rm as}}-i{e_2^{\rm as}})+\cot\theta\rp)\lp(\kappa^{-1}\swei{\upalpha}{+1}\rp),\label{eq:Maxwell-l}\\
\frac{\Delta}{2\Sigma}e_3^{\rm as}\big(\swei{\upupsilon}{0}\big)&=\frac{\kappa^2}{2\sqrt{2}\Sigma}\lp(\overline{\kappa}({e_1^{\rm as}}+i{e_2^{\rm as}})+\cot\theta\rp)\lp(\kappa^{-1} \swei{\upalpha}{-1}\rp), \label{eq:Maxwell-n}
\end{align}
which can be found from the usual Newman--Penrose formalism for the Kerr metric, see e.g.\  the classical textbook \cite[Section 69, eq.\ (11)]{Chandrasekhar}.
\end{remark}

\begin{proof}[Proof of Proposition~\ref{prop:reduced-system}]
We start by deriving the equations \eqref{D3_sigma}--\eqref{D3_rho} and \eqref{D4_sigma}--\eqref{D4_rho}. We compute
\begin{align*}
\nablasl_3\widehat{\bsy{\sigma}}
=& \, -(2a\cos\theta+2ar\cos\theta(\truchi)+(r^2-a^2\cos^2\theta)\epsuchi )\bsy{\rho} \\
&+(-2r+2ar\cos\theta\epsuchi-(r^2-a^2\cos^2\theta)(\truchi))\bsy{\sigma}\\
&-2ar\cos\theta \divsl\bsy\ualpha- (r^2-a^2\cos^2\theta)\curlsl\bsy\ualpha 
\end{align*}
and
\begin{align*}
\nablasl_3\widehat{\bsy{\rho}} 
=& \, (2a\cos\theta+2ar\cos\theta(\truchi)+(r^2-a^2\cos^2\theta)\epsuchi )\bsy{\sigma} \\
&+(-2r+2ar\cos\theta\epsuchi-(r^2-a^2\cos^2\theta)(\truchi))\bsy{\rho}\\
&+2ar\cos\theta \curlsl\bsy\ualpha- (r^2-a^2\cos^2\theta)\divsl\bsy\ualpha \, ,
\end{align*}
where we used the equations \eqref{eq:del3-sigma}--\eqref{eq:del3-rho}. Using Section \ref{sec:algebraically-special-props}, both the brackets multiplying $\bsy{\rho}$ and $\bsy{\sigma}$ can be checked to identically vanish, i.e.
\begin{align*}
2a\cos\theta+2ar\cos\theta(\truchi)+(r^2-a^2\cos^2\theta)\epsuchi&=0 \, , \\
-2r+2ar\cos\theta\epsuchi-(r^2-a^2\cos^2\theta)(\truchi) &= 0 \, .
\end{align*}
Similarly, we compute
\begin{align*}
\nablasl_4\widehat{\bsy{\sigma}}
=& \, \lp(\frac{\Delta}{\Sigma}2a\cos\theta-2ar\cos\theta(\trchi)+(r^2-a^2\cos^2\theta)\epschi\rp)\bsy{\rho} \\
&+\lp(\frac{\Delta}{\Sigma}2r-2ar\cos\theta\epschi-(r^2-a^2\cos^2\theta)(\trchi)\rp)\bsy{\sigma} \\
&+2ar\cos\theta(\divsl\bsy\alpha+(\eta+\ueta,\bsy\alpha))-(r^2-a^2\cos^2\theta)(\curlsl\bsy\alpha+(\eta+\ueta)\wedge \bsy\alpha)
\end{align*}
and
\begin{align*}
\nablasl_4\widehat{\bsy{\rho}} 
=& \, -\lp(\frac{\Delta}{\Sigma}2a\cos\theta-2ar\cos\theta(\trchi)+(r^2-a^2\cos^2\theta)\epschi\rp)\bsy{\sigma} \\
&+\lp(\frac{\Delta}{\Sigma}2r-2ar\cos\theta\epschi-(r^2-a^2\cos^2\theta)(\trchi)\rp)\bsy{\rho} \\
&+2ar\cos\theta(\curlsl\bsy\alpha+(\eta+\ueta)\wedge \bsy\alpha)+(r^2-a^2\cos^2\theta)(\divsl\bsy\alpha+(\eta+\ueta,\bsy\alpha)) \, ,
\end{align*}
where we used the equations \eqref{eq:del4-sigma}--\eqref{eq:del4-rho}. To conclude, we use the fact that, in view of the identities of Section~\ref{sec:algebraically-special-props}, we have 
\begin{align*}
\frac{\Delta}{\Sigma}2a\cos\theta-2ar\cos\theta(\trchi)+(r^2-a^2\cos^2\theta)\epschi &= 0 \, , \\
\frac{\Delta}{\Sigma}2r-2ar\cos\theta\epschi-(r^2-a^2\cos^2\theta)(\trchi) &= 0 \, .
\end{align*}

We now derive the equations \eqref{DA_sigma}--\eqref{DA_rho}. By summing and subtracting the equations \eqref{eq:del3-alpha'} and \eqref{eq:del4-ualpha'} (and taking horizontal Hodge duals), we obtain
\begin{align}
\nablasl(\Sigma\bsy{\sigma})-\Sigma\bsy{\rho}{}^{\star}(\eta-\etab)&=-{}^{\star}\mathfrak{T}(\bsy{\alpha})-{}^{\star}\underline{\mathfrak{T}}(\bsy{\alphab}) \, , \label{id_diff}\\
\nablasl(\Sigma\bsy{\rho})+\Sigma\bsy{\sigma}{}^{\star}(\eta-\etab)&=\mathfrak{T}(\bsy{\alpha})-\underline{\mathfrak{T}}(\bsy{\alphab})\,.  \label{id_sum}
\end{align}
We then compute
\begin{align*}
\nablasl \widehat{\bsy{\sigma}} 
&=   \nablasl(r^2-a^2\cos^2\theta)\Sigma\bsy{\sigma}+\frac{r^2-a^2\cos^2\theta}{\Sigma}\nablasl(\Sigma\bsy{\sigma})+(r^2-a^2\cos^2\theta)\nablasl(\Sigma^{-1})\Sigma\bsy{\sigma} \\
&\qquad +\nablasl(r\Sigma(\slashed{\varepsilon}\cdot\chib))\bsy{\rho}+r(\slashed{\varepsilon}\cdot\chib)\nablasl(\Sigma\bsy{\rho})+r(\slashed{\varepsilon}\cdot\chib)\nablasl(\Sigma^{-1})\Sigma\bsy{\rho} \\
&= (-(\eta+\etab)\Sigma-(r^2-a^2\cos^2\theta)(\eta+\etab)-r(\slashed{\varepsilon}\cdot\chib){}^{\star}(\eta-\etab)\Sigma)\bsy{\sigma} \\
&\qquad -\lp(-\frac{r^2-a^2\cos^2\theta}{\Sigma}{}^{\star}(\eta-\etab)\Sigma-\nablasl(r\Sigma(\slashed{\varepsilon}\cdot\chib))+r(\slashed{\varepsilon}\cdot\chib)\Sigma(\eta+\etab)\rp)\bsy{\rho} \\
&\qquad +r(\slashed{\varepsilon}\cdot\chib)(\mathfrak{T}(\bsy{\alpha})-\underline{\mathfrak{T}}(\bsy{\alphab}))-\frac{r^2-a^2\cos^2\theta}{\Sigma}({}^{\star}\mathfrak{T}(\bsy{\alpha})+{}^{\star}\underline{\mathfrak{T}}(\bsy{\alphab}))
\end{align*}
and
\begin{align*}
\nablasl \widehat{\bsy{\rho}} 
&=  \nablasl(r^2-a^2\cos^2\theta)\Sigma\bsy{\rho}+\frac{r^2-a^2\cos^2\theta}{\Sigma}\nablasl(\Sigma\bsy{\rho})+(r^2-a^2\cos^2\theta)\nablasl(\Sigma^{-1})\Sigma\bsy{\rho} \\
&\qquad-\nablasl(r\Sigma(\slashed{\varepsilon}\cdot\chib))\bsy{\sigma}-r(\slashed{\varepsilon}\cdot\chib)\nablasl(\Sigma\bsy{\sigma})-r(\slashed{\varepsilon}\cdot\chib)\nablasl(\Sigma^{-1})\Sigma\bsy{\sigma} \\
&= (-(\eta+\etab)\Sigma-(r^2-a^2\cos^2\theta)(\eta+\etab)-r(\slashed{\varepsilon}\cdot\chib){}^{\star}(\eta-\etab)\Sigma)\bsy{\rho} \\
&\qquad+\lp(-\frac{r^2-a^2\cos^2\theta}{\Sigma}{}^{\star}(\eta-\etab)\Sigma-\nablasl(r\Sigma(\slashed{\varepsilon}\cdot\chib))+r(\slashed{\varepsilon}\cdot\chib)\Sigma(\eta+\etab)\rp)\bsy{\sigma} \\
&\qquad+r(\slashed{\varepsilon}\cdot\chib)({}^{\star}\mathfrak{T}(\bsy{\alpha})+{}^{\star}\underline{\mathfrak{T}}(\bsy{\alphab}))+\frac{r^2-a^2\cos^2\theta}{\Sigma}(\mathfrak{T}(\bsy{\alpha})-\underline{\mathfrak{T}}(\bsy{\alphab})) \, ,
\end{align*}
where we used \eqref{eq:ang-derivative-Sigma} and the identities \eqref{id_diff}--\eqref{id_sum}. Both the brackets multiplying $\bsy{\rho}$ and $\bsy{\sigma}$ can be checked to identically vanish, i.e.
\begin{align*}
-(\eta+\etab)\Sigma-(r^2-a^2\cos^2\theta)(\eta+\etab)-r(\slashed{\varepsilon}\cdot\chib){}^{\star}(\eta-\etab)\Sigma &= 0 \, , \\
-(r^2-a^2\cos^2\theta){}^{\star}(\eta-\etab)-r\Sigma(\tr \chib){}^\star \eta+r\Sigma\epsuchi\etab &= 0 \, ,
\end{align*}
where we used the geometric relation \eqref{formula_nablasl_atrchib} and the identities of Section~\ref{sec:algebraically-special-props}. 
\end{proof}

\subsubsection{Notation for solutions}

In the sequel, it will be useful to have some notation to distinguish solutions to the original and modified Maxwell system. We let $$\bsy{\mathfrak{S}}_{\circ}=(\bsy{\alpha}[\bsy{\mathfrak{S}}_{\circ}],\bsy{\alphab}[\bsy{\mathfrak{S}}_{\circ}],\bsy{\rho}[\bsy{\mathfrak{S}}_{\circ}],\bsy{\sigma}[\bsy{\mathfrak{S}}_{\circ}])$$ be a \emph{solution} to the system of Maxwell equations \eqref{eq:del3-sigma}--\eqref{eq:del4-ualpha}. We denote by $$\bsy{\mathfrak S}=(\bsy{\alpha}[\bsy{\mathfrak S}],\bsy{\alphab}[\bsy{\mathfrak S}],\widehat{\bsy{\rho}}[\bsy{\mathfrak S}],\widehat{\bsy{\sigma}}[\bsy{\mathfrak S}])$$ the corresponding \emph{modified solution} (i.e.~solution to the modified system of Maxwell equations \eqref{D3_sigma}--\eqref{DA_rho}), with
\begin{align*}
\bsy{\alpha}[\bsy{\mathfrak S}]&=\bsy{\alpha}[\bsy{\mathfrak S}_{\circ}] \, , & \bsy{\alphab}[\bsy{\mathfrak S}]&=\bsy{\alphab}[\bsy{\mathfrak S}_{\circ}] \, , & \widehat{\bsy{\rho}}[\bsy{\mathfrak S}]&=\widehat{\bsy{\rho}}[\bsy{\mathfrak S}_{\circ}] \, , & \widehat{\bsy{\sigma}}[\bsy{\mathfrak S}]&=\widehat{\bsy{\sigma}}[\bsy{\mathfrak S}_{\circ}] \, .
\end{align*}

\subsection{Special solutions}

In this section, we present two classes of special solutions to the system of Maxwell equations.

\subsubsection{Stationary solutions}
\label{sec:stationary-solutions}

Let $\mathfrak{q}_1,\mathfrak{q}_2\in\mathbb{R}$, to be interpreted as electromagnetic charges. We define the two-parameter family of stationary (smooth) solutions $\bsy{\mathfrak S}_{\circ}{}_{\mathfrak{q}_1,\mathfrak{q}_2}$ such that
\begin{align*}
\bsy{\alpha}[\bsy{\mathfrak S}_{\circ}{}_{\mathfrak{q}_1,\mathfrak{q}_2}]&=0 \, , \\ \bsy{\alphab}[\bsy{\mathfrak S}_{\circ}{}_{\mathfrak{q}_1,\mathfrak{q}_2}]&=0 \, , \\
\bsy{\rho}[\bsy{\mathfrak S}_{\circ}{}_{\mathfrak{q}_1,\mathfrak{q}_2}]&= \frac{1}{\Sigma}\left(\mathfrak{q}_1 \frac{r^2-a^2\cos^2\theta}{\Sigma}+2\,\mathfrak{q}_2\frac{ar\cos\theta}{\Sigma}\right) \, , \\
\bsy{\sigma}[\bsy{\mathfrak S}_{\circ}{}_{\mathfrak{q}_1,\mathfrak{q}_2}]&= \frac{1}{\Sigma}\left(-2\,\mathfrak{q}_1 \frac{ar\cos\theta}{\Sigma}+\mathfrak{q}_2\frac{r^2-a^2\cos^2\theta}{\Sigma}\right) \, ,
\end{align*}
which induces the two-parameter family of modified stationary (smooth) solutions $\bsy{\mathfrak S}_{\mathfrak{q}_1,\mathfrak{q}_2}$ such that
\begin{align*}
\bsy{\alpha}[\bsy{\mathfrak S}_{\mathfrak{q}_1,\mathfrak{q}_2}]&=0 \, , & \bsy{\alphab}[\bsy{\mathfrak S}_{\mathfrak{q}_1,\mathfrak{q}_2}]&=0 \, , \\
\widehat{\bsy{\rho}}[\bsy{\mathfrak S}_{\mathfrak{q}_1,\mathfrak{q}_2}]&= \mathfrak{q}_1 \, , &
\widehat{\bsy{\sigma}}[\bsy{\mathfrak S}_{\mathfrak{q}_1,\mathfrak{q}_2}]&=\mathfrak{q}_2  \, .
\end{align*}
Using the modified system of Maxwell equations \eqref{D3_sigma}--\eqref{DA_rho}, it is immediate to check that, if $\bsy{\alpha}[\bsy{\mathfrak S}]=\bsy{\alphab}[\bsy{\mathfrak S}]=0$ for a given modified solution $\bsy{\mathfrak{S}}$, then $\bsy{\mathfrak{S}}$ is a modified stationary solution.

\subsubsection{Solutions supported on a fixed azimuthal mode}

Let $m\in \mathbb{\mathbb{Z}}$. We say that a solution $\bsy{\mathfrak S}_\circ$ of the system of Maxwell equations is supported on the fixed azimuthal mode $m$ if
\begin{align*}
\widetilde Z\big(\swei{\upalpha}{\pm 1}\big)&=im\,\swei{\upalpha}{\pm 1},
\end{align*}
where $\Delta^{\pm 1}\swei{\upalpha}{\pm 1}\in\mathscr S^{[\pm 1]}_\infty(\mc M)$ are the image of $\bsy{\alpha}[\bsy{\mathfrak S}_\circ]$ and $\bsy{\alphab}[\bsy{\mathfrak S}_\circ]$, respectively, under the invertible maps $\mathrm i^{[\pm 1]}$ (see Lemma \ref{lemma:isomorphism}), and if 
\begin{align*}
\widetilde Z\big(\bsy{\rho}[\bsy{\mathfrak S}_{\circ}]\big)&=im\,\bsy{\rho}[\bsy{\mathfrak S}_{\circ}]\,, & \widetilde Z\big(\bsy{\sigma}[\bsy{\mathfrak S}_{\circ}]\big)&= im\,\bsy{\sigma}[\bsy{\mathfrak S}_{\circ}],
\end{align*}
after promoting $(\bsy{\rho}[\bsy{\mathfrak S}_{\circ}],\bsy{\sigma}[\bsy{\mathfrak S}_{\circ}])$ to complex-valued functions on $\mc M$. This induces a modified solution $\bsy{\mathfrak S}$ supported on  the fixed azimuthal mode $m$, which is such that
\begin{align*}
\widetilde Z\big(\swei{\upalpha}{\pm 1}\big)&=im\,\swei{\upalpha}{\pm 1}\,, &
\widetilde Z\big(\bsy{\rho}[\bsy{\mathfrak S}]\big)&=im\,\bsy{\rho}[\bsy{\mathfrak S}]\,, &
\widetilde Z\big(\bsy{\sigma}[\bsy{\mathfrak S}]\big)&= im\,\bsy{\sigma}[\bsy{\mathfrak S}],
\end{align*}
where $\Delta^{\pm 1}\swei{\upalpha}{\pm 1}\in\mathscr S^{[\pm 1]}_\infty(\mc M)$ are the image of $\bsy{\alpha}[\bsy{\mathfrak S}]=\bsy{\alpha}[\bsy{\mathfrak S}_\circ]$ and $\bsy{\alphab}[\bsy{\mathfrak S}]=\bsy{\alphab}[\bsy{\mathfrak S}_\circ]$, respectively, under the invertible maps $\mathrm i^{[\pm 1]}$.

\subsection{Initial data and well-posedness}

The Maxwell equations \eqref{eq:del3-sigma}--\eqref{eq:del4-ualpha} form an overdetermined system of equations. As a result, the initial data for the system have to satisfy a set of constraint equations. In this section, we discuss how initial data are to be prescribed and briefly comment on the (global) well-posedness of the Maxwell equations \eqref{eq:del3-sigma}--\eqref{eq:del4-ualpha}. In fact, it will be convenient to present our discussion in terms of the modified Maxwell equations \eqref{D3_sigma}--\eqref{DA_rho} and modified solutions $\bsy{\mathfrak{S}}_{\circ}$. Through the identities \eqref{eq:def-rho-hat}--\eqref{eq:def-sigma-hat}, our statements will immediately imply analogous statements for the original Maxwell equations \eqref{eq:del3-sigma}--\eqref{eq:del4-ualpha} and solutions $\bsy{\mathfrak{S}}$.

\emph{In this section, as well as in the remainder of the paper, we always consider smooth solutions to the Maxwell equations.}

\subsubsection{Seed initial data}

The initial data for the modified Maxwell equations \eqref{D3_sigma}--\eqref{DA_rho} are prescribed on the hyperboloidal hypersurface $\Sigma_0$ (recall definition \eqref{def_hyperboloidal_hypers}). The initial data quantities which can be freely prescribed will be referred to as \emph{seed initial data}. Since $\mathfrak{D}_{\mathcal{N}_{\text{as}}}\not\subset T\Sigma_0$, formulating a notion of seed initial data for a system of equations for $\mathfrak{D}_{\mathcal{N}_{\text{as}}}$ tensors, starting from a suitable prescription of quantities on (and intrinsic to) $\Sigma_0$, is slightly subtle.

As a preliminary step, it is useful to relate $\mathfrak{D}_{\mathcal{N}_{\text{as}}}$ one-forms to spacetime one-forms. The former admit canonical extensions to smooth sections
\begin{align*}
\Gamma_{\circ}((T\mathcal{M})^{\star})&:=\{\bsy\varsigma \in \Gamma((T\mathcal{M})^{\star})\colon \bsy\varsigma(e_3^{\text{as}})=\bsy\varsigma(e_4^{\text{as}})=0\} \, ,
\end{align*}
i.e.~spacetime one-forms which identically vanish when evaluated on $e_3^{\text{as}}$ and $e_4^{\text{as}}$ (see e.g.~\cite[Section 7.2]{Benomio2022}). By employing the hyperboloidal differentiable structure of Section \ref{sec_hyperboloidal_coords}, a more explicit characterization of the canonical extension of $\mathfrak{D}_{\mathcal{N}_{\text{as}}}$ one-forms is as follows:

\begin{lemma} \label{lemma:Gamma0-forms} There exist smooth scalar functions $c_{1,2}(r)$ such that the following holds true:~$\bsy{\varsigma}\in \Gamma_{\circ}((T\mathcal{M})^{\star})$ if and only if there exists  $\bsy{\varsigma}_{\mathbb{S}^2_{\tilde{t}{}^*,r}}\in \Gamma((T\mathbb{S}^2_{\tilde{t}{}^*,r})^\star)$ such that 
\begin{equation*}
\bsy{\varsigma}=[c_1(r)\,\bsy{\varsigma}{}_{\mathbb{S}^2_{\tilde{t}{}^*,r}}(Z)]\, d\tilde{t}{}^*+[c_2(r)\,\bsy{\varsigma}{}_{\mathbb{S}^2_{\tilde{t}{}^*,r}}(Z)]\, dr+\bsy{\varsigma}{}_{\mathbb{S}^2_{\tilde{t}{}^*,r}} \, ,
\end{equation*}
and therefore there exists an isomorphism between $\Gamma_{\circ}((T\mathcal{M})^{\star})$ and $\Gamma((T\mathbb{S}^2_{\tilde{t}{}^*,r})^\star)$.
\end{lemma}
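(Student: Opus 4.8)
The plan is to work in the hyperboloidal Kerr-star coordinates $(\tilde{t}{}^*,r,\theta,\tilde\phi{}^*)$ of Section~\ref{sec_hyperboloidal_coords}, where $T=\partial_{\tilde{t}{}^*}$ and $Z=\partial_{\tilde\phi{}^*}$. Every smooth one-form on $\mathcal{M}$ can be written uniquely as $\bsy\varsigma=A\,d\tilde{t}{}^*+B\,dr+C\,d\theta+D\,d\tilde\phi{}^*$ with $A,B,C,D\in C^\infty(\mathcal{M})$; then $\bsy\varsigma(T)=A$, $\bsy\varsigma(Z)=D$, while the restriction of $\bsy\varsigma$ to $T\mathbb{S}^2_{\tilde{t}{}^*,r}$ is the sphere one-form $C\,d\theta+D\,d\tilde\phi{}^*$, which takes value $D$ on $Z$. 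By definition of the canonical extension, $\bsy\varsigma\in\Gamma_{\circ}((T\mathcal{M})^{\star})$ is equivalent to the two scalar identities $\bsy\varsigma(e_3^{\text{as}})=\bsy\varsigma(e_4^{\text{as}})=0$ holding on $\mathcal{M}$. Hence the lemma reduces to showing that these identities force $A=c_1(r)D$ and $B=c_2(r)D$ for suitable smooth $c_{1,2}$, with $C$ and $D$ free, and conversely that the displayed formula always lands in $\Gamma_\circ((T\mathcal{M})^\star)$.

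The only computational step is to express $e_3^{\text{as}},e_4^{\text{as}}$ in these coordinates, and two facts suffice. First, directly from \eqref{as_null_frame_vectors_1}--\eqref{as_null_frame_vectors_2}, $\Sigma\,e_4^{\text{as}}+\Delta\,e_3^{\text{as}}=2(r^2+a^2)\,\partial_t+2a\,\partial_\phi=2(r^2+a^2)\,T+2a\,Z$ in Boyer--Lindquist coordinates, and since both sides are smooth on $\mathcal{M}$ this identity persists up to and on $\mathcal{H}^+$. Second, substituting the coordinate changes of Sections~\ref{sec_Kerr_star_coords}--\ref{sec_hyperboloidal_coords} into \eqref{as_null_frame_vectors_2} — all of which fix $\theta$ and modify $\partial_r$ only by $r$-dependent multiples of $T$ and $Z$ — and using that the cutoff $1-\iota$ of the Kerr-star change of variables cancels the would-be $1/\Delta$ singularities, one finds that $e_3^{\text{as}}=\mathfrak a_3(r)\,\partial_{\tilde{t}{}^*}-\partial_r+\mathfrak d_3(r)\,\partial_{\tilde\phi{}^*}$ with $\mathfrak a_3,\mathfrak d_3$ smooth functions of $r$ alone and, crucially, no $\partial_\theta$ component; by the first fact $e_4^{\text{as}}$ then also has no $\partial_\theta$ component. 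Therefore $\bsy\varsigma(e_3^{\text{as}})=\bsy\varsigma(e_4^{\text{as}})=0$ leaves $C$ unconstrained, while testing the first identity against $\bsy\varsigma$ gives $(r^2+a^2)A+aD=0$, and $\bsy\varsigma(e_3^{\text{as}})=0$ reads $\mathfrak a_3(r)A-B+\mathfrak d_3(r)D=0$. Solving yields $A=c_1(r)D$, $B=c_2(r)D$ with
\begin{align*}
c_1(r)&=-\frac{a}{r^2+a^2}\,, & c_2(r)&=\mathfrak a_3(r)c_1(r)+\mathfrak d_3(r)=-\frac{a\big(\iota(r)+\tfrac{df}{dr}(r)\big)}{r^2+a^2}\,,
\end{align*}
both smooth on $[r_+,\infty)$ and vanishing for $a=0$ (the $\theta$-dependence, entering only through $\Sigma$, has dropped out). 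I expect this step — tracking the three coordinate systems and checking that the apparently singular Boyer--Lindquist frame data reassemble into quantities smooth across $\mathcal{H}^+$ — to be the only real obstacle, though it is routine.

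This proves the forward implication and exhibits $c_{1,2}$. For the converse, given $\bsy\varsigma_{\mathbb{S}^2_{\tilde{t}{}^*,r}}\in\Gamma((T\mathbb{S}^2_{\tilde{t}{}^*,r})^{\star})$, set $D:=\bsy\varsigma_{\mathbb{S}^2_{\tilde{t}{}^*,r}}(Z)$ and define $\bsy\varsigma:=c_1(r)\,D\,d\tilde{t}{}^*+c_2(r)\,D\,dr+\bsy\varsigma_{\mathbb{S}^2_{\tilde{t}{}^*,r}}$; smoothness of $c_{1,2}$ and of $\bsy\varsigma_{\mathbb{S}^2_{\tilde{t}{}^*,r}}$ gives $\bsy\varsigma\in\Gamma((T\mathcal{M})^{\star})$, and by construction $\mathfrak a_3A-B+\mathfrak d_3D=0$, i.e.\ $\bsy\varsigma(e_3^{\text{as}})=0$, while $(r^2+a^2)A+aD=0$ combined with the first identity then forces $\bsy\varsigma(e_4^{\text{as}})=0$; hence $\bsy\varsigma\in\Gamma_{\circ}((T\mathcal{M})^{\star})$. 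Finally, the assignments $\bsy\varsigma\mapsto\bsy\varsigma|_{T\mathbb{S}^2_{\tilde{t}{}^*,r}}$ and $\bsy\varsigma_{\mathbb{S}^2_{\tilde{t}{}^*,r}}\mapsto\bsy\varsigma$ are mutually inverse — one composition is the identity by construction, and the other because the forward implication recovers $A,B$ from $D$ while leaving $C,D$, hence the restriction, unchanged — giving the asserted ($C^\infty(\mathcal{M})$-linear) isomorphism between $\Gamma_{\circ}((T\mathcal{M})^{\star})$ and $\Gamma((T\mathbb{S}^2_{\tilde{t}{}^*,r})^{\star})$.
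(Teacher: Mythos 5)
Your proposal is correct and follows the same route as the paper's proof: decompose $\bsy\varsigma$ in the hyperboloidal coordinate coframe and observe that the two conditions $\bsy\varsigma(e_3^{\rm as})=\bsy\varsigma(e_4^{\rm as})=0$ constrain only the $d\tilde t{}^*$ and $dr$ components, each as an $r$-dependent multiple of $\bsy\varsigma(\p_{\tilde\phi{}^*})$. The paper merely asserts the existence of smooth $c_{1,2}(r)$, whereas you verify it by the explicit frame computation (using $\Sigma e_4^{\rm as}+\Delta e_3^{\rm as}=2(r^2+a^2)T+2aZ$ and the regularity of $e_3^{\rm as}$ across $\mathcal H^+$ in Kerr-star coordinates), and your resulting formulas for $c_{1,2}$ are consistent with the remark following the lemma.
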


\begin{proof} For any $\bsy{\varsigma}\in \Gamma_{\circ}((T\mathcal{M})^{\star})$, we have
\begin{align*}
\bsy{\varsigma}= \bsy{\varsigma}(\p_{\tilde{t}{}^*})d\tilde{t}{}^*+\bsy{\varsigma}(\p_{r})dr + \bsy{\varsigma}(\p_{\theta})d\theta + \bsy{\varsigma}(\p_{\tilde{\phi}{}^*})d\tilde{\phi}{}^*\,.
\end{align*}
The conditions $\bsy{\varsigma}(e_4^{\text{as}})=\bsy{\varsigma}(e_3^{\text{as}})=0$ are equivalent to the constraining identities
\begin{align*} 
\bsy{\varsigma}(\p_{\tilde{t}{}^*})&=c_1(r)\,\bsy{\varsigma}(\p_{\tilde{\phi}{}^*})  \, , & \bsy{\varsigma}(\p_r)&= c_2(r)\,\bsy{\varsigma}(\p_{\tilde{\phi}{}^*}) 
\end{align*}
for some smooth scalar functions $c_{1,2}(r)$. By defining 
\begin{equation*}
    \bsy{\varsigma}{}_{\mathbb{S}^2_{\tilde{t}{}^*,r}}=\bsy{\varsigma}(\p_{\theta})d\theta + \bsy{\varsigma}(\p_{\tilde{\phi}{}^*})d\tilde{\phi}{}^* \, ,
\end{equation*}
one can write $\bsy{\varsigma}$ as in the lemma.
\end{proof}

\begin{remark}
    One has $c_{1,2}(r)\rightarrow 0$ as $r\rightarrow \infty$ and $c_{1,2}(r)$ identically vanishing for $|a|=0$.
\end{remark}

We now state the definition of seed initial data for the modified Maxwell equations \eqref{D3_sigma}--\eqref{DA_rho}. We recall that, by standard Hodge decomposition on $\mathbb{S}^2$, any $\mathbb{S}^2_{\tilde{t}{}^*,r}$ one-form $\bsy{\varsigma}{}_{\mathbb{S}^2_{\tilde{t}{}^*,r}}$ is uniquely represented as
\begin{equation*}
    \bsy{\varsigma}{}_{\mathbb{S}^2_{\tilde{t}{}^*,r}}=-\nablasl_{\mathbb{S}^2_{\tilde{t}{}^*,r}} \bsy{f}+{}^{\star}\nablasl_{\mathbb{S}^2_{\tilde{t}{}^*,r}} \bsy{h} \, ,
\end{equation*}
with $\bsy{f}$ and $\bsy{h}$ smooth scalar functions.

\begin{definition}[Seed initial data] \label{def_seed_data}
Let 
\begin{equation} \label{data_functions}
(\mathfrak{f},\underline{\mathfrak{f}},\mathfrak{h},\underline{\mathfrak{h}})
\end{equation}
be smooth scalar functions on $\Sigma_0$ and 
\begin{equation} \label{data_charges}
(\mathfrak{r},\mathfrak{s})
\end{equation}
be smooth scalar functions on $\mathbb{S}^2_{0,r_+}=\Sigma_0\cap\mathcal{H}^+$. We refer to the scalars \eqref{data_functions}--\eqref{data_charges} as \emph{seed quantities} on $\Sigma_0$. We define \emph{seed initial data} on $\Sigma_0$ for the modified Maxwell equations \eqref{D3_sigma}--\eqref{DA_rho} as the (canonical extensions of) $\mathfrak{D}_{\mathcal{N}_{\text{as}}}$ one-forms 
\begin{equation}
\begin{split}
\bsy{\alpha}|_{\Sigma_0}=& \, [c_1(r)\cdot\,(-\nablasl_{\mathbb{S}^2_{0,r}} \mathfrak{f}+{}^{\star}\nablasl_{\mathbb{S}^2_{0,r}} \mathfrak{h})(Z)]\,d\tilde{t}{}^*+[c_2(r)\cdot\,(-\nablasl_{\mathbb{S}^2_{0,r}} \mathfrak{f}+{}^{\star}\nablasl_{\mathbb{S}^2_{0,r}} \mathfrak{h})(Z)] \, dr \\
&-\nablasl_{\mathbb{S}^2_{0,r}} \mathfrak{f}+{}^{\star}\nablasl_{\mathbb{S}^2_{0,r}} \mathfrak{h} \, , \\
\bsy{\alphab}|_{\Sigma_0}=& \, [c_1(r)\cdot\,(-\nablasl_{\mathbb{S}^2_{0,r}} \underline{\mathfrak{f}}+{}^{\star}\nablasl_{\mathbb{S}^2_{0,r}} \underline{\mathfrak{h}})(Z)]\,d\tilde{t}{}^*+[c_2(r)\cdot\,(-\nablasl_{\mathbb{S}^2_{0,r}} \underline{\mathfrak{f}}+{}^{\star}\nablasl_{\mathbb{S}^2_{0,r}} \underline{\mathfrak{h}})(Z)] \, dr \\
&-\nablasl_{\mathbb{S}^2_{0,r}} \underline{\mathfrak{f}}+{}^{\star}\nablasl_{\mathbb{S}^2_{0,r}} \underline{\mathfrak{h}} 
\end{split}\label{data_alpha_alphab}
\end{equation}
on $\Sigma_0$ and smooth scalar functions
\begin{align}
\widehat{\bsy{\rho}}\, |_{\mathbb{S}^2_{0,r_+}}&= \mathfrak{r} \, , & \widehat{\bsy{\sigma}}\, |_{\mathbb{S}^2_{0,r_+}}&= \mathfrak{s} \label{data_rho_sigma}
\end{align}
on the horizon sphere $\mathbb{S}^2_{0,r_+}$.
\end{definition}

\begin{remark}
The family of modified stationary solutions arises from seed quantities
\begin{align*}
(\mathfrak{f},\underline{\mathfrak{f}},\mathfrak{h},\underline{\mathfrak{h}})&=0 \, ,  & (\mathfrak{r},\mathfrak{s})&=(\mathfrak{q}_1,\mathfrak{q}_2)
\end{align*}
on $\Sigma_0$ and $\mathbb{S}^2_{0,r_+}$ respectively.
\end{remark}

\begin{remark} Modified solutions supported on a fixed azimuthal number $m\in \mathbb Z$ arise from seed quantities satisfying
\begin{align*}
Z(\mathfrak{f},\underline{\mathfrak{f}},\mathfrak{h},\underline{\mathfrak{h}})&=im\, (\mathfrak{f},\underline{\mathfrak{f}},\mathfrak{h},\underline{\mathfrak{h}})\,, &
Z(\mathfrak{r},\mathfrak{s})&=im\, (\mathfrak{r},\mathfrak{s})\,.
\end{align*}
on $\Sigma_0$ and $\mathbb{S}^2_{0,r_+}$ respectively. We refer to these as \emph{seed initial data supported on the fixed azimuthal number} $m\in \mathbb Z$.
\end{remark}

\begin{remark}
    In view of Lemma \ref{lemma:Gamma0-forms}, the space of seed quantities on $\Sigma_0$ is isomorphic to the space of seed initial data on $\Sigma_0$.
\end{remark}

\subsubsection{Well-posedness}

We have the following well-posedness statement for the modified Maxwell equations \eqref{D3_sigma}--\eqref{DA_rho}.

\begin{proposition}[Well-posedness] \label{prop:wp}
For any suitable\footnote{The proof of the proposition relies on solving a constraint (transport) equation along $\Sigma_0$. The seed initial data are to be prescribed so that this equation admits a global solution. For the main results of this work, we consider initial data whose finite energy norm implies that the constraint (transport) equation can indeed be integrated \emph{globally} along $\Sigma_0$.} seed initial data on $\Sigma_0$, there exists a unique smooth solution to the modified Maxwell equations \eqref{D3_sigma}--\eqref{DA_rho} on $J^+(\Sigma_0)\cap\mathcal{M}$ which agrees with the seed initial data on $\Sigma_0$.
\end{proposition}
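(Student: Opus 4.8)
The plan is to reduce the statement to the classical Cauchy theory for the Maxwell equations $\mathrm{d}\bsy F=0$, $\mathrm{d}\star\bsy F=0$ on the globally hyperbolic region $J^+(\Sigma_0)\cap\mathcal{M}$, which has $\Sigma_0$ as a spacelike Cauchy hypersurface. By the computations of Sections~\ref{sec:maxwell_general}--\ref{sec:Maxwell-Kerr}, the $\mathcal{N}_{\rm as}$-frame components of any Maxwell field $\bsy F$ automatically solve the system \eqref{eq:del3-sigma}--\eqref{eq:del4-ualpha}, whence Proposition~\ref{prop:reduced-system} shows that the modified quantities \eqref{eq:def-rho-hat}--\eqref{eq:def-sigma-hat} solve \eqref{D3_sigma}--\eqref{DA_rho}; conversely, since \eqref{D3_sigma}--\eqref{DA_rho} are obtained from \eqref{eq:del3-sigma}--\eqref{eq:del4-ualpha} through algebraic identities and the pointwise-invertible change of variables $(\Sigma\bsy\rho,\Sigma\bsy\sigma)\mapsto(\widehat{\bsy\rho},\widehat{\bsy\sigma})$ only (see \eqref{eq:middle-component-squares} and $\Sigma\geq r_+^2>0$), a solution of \eqref{D3_sigma}--\eqref{DA_rho} yields a solution of \eqref{eq:del3-sigma}--\eqref{eq:del4-ualpha}, hence a genuine Maxwell field $\bsy F$ (cf.\ \eqref{eq:Maxwell-schematic}). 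I would therefore argue entirely at the level of $\bsy F$ and of its Cauchy data on $\Sigma_0$.

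First I would reconstruct the full Cauchy configuration on $\Sigma_0$ out of the seed quantities \eqref{data_functions}--\eqref{data_charges}. The one-forms $\bsy\alpha|_{\Sigma_0}$ and $\bsy\alphab|_{\Sigma_0}$ are prescribed outright by \eqref{data_alpha_alphab} (via the isomorphism of Lemma~\ref{lemma:Gamma0-forms}), so what is missing is to recover $\widehat{\bsy\rho}$ and $\widehat{\bsy\sigma}$ on all of $\Sigma_0$ from their prescribed restrictions $\mathfrak{r},\mathfrak{s}$ to the single horizon sphere $\mathbb{S}^2_{0,r_+}$. For this I would use that, by \eqref{eq:tilde-X_bis}, the radial field $\tilde X$ tangent to $\Sigma_0$ is an explicit combination of $e_3^{\rm as}$, $e_4^{\rm as}$ and a horizontal vector with a nonvanishing $e_4^{\rm as}$-coefficient; contracting the equations \eqref{D3_rho}, \eqref{D4_rho}, \eqref{DA_rho} of the modified system against this decomposition should produce a linear first-order transport equation for $\widehat{\bsy\rho}$, in the radial direction along $\Sigma_0$, whose right-hand side is an expression in $\bsy\alpha|_{\Sigma_0}$, $\bsy\alphab|_{\Sigma_0}$ and their $\Sigma_0$-tangential derivatives (and similarly for $\widehat{\bsy\sigma}$ from \eqref{D3_sigma}, \eqref{D4_sigma}, \eqref{DA_sigma}); this is the ``constraint (transport) equation along $\Sigma_0$'' of the footnote. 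Integrating it radially outward from $\bar{r}=r_+$, with initial data $\mathfrak{r},\mathfrak{s}$, yields smooth $\widehat{\bsy\rho}|_{\Sigma_0}$, $\widehat{\bsy\sigma}|_{\Sigma_0}$, hence $\bsy\rho|_{\Sigma_0},\bsy\sigma|_{\Sigma_0}$ by inverting \eqref{eq:def-rho-hat}--\eqref{eq:def-sigma-hat}. Re-expressing $(\bsy\alpha,\bsy\alphab,\bsy\rho,\bsy\sigma)|_{\Sigma_0}$ in the coordinate frame through the (explicit but lengthy) change of frame determines $\bsy F|_{\Sigma_0}$, and this datum satisfies the intrinsic Maxwell constraints $\mathrm{d}(\bsy F|_{\Sigma_0})=0$, $\mathrm{d}(\star\bsy F|_{\Sigma_0})=0$ exactly because the transport equation we integrated is the constraint rewritten in the variables $\widehat{\bsy\rho},\widehat{\bsy\sigma}$. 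The qualifier \emph{suitable} in the statement is precisely the requirement that this transport equation be \emph{globally} solvable on the noncompact $\Sigma_0$, which is guaranteed once the seed quantities decay at the rates built into the finite-energy norms used for the main theorems.

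Next I would invoke the classical well-posedness theory for the Maxwell equations on a globally hyperbolic Lorentzian manifold (reducing, say, to a wave equation for a Lorenz-gauge potential, or to a first-order symmetric hyperbolic system): this produces a unique smooth $\bsy F$ on $J^+(\Sigma_0)\cap\mathcal{M}$ with the constrained initial value $\bsy F|_{\Sigma_0}$ built above, the constraints propagating off $\Sigma_0$ by the standard argument. Passing to $\mathcal{N}_{\rm as}$-components and then to $\widehat{\bsy\rho},\widehat{\bsy\sigma}$ gives the desired smooth solution of \eqref{D3_sigma}--\eqref{DA_rho} agreeing with the seed data on $\Sigma_0$. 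Uniqueness follows by running this dictionary backwards: a smooth solution of \eqref{D3_sigma}--\eqref{DA_rho} agreeing with the given seed data yields a Maxwell field $\bsy F$ as in the first paragraph; restricting to $\Sigma_0$ one reads off $\bsy\alpha|_{\Sigma_0},\bsy\alphab|_{\Sigma_0}$ (hence, by Hodge decomposition on the foliation spheres, $\mathfrak{f},\underline{\mathfrak{f}},\mathfrak{h},\underline{\mathfrak{h}}$) and $\widehat{\bsy\rho}|_{\mathbb{S}^2_{0,r_+}},\widehat{\bsy\sigma}|_{\mathbb{S}^2_{0,r_+}}$ ($=\mathfrak{r},\mathfrak{s}$); since $\bsy F$ satisfies the Maxwell constraints on $\Sigma_0$, its $\widehat{\bsy\rho},\widehat{\bsy\sigma}$ solve the same radial transport equation with the same data and so coincide with the reconstruction above, so $\bsy F|_{\Sigma_0}$ agrees and classical uniqueness forces equality throughout $J^+(\Sigma_0)\cap\mathcal{M}$.

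The delicate point is the reconstruction step: one must verify that contracting \eqref{D3_rho}--\eqref{DA_rho} with \eqref{eq:tilde-X_bis} genuinely closes into a transport equation for $\widehat{\bsy\rho}$ intrinsic to $\Sigma_0$ --- this rests on the decoupling structure of the modified system (Proposition~\ref{prop:reduced-system}) and on the geometry of $\mathcal{N}_{\rm as}$ relative to the hyperboloidal foliation of Section~\ref{sec_hyperboloidal_coords} --- and, more substantively, that this transport equation is \emph{globally} integrable along the noncompact $\Sigma_0$, which is exactly what forces the restriction to seed data decaying at $\mathcal{I}^+$ and along $\mathcal{H}^+$. The explicit translation between $\mathcal{N}_{\rm as}$-components and $\bsy F|_{\Sigma_0}$, the verification that the intrinsic constraints are then met, and the evolution step itself are all standard --- tedious in the first two cases, routine in the last.
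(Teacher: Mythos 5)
Your argument is correct, and its genuinely nontrivial step---recovering $\widehat{\bsy\rho},\widehat{\bsy\sigma}$ on all of $\Sigma_0$ by integrating a first-order constraint (transport) ODE along $\Sigma_0$, outward from the horizon sphere where $(\mathfrak r,\mathfrak s)$ live---is exactly the paper's: the paper realizes this ODE as an explicit linear combination of the complex spin-weighted equations \eqref{eq:Maxwell-m}--\eqref{eq:Maxwell-n} chosen to cancel all $\partial_{\tilde t^*}$-derivatives, which amounts to your contraction with the $\Sigma_0$-tangent field $\tilde X$ (note only that the decomposition \eqref{eq:tilde-X_bis} is quoted for $r\geq 9M/4$, so near the horizon you need its Kerr-star analogue). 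Where you genuinely diverge is the evolution step. The paper never returns to the covariant system $\mathrm d\bsy F=0$, $\mathrm d\star\bsy F=0$: it evolves $\bsy\alpha,\bsy\alphab$ via the decoupled Teukolsky equations (Lemma~\ref{lemma:isomorphism} plus the standard Cauchy theory for \eqref{eq:Teukolsky-spin}), then determines $\widehat{\bsy\rho},\widehat{\bsy\sigma}$ on $J^+(\Sigma_0)\cap\mathcal M$ by integrating the transport equations \eqref{D3_sigma}--\eqref{D4_rho}, and closes by propagating the $\Sigma_0$-constraint. You instead reassemble $\bsy F|_{\Sigma_0}$ and invoke classical well-posedness for Maxwell on a globally hyperbolic region. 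Both work; yours buys an off-the-shelf existence and uniqueness theorem at the cost of two verifications the paper's route avoids: (i) that the reconstructed frame components define a two-form whose intrinsic constraints on $\Sigma_0$ hold---true, but the identification of the two real constraint scalars with the real and imaginary parts of your transport ODE should be checked rather than asserted; and (ii) the converse direction of Proposition~\ref{prop:reduced-system}, i.e.\ that a solution of \eqref{D3_sigma}--\eqref{DA_rho} yields one of \eqref{eq:del3-sigma}--\eqref{eq:del4-ualpha}. The latter does hold (the map $(\Sigma\bsy\rho,\Sigma\bsy\sigma)\mapsto(\widehat{\bsy\rho},\widehat{\bsy\sigma})$ is a pointwise rotation by \eqref{eq:middle-component-squares}, and \eqref{DA_sigma}--\eqref{DA_rho} invert to give back \eqref{id_diff}--\eqref{id_sum}, hence \eqref{eq:del3-alpha'} and \eqref{eq:del4-ualpha'}), but your write-up leans on it without proof. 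The paper's route stays entirely inside its own formalism and makes propagation of the constraint manifest, at the price of treating the Teukolsky Cauchy theory as a black box.
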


\begin{proof}
    From any prescription of seed initial data on $\Sigma_0$, one can determine all the unknowns of the modified Maxwell equations \eqref{D3_sigma}--\eqref{DA_rho} along $\Sigma_0$. This fact is more easily seen by exploiting the Maxwell equations \eqref{eq:Maxwell-m}--\eqref{eq:Maxwell-n}. Indeed, by taking the linear combination 
    \begin{align*}
        ia\sin\theta\frac{\Delta \overline\kappa^2}{(r^2+a^2)^2 \kappa}\overline \kappa\cdot\eqref{eq:Maxwell-m}+ia\sin\theta\frac{\Delta \overline\kappa}{(r^2+a^2)^2}\kappa\cdot\eqref{eq:Maxwell-m-bar}+\frac{\Delta}{r^2+a^2}\cdot\eqref{eq:Maxwell-l}-\frac{2\Sigma}{r^2+a^2}\cdot\eqref{eq:Maxwell-n}
    \end{align*}

    one can eliminate the $\partial_{\tilde t^*}$-derivatives of all the unknowns and obtain a first order ODE for $\swei{\upupsilon}{0}$ in a direction tangent to $\Sigma_0$ (and transversal to the spheres foliating $\Sigma_0$). The seed initial data for $\bsy{\widehat{\rho}}$ and $\bsy{\widehat{\sigma}}$ prescribed on $\mathcal{H}^+\cap\Sigma_0$ provides the initial datum for the ODE, whereas the seed initial data for $\bsy{\alpha}$ and $\bsy{\alphab}$ prescribed on $\Sigma_0$ completely determine the right hand side of the ODE on the entire $\Sigma_0$. The ODE can then be integrated along $\Sigma_0$ so as to uniquely determine smooth $\swei{\upupsilon}{0}$ (and therefore $\bsy{\widehat{\rho}}$ and $\bsy{\widehat{\sigma}}$) on the entire $\Sigma_0$.

    Once all the unknowns are determined on $\Sigma_0$, the modified Maxwell equations \eqref{D3_sigma}--\eqref{DA_rho} determine the first order normal (to $\Sigma_0$) covariant derivative of all the unknowns along $\Sigma_0$. 
    
    The fact that $\bsy{\alpha}$ and $\bsy{\alphab}$ decouple into Teukolsky equations, together with the standard well-posedness theory (in the smooth class) for the Cauchy problem for the Teukolsky equations (in light of  Lemma~\ref{lemma:isomorphism}, see e.g.\ \cite[Proposition 4.12]{SRTdC2023}), guarantee that one can then uniquely determine smooth $\bsy{\alpha}$ and $\bsy{\alphab}$ on $J^+(\Sigma_0)\cap\mathcal{M}$. Knowing $\bsy{\alpha}$ and $\bsy{\alphab}$ on $J^+(\Sigma_0)\cap\mathcal{M}$, one applies the equations to uniquely determine smooth $\widehat{\bsy{\rho}}$ and $\widehat{\bsy{\sigma}}$ on $J^+(\Sigma_0)\cap\mathcal{M}$. One can check that the first order ODE originally used to determine $\bsy{\widehat{\rho}}$ and $\bsy{\widehat{\sigma}}$ on $\Sigma_0$ is automatically satisfied on any $\Sigma_{\tau}\subset J^+(\Sigma_0)\cap\mathcal{M}$ with $\tau\geq 0$ (i.e.~propagation of constraints).
\end{proof}

\section{Energy norms} \label{sec:energy_norms_head}

In this section, we define energy norms for both extremal and middle Maxwell components. The energy norms are defined over the hyperboloidal hypersurfaces $\Sigma_{\tau}$ defined in \eqref{def_hyperboloidal_hypers} (see also Figure \eqref{fig:hyperboloidal}).

\subsection{Extremal Maxwell components}
\label{sec:Teukolsky-norms}

Let $\bsy\alpha$ and $\bsy \alphab$ be $\mathfrak D_{\mc N_{\rm as}}$ one-forms, and let $p\in[0,2)$. We define the second order energy fluxes
\begin{align*}
\mathbb{E}_p^1[r^2\bsy\alpha](\tau)
&:=
\int_{\Sigma_{\tau}}\lp(r^{p}\big|\nablasl^2(r^2\bsy\alpha)\big|^2+r^{p}\big|{\nablasl}\nablasl_4(r^2\bsy\alpha)\big|^2+r^p\big|\nablasl_4^2(r^2\bsy\alpha)\big|^2\rp)drd\Omega\\
&\qquad +
\int_{\Sigma_{\tau}}\lp(r^{p+2}\big|\nablasl_4\nablasl_3(r^2\bsy\alpha)\big|^2+\frac{\Delta}{r^2}\big|\nablasl_3^2(r^2\bsy\alpha)\big|^2+r^2\big|{\nablasl}\nablasl_3(r^2\bsy\alpha)\big|^2+\big|\nablasl_3(r^2\bsy\alpha)\big|^2\rp)drd\Omega\\
&\qquad +
\int_{\Sigma_{\tau}}\lp(r^p\big|\nablasl_4(r^2\bsy\alpha)\big|^2+\big|{\nablasl}(r^2\bsy\alpha)\big|^2+r^{-2}\big|r^2\bsy\alpha\big|^2\rp)drd\Omega\,,\\
\overline{\mathbb{E}}_p^1[r^2\bsy\alpha](\tau)
&:=
\int_{\Sigma_{\tau}}\lp(r^{p}\big|{\nablasl}^2(r^2\bsy\alpha)\big|^2+r^{p}\big|{\nablasl}\nablasl_4(r^2\bsy\alpha)\big|^2+r^p\big|\nablasl_4^2(r^2\bsy\alpha)\big|^2\rp)drd\Omega\\
&\qquad +
\int_{\Sigma_{\tau}}\lp(r^{p+2}\big|\nablasl_4\nablasl_3(r^2\bsy\alpha)\big|^2+\big|\nablasl_3^2(r^2\bsy\alpha)\big|^2+r^2\big|{\nablasl}\nablasl_3(r^2\bsy\alpha)\big|^2+\big|\nablasl_3(r^2\bsy\alpha)\big|^2\rp)drd\Omega\\
&\qquad +
\int_{\Sigma_{\tau}}\lp(r^p\big|\nablasl_4(r^2\bsy\alpha)\big|^2+\big|\nablasl(r^2\bsy\alpha)\big|^2+r^{-2}\big|r^2\bsy\alpha\big|^2\rp)drd\Omega\,,
\end{align*}
for $\bsy\alpha$ and
\begin{align*}
\mathbb{E}_p^1[r\bsy\alphab](\tau)
&:=
\int_{\Sigma_{\tau}}\frac{\Delta}{r^2}\lp(r^{p}\big|{\nablasl}^2(r\bsy\alphab)\big|^2+ r^{-2}(r-M)^2\big|\nablasl_3{\nablasl}(r\bsy\alphab)\big|^2+r^{-6}(r-M)^4\big|\nablasl_3^2(r\bsy\alphab)\big|^2\rp)drd\Omega\\
&\qquad+\int_{\Sigma_{\tau}}\lp(r^{p}\big|\nablasl_4\underline{\mathfrak T}(r\bsy\alphab)\big|^2+\frac{\Delta}{r^4}\big|\nablasl_3\underline{\mathfrak T}( r\bsy\alphab)\big|^2+\big|{\nablasl}\underline{\mathfrak T}( r\bsy\alphab)\big|^2+r^{-2}\big|\underline{\mathfrak T}( r\bsy\alphab)\big|^2\rp)drd\Omega\\
&\qquad+\int_{\Sigma_{\tau}}\lp(r^{-6}(r-M)^2\Delta\big|\nablasl_3(r\bsy\alphab)\big|^2+r^{-2}(r-M)^2\lp(\big|{\nablasl}(r\bsy\alphab)\big|^2+r^{-2}\big|r\bsy\alphab\big|^2\rp)\rp)drd\Omega\,,\\
\overline{\mathbb{E}}_p^1[r\bsy\alphab](\tau)
&:=
\int_{\Sigma_{\tau}}\lp(r^{p}\big|{\nablasl}^2(r\bsy\alphab)\big|^2+r^{-1}\big|{\nablasl}\nablasl_3(r\bsy\alphab)\big|^2+r^{-2}\big|\nablasl_3^2(r\bsy\alphab)\big|^2\rp)drd\Omega\\
&\qquad +
\int_{\Sigma_{\tau}}\lp(r^{p+4}\big|\nablasl_4^2(r\bsy\alphab)\big|^2+\big|\nablasl_3\nablasl_4( r\bsy\alphab)\big|^2+r^{4}\big|{\nablasl}\nablasl_4( r\bsy\alphab)\big|^2+r^{2}\big|\nablasl_4( r\bsy\alphab)\big|^2\rp)drd\Omega\\
&\qquad +
\int_{\Sigma_{\tau}}\lp(r^{-2}\big|\nablasl_3(r\bsy\alphab)\big|^2+\big|{\nablasl}(r\bsy\alphab)\big|^2+r^{-2}\big|r\bsy\alphab\big|^2\rp)drd\Omega
\end{align*}
for $\bsy\alphab$ (recall definitions \eqref{def_quantities_3_alpha_4_alphab}). We note that what distinguishes $\mathbb E_p$ from $\overline{\mathbb E}_p$ is the degeneracy of the former norms at the event horizon (i.e.~at $r=r_+$). We will also consider corresponding higher order energy fluxes
\begin{align*}
\mathbb{E}^{J+1}_p[\bsy \varsigma](\tau)&:= \sum_{J_1+J_2+J_3\leq J}\mathbb{E}_p^1\lp[\lp((1-Mr^{-1})\nablasl_3\rp)^{J_1}\nablasl_4^{J_2}\nablasl^{J_3}\bsy \varsigma\rp](\tau)\,, \\
\overline{\mathbb{E}}^{J+1}_p[\bsy \varsigma](\tau)&:= \sum_{J_1+J_2+J_3\leq J}\overline{\mathbb{E}}_p^1\lp[\nablasl_3^{J_1}\nablasl_4^{J_2}\nablasl^{J_3}\bsy\varsigma\rp](\tau) 
\end{align*}
for $J\geq 0$ and $\bsy\varsigma=\{r^2\bsy\alpha,r\bsy\alphab\}$. By convention, we write  ${\mathbb E}^{J+1}={\mathbb E}^{J+1}_0$ and $\overline{\mathbb E}^{J+1}=\overline{\mathbb E}^{J+1}_0$. Finally, it will be useful to introduce notation for the degenerate (first order) energy flux of $\bsy\alpha$ through $\mc H^+$,
\begin{align*}
\mathbb{E}_{\mc H^+}[r^2\bsy\alpha](\tau_1,\tau_2)=\int_{\tau_1}^{\tau_2}\int_{\mathbb S^2_{ \tilde{t}{}^*,r_+}}\lp(|r^2\bsy \alpha|^2+|{\nablasl}(r^2\bsy \alpha)|^2+|\nablasl_4(r^2\bsy \alpha)|^2\rp) d\Omega d \tilde{t}{}^*
\end{align*}
for any $\tau_1<\tau_2\leq\infty$, and for the non-degenerate (second order) energy fluxes of $\bsy\alpha$ and $\bsy\alphab$ through $\mc H^+$,
\begin{align*}
\overline{\mathbb{E}}^1_{\mc H^+}[r^2\bsy\alpha](\tau_1,\tau_2)&=\int_{\tau_1}^{\tau_2}\int_{\mathbb S^2_{ \tilde{t}{}^*,r_+}}\lp(| \nablasl{\mathfrak T}(r^2\bsy \alpha)|^2+|\nablasl_4{\mathfrak T}(r^2\bsy \alpha)|^2+|{\mathfrak T}(r^2\bsy \alpha)|^2\rp) d\Omega d \tilde{t}{}^*\\
&\qquad +\int_{\tau_1}^{\tau_2}\int_{\mathbb S^2_{ \tilde{t}{}^*,r_+}}\lp(| \nablasl(r^2\bsy \alpha)|^2+|\nablasl_4(r^2\bsy \alpha)|^2+|r^2\bsy \alpha|^2\rp) d\Omega d \tilde{t}{}^*,
\\
\overline{\mathbb{E}}^1_{\mc H^+}[r\bsy\alphab](\tau_1,\tau_2)&=\int_{\tau_1}^{\tau_2}\int_{\mathbb S^2_{ \tilde{t}{}^*,r_+}}\lp(| \nablasl\underline{\mathfrak T}(r\bsy \alphab)|^2+|\nablasl_4\underline{\mathfrak T}(r\bsy \alphab)|^2+|\underline{\mathfrak T}(r\bsy \alphab)|^2\rp) d\Omega d \tilde{t}{}^*\\
&\qquad +\int_{\tau_1}^{\tau_2}\int_{\mathbb S^2_{ \tilde{t}{}^*,r_+}}\lp(| \nablasl(r\bsy \alphab)|^2+|\nablasl_4(r\bsy \alphab)|^2+|r\bsy \alphab|^2\rp) d\Omega d \tilde{t}{}^*
\end{align*}
for any $\tau_1<\tau_2\leq\infty$.

\subsubsection{Relation with spin-weighted energy norms} \label{rmk:translation-SRTdC}

In the $|a|<M$ case, the energy norms above can be compared to those introduced in \cite{SRTdC2023} for spin-weighted quantities. 

Let $\swei{\upalpha}{\pm 1}$ be the $(\pm 1)-$spin-weighted functions obtained from $(\bsy\alpha,\bsy\alphab)$ through the isomorphism of Lemma~\ref{lemma:isomorphism}. We define
\begin{gather*}
\begin{gathered}
\swei{\upphi}{\pm 1}_0 = \frac{\Delta^{\frac{1\pm 1}{2}}}{\sqrt{r^2+a^2}}\swei{\upalpha}{\pm 1}\,, \numberthis \label{eq:def-upphi}
\end{gathered}
\end{gather*}
the rescalings
\begin{align*}
\swei{\tilde \upphi}{+ 1}_0 &:=\sqrt{r^2+a^2}\swei{\upphi}{+ 1}_0 = \Delta^{+1}\swei{\upalpha}{+ 1} = -\frac{1}{\sqrt{2}}\Sigma\bsy\alpha(e_1^{\rm as}+ie_2^{\rm as})\,, \\
 \swei{\tilde \upphi}{- 1}_0 &:=\frac{r^2+a^2}{\Delta}\swei{\upphi}{- 1}_0 = \sqrt{r^2+a^2}\Delta^{-1}\swei{\upalpha}{-1} = \frac{\kappa^2}{\sqrt{2}\Sigma}(r^2+a^2)^{1/2}\bsy\alphab(e_1^{\rm as}-ie_2^{\rm as}) 
\end{align*}
and the so-called \emph{transformed} quantities
\begin{align*}
\begin{split}
\swei{\Phi}{+ 1}&:= (r^2+a^2) e_3^{\rm as} (\swei{\upphi}{+ 1}_0)
=\frac{(r^2+a^2)^{1/2}}{\sqrt{2}}\lp[-\mathfrak{T}(\bsy\alpha)+\lp(\frac{a^2r\sin^2\theta}{r^2+a^2}-ia\cos\theta \rp)\bsy\alpha\rp](e_1^{\rm as}+ie_2^{\rm as})\,,\\
\swei{\Phi}{-1}
&:=\frac{r^2+a^2}{\Delta} \Sigma e_4^{\rm as} (\swei{\upphi}{- 1}_0)
%
%
=\frac{\kappa^2(r^2+a^2)^{1/2}}{\sqrt{2}\Sigma}\lp[\underline{\mathfrak T}(\bsy\alphab)+\frac{\Delta}{\Sigma}\lp(\frac{a^2r\sin^2\theta}{r^2+a^2}+ia\cos\theta \rp)\bsy\alphab\rp](e_1^{\rm as}-ie_2^{\rm as})\,.
\end{split} \numberthis \label{eq:def-Phi}
\end{align*}
By applying Lemma~\ref{lemma:isomorphism}, one can obtain pointwise estimates (whose implicit constants depend only on $M$) relating the absolute value of the above complex scalars with the pointwise norms of the tensorial quantities $(\bsy\alpha,\bsy\alphab)$. For instance, from the definitions \eqref{eq:map-B-to-NP}, one can compute 
\begin{align*}
\big|\swei{\tilde\upphi}{+1}_0\big|^2 &= |\Delta\swei{\upalpha}{+1}|^2 =\frac12 \lvert\Sigma \bsy\alpha\rvert_{{\slashed g}}^2,\\
\big|\swei{\tilde\upphi}{-1}_0\big|^2 &= |(r^2+a^2)^{1/2}\Delta^{-1}\swei{\upalpha}{-1}|^2 = 
\frac12|\kappa^{-2}\Sigma|^2 \,\lvert (r^2+a^2)^{1/2}\bsy\alphab\rvert_{{\slashed g}}^2 = 
\frac12 \lvert (r^2+a^2)^{1/2}\bsy\alphab\rvert_{{\slashed g}}^2,\\
\big|\swei{\Phi}{-1}\big|^2 &\sim |\underline{\mathfrak T}(r\bsy\alphab)|_{{\slashed g}}^2+\frac{\Delta}{r^4}|r\bsy\alphab|_{{\slashed g}}^2
\end{align*}
and other similar relations. We note that the computations leading to such relations make use of the particular choice of frame vectors \eqref{explicit_e1_e2}, but eventually yield relations which are independent of any such choice.

With the above relations in mind, the degenerate energy norms introduced in \cite{SRTdC2023} can be shown to satisfy (recall definition \eqref{def_L_Lbar})
\begin{align*}
\mathbb{E}_p\big[\swei{\tilde\upphi_0}{+1}\big](\tau)&=\int_{\Sigma_{\tau}}\lp(r^p\big|L\swei{\tilde\upphi_0}{+1}\big|^2+\frac{1}{\Delta}\big|\uL\swei{\tilde\upphi_0}{+1}\big|^2+r^{-2}\big|\mathring{\slashed\nabla}\swei{\tilde\upphi_0}{+1}\big|^2+r^{-2}\big|\swei{\tilde\upphi_0}{+1}\big|^2\rp)drd\Omega\\
&\sim \int_{\Sigma_{\tau}}\lp(r^p\big|\nablasl_4(r^2\bsy\alpha)\big|^2+\frac{\Delta}{r^4}\big|\nablasl_3(r^2\bsy\alpha)\big|^2+\big|{\nablasl}(r^2\bsy\alpha)\big|^2+r^{-2}\big|r^2\bsy\alpha\big|^2\rp)drd\Omega\,,
\\
\mathbb{E}_p\big[\swei{\Phi}{+1}\big](\tau)&=\int_{\Sigma_{\tau}}\lp(r^p\big|L\swei{\Phi}{+1}\big|^2+\frac{1}{\Delta}\big|\uL\swei{\Phi}{+1}\big|^2+r^{-2}\big|\mathring{\slashed\nabla}^{[+1]}\swei{\Phi}{+1}\big|^2+r^{-2}\big|\swei{\Phi}{+1}\big|^2\rp)drd\Omega\\
&\sim \int_{\Sigma_{\tau}}\lp(r^{p+2}\big|\nablasl_4\nablasl_3(r^2\bsy\alpha)\big|^2+\frac{\Delta}{r^2}\big|\nablasl_3^2(r^2\bsy\alpha)\big|^2+r^2\big|{\nablasl}\nablasl_3(r^2\bsy\alpha)\big|^2+\big|\nablasl_3(r^2\bsy\alpha)\big|^2\rp)drd\Omega \\
&\qquad+ \mathbb{E}_p\big[\swei{\tilde\upphi_0}{+1}\big](\tau)
\end{align*}
for spin $+1$ and
\begin{align*}
\mathbb{E}_p\big[\swei{\tilde\upphi_0}{-1}\big](\tau)&:=\int_{\Sigma_{\tau}}\lp(r^p\big|L\swei{\tilde\upphi}{-1}_0\big|^2+\frac{1}{\Delta}\big|\uL\swei{\tilde\upphi_0}{-1}\big|^2+r^{-2}\big|\mathring{\slashed\nabla}^{[-1]}\swei{\tilde\upphi_0}{-1}\big|^2+r^{-2}\big|\swei{\tilde\upphi_0}{-1}\big|^2\rp)drd\Omega\\
&\sim \int_{\Sigma_{\tau}}\lp(r^{p}\big|\nablasl_4(r\bsy\alphab)\big|^2+\frac{\Delta}{r^4}\big|\nablasl_3(r\bsy\alphab)\big|^2+\big|{\nablasl}(r\bsy\alphab)\big|^2+r^{-2}\big|r\bsy\alphab\big|^2\rp)drd\Omega\,,
\\
\mathbb{E}_p\big[\swei{\Phi}{-1}\big](\tau)&:=\int_{\Sigma_{\tau}}\lp(r^p\big|L\swei{\Phi}{-1}\big|^2+\frac{1}{\Delta}\big|\uL\swei{\Phi}{-1}\big|^2+r^{-2}\big|\mathring{\slashed\nabla}^{[-1]}\swei{\Phi}{-1}\big|^2+r^{-2}\big|\swei{\Phi}{-1}\big|^2\rp)drd\Omega\\
&\sim \int_{\Sigma_{\tau}}\lp(r^{p}\big|\nablasl_4\underline{\mathfrak T}(r\bsy\alphab)\big|^2+\frac{\Delta}{r^4}\big|\nablasl_3\underline{\mathfrak T}( r\bsy\alphab)\big|^2+\big|{\nablasl}\underline{\mathfrak T}( r\bsy\alphab)\big|^2+r^{-2}\big|\underline{\mathfrak T}( r\bsy\alphab)\big|^2\rp)drd\Omega\\
&\qquad+ \mathbb{E}_p\big[\swei{\tilde\upphi_0}{-1}\big](\tau)
\end{align*}
for spin $-1$. Here, $\mathring{\slashed\nabla}^{[\pm 1]}$ is the spinorial gradient defined in e.g.~\cite[Section 3]{SRTdC2023}. By adding up, for each spin sign, the two energy norms, one obtains almost every term in the energy norms introduced in Section \ref{sec:Teukolsky-norms} for the tensorial quantities $(\bsy\alpha,\bsy\alphab)$, i.e.\ almost every term in $\mathbb E_p[r^2\bsy\alpha]$ and $\mathbb E_p[r\bsy\alphab]$. The missing terms are obtained by elliptic estimates, as explained in  \cite[Section 4]{SRTdC2023}. To obtain the non-degenerate energy norms $\overline{\mathbb E}_p[r^2\bsy\alpha]$ and $\overline{\mathbb E}_p[r\bsy\alphab]$, one follows a similar procedure: one considers the spin-weighted energy norms $\overline{\mathbb E}_p[\swei{\tilde\upphi}{\pm 1}]$ and $\overline{\mathbb E}_p[\swei{\Phi}{\pm 1}]$ in \cite{SRTdC2023}, and then revisit the elliptic estimates there accordingly.

\begin{remark}
The reader familiar with the standard definitions of energy norms for wave equations may be surprised to find zeroth order terms in the energy norms above. However, we recall from \cite[Section 3]{SRTdC2023} that, by the properties of the spinorial gradient, one has
\begin{equation*}
\int_{\Sigma_{\tau}}\lp(r^{-2}\big|\mathring{\slashed\nabla}^{[s]}\swei{\Phi}{s}\big|^2+\frac{r-M}{r^4}\big|\swei{\Phi}{s}\big|^2\rp)drd\Omega \gtrsim \int_{\Sigma_{\tau}}\lp(r^{-2}\big|\mathring{\slashed\nabla}^{[s]}\swei{\Phi}{s}\big|^2+r^{-2}\lp(s^2+\frac{r-M}{r^2}\rp)\big|\swei{\Phi}{s}\big|^2\rp)drd\Omega
\end{equation*}
for any $s$-spin-weighted function $\swei{\Phi}{s}$, with $s\in\frac12\mathbb Z$.
\end{remark}

\subsection{Middle Maxwell components}
\label{sec:middle-norms}

Let $\bsy f=\left\lbrace \widehat{\bsy\rho}, \widehat{\bsy\sigma} \right\rbrace$. We define the first order energy fluxes
\begin{align*}
\dot{\mathbb{E}}[\bsy f](\tau)&:=\int_{\Sigma_\tau}\lp[|\nablasl_4\bsy f|^2+\Delta r^{-4}|\nablasl_3\bsy f|^2+|{\nablasl}\bsy f|^2\rp] dr d\Omega\,,\\
\dot{\overline{\mathbb{E}}}[\bsy f](\tau)&:=\int_{\Sigma_\tau}\lp[|\nablasl_4\bsy f|^2+r^{-2}|\nablasl_3{\bsy f}\,|^2+|{\nablasl}\bsy f|^2\rp]dr d\Omega\,,
\end{align*}
where the overline indicates the non-degeneracy of the energy norm at the event horizon (i.e.~at $r=r_+$), and corresponding higher order energy fluxes 
\begin{align*}
\dot{\mathbb{E}}^{J}[\bsy f](\tau)&:= \sum_{J_1+J_2+J_3\leq J}\dot{\mathbb{E}}\lp[\lp((1-Mr^{-1})\nablasl_3\rp)^{J_1}\nablasl_4^{J_2}\nablasl^{J_3}\bsy f\rp](\tau)\,, \\
\dot{\overline{\mathbb{E}}}^{J}[\bsy f](\tau)&:= \sum_{J_1+J_2+J_3\leq J}\dot{\overline{\mathbb{E}}}\lp[\nablasl_3^{J_1}\nablasl_4^{J_2}\nablasl^{J_3}\bsy f\rp](\tau) 
\end{align*}
for $J\geq 0$. We note that $\dot{\mathbb{E}}^{0}=\dot{\mathbb{E}}$ and $\dot{\overline{\mathbb{E}}}^{0}= \dot{\overline{\mathbb{E}}}$. The dot indicates that the energy norm does \emph{not} contain a zeroth order term. It will also be convenient to define the energy fluxes
\begin{align*}
\mathbb{E}^J[\bsy f](\tau)&:=\dot{\mathbb{E}}^{J}[\bsy f](\tau)+\int_{ \Sigma_{\tau}}r^{-2}|\bsy f|^2 drd\Omega\,,\\
\overline{\mathbb{E}}^J[\bsy f](\tau)&:=\dot{\overline{\mathbb{E}}}^{J}[\bsy f](\tau)+\int_{ \Sigma_{\tau}}r^{-2}|\bsy f|^2drd\Omega\,,
\end{align*}
which contain a zeroth order term.

\section{Statement of the main theorems} \label{sec:statement_theorems}

In this section, we provide a precise statement of our main theorems. All results are stated for the modified Maxwell equations \eqref{D3_sigma}--\eqref{DA_rho}. The reader can easily retrieve analogous statements for the original Maxwell equations \eqref{eq:del3-sigma}--\eqref{eq:del4-ualpha}.

The first theorem proven in the present paper concerns energy boundedness and decay for the Maxwell components in the sub-extremal $|a|<M$ case. As we shall explain in Section \ref{sec:analysis-teukolsky}, the energy boundedness and decay statements for the \emph{extremal} Maxwell components stated in Theorem \ref{thm:precise-EB-ED-sub} are a direct translation of the results obtained in \cite{SRTdC2020,SRTdC2023}.

\begin{theorem}[Sub-extremal case] \label{thm:precise-EB-ED-sub} Fix $M>0$ and $a_0\in[0,M)$. Let $J\geq 1$ and $\eta\in(0,1)$. Then, there exists a uniform constant $C=C(a_0,M,J,\eta)>0$ such that, for any $|a|\leq a_0$ and any solution 
$$\bsy{\mathfrak S}=(\bsy{\alpha}[\bsy{\mathfrak S}],\bsy{\alphab}[\bsy{\mathfrak S}],\widehat{\bsy{\rho}}[\bsy{\mathfrak S}],\widehat{\bsy{\sigma}}[\bsy{\mathfrak S}])$$ 
to the modified Maxwell equations \eqref{D3_sigma}--\eqref{DA_rho} arising from seed initial data prescribed on $\Sigma_0$, the following energy estimates hold.
\begin{itemize}
\item Non-degenerate energy boundedness: if $J\geq 3$, then
\begin{align*}
\overline{\mathbb{E}}^{J}\big[r^2\bsy \alpha[\bsy{\mathfrak S}]\big](\tau)&\leq C\,\overline{\mathbb{E}}^{J}\big[r^2\bsy\alpha [\bsy{\mathfrak S}]\big](0)\,,\\
\overline{\mathbb{E}}^{J}\big[r\bsy \alphab[\bsy{\mathfrak S}]\big](\tau)&\leq C\,\overline{\mathbb{E}}^{J}\big[r\bsy\alphab[\bsy{\mathfrak S}]\big](0)\,,\\
 \overline{\mathbb{E}}^{J}\big[\widehat{\bsy\rho}[\bsy{\mathfrak S}]\big](\tau)+\overline{\mathbb{E}}^{J}\big[\widehat{\bsy\sigma}[\bsy{\mathfrak S}]\big](\tau) 
& \leq C \Big(\lVert|\widehat{\bsy \rho}[\bsy{\mathfrak S}]\rVert^2_{L^2(\mathbb{S}^2_{0,r_+})}+\lVert\widehat{\bsy \sigma}[\bsy{\mathfrak S}]\rVert^2_{L^2(\mathbb{S}^2_{0,r_+})}
\Big)\\
&\qquad+ C\lp(\overline{\mathbb{E}}_{2-\eta}^{3}\big[r^2{\bsy\alpha}[\bsy{\mathfrak S}]\big](0)+\overline{\mathbb{E}}^{J}\big[r^2{\bsy\alpha}[\bsy{\mathfrak S}]\big](0)+\overline{\mathbb{E}}^{J}\big[r{\bsy\alphab}[\bsy{\mathfrak S}]\big](0)\rp)
\end{align*}
for all $\tau\geq 0$.
\item  Non-degenerate energy decay (extremal Maxwell components): we have
\begin{align*}
    \overline{\mathbb{E}}^{J}\big[r^2\bsy \alpha [\bsy{\mathfrak S}]\big](\tau)&\leq \frac{C}{(1+\tau)^{2-\eta}}\,\overline{\mathbb{E}}_{2-\eta}^{J+2}\big[r^2\bsy \alpha [\bsy{\mathfrak S}]\big](0)\,,\\
\overline{\mathbb{E}}^J\big[r\bsy \alphab[\bsy{\mathfrak S}]\big](\tau)
&\leq \frac{C}{(1+\tau)^{2-\eta}}\,\overline{\mathbb{E}}^{J+2}_{2-\eta}\big[ r\bsy\alphab[\bsy{\mathfrak S}]\big](0)
\end{align*}
for all $\tau\geq 0$.
\item  Non-degenerate energy decay (middle Maxwell components): there exist finite $\mathfrak{q}_1,\mathfrak{q}_2\in\mathbb{R}$, which can be read off from the seed initial data, such that the normalised solution $\widetilde{\bsy{\mathfrak S}}=\bsy{\mathfrak S}-\bsy{\mathfrak S}_{\mathfrak{q}_1,\mathfrak{q}_2}$ satisfies 
\begin{align*}
\bsy \alpha [\widetilde{\bsy{\mathfrak S}}](\tau)&=\bsy \alpha [\bsy{\mathfrak S}](\tau) \, , & \bsy \alphab[\widetilde{\bsy{\mathfrak S}}](\tau)&=\bsy \alphab[\bsy{\mathfrak S}](\tau)
\end{align*}
and
\begin{equation*}
\overline{\mathbb{E}}^{J}\big[\widehat{\bsy\rho}\big[\widetilde{\bsy{\mathfrak S}}\big]\big](\tau)+\overline{\mathbb{E}}^{J}\big[\widehat{\bsy\sigma}\big[\widetilde{\bsy{\mathfrak S}}\big]\big](\tau) \leq \frac{C}{(1+\tau)^{1-\eta}}\lp(\overline{\mathbb{E}}_{2-\eta}^{J+3}\big[r^2{\bsy\alpha}[\bsy{\mathfrak S}]\big](0)+\overline{\mathbb{E}}_{2-\eta}^{J+3}\big[r{\bsy\alphab}[\bsy{\mathfrak S}]\big](0)\rp) 
\end{equation*}
for all $\tau\geq 0$.
\end{itemize}
\end{theorem}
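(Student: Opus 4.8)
The plan is to treat the extremal components $\bsy\alpha,\bsy\alphab$ and the middle components $\widehat{\bsy\rho},\widehat{\bsy\sigma}$ separately: one first establishes the estimates for $\bsy\alpha,\bsy\alphab$ by translating known results on the Teukolsky equations, and then feeds these into the transport and elliptic equations of Proposition~\ref{prop:reduced-system} to control $\widehat{\bsy\rho},\widehat{\bsy\sigma}$. For the extremal components, Proposition~\ref{prop:Teukolsky-tensor} and the isomorphism of Lemma~\ref{lemma:isomorphism} identify $\bsy\alpha,\bsy\alphab$ with solutions of the spin $\pm1$ Teukolsky equations \eqref{eq:Teukolsky-spin}; after commuting with $T$, $Z$ and the spin-weighted angular operators up to order $J$, I would quote from \cite{SRTdC2020,SRTdC2023} the non-degenerate (red-shift) energy boundedness, the integrated local energy decay, the $r^p$-hierarchy and the resulting polynomial decay for the rescaled quantities $\swei{\tilde\upphi_0}{\pm1}$ and the transformed quantities $\swei{\Phi}{\pm1}$, and convert these into the claimed bounds for $\overline{\mathbb{E}}^{J}[r^2\bsy\alpha]$ and $\overline{\mathbb{E}}^{J}[r\bsy\alphab]$ via the pointwise norm equivalences and elliptic estimates of Section~\ref{rmk:translation-SRTdC} — this last step being where the restriction $J\geq 3$ in the boundedness statement originates. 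In particular one obtains data-bounded horizon fluxes $\overline{\mathbb{E}}^{1}_{\mc H^+}[r^2\bsy\alpha]$, $\overline{\mathbb{E}}^{1}_{\mc H^+}[r\bsy\alphab]$ and pointwise decay of $\bsy\alpha,\bsy\alphab$ and their derivatives along $\mc H^+$.

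\emph{Middle components on $\mc H^+$ and the charges.} I would next analyse $\widehat{\bsy\rho},\widehat{\bsy\sigma}$ restricted to $\mc H^+$, decomposing each scalar into the two directions picked out by the $\theta$-weights in $\mathfrak{Q}_{1},\mathfrak{Q}_{2}$ and their $L^2(\mathbb{S}^2)$-orthogonal complement. On the complement, restricting the elliptic equations \eqref{DA_sigma}--\eqref{DA_rho} to $\mc H^+$ through the projection formulae of Proposition~\ref{prop:proj-formulas} and inverting $\slashed{\Delta}$ on $\mathbb{S}^2$ expresses $\widehat{\bsy\rho},\widehat{\bsy\sigma}$ there algebraically in terms of $\mathfrak{T}(\bsy\alpha),\underline{\mathfrak{T}}(\bsy\alphab),\bsy\alpha,\bsy\alphab$, so their boundedness and decay are inherited from the previous paragraph. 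On the remaining directions I would prove the horizon conservation laws $\partial_\tau\mathfrak{Q}_1=\partial_\tau\mathfrak{Q}_2=0$: differentiating the integrals defining $\mathfrak{Q}_{1,2}$ and inserting the transport equations \eqref{D4_sigma}--\eqref{D4_rho} along the generator of $\mc H^+$, which by \eqref{intro_e4_horizon_killing} is spanned by Killing fields, together with $\nablasl_4\Gamma|_{\mc H^+}=0$, $\nablasl_4\mathfrak k|_{\mc H^+}=0$ and the divergence theorem on $\mathbb{S}^2$, one checks by direct computation (using the identities of Section~\ref{sec:algebraically-special-props}) that the $\mathfrak k$-anomalies appearing in $\widetilde{\divsl\bsy\alpha}$ and $\widetilde{\curlsl\bsy\alpha}$ are cancelled exactly by the $(\mathfrak k,\widetilde{\bsy\alpha})$- and $\mathfrak k\wedge\widetilde{\bsy\alpha}$-terms built into $\mathfrak{Q}_{1,2}$. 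Setting $\mathfrak{q}_i:=\mathfrak{Q}_i(0)$ then exhibits the charges as functionals of the seed data \eqref{data_alpha_alphab}--\eqref{data_rho_sigma}, and shows that $\widehat{\bsy\rho}[\widetilde{\bsy{\mathfrak S}}],\widehat{\bsy\sigma}[\widetilde{\bsy{\mathfrak S}}]$ decay along $\mc H^+$ at the rate inherited above; for boundedness, $\widehat{\bsy\rho},\widehat{\bsy\sigma}|_{\mc H^+}$ are controlled by $\lVert\widehat{\bsy\rho}\rVert^2_{L^2(\mathbb{S}^2_{0,r_+})}+\lVert\widehat{\bsy\sigma}\rVert^2_{L^2(\mathbb{S}^2_{0,r_+})}$ plus horizon fluxes of $\bsy\alpha,\bsy\alphab$.

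\emph{Transversal integration.} Finally I would propagate the horizon control into the bulk. All derivative terms in $\overline{\mathbb{E}}^{J}[\widehat{\bsy\rho}],\overline{\mathbb{E}}^{J}[\widehat{\bsy\sigma}]$ are, after commuting with $\nablasl_3^{J_1}\nablasl_4^{J_2}\nablasl^{J_3}$, given pointwise through \eqref{D3_sigma}--\eqref{DA_rho} in terms of $\bsy\alpha,\bsy\alphab$ with one extra angular derivative, hence bounded directly by $\overline{\mathbb{E}}^{J}[r^2\bsy\alpha]$ and $\overline{\mathbb{E}}^{J}[r\bsy\alphab]$. For the zeroth-order terms $\int_{\Sigma_\tau}r^{-2}|\widehat{\bsy\rho}|^2$ (and likewise for $\widehat{\bsy\sigma}$), I would integrate the transversal derivative $\tilde X\widehat{\bsy\rho}$ — rewritten via \eqref{eq:tilde-X_bis} and the modified equations — from $\mc H^+$ outward along $\Sigma_\tau$, applying Cauchy--Schwarz with an $r$-weight whose integrability as $r\to\infty$ is supplied by the $p=2-\eta$ weighted flux of $\bsy\alpha$ (explaining the appearance of $\overline{\mathbb{E}}^{3}_{2-\eta}[r^2\bsy\alpha](0)$ on the right-hand side) and by the faster intrinsic $r$-decay of $\bsy\alphab$; the boundary term at $r_+$ is exactly the horizon quantity bounded in the previous paragraph. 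For the decay statement the same integration, now starting from the decaying horizon values of the normalised solution $\widetilde{\bsy{\mathfrak S}}$ and using the decay of $\mathbb{E}^{J}[r^2\bsy\alpha],\mathbb{E}^{J}[r\bsy\alphab]$ on $\Sigma_\tau$, yields the rate $(1+\tau)^{-(1-\eta)}$, the loss of one power of $\tau$ relative to the extremal rate being the price of trading one $r$-weight in the integration.

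\emph{Main obstacle.} The genuinely delicate parts are the horizon conservation laws with their precisely matching $\mathfrak k$-anomaly, and the choice of $r$-weights in the transversal integration keeping the $r\to\infty$ integral convergent while using only the $r^p$-hierarchy input that is actually bounded by data; both depend essentially on the special behaviour of $\mc N_{\rm as}$ and of the $\mathbb{S}^2$-projection at $\mc H^+$ (Sections~\ref{sec:algebraically-special-props} and \ref{sec:projection_formulae}) and have no counterpart off the horizon. Carrying the exact derivative count $J$ through the commutations is a secondary bookkeeping matter.
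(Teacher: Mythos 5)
Your proposal follows the same overall architecture as the paper's proof: black-box the Teukolsky estimates of \cite{SRTdC2020,SRTdC2023} via the isomorphism of Lemma~\ref{lemma:isomorphism}, establish horizon conservation laws for the charge part of the middle components through the $\mathbb{S}^2$-projection formulae (your $\mathfrak k$-anomaly cancellation is exactly the mechanism of Lemma~\ref{lemma:zero-mode-horizon-oldsystem}), control the orthogonal complement by elliptic inversion/Poincar\'e using \eqref{DA_sigma}--\eqref{DA_rho}, and propagate off the horizon by transport integration while reading all derivative terms directly from the modified system. The paper implements this as Theorem~\ref{thm:teukolsky-EB-ILED} combined with Theorem~\ref{thm:middle-EB-ED-sub}, the latter built from Lemmas~\ref{lemma:zero-mode-horizon-oldsystem}--\ref{lemma:zero-mode-horizon} and Propositions~\ref{prop:L2-norm}--\ref{prop:higher-derivatives}.

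Two points in your write-up need correction. First, the boundedness of the horizon zero modes of $\widehat{\bsy\rho},\widehat{\bsy\sigma}$ cannot be obtained from ``data-bounded horizon fluxes'' of $\bsy\alpha$ alone: integrating $\nablasl_4\widehat{\bsy\rho}\sim\divsl(\Sigma\bsy\alpha)$ in time along $\mc H^+$ and applying Cauchy--Schwarz against a merely bounded flux yields $\sqrt\tau$ growth. What is actually needed, and what the paper uses in Lemma~\ref{lemma:zero-mode-horizon} via a dyadic decomposition in $\tau$, is the quantitative decay of the tail flux $\mathbb{E}_{\mc H^+}[r^2\bsy\alpha](\tau,\infty)\lesssim(1+\tau)^{-(2-\eta)}\,\overline{\mathbb{E}}^{3}_{2-\eta}[r^2\bsy\alpha](0)$ from the third bullet of Theorem~\ref{thm:teukolsky-EB-ILED}; this, not the $r\to\infty$ integration, is where the term $\overline{\mathbb{E}}^{3}_{2-\eta}[r^2\bsy\alpha](0)$ on the right-hand side originates. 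Second, your large-$r$ step as described (Cauchy--Schwarz on $\int_{r_+}^r\tilde X\widehat{\bsy\rho}\,dr'$ with an integrable $r$-weight) would require fluxes such as $\int r^{1+\delta}|\nablasl(r\bsy\alphab)|^2$ \emph{at time $\tau$}, since $\tilde X\widehat{\bsy\rho}$ contains the $\nablasl_3$-equation contribution $\sim\nablasl(r\bsy\alphab)$ at large $r$; these weighted fluxes are not controlled by the unweighted $\overline{\mathbb{E}}^{J}[r\bsy\alphab](\tau)$ appearing on the stated right-hand side, and in the decay statement they would degrade the rate. The paper avoids this entirely by splitting $\Sigma_\tau$ at a large radius $R$: for $r\geq R$ it integrates the multiplier identity $\nablasl_4(r^{-1}|\widehat{\bsy\rho}|^2)+\frac{\Delta}{r^2\Sigma}|\widehat{\bsy\rho}|^2=\frac{2}{r}\widehat{\bsy\rho}\,\nablasl_4\widehat{\bsy\rho}$, absorbing the error from $\tilde X-\nablasl_4$ for $R$ large, which needs only unweighted first-order fluxes; the fundamental-theorem-of-calculus integration in $r$ from $\mc H^+$ is confined to the bounded region $r\leq R$, where weights are irrelevant. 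With these two adjustments your argument coincides with the paper's.
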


\begin{remark}
    The energy norms appearing on the left hand side of the energy estimates in Theorem \ref{thm:precise-EB-ED-sub} are non-degenerate at the event horizon (i.e.~at $r=r_+$).
\end{remark}

\begin{remark} In Theorem~\ref{thm:precise-EB-ED-sub}, the energy boundedness statement does not lose derivatives. In that sense, it provides a true orbital stability statement, comparable to those obtained for $|a|\ll M$ in \cite{Andersson2015a,Blue2008,Pasqualotto2016}. 
\end{remark}

\begin{remark}
The energy decay statement of Theorem~\ref{thm:precise-EB-ED-sub} can be improved, provided that higher regularity and stronger $r$-decay are assumed on the initial data. Sharp decay estimates in this form are obtained in \cite{Ma2022}.
\end{remark}

\begin{remark}
We expect the loss in $r$-weights of Theorem~\ref{thm:precise-EB-ED-sub} to be suboptimal. Treating the middle Maxwell components as solutions to a wave equation (see Appendix~\ref{app:FI-equations}) would lead to improved $r$-weights.
\end{remark}

In Theorem~\ref{thm:precise-EB-ED-sub}, we have $C(a_0)\to \infty$ as $a_0\to M$, and thus the energy boundedness and decay statements break down towards extremality. As we shall explain in Section \ref{sec:analysis-teukolsky}, such a breakdown emerges from the breakdown of energy boundedness and decay statements for the \emph{extremal} Maxwell components. Nevertheless, it is expected that energy boundedness and (weaker) decay for the extremal Maxwell components still hold in the nearly extremal $|a|\approx M$ case, at least under some additional assumptions on the initial data. More precisely, one may conjecture:

\begin{conjecture}[Sub-extremal and extremal cases] \label{conj:teukolsky-EB-ILED-ext} Fix $M>0$. Let $\eta\in(0,1)$ and $m\in \mathbb{Z}$. Then, there exist $J_2,J_3\geq J_{1}\geq J_{\textup{min}}\geq 1$ and a uniform constant $C=C(m, M,J_1,J_2,J_3,\eta)>0$ such that, for any $|a|\leq M$ and any solution $$\bsy{\mathfrak S}=(\bsy{\alpha}[\bsy{\mathfrak S}],\bsy{\alphab}[\bsy{\mathfrak S}],\widehat{\bsy{\rho}}[\bsy{\mathfrak S}],\widehat{\bsy{\sigma}}[\bsy{\mathfrak S}])$$ to the modified Maxwell equations \eqref{D3_sigma}--\eqref{DA_rho} arising from seed initial data supported on a fixed azimuthal mode $m$ prescribed on $\Sigma_0$, the following energy estimates hold.
\begin{itemize}
\item Degenerate energy boundedness: we have
\begin{align*}
{\mathbb{E}}^{J_1}\big[r^2\bsy \alpha[\bsy{\mathfrak S}]\big](\tau)&\leq C\,{\mathbb{D}}_{\mathrm{ bdd}}^{J_2}\big[r^2\bsy{\alpha}[\bsy{\mathfrak S}]\big](0)\,,\\
 {\mathbb{E}}^{J_1}\big[r\bsy \alphab[\bsy{\mathfrak S}]\big](\tau)&\leq C\,{\mathbb{D}}_{\mathrm{ bdd}}^{J_2}\big[r\bsy\alphab[\bsy{\mathfrak S}]\big](0)
\end{align*}
for all $\tau\geq 0$.
\item  Degenerate energy decay: we have
\begin{align*}
{\mathbb{E}}^{J_1}\big[r^2\bsy \alpha [\bsy{\mathfrak S}]\big](\tau)&\leq \frac{C}{(1+\tau)^{1-\eta}}\,\mathbb{D}_{\mathrm{dec}}^{J_3}\big[r^2\bsy \alpha [\bsy{\mathfrak S}]\big](0)\,,\\
{\mathbb{E}}^{J_1}\big[r\bsy \alphab[\bsy{\mathfrak S}]\big](\tau)&\leq \frac{C}{(1+\tau)^{1-\eta}}\,\mathbb{D}_{\mathrm{dec}}^{J_3}\big[r\bsy\alphab[\bsy{\mathfrak S}]\big](0)
\end{align*}
for all $\tau\geq 0$.
\item  Decay at the event horizon: we have
\begin{align*}
{\mathbb{E}}_{\mc H^+}\big[r^2\bsy \alpha[\bsy{\mathfrak S}]\big](\tau,\infty)&\leq \frac{C}{(1+\tau)^{2-\eta}}\,\mathbb{D}_{\mathrm{dec}}^{J_3}\big[r^2\bsy \alpha [\bsy{\mathfrak S}]\big](0)\,,\\
\int_{\mathbb{S}^2_{\tau,r_+}}\lp(|\nablasl{\mathfrak T}(r^2\bsy\alpha)|^2+|{\mathfrak T}(r^2\bsy\alpha)|^2\rp) d\Omega &\leq \frac{C}{(1+\tau)^{1-\eta}}\,\mathbb{D}_{\mathrm{dec}}^{J_3}\big[r^2\bsy \alpha [\bsy{\mathfrak S}]\big](0)\,,\\
\int_{\mathbb{S}^2_{\tau,r_+}}\lp(|\nablasl\underline{\mathfrak T}(r\bsy\alphab)|^2+|\underline{\mathfrak T}(r\bsy\alphab)|^2\rp) d\Omega &\leq \frac{C}{(1+\tau)^{1-\eta}}\,\mathbb{D}_{\mathrm{dec}}^{J_3}\big[r\bsy \alphab [\bsy{\mathfrak S}]\big](0)\,,
\end{align*}
for all $\tau\geq 0$.
\end{itemize}
In the above, $\mathbb{D}_{\mathrm{bdd}}^{J}$ and $\mathbb{D}_{\mathrm{dec}}^{J}$ are $J$-th order energy quantities depending only on the seed initial data. We also recall definition \eqref{def_quantities_3_alpha_4_alphab}.
\end{conjecture}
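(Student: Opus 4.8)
\emph{Proof strategy (the statement being a conjecture, we outline how it would be established).} Conjecture~\ref{conj:teukolsky-EB-ILED-ext} is a boundedness, integrated local energy decay (ILED) and time-decay statement for the spin $\pm 1$ Teukolsky equations \eqref{eq:Teukolsky-spin} on the whole Kerr family $|a|\leq M$, restricted to a fixed azimuthal mode $m$. For $|a|<M$ it is precisely what is proved (with no restriction on $m$) in \cite{SRTdC2020,SRTdC2023}, so the substance of the statement lies in the extremal and near-extremal regime $|a|\approx M$; the plan below concentrates on this case and on why the statement is phrased as a conjecture.

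The plan is to follow the frequency-space strategy of \cite{SRTdC2020,SRTdC2023}. First, use Lemma~\ref{lemma:isomorphism} to pass to the spin-weighted scalars $\swei{\upalpha}{\pm1}$, then to the rescalings $\swei{\upphi}{\pm1}_0$ of \eqref{eq:def-upphi} and the transformed quantities $\swei{\Phi}{\pm1}$ of \eqref{eq:def-Phi}, which satisfy Regge--Wheeler-type equations with real potentials and whose natural energies are equivalent to those of Section~\ref{sec:Teukolsky-norms}. Second, since $m$ is fixed, take a time-cut-off Fourier transform in $t$ and decompose in spin-weighted spheroidal harmonics to reduce to a family of radial ODEs $u''+(\omega^2-V)u=H$ parametrised by the frequency $\omega$ and the spheroidal eigenvalue $\Lambda$, with $H$ the error produced by the cut-off. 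Third, run the microlocal multiplier analysis: for $\omega^2+\Lambda$ large, a virial/Morawetz current degenerating at the photon-sphere trapping radius together with an $r^p$-hierarchy at infinity; for bounded, non-superradiant frequencies, the $T$- and $K$-energy currents; these pieces go through, essentially as in \cite{SRTdC2023}, uniformly in $|a|\leq M$, and yield the degenerate-at-the-horizon boundedness and ILED, the derivative losses $J_2,J_3\geq J_1$ arising from commuting with $T$, with $Z$ (which acts as $im$ on a fixed mode) and with a transversal Aretakis-type commutation field near $\mc H^+$. Fourth, upgrade ILED to the rates $(1+\tau)^{-(1-\eta)}$ and the horizon-flux rate $(1+\tau)^{-(2-\eta)}$ by the $r^p$-hierarchy and a pigeonhole argument, the $\eta$-loss reflecting the inability to close the top of the hierarchy at a degenerate horizon; the faster $\mathbb{E}_{\mc H^+}[r^2\bsy\alpha]$ decay for the spin $+1$ component then follows by integrating its transport equation along the null generator of $\mc H^+$ against the bulk decay, using that $\bsy\alpha$ — unlike $\bsy\alphab$ — carries no obstruction there.

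The main obstacle, and the reason for the conjectural status, is the near-horizon superradiant regime of extremal Kerr: frequencies $\omega$ in a window around $\omega=m\omega_+=m/(2M)$. The decisive input of \cite{SRTdC2020,SRTdC2023} for $|a|<M$ is a \emph{quantitative} non-trapping bound for superradiant frequencies; this degenerates as $|a|\to M$, because $\Delta=(r-r_+)(r-r_-)$ acquires a double root, the photon sphere merges with the horizon, and in the tortoise coordinate the horizon is reached only polynomially, so the Regge--Wheeler potential decays like $(r^*)^{-2}$ rather than exponentially — exactly the mechanism of the Aretakis instability, which rules out any non-degenerate horizon estimate. The route I would attempt is matched asymptotics: in the self-similar near-horizon region one obtains, after rescaling, an effective problem on the $\mathrm{AdS}_2\times S^2$ near-horizon geometry of extremal Kerr whose radial part is an exactly solvable (hypergeometric) ODE; from it one extracts the precise boundary behaviour of the radial solutions as $\omega\to m\omega_+$, matches to the far-region WKB solutions, and feeds the resulting polynomially-small bounds back into the physical-space estimates to obtain the degenerate boundedness, ILED and the stated $\eta$-losses. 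Making this rigorous — in particular controlling the matching errors uniformly down to $a=M$ — is genuinely open; partial physical-space evidence for related equations is in \cite{Gajic2023}, the expected rates match the heuristics of \cite{Casals2016,Gralla2018}, and the conservation laws of \cite{Aretakis2012} force transversal derivatives of $\bsy\alphab$ not to decay, consistent with the degeneracy of the norms in the conjecture. For fixed $m$ the constants in this near-horizon analysis are finite but are expected to blow up as $|m|\to\infty$, which is why the conjecture — and any proof along these lines — is restricted to a single azimuthal mode.
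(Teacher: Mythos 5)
The statement you were asked to prove is a \emph{conjecture}: the paper offers no proof of it, only a motivation (Section~\ref{sec:teukoslsky-extremal} and Remark~\ref{rmk:SRTdC20-extremal}) drawn from the heuristics of Casals--Gralla--Zimmerman and the rigorous scalar-wave results of Gajic. You correctly recognise this, so there is no proof in the paper against which your argument can be checked; what can be assessed is whether your outline is consistent with the paper's own framing of why the statement is open, and it largely is. You identify the same essential obstruction the paper points to: the only uniform-in-$a$ estimates available in the full range $|a|\leq M$ (from \cite{SRTdC2020}) hold only for frequencies away from the superradiant threshold, and it is precisely the near-horizon, near-threshold regime at extremality — where the photon sphere merges with the degenerate horizon and the Aretakis conservation laws force non-decay of transversal derivatives of $\bsy\alphab$ — that breaks the sub-extremal argument. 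Your observation that the conjecture's norms are deliberately degenerate at $\mathcal H^+$, and that the restriction to a fixed azimuthal mode is forced by the expected blow-up of constants as $|m|\to\infty$, matches Remark~\ref{rmk:ext-sum-m} and the discussion surrounding the conjecture.

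Two caveats. First, be careful with the claim that for $|a|<M$ the conjecture ``is precisely what is proved'' in \cite{SRTdC2020,SRTdC2023}: Theorem~\ref{thm:teukolsky-EB-ILED} gives constants $C(a_0)\to\infty$ as $a_0\to M$, whereas the conjecture demands a constant uniform over the \emph{entire} closed range $|a|\leq M$; the sub-extremal results therefore do not even settle the $|a|<M$ portion of the conjecture as stated. Second, your proposed route via matched asymptotics on the $\mathrm{AdS}_2\times S^2$ near-horizon geometry is a reasonable research programme (it is essentially the Casals--Gralla--Zimmerman heuristic promoted to rigour), but it is one of several conceivable approaches — the paper itself gestures instead towards adapting Gajic's techniques — and the hard analytic content (uniform control of the matching errors down to $a=M$, and closing the $r^p$-hierarchy at a degenerate horizon) remains entirely open, as you acknowledge. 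In short: your submission is an honest and well-informed sketch of why the statement is plausible and hard, not a proof, and none exists in the paper either.
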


\begin{remark}
    The energy norms appearing on the left hand side of the energy estimates in Conjecture \ref{conj:teukolsky-EB-ILED-ext} are degenerate at the event horizon (i.e.~at $r=r_+$).
\end{remark}

\begin{remark} \label{rmk:SRTdC20-extremal}
We note that Conjecture~\ref{conj:teukolsky-EB-ILED-ext} is a uniform-in-$a$ statement in the full $|a|\leq M$ range. Very few such results exist in the literature at the present time. To the best of our knowledge, the only uniform-in-$a$ statements in the full $|a|\leq M$ range were shown in \cite{SRTdC2020} for the radial ODE obtained from the Teukolsky equation \eqref{eq:Teukolsky-spin} by (spacetime) frequency decomposition, but only in the case where the frequency parameters are sufficiently far from the so-called superradiant threshold. A lot of work in the physics and mathematics literature from recent years has focused on the \emph{exactly} extremal $|a|=M$ case. Conjecture~\ref{conj:teukolsky-EB-ILED-ext} is formulated based on this literature, which we briefly review in Section \ref{sec:teukoslsky-extremal}.
\end{remark}

The second theorem proven in the present paper concerns energy boundedness and decay for the \emph{middle} Maxwell components in the full $|a|\leq M$ black hole parameter range and assumes Conjecture~\ref{conj:teukolsky-EB-ILED-ext}. The energy estimates for the extremal Maxwell components of Conjecture~\ref{conj:teukolsky-EB-ILED-ext} are embedded in the statement below and play a crucial role in the proof of the energy estimates for the middle Maxwell components.

\begin{theorem}[Sub-extremal and extremal cases] \label{thm:precise-EB-ED-ext} Fix $M>0$. Let $\eta\in(0,1)$ and $m\in\mathbb{Z}$. Assume that Conjecture~\ref{conj:teukolsky-EB-ILED-ext} holds. Then, there exist $J_2,J_3\geq J_{1}\geq J_{\textup{min}}\geq 1$ and a uniform constant $C=C(m, M,J_1,J_2,J_3,\eta)>0$ such that, for any $|a|\leq M$ and any solution $$\bsy{\mathfrak S}=(\bsy{\alpha}[\bsy{\mathfrak S}],\bsy{\alphab}[\bsy{\mathfrak S}],\widehat{\bsy{\rho}}[\bsy{\mathfrak S}],\widehat{\bsy{\sigma}}[\bsy{\mathfrak S}])$$ to the modified Maxwell equations \eqref{D3_sigma}--\eqref{DA_rho} arising from seed initial data supported on a fixed azimuthal mode $m$ prescribed on $\Sigma_0$, the following energy estimates hold.
\begin{itemize}
\item Degenerate energy boundedness: we have
\begin{align*}
{\mathbb{E}}^{J_1}\big[r^2\bsy \alpha[\bsy{\mathfrak S}]\big](\tau)&\leq C\,{\mathbb{D}}_{\mathrm{ bdd}}^{J_2}\big[r^2\bsy{\alpha}[\bsy{\mathfrak S}]\big](0)\,,\\
{\mathbb{E}}^{J_1}\big[r\bsy \alphab[\bsy{\mathfrak S}]\big](\tau)&\leq C\,{\mathbb{D}}_{\mathrm{ bdd}}^{J_2}\big[r\bsy\alphab[\bsy{\mathfrak S}]\big](0)\,,\\
\mathbb{E}^{J_1}\big[\widehat{\bsy\rho}[\bsy{\mathfrak S}]\big](\tau)+\mathbb{E}^{J_1}\big[\widehat{\bsy\sigma}[\bsy{\mathfrak S}]\big](\tau) 
& \leq C \Big(\lVert|\widehat{\bsy \rho}[\bsy{\mathfrak S}]\rVert^2_{L^2(\mathbb{S}^2_{0,r_+})}+\lVert\widehat{\bsy \sigma}[\bsy{\mathfrak S}]\rVert^2_{L^2(\mathbb{S}^2_{0,r_+})}
\Big)\\
& \qquad+ C\lp({\mathbb{D}}_{\mathrm{ dec}}^{J_3}\big[r^2{\bsy\alpha}[\bsy{\mathfrak S}]\big](0)+{\mathbb{D}}_{\mathrm{ bdd}}^{J_2}\big[r^2{\bsy\alpha}[\bsy{\mathfrak S}]\big](0)\rp) \\
& \qquad+ C\lp({\mathbb{D}}_{\mathrm{ dec}}^{J_3}\big[r{\bsy\alphab}[\bsy{\mathfrak S}]\big](0)+{\mathbb{D}}_{\mathrm{ bdd}}^{J_2}\big[r{\bsy\alphab}[\bsy{\mathfrak S}]\big](0)\rp) 
\end{align*}
for all $\tau\geq 0$.
\item Degenerate energy decay (extremal Maxwell components): we have
\begin{align*}
    {\mathbb{E}}^{J_1}\big[r^2\bsy \alpha[\widetilde{\bsy{\mathfrak S}}]\big](\tau)&\leq \frac{C}{(1+\tau)^{1-\eta}}\,\mathbb{D}_{\mathrm{dec}}^{J_3}\big[r^2\bsy \alpha [\bsy{\mathfrak S}]\big](0)\,,\\
{\mathbb{E}}^{J_1}\big[r\bsy \alphab[\widetilde{\bsy{\mathfrak S}}]\big](\tau)&\leq \frac{C}{(1+\tau)^{1-\eta}}\,\mathbb{D}_{\mathrm{dec}}^{J_3}\big[r\bsy\alphab[\bsy{\mathfrak S}]\big](0)
\end{align*}
for all $\tau\geq 0$.
\item Degenerate energy decay (middle Maxwell components): there exist finite $\mathfrak{q}_1,\mathfrak{q}_2\in\mathbb{R}$, which can be read off from the seed initial data, such that the normalised solution $\widetilde{\bsy{\mathfrak S}}=\bsy{\mathfrak S}-\bsy{\mathfrak S}_{\mathfrak{q}_1,\mathfrak{q}_2}$ satisfies
\begin{align*}
    \bsy \alpha [\widetilde{\bsy{\mathfrak S}}](\tau)&=\bsy \alpha [\bsy{\mathfrak S}](\tau) \, , & \bsy \alphab[\widetilde{\bsy{\mathfrak S}}](\tau)&=\bsy \alphab[\bsy{\mathfrak S}](\tau)
\end{align*}
and
\begin{align*}
\mathbb{E}^{J_1}\big[\widehat{\bsy\rho}[\widetilde{\bsy{\mathfrak S}}]\big](\tau)+\mathbb{E}^{J_1}\big[\widehat{\bsy\sigma}\big[\widetilde{\bsy{\mathfrak S}}]\big](\tau) &\leq \frac{C}{(1+\tau)^{1-\eta}} \lp({\mathbb{D}}_{\mathrm{ dec}}^{J_3}\big[r^2{\bsy\alpha}[\bsy{\mathfrak S}]\big](0)+{\mathbb{D}}_{\mathrm{ bdd}}^{J_2}\big[r^2{\bsy\alpha}[\bsy{\mathfrak S}]\big](0) \rp)\\
&\qquad +\frac{C}{(1+\tau)^{1-\eta}} \lp( {\mathbb{D}}_{\mathrm{ dec}}^{J_3}\big[r{\bsy\alphab}[\bsy{\mathfrak S}]\big](0)+{\mathbb{D}}_{\mathrm{ bdd}}^{J_2}\big[r{\bsy\alphab}[\bsy{\mathfrak S}]\big](0)\rp)
\end{align*}
for all $\tau\geq 0$.
\end{itemize}
In the above, $\mathbb{D}_{\mathrm{bdd}}^{J}$ and $\mathbb{D}_{\mathrm{dec}}^{J}$ are the $J$-th order energy quantities introduced in Conjecture \ref{conj:teukolsky-EB-ILED-ext}.
\end{theorem}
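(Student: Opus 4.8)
The plan is to reduce everything to Conjecture~\ref{conj:teukolsky-EB-ILED-ext}, which already supplies the degenerate energy boundedness and decay for $\bsy\alpha[\bsy{\mathfrak S}]$ and $\bsy\alphab[\bsy{\mathfrak S}]$ as well as the sharp decay of the degenerate horizon flux $\mathbb E_{\mc H^+}[r^2\bsy\alpha]$ and of the (non-degenerate) horizon quantities involving $\mathfrak T(r^2\bsy\alpha)$ and $\underline{\mathfrak T}(r\bsy\alphab)$. The first two bullet points of the theorem are then immediate: the extremal estimates are literally the corresponding statements of the Conjecture (using $\bsy\alpha[\widetilde{\bsy{\mathfrak S}}]=\bsy\alpha[\bsy{\mathfrak S}]$ and $\bsy\alphab[\widetilde{\bsy{\mathfrak S}}]=\bsy\alphab[\bsy{\mathfrak S}]$ since the stationary solution $\bsy{\mathfrak S}_{\mathfrak q_1,\mathfrak q_2}$ has vanishing extremal components). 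So the entire content is the analysis of $(\widehat{\bsy\rho},\widehat{\bsy\sigma})$, which is precisely the subject of Section~\ref{sec:analysis-middle}. The heart of the matter is: given the degenerate-at-horizon control of $(\bsy\alpha,\bsy\alphab)$ furnished by the Conjecture, deduce non-degenerate-away-from-horizon (equivalently, $\mathbb E^{J_1}$, which includes a zeroth order term but whose $\nablasl_3$-weight $\Delta r^{-4}$ degenerates at $r=r_+$) control of $(\widehat{\bsy\rho},\widehat{\bsy\sigma})$.

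The core estimates proceed as follows. First, from the modified Maxwell equations \eqref{D3_rho}--\eqref{DA_rho} and \eqref{D3_sigma}--\eqref{DA_sigma}, observe that $\nablasl_4(\widehat{\bsy\rho},\widehat{\bsy\sigma})$ and $\nablasl(\widehat{\bsy\rho},\widehat{\bsy\sigma})$ are controlled \emph{pointwise} by first-order quantities in $\Sigma\bsy\alpha$, while $\nablasl_3(\widehat{\bsy\rho},\widehat{\bsy\sigma})$ is controlled by first-order quantities in $\bsy\alphab$; the $\Delta r^{-4}$ weight on the $\nablasl_3$ term in $\mathbb E^{J_1}[\widehat{\bsy\rho}]$ is exactly what matches the weight with which $\nablasl_3(r\bsy\alphab)$ appears in the degenerate norm $\mathbb E^{J_1}[r\bsy\alphab]$. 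Thus the $\dot{\mathbb E}^{J_1}$-part of the flux of $(\widehat{\bsy\rho},\widehat{\bsy\sigma})$ on each $\Sigma_\tau$ is controlled, pointwise on the slice, by $\mathbb E^{J_1}[r^2\bsy\alpha](\tau)+\mathbb E^{J_1}[r\bsy\alphab](\tau)$, hence by the initial data via the Conjecture; for decay one uses the decay bullet of the Conjecture. Second, one must control the zeroth-order term $\int_{\Sigma_\tau}r^{-2}|\widehat{\bsy\rho}|^2\,drd\Omega$. Here one integrates the $\nablasl_4$-equation \eqref{D4_rho} (i.e.\ transports along the ingoing-from-infinity direction, or better along $e_4^{\rm as}$) from the initial slice, or alternatively integrates transversally off $\mc H^+$; this requires an ``anchor'' on a codimension-one set, which is provided by control of $(\widehat{\bsy\rho},\widehat{\bsy\sigma})$ on the horizon sphere $\mathbb S^2_{\tau,r_+}$. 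For boundedness this anchor is, via the horizon conservation laws $\p_\tau\mathfrak Q_{1,2}=0$ (and the crucial identity \eqref{eq:crucial-identity-intro}), expressible in terms of the data $\|\widehat{\bsy\rho}\|_{L^2(\mathbb S^2_{0,r_+})}$, $\|\widehat{\bsy\sigma}\|_{L^2(\mathbb S^2_{0,r_+})}$ and horizon fluxes of $(\bsy\alpha,\bsy\alphab)$; this is precisely why the right-hand side of the boundedness estimate contains the two $L^2(\mathbb S^2_{0,r_+})$ norms together with $\mathbb D_{\rm bdd}$ and $\mathbb D_{\rm dec}$ quantities for the extremal components. Third, for the decay bullet one first subtracts $\bsy{\mathfrak S}_{\mathfrak q_1,\mathfrak q_2}$ with $\mathfrak q_1,\mathfrak q_2$ chosen as the teleological horizon limits (equivalently, by the conservation laws, as $\mathfrak Q_{1,2}(0)$ read off from data); this makes the horizon zero modes of $(\widehat{\bsy\rho}-\mathfrak q_1,\widehat{\bsy\sigma}-\mathfrak q_2)$ decay, and then the identity \eqref{eq:crucial-identity-intro} combined with decay of $\mathbb E_{\mc H^+}[r^2\bsy\alpha]$, of the $\mathfrak T$, $\underline{\mathfrak T}$ horizon quantities, and of $\mathbb E^{J_1}[r^2\bsy\alpha], \mathbb E^{J_1}[r\bsy\alphab]$ propagates decay of $(\widehat{\bsy\rho},\widehat{\bsy\sigma})$ off the horizon. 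The loss of two derivatives in passing from $J_1$ to $J_2,J_3$ absorbs the derivatives spent both in the transport/elliptic estimates for $(\widehat{\bsy\rho},\widehat{\bsy\sigma})$ and in the horizon conservation-law argument.

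The main obstacle is the propagation step in the extremal $|a|=M$ setting: one does \emph{not} have decay of $\bsy\alphab$ along $\mc H^+$ (indeed $\underline{\bsy\Phi}=\underline{\mathfrak T}(\bsy\alphab)$-type transversal derivatives are expected to grow), so the naive transversal integration of \eqref{D4_rho} off the horizon fails. The resolution, already flagged in the introduction, is to combine the transport equations with the Teukolsky structure to produce the schematic identity $\p_r(\widehat{\bsy\rho},\widehat{\bsy\sigma})=\p_r\underline{\bsy\Phi}+(\text{faster-decaying})$ at $\mc H^+$, so that the dangerous growing quantity appears as a total $r$-derivative and is absorbed upon integration, leaving $(\widehat{\bsy\rho},\widehat{\bsy\sigma})=(\widehat{\bsy\rho},\widehat{\bsy\sigma})|_{\mc H^+}+\underline{\bsy\Phi}+(\text{faster-decaying})$; one then only needs \emph{degenerate} (hence bounded/decaying) control of $\underline{\bsy\Phi}$, which the Conjecture supplies. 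Making this rigorous at all orders $J_1$, with the correct $r$-weights so that the degenerate norms on the right genuinely close, and checking that the anchoring via the horizon conservation laws survives differentiation by $\nablasl_4^{J_2}\nablasl^{J_3}$ (using $\nablasl_4\Gamma|_{\mc H^+}=0$ and $\nablasl_4\mathfrak k|_{\mc H^+}=0$ from Sections~\ref{sec:algebraically-special-props} and \ref{sec:projection_formulae}), is where the real work lies; the remaining energy manipulations are routine given the black-box input.
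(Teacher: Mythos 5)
Your proposal is correct and follows essentially the same route as the paper: the theorem is proved by combining Conjecture~\ref{conj:teukolsky-EB-ILED-ext} (black box for the extremal components) with Theorem~\ref{thm:middle-EB-ED-ext}, whose proof rests on exactly the ingredients you identify — the transport/elliptic structure of the modified system for the $\dot{\mathbb E}^J$ part (Proposition~\ref{prop:higher-derivatives}), the horizon conservation laws obtained via the $\mathbb{S}^2$-projection to anchor the zeroth-order flux (Lemmas~\ref{lemma:zero-mode-horizon-oldsystem}--\ref{lemma:zero-mode-horizon} and Proposition~\ref{prop:L2-norm}), and the identity combining the Maxwell equation \eqref{eq:Maxwell-n} with the Teukolsky equation so that the near-horizon transversal derivative of $(\widehat{\bsy\rho},\widehat{\bsy\sigma})$ is a boundary term in $\swei{\Phi}{-1}$ plus degenerately-weighted bulk terms (Lemmas~\ref{lemma:Teukolsky-invertible-identity}--\ref{lemma:Maxwell-n-identity} and Proposition~\ref{prop:control-near-hor-spin}). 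The one technical step your sketch leaves implicit, and which the paper must supply, is the invertibility of the modified spin-weighted angular operator $\mathring{\slashed\triangle}^{[-1]}_{\mc H}$ uniformly in $|a|\leq M$ (Lemma~\ref{lemma:invertibility}), which is what allows the Teukolsky constraint to be solved for $\swei{\tilde\upphi}{-1}_0$ and substituted into the Maxwell identity.
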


\begin{remark}
    As for Conjecture \ref{conj:teukolsky-EB-ILED-ext}, and contrary to Theorem \ref{thm:precise-EB-ED-sub}, the energy norms appearing on the left hand side of the energy estimates in Theorem \ref{thm:precise-EB-ED-ext} are degenerate at the event horizon (i.e.~at $r=r_+$). Moreover, we note that Theorem \ref{thm:precise-EB-ED-ext} is a uniform-in-$a$ statement in the full $|a|\leq M$ range.
\end{remark}

The proof of Theorems \ref{thm:precise-EB-ED-sub} and \ref{thm:precise-EB-ED-ext} is carried out in Section \ref{sec:proof_main_theorems} and relies on the separate analysis of the middle and extremal Maxwell components, which is presented in Sections \ref{sec:analysis-middle} and \ref{sec:analysis-teukolsky} respectively.

%
%
\section{Analysis of the middle Maxwell components}
\label{sec:analysis-middle}

In this section, we prove energy estimates for the middle Maxwell components in terms of energy norms of the extremal Maxwell components. The ones proven in this section are standalone statements. Once combined with the statements for the extremal Maxwell components of Section \ref{sec:analysis-teukolsky}, the energy estimates proven in this section establish energy boundedness and decay for the middle Maxwell components (hence the terminology ``boundedness" and ``decay" already appearing in Theorems \ref{thm:middle-EB-ED-sub} and \ref{thm:middle-EB-ED-ext} below and throughout the section). The energy estimates for the middle Maxwell components are obtained in a unified fashion for the full $|a|\leq M$ range.

We state the first theorem of the section, which concerns non-degenerate-at-$\mc H^+$ energy norms.

\begin{theorem} \label{thm:middle-EB-ED-sub} Fix $M>0$. Let $J\geq 1$ and $p_\alpha>0$. Assume that, for any $|a|\leq M$ and any solution 
$$\bsy{\mathfrak S}=(\bsy{\alpha}[\bsy{\mathfrak S}],\bsy{\alphab}[\bsy{\mathfrak S}],\widehat{\bsy{\rho}}[\bsy{\mathfrak S}],\widehat{\bsy{\sigma}}[\bsy{\mathfrak S}])$$ 
to the modified Maxwell equations \eqref{D3_sigma}--\eqref{DA_rho} arising from seed initial data prescribed on $\Sigma_0$, the qualitative statement
\begin{align*}
\lim_{\tau\to \infty}\int_{\mathbb{S}^2_{\tau,r_+}}|\underline{\mathfrak{T}}(\bsy\alphab[\bsy{\mathfrak S}])|^2 d\Omega = 0
\end{align*}
and the quantitative estimate
\begin{align*}
\mathbb E_{\mc H^+}[r^2\bsy\alpha[\bsy{\mathfrak S}]](\tau,\infty)\leq \frac{\mathbb D[r^2\bsy\alpha[\bsy{\mathfrak S}]](0)}{(1+\tau)^{1+p_{\alpha}}}
\end{align*}
hold, where $\mathbb D$ is an energy quantity depending only on the seed initial data. Then, there exists a uniform constant $C=C(M,J,p_{\alpha})>0$ such that the following energy estimates hold.
\begin{itemize}
\item Non-degenerate energy boundedness: we have 
\begin{align*}
\overline{\mathbb{E}}^J[\widehat{\bsy \rho}[\bsy{\mathfrak S}]](\tau ) +\overline{\mathbb{E}}^J[\widehat{\bsy \sigma}[\bsy{\mathfrak S}]](\tau ) &\leq C\lp(\lVert \widehat{\bsy\rho}[\bsy{\mathfrak S}]\rVert^2_{L^2({\mathbb{S}^2_{0,r_+}})}+\lVert \widehat{\bsy\sigma}[\bsy{\mathfrak S}]\rVert^2_{L^2({\mathbb{S}^2_{0,r_+}})}\rp)\\
&\qquad +C\lp(  \mathbb{D}[r^2\bsy\alpha[\bsy{\mathfrak S}]](0)  + \overline{\mathbb{E}}^{\max\{J,1\}}[r^2\bsy\alpha[\bsy{\mathfrak S}]](\tau)+\overline{\mathbb{E}}^{\max\{J,1\}}[r\bsy\alphab[\bsy{\mathfrak S}]](\tau)\rp)
\end{align*}
for all $\tau\geq 0$.
\item Non-degenerate energy decay: there exist finite $\mathfrak q_1,\mathfrak q_2\in\mathbb{R}$, which can be read off from the seed initial data, such that the normalised solution $\widetilde{\bsy{\mathfrak S}}=\bsy{\mathfrak S}-\bsy{\mathfrak S}_{\mathfrak q_1,\mathfrak q_2}$ satisfies
\begin{align*}
    \bsy \alpha [\widetilde{\bsy{\mathfrak S}}](\tau)&=\bsy \alpha [\bsy{\mathfrak S}](\tau) \, , & \bsy \alphab[\widetilde{\bsy{\mathfrak S}}](\tau)&=\bsy \alphab[\bsy{\mathfrak S}](\tau)
\end{align*}
and
\begin{align*}
\overline{\mathbb{E}}^J[\widehat{\bsy \rho}[\widetilde{\bsy{\mathfrak S}}]](\tau )+\overline{\mathbb{E}}^J[\widehat{\bsy \sigma}[\widetilde{\bsy{\mathfrak S}}]](\tau )  &\leq  \frac{C\,\mathbb{D}[r^2\bsy\alpha[\bsy{\mathfrak S}]](0)}{(1+\tau)^{p_\alpha}}  + C\lp(\overline{\mathbb{E}}^{\max\{J,1\}}[r^2\bsy\alpha[\bsy{\mathfrak S}]](\tau)+\overline{\mathbb{E}}^{\max\{J,1\}}[r\bsy\alphab[\bsy{\mathfrak S}]](\tau)\rp)
\end{align*}
for all $\tau\geq 0$.
\end{itemize}
\end{theorem}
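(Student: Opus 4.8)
\emph{Strategy.} The engine of the proof is the observation that the modified Maxwell equations \eqref{D3_sigma}--\eqref{DA_rho} express each of $\nablasl_3\bsy f$, $\nablasl_4\bsy f$ and $\nablasl\bsy f$ (for $\bsy f\in\{\widehat{\bsy\rho},\widehat{\bsy\sigma}\}$) \emph{algebraically} in terms of a single derivative of $\bsy\alpha$ and $\bsy\alphab$, with coefficients built from $\Sigma$, $\truchi$, $\epsuchi$, which are bounded with definite $r$-weights by Section~\ref{sec:algebraically-special-props}. Commuting these identities with the Killing field $T$ and with $\nablasl_4,\nablasl,\nablasl_3$ — which produces further bounded connection and curvature coefficients together with extra derivatives of $\bsy\alpha,\bsy\alphab$ — and matching the $r$-weights of the norms of Sections~\ref{sec:Teukolsky-norms} and \ref{sec:middle-norms}, I would first establish
\[
\dot{\overline{\mathbb E}}^J[\widehat{\bsy\rho}](\tau)+\dot{\overline{\mathbb E}}^J[\widehat{\bsy\sigma}](\tau)\;\lesssim\;\overline{\mathbb E}^{\max\{J,1\}}[r^2\bsy\alpha](\tau)+\overline{\mathbb E}^{\max\{J,1\}}[r\bsy\alphab](\tau)\qquad(\tau\ge 0).
\]
Since subtracting the constants $\mathfrak q_{1,2}$ does not affect any derivative, this already controls every term of $\overline{\mathbb E}^J[\widehat{\bsy\rho}[\widetilde{\bsy{\mathfrak S}}]]$ and $\overline{\mathbb E}^J[\widehat{\bsy\sigma}[\widetilde{\bsy{\mathfrak S}}]]$ \emph{except} the zeroth-order term $\int_{\Sigma_\tau}r^{-2}|\bsy f|^2\,drd\Omega$; estimating that term is the object of the remaining two steps.

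\emph{Horizon analysis.} On $\mc H^+$ the generator $e_4^{\rm as}$ is tangent and spanned by $T,\widetilde Z$ \eqref{intro_e4_horizon_killing}, so restricting the $\nablasl_4$-equations \eqref{D4_sigma}--\eqref{D4_rho} to $\mc H^+$ and projecting to the horizon spheres via Proposition~\ref{prop:proj-formulas} produces transport equations for the scalars $\widehat{\bsy\rho}|_{\mc H^+}$, $\widehat{\bsy\sigma}|_{\mc H^+}$ along the null generators whose sources are $\divsl(\Sigma\bsy\alpha)$ and $\curlsl(\Sigma\bsy\alpha)$ --- crucially involving only $\bsy\alpha$, and bounded in $L^2(\mathbb S^2_{\tau,r_+})$ by the integrand of $\mathbb E_{\mc H^+}[r^2\bsy\alpha]$. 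Integrating along generators and inserting the quantitative hypothesis, a dyadic Cauchy--Schwarz argument in time --- summable precisely because $p_\alpha>0$ --- shows that $\widehat{\bsy\rho}|_{\mc H^+}(\tau)$, $\widehat{\bsy\sigma}|_{\mc H^+}(\tau)$ converge in $L^2(\mathbb S^2_{\tau,r_+})$, with $\sup_\tau\|\bsy f|_{\mc H^+}(\tau)\|^2_{L^2(\mathbb S^2_{\tau,r_+})}\lesssim \|\widehat{\bsy\rho}\|^2_{L^2(\mathbb S^2_{0,r_+})}+\|\widehat{\bsy\sigma}\|^2_{L^2(\mathbb S^2_{0,r_+})}+\mathbb D[r^2\bsy\alpha](0)$ and, running the same estimate on $[\tau,\infty)$, $\|\bsy f|_{\mc H^+}(\tau)-\bsy f_\infty\|^2_{L^2(\mathbb S^2_{\tau,r_+})}\lesssim (1+\tau)^{-p_\alpha}\mathbb D[r^2\bsy\alpha](0)$. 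Combining \eqref{DA_rho}--\eqref{DA_sigma} on $\mc H^+$ with a once-differentiated version of these transport equations and with the two hypotheses --- in particular $\int_{\mathbb S^2_{\tau,r_+}}|\underline{\mathfrak{T}}(\bsy\alphab)|^2\to 0$ --- I would then show $\nablasl\widehat{\bsy\rho}|_{\mc H^+},\nablasl\widehat{\bsy\sigma}|_{\mc H^+}\to 0$ in $L^2(\mathbb S^2_{\tau,r_+})$, so that $\bsy f_\infty$ is a \emph{constant}. These constants $\mathfrak q_1,\mathfrak q_2$ are identified with explicit functionals of the seed data by tracing the transport equations, equivalently via the horizon conservation laws $\p_\tau\mathfrak Q_1=\p_\tau\mathfrak Q_2=0$.

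\emph{Propagation off the horizon.} The radial field $\tilde X$ \eqref{eq:tilde-X} admits globally on $\mc M$, including on $\mc H^+$, a decomposition $\tilde X=d_3 e_3^{\rm as}+d_4 e_4^{\rm as}+d^A e_A^{\rm as}$ with smooth bounded coefficients (reducing to \eqref{eq:tilde-X_bis} for $r\ge 9M/4$), so that by \eqref{D3_sigma}--\eqref{DA_rho} the quantity $\tilde X\widehat{\bsy\rho}$ is again controlled by one derivative of $\bsy\alpha,\bsy\alphab$. Writing, along the integral curves of $\tilde X$ inside $\Sigma_\tau$, $\widehat{\bsy\rho}(\tau,r,\omega)=\widehat{\bsy\rho}(\tau,r_+,\omega)+\int_{r_+}^r(\tilde X\widehat{\bsy\rho})(\tau,r',\omega)\,dr'$, squaring, applying a weighted Cauchy--Schwarz, and integrating against $r^{-2}\,dr\,d\Omega$ gives
\[
\int_{\Sigma_\tau}r^{-2}|\widehat{\bsy\rho}|^2\,drd\Omega\;\lesssim\;\|\widehat{\bsy\rho}|_{\mc H^+}(\tau)\|^2_{L^2(\mathbb S^2_{\tau,r_+})}+\overline{\mathbb E}^1[r^2\bsy\alpha](\tau)+\overline{\mathbb E}^1[r\bsy\alphab](\tau),
\]
and likewise with $\widehat{\bsy\rho}$ replaced by $\widehat{\bsy\rho}[\widetilde{\bsy{\mathfrak S}}]=\widehat{\bsy\rho}-\mathfrak q_1$ (and analogously for $\widehat{\bsy\sigma}$). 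Feeding in the supremum bound for boundedness and the $(1+\tau)^{-p_\alpha}$ decay for the normalised horizon value from the previous step, and combining with the first step, yields the two asserted non-degenerate energy estimates.

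\emph{Main obstacle.} The bulk of the work is routine but delicate weight-chasing: one must verify that the $\Sigma$-, $\truchi$-, $\epsuchi$-weights in \eqref{D3_sigma}--\eqref{DA_rho} are exactly compatible with the weights built into $\overline{\mathbb E}^J[\widehat{\bsy\rho}]$, $\overline{\mathbb E}^J[r^2\bsy\alpha]$, $\overline{\mathbb E}^J[r\bsy\alphab]$, and that the weight used in the Hardy-type inequality of the last step balances. The genuinely structural difficulty, however, is the argument that $\nablasl\widehat{\bsy\rho}|_{\mc H^+}\to 0$ --- i.e.\ that the horizon limit is a \emph{constant} rather than an arbitrary function on $\mathbb S^2$ --- which is precisely where the qualitative hypothesis on $\underline{\mathfrak{T}}(\bsy\alphab)$ is indispensable, and which relies on the special algebraic and projection properties of $\mc N_{\rm as}$ on $\mc H^+$ from Sections~\ref{sec:algebraically-special-prelims}--\ref{sec:projection_formulae} (notably that $e_4^{\rm as},e_1^{\rm as},e_2^{\rm as}$ are all horizon-tangent and that $\check{\slashed g}=\slashed g_{\mathbb S^2_{t^*,r_+}}$ there).
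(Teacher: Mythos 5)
Your proposal follows the paper's own architecture step for step: your first step is Proposition~\ref{prop:higher-derivatives}; your horizon analysis is the content of Lemmas~\ref{lemma:zero-mode-horizon-oldsystem}--\ref{lemma:zero-mode-horizon} (the paper runs the transport-plus-dyadic-Cauchy--Schwarz argument only on the \emph{zero modes} and recovers the full $L^2(\mathbb S^2_{\tau,r_+})$ trace by a Poincar\'e inequality, whereas you run it directly on the full trace --- an immaterial variation, and your identification of the charges via the projected conservation laws is exactly the paper's Lemma~\ref{lemma:zero-mode-horizon-oldsystem}); and your propagation off the horizon is Proposition~\ref{prop:L2-norm} combined with Section~\ref{sec:conclusion-middle-EB-ED-sub}, where the near-horizon $\nablasl_3$-integral is absorbed into $\overline{\mathbb{E}}^{1}[r\bsy\alphab]$ via \eqref{D3_sigma}--\eqref{D3_rho}.

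The one step that fails as literally written is the large-$r$ part of your last step. Squaring the fundamental-theorem identity and applying a weighted Cauchy--Schwarz forces you, for any admissible weight, to bound $\int r^{1+\delta}|\tilde X\widehat{\bsy\rho}|^2\,dr\,d\Omega$ for some $\delta>0$; since at large $r$ one has $\tilde X\widehat{\bsy\rho}\approx \nablasl_4\widehat{\bsy\rho}$ and, by \eqref{D4_rho}, $|\nablasl_4\widehat{\bsy\rho}|\sim|\nablasl(r^2\bsy\alpha)|$ (the prefactor $\tfrac14\Sigma[(\truchi)^2-\epsuchi^2]$ is $O(1)$), this requires $r^{1+\delta}|\nablasl(r^2\bsy\alpha)|^2$ in the flux, which is \emph{not} contained in $\overline{\mathbb{E}}^{1}[r^2\bsy\alpha]$ (the first-order angular term there carries weight $1$, and the $p$-weights only upgrade second-order terms). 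The paper avoids this loss with the genuine Hardy inequality, implemented as the divergence identity
\begin{align*}
\nablasl_4\lp(r^{-1}|\widehat{\bsy\rho}|^2\rp)+\frac{\Delta}{r^2\Sigma}|\widehat{\bsy\rho}|^2=\frac{2}{r}\,\widehat{\bsy\rho}\,\nablasl_4\widehat{\bsy\rho}
\end{align*}
integrated over $\Sigma_\tau\cap\{r\geq R\}$, whose cross term is absorbed into the left-hand side and leaves $|\nablasl_4\widehat{\bsy\rho}|^2$ with weight $1$. You do name a ``Hardy-type inequality'' among the obstacles, so this is a matter of replacing the written estimate by its integration-by-parts form rather than a missing idea. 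One further point to make explicit: your argument that the horizon limit is a constant requires, via \eqref{DA_rho}--\eqref{DA_sigma}, decay of $\mathfrak T(\bsy\alpha)$ on $\mc H^+$ in addition to $\underline{\mathfrak T}(\bsy\alphab)$; the integrated hypothesis on $\mathbb E_{\mc H^+}[r^2\bsy\alpha]$ does not control $\nablasl_3\bsy\alpha$, so either assume it (as the paper's Lemma~\ref{lemma:zero-mode-horizon} does) or extract a subsequence of times along which the relevant sphere integrals vanish.
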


We state the second theorem of the section, which concerns degenerate-at-$\mc H^+$ energy norms.

\begin{theorem} \label{thm:middle-EB-ED-ext} Fix $M>0$. Let $J\geq 1$ and $p_\alpha>0$. Assume that, for any $|a|\leq M$ and any solution 
$$\bsy{\mathfrak S}=(\bsy{\alpha}[\bsy{\mathfrak S}],\bsy{\alphab}[\bsy{\mathfrak S}],\widehat{\bsy{\rho}}[\bsy{\mathfrak S}],\widehat{\bsy{\sigma}}[\bsy{\mathfrak S}])$$ 
to the modified Maxwell equations \eqref{D3_sigma}--\eqref{DA_rho} arising from seed initial data prescribed on $\Sigma_0$, the qualitative statement
\begin{align*}
\lim_{\tau\to \infty}\int_{\mathbb{S}^2_{\tau,r_+}}|\underline{\mathfrak{T}}(\bsy\alphab[\bsy{\mathfrak S}])|^2 d\Omega = 0
\end{align*}
and the quantitative estimate
\begin{align*}
\mathbb E_{\mc H^+}[r^2\bsy\alpha[\bsy{\mathfrak S}]](\tau,\infty)\leq \frac{\mathbb D[r^2\bsy\alpha[\bsy{\mathfrak S}]](0)}{(1+\tau)^{1+p_{\alpha}}}
\end{align*}
hold, where $\mathbb D$ is an energy quantity depending only on the seed initial data. Then, there exists a uniform constant $C=C(M,J,p_{\alpha})>0$ such that the following energy estimates hold.
\begin{itemize}
\item Degenerate energy boundedness: we have 
\begin{align*}
\mathbb{E}^J[\widehat{\bsy \rho}[\bsy{\mathfrak S}]](\tau ) +\mathbb{E}^J[\widehat{\bsy \sigma}[\bsy{\mathfrak S}]](\tau ) &\leq 
C\lp(\lVert \widehat{\bsy\rho}[\bsy{\mathfrak S}]\rVert^2_{L^2({\mathbb{S}^2_{0,r_+}})}+\lVert \widehat{\bsy\sigma}[\bsy{\mathfrak S}]\rVert^2_{L^2({\mathbb{S}^2_{0,r_+}})}\rp)\\
&\qquad +C\int_{\mathbb S^2_{\tau,r_+}}\lp(|\nablasl\underline{\mathfrak{T}}(\bsy\alphab[\bsy{\mathfrak S}])|^2+|\underline{\mathfrak{T}}(\bsy\alphab[\bsy{\mathfrak S}])|^2\rp)d\Omega+C\,\mathbb{E}^{\max\{J,2\}}[r\bsy\alphab[\bsy{\mathfrak S}]](\tau)\\
&\qquad + C\,\mathbb{D}[r^2\bsy\alpha[\bsy{\mathfrak S}]](0) + C\,\mathbb{E}^{\max\{J,1\}}[r^2\bsy\alpha[\bsy{\mathfrak S}]](\tau)
\end{align*}
for all $\tau\geq 0$.
\item Degenerate energy decay: there exist finite $\mathfrak q_1,\mathfrak q_2\in\mathbb{R}$, which can be read off from the seed initial data, such that the normalised solution $\widetilde{\bsy{\mathfrak S}}=\bsy{\mathfrak S}-\bsy{\mathfrak S}_{\mathfrak q_1,\mathfrak q_2}$ satisfies
\begin{align*}
    \bsy \alpha [\widetilde{\bsy{\mathfrak S}}](\tau)&=\bsy \alpha [\bsy{\mathfrak S}](\tau) \, , & \bsy \alphab[\widetilde{\bsy{\mathfrak S}}](\tau)&=\bsy \alphab[\bsy{\mathfrak S}](\tau)
\end{align*}
and
\begin{align*}
\mathbb{E}^J[\widehat{\bsy \rho}[\widetilde{\bsy{\mathfrak S}}]](\tau ) +\mathbb{E}^J[\widehat{\bsy \sigma}[\widetilde{\bsy{\mathfrak S}}]](\tau ) &\leq C\int_{\mathbb S^2_{\tau,r_+}}\lp(|\nablasl\underline{\mathfrak{T}}(\bsy\alphab[\bsy{\mathfrak S}])|^2+|\underline{\mathfrak{T}}(\bsy\alphab[\bsy{\mathfrak S}])|^2\rp)d\Omega+C\,\mathbb{E}^{\max\{J,2\}}[r\bsy\alphab[\bsy{\mathfrak S}]](\tau)\\
&\qquad + \frac{C\,\mathbb{D}[r^2\bsy\alpha[\bsy{\mathfrak S}]](0)}{(1+\tau)^{p_\alpha}} + C\,\mathbb{E}^{\max\{J,1\}}[r^2\bsy\alpha[\bsy{\mathfrak S}]](\tau)
\end{align*}
for all $\tau\geq 0$.
\end{itemize}
\end{theorem}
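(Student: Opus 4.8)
The starting point is the decoupled structure of the modified Maxwell equations (Proposition~\ref{prop:reduced-system}): the scalar $\widehat{\bsy\rho}$ solves the closed transport system \eqref{D3_rho}, \eqref{D4_rho}, \eqref{DA_rho}, while $\widehat{\bsy\sigma}$ solves the structurally identical system \eqref{D3_sigma}, \eqref{D4_sigma}, \eqref{DA_sigma}, both forced only by the extremal components $\bsy\alpha$ and $\bsy\alphab$; it therefore suffices to treat $\widehat{\bsy\rho}$. The plan is to organise the argument in three stages: (i) integrate the $\nablasl_4$-equation, which on $\mc H^+$ is a transport equation along the horizon generators, to pin down the charges $(\mathfrak q_1,\mathfrak q_2)$ and to prove decay of $\widehat{\bsy\rho}[\widetilde{\bsy{\mathfrak S}}]$ along $\mc H^+$; (ii) propagate this decay into a neighbourhood of $\mc H^+$ by a combined transport--Teukolsky identity that uses only \emph{degenerate}-at-$\mc H^+$ control of $\bsy\alphab$; (iii) commute the system with $\big((1-Mr^{-1})\nablasl_3\big)^{J_1}\nablasl_4^{J_2}\nablasl^{J_3}$, $J_1+J_2+J_3\le J$, and convert the pointwise control into the stated $L^2(\Sigma_\tau)$ fluxes.

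For stage (i), recall from \eqref{intro_e4_horizon_killing} that on $\mc H^+$ the vector field $e_4^{\rm as}$ is tangent and equals the Killing field $2\tfrac{r_+^2+a^2}{\Sigma(r_+)}T+2\tfrac{a}{\Sigma(r_+)}\widetilde Z$, and that $\mathfrak D_{\mathcal N_{\rm as}}|_{\mc H^+}\subset T\mc H^+$. Equation \eqref{D4_rho} then restricts to a transport equation along the generators whose right-hand side involves $\bsy\alpha$ alone. Integrating over $\mathbb S^2_{\tau,r_+}$ (the flow preserves $d\Omega$) and using the horizon $\mathbb S^2$-projection formulae of Proposition~\ref{prop:proj-formulas} to rewrite $\divsl,\curlsl$ as round-sphere operators up to controlled $\mathfrak k$-corrections, the hypothesis $\mathbb E_{\mc H^+}[r^2\bsy\alpha](\tau,\infty)\le \mathbb D[r^2\bsy\alpha](0)(1+\tau)^{-1-p_\alpha}$ forces $\int_{\mathbb S^2_{\tau,r_+}}\widehat{\bsy\rho}\,d\Omega$ to converge with rate $(1+\tau)^{-p_\alpha/2}$; this defines $\mathfrak q_1$, and $\mathfrak q_2$ likewise. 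Decay of the \emph{entire} $\widehat{\bsy\rho}[\widetilde{\bsy{\mathfrak S}}]=\widehat{\bsy\rho}-\mathfrak q_1$ in $L^2(\mathbb S^2_{\tau,r_+})$ then follows from the $\mathbb S^2$-Poincar\'e inequality together with the angular equation \eqref{DA_rho}, which on $\mc H^+$ bounds $\nablasl\widehat{\bsy\rho}$ by $\mathfrak T(\bsy\alpha)$ and $\underline{\mathfrak T}(\bsy\alphab)$---the second controlled by the qualitative hypothesis and by the horizon flux $\int_{\mathbb S^2_{\tau,r_+}}(|\nablasl\underline{\mathfrak T}(\bsy\alphab)|^2+|\underline{\mathfrak T}(\bsy\alphab)|^2)\,d\Omega$ on the right-hand side of the theorem, the first after a further commutation of \eqref{D4_rho}. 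Finally, to express $(\mathfrak q_1,\mathfrak q_2)$ through the seed data one would establish the horizon conservation laws $\partial_\tau\mathfrak Q_{1,2}=0$ by direct computation, using $\nablasl_4\Gamma|_{\mc H^+}=0$, $\nablasl_4\mathfrak k|_{\mc H^+}=0$, and $\omegah|_{\mc H^+}=0$ in the extremal case (\eqref{omegah_extremal}, Section~\ref{sec:algebraically-special-props}), together with the Teukolsky--Starobinsky relation tying $\nablasl_4\underline{\mathfrak T}(\bsy\alphab)$ to angular operators of $\bsy\alpha$.

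Stage (ii) is where I expect the real obstacle to lie. Integrating \eqref{D3_rho} directly along the transversal field $\tilde X$ (which near $\mc H^+$ reduces to $T-e_3^{\rm as}$) is not permissible: its source $\divsl\bsy\alphab,\curlsl\bsy\alphab$ is controlled only \emph{degenerately} at $\mc H^+$, where $\bsy\alphab$ is expected to grow in the extremal case. The remedy is to note that the second factor in the tensorial Teukolsky equation \eqref{eq:Teukolsky-minus} is exactly $\tfrac{2}{\Sigma}\underline{\mathfrak T}(\bsy\alphab)$, so that \eqref{eq:Teukolsky-minus} rewrites $\nablasl$-derivatives of $\divsl\bsy\alphab,\curlsl\bsy\alphab$ in terms of $\nablasl_3\underline{\mathfrak T}(\bsy\alphab)$ modulo lower order; combining this with the explicit coefficient identities of Section~\ref{sec_Kerr_connection_coeff_curv_comps} and the Maxwell equations \eqref{eq:del3-sigma'}--\eqref{eq:del4-ualpha'}, which relate angular derivatives of $\bsy\alphab$ to $\nablasl_3$-derivatives of $(\widehat{\bsy\rho},\widehat{\bsy\sigma})$, I would derive a pointwise identity valid in a neighbourhood of $\mc H^+$ of the schematic form $\tilde X\widehat{\bsy\rho}=\tilde X\,\underline{\bsy\Phi}+(\text{terms controlled by the }r^2\bsy\alpha\text{-quantities on the right-hand side})$, with $\underline{\bsy\Phi}$ a first-order transformed quantity of $\bsy\alphab$ comparable to $\underline{\mathfrak T}(\bsy\alphab)$; this is the precise version of the schematic \eqref{eq:crucial-identity-intro}, and its spherically symmetric prototype is worked out in Section~\ref{sec:warm-up-spherically-symmetric}. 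Integrating from $\mc H^+$ outward, the two growing contributions cancel, so that $\widehat{\bsy\rho}[\widetilde{\bsy{\mathfrak S}}]$ equals its horizon trace plus $\underline{\bsy\Phi}$ plus an integrable remainder; the horizon decay of stage (i), the hypothesis $\int_{\mathbb S^2_{\tau,r_+}}|\underline{\mathfrak T}(\bsy\alphab)|^2\,d\Omega\to 0$, and the right-hand side term $\mathbb E^{\max\{J,2\}}[r\bsy\alphab](\tau)$ (two derivatives of $\bsy\alphab$ being spent on $\underline{\bsy\Phi}$ and the angular operators it carries) then control $\widehat{\bsy\rho}[\widetilde{\bsy{\mathfrak S}}]$ there. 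Away from $\mc H^+$, where the degenerate and non-degenerate weights are equivalent, I would instead integrate \eqref{D3_rho} in the standard fashion against the $\Sigma_\tau$-energy of $\bsy\alphab$, exactly as in the sub-extremal case.

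For stage (iii), commuting \eqref{D3_rho}, \eqref{D4_rho}, \eqref{DA_rho} with $\big((1-Mr^{-1})\nablasl_3\big)^{J_1}\nablasl_4^{J_2}\nablasl^{J_3}$ yields transport systems of the same shape: the transport coefficients and the curvature commutators are smooth, and the $(1-Mr^{-1})$ weight on $\nablasl_3$ matches the horizon degeneracy built into $\mathbb E^J[\,\cdot\,]$, so the commuted sources are bounded by $\mathbb E^{\max\{J,1\}}[r^2\bsy\alpha](\tau)$, $\mathbb E^{\max\{J,2\}}[r\bsy\alphab](\tau)$, the horizon $\underline{\mathfrak T}(\bsy\alphab)$-flux, and $\mathbb D[r^2\bsy\alpha](0)$. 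Running stages (i)--(ii) at each order and then taking $L^2(\Sigma_\tau)$ norms produces the degenerate boundedness estimate---the zeroth-order terms of $\mathbb E^J[\widehat{\bsy\rho}]$ being recovered precisely because the charge has been subtracted and the remainder is controlled---while propagating the $(1+\tau)^{-p_\alpha}$ rate of stages (i)--(ii) through the same computation yields the degenerate decay estimate. All constants are uniform in $|a|\le M$, since only the explicit, smooth, non-degenerate horizon structure of $\mathcal N_{\rm as}$ is used; the sole point of departure from Theorem~\ref{thm:middle-EB-ED-sub} is stage (ii), where the combined transport--Teukolsky identity replaces the direct transversal integration that is available only when $\bsy\alphab$ is controlled non-degenerately.
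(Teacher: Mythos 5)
Your overall route coincides with the paper's: horizon conservation laws via the $\mathbb{S}^2$-projection to fix $(\mathfrak q_1,\mathfrak q_2)$ and obtain horizon decay of the zero modes (the paper's Lemmas on the zero modes and Proposition~\ref{prop:L2-norm}), a combined Maxwell--Teukolsky identity near $\mc H^+$ to avoid non-degenerate control of $\bsy\alphab$ (the precise form of \eqref{eq:crucial-identity-intro}), and commutation for the higher-order fluxes (Proposition~\ref{prop:higher-derivatives}). Stages (i) and (iii) are essentially the paper's argument.

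The gap is in stage (ii), and it is exactly at the point you pass over most quickly. The Teukolsky equation \eqref{eq:Teukolsky-minus} does not directly express $\divsl\bsy\alphab$ and $\curlsl\bsy\alphab$ (which are what \eqref{D3_rho} needs) in terms of $e_3^{\rm as}$-derivatives of $\underline{\mathfrak T}(\bsy\alphab)$; it expresses a \emph{second-order angular operator} applied to $\bsy\alphab$, namely $(\nablasl+2\etab)\divsl\bsy\alphab-{}^\star(\nablasl+2\etab)\curlsl\bsy\alphab$, in those terms. Meanwhile the Maxwell equation \eqref{eq:Maxwell-n} expresses $e_3^{\rm as}\big(\swei{\upupsilon}{0}\big)$ as a \emph{first-order} angular operator applied to $\swei{\tilde\upphi}{-1}_0$. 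To close the loop and obtain the claimed identity
\begin{align*}
e_3^{\rm as}\big(\swei{\upupsilon}{0}\big)\;=\;-\,e_3^{\rm as}\Big(\tfrac{\kappa}{\sqrt{2(r^2+a^2)}}\,\mathring{\nablasl}^{[-1]}_{\mc H}\big(\mathring{\slashed\triangle}^{[-1]}_{\mc H}\big)^{-1}\swei{\Phi}{-1}\Big)+\text{(degenerate terms)}\,,
\end{align*}
one must \emph{invert} the second-order angular operator $\mathring{\slashed\triangle}^{[-1]}_{\mc H}$ in \eqref{eq:spin-weighted-laplacian-H} on $L^2(d\Omega)$, with norm uniform in the full range $|a|\leq M$. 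In your Schwarzschild warm-up this inversion is hidden in the division by $L(L+1)\geq 2$; in Kerr the operator acquires first-order $Z$-terms with complex coefficients evaluated at $r=r_+$, its eigenvalues $\tilde\lambda^{[-1]}_{m\ell}$ are complex with $\Im\tilde\lambda^{[-1]}_{m\ell}=am/M$, and the uniform lower bound $|\tilde\lambda^{[-1]}_{m\ell}|\geq 1/2$ requires a case analysis in $m$ and $a$ (this is Lemma~\ref{lemma:invertibility} of the paper). This is the step where extremality could genuinely obstruct the argument, and your proposal neither identifies the operator to be inverted nor argues that its inverse is uniformly bounded; the phrase ``two derivatives of $\bsy\alphab$ being spent on $\underline{\bsy\Phi}$ and the angular operators it carries'' conceals a nonlocal angular operator $\mathring{\nablasl}^{[-1]}_{\mc H}\big(\mathring{\slashed\triangle}^{[-1]}_{\mc H}\big)^{-1}$ whose existence and boundedness is the crux of the extremal case. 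A secondary, related imprecision: the quantity that actually needs to be controlled is not a pointwise trace of $\widehat{\bsy\rho}$ off the horizon but the specific near-horizon bulk term $\big|\int_{r_+}^{r_++\epsilon}\int_{\mathbb S^2_{\tau,r}}\nablasl_3\widehat{\bsy\rho}\,d\Omega\,dr\big|^2$ left over from the Poincar\'e/fundamental-theorem step in Proposition~\ref{prop:L2-norm}, and after subtracting the boundary contribution $\int_{\mathbb{S}^2_{\tau,r_+}}\mathring{\nablasl}^{[-1]}_{\mc H}\big(\mathring{\slashed\triangle}^{[-1]}_{\mc H}\big)^{-1}\swei{\Phi}{-1}\,d\Omega$ (controlled by the horizon $\underline{\mathfrak T}(\bsy\alphab)$-flux on the right-hand side of the theorem) the remainder must be checked to carry weights in $(r-r_+)$, $\Delta$ and $(r-M)$ compatible with the degenerate norms $\mathbb{E}^{\max\{J,2\}}[r\bsy\alphab]$; your proposal asserts the cancellation but does not exhibit the weight structure that makes it work.
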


Although both theorems are stated for the full $|a|\leq M$ range, we note that, once combined with the statements for the extremal Maxwell components of Section \ref{sec:analysis-teukolsky}, Theorem~\ref{thm:middle-EB-ED-sub} will lead to (non-degenerate) energy boundedness and decay for all Maxwell components in the sub-extremal $|a|<M$ case (i.e.~Theorem \ref{thm:precise-EB-ED-sub}), whereas Theorem~\ref{thm:middle-EB-ED-ext} will lead to (degenerate) energy boundedness and decay for all Maxwell components in the full $|a|\leq M$ black hole parameter range (i.e.~Theorem \ref{thm:precise-EB-ED-ext}).

Sections \ref{sec:middle-zeroth-order}--\ref{sec:conclusion-middle-EB-ED-ext} below contain the proof of Theorems~\ref{thm:middle-EB-ED-sub} and \ref{thm:middle-EB-ED-ext}, which is divided as follows:
\begin{itemize}
\item \textbf{Section~\ref{sec:middle-zeroth-order}}:~We prove estimates for the zeroth order energy flux (i.e.~$L^2(\Sigma_{\tau})$-norm) of the middle Maxwell components. These estimates are achieved in Proposition \ref{prop:L2-norm}. To this end, one needs control over the zero mode of the middle Maxwell components (see Lemmas \ref{lemma:zero-mode-horizon-oldsystem} and \ref{lemma:zero-mode-horizon}), which is obtained by applying the $\mathbb{S}^2$-projection procedure of Section \ref{sec:projection_formulae} and exploiting the enhanced structure of the algebraically special frame at the event horizon.
\item \textbf{Section~\ref{sec:middle-higher-order}}:~We establish estimates for first (and higher) order energy fluxes of the middle Maxwell components. These estimates are immediately read off from the Maxwell equations \eqref{D3_sigma}--\eqref{DA_rho}.
\item \textbf{Section~\ref{sec:conclusion-middle-EB-ED-sub}}:~We combine the estimates of Sections \ref{sec:middle-zeroth-order}--\ref{sec:middle-higher-order} to prove Theorem \ref{thm:middle-EB-ED-sub}. The reader interested only in the sub-extremal $|a|<M$ case need not read beyond this section.
\item \textbf{Section~\ref{sec:conclusion-middle-EB-ED-ext}}:~We prove Theorem \ref{thm:middle-EB-ED-ext} by combining the estimates of Sections \ref{sec:middle-zeroth-order}--\ref{sec:middle-higher-order} with an additional pointwise identity (in the spin-weighted formalism) for transversal derivatives of the middle Maxwell components. As a warm-up, the additional identity is first derived in the Schwarzschild (and Reissner--Nordstr\"om) case (see Section \ref{sec:warm-up-spherically-symmetric}). The derivation of the identity in the Kerr case is carried out in a sequence of three lemmas in Section \ref{sec:three_key_lemmas} (see the final Lemma \ref{lemma:Maxwell-n-identity} for a statement of the identity). In Section \ref{sec:combining_extremal_lemmas}, we employ the identity to prove Theorem \ref{thm:middle-EB-ED-ext}.
\end{itemize}

Additionally, we have:
\begin{itemize}
\item \textbf{Section~\ref{rmk:extremal-axisym}}:~Besides yielding a proof of Theorem~\ref{thm:middle-EB-ED-ext}, the additional identity derived in Section \ref{sec:conclusion-middle-EB-ED-ext} allows one to establish novel conservation laws along the event horizon for the middle Maxwell components under axisymmetry in the extremal $|a|=M$ case.
\end{itemize}

\subsection{Control over the zeroth order flux}
\label{sec:middle-zeroth-order}

By exploiting the Maxwell equations \eqref{eq:del3-sigma'}--\eqref{eq:del4-ualpha'}, we can derive conservation laws which control the zero mode of the middle Maxwell components $\Sigma{\bsy\rho}$ and $\Sigma{\bsy\sigma}$ along the event horizon, thereby obtaining the following lemma.

\begin{lemma} \label{lemma:zero-mode-horizon-oldsystem}
Fix $M>0$. Then, there exists a uniform constant $C=C(M)>0$ such that, for any $|a|\leq M$ and any solution 
$$\bsy{\mathfrak S}_\circ=(\bsy{\alpha}[\bsy{\mathfrak S}_\circ],\bsy{\alphab}[\bsy{\mathfrak S}_\circ],\bsy{\rho}[\bsy{\mathfrak S}_\circ],\bsy{\sigma}[\bsy{\mathfrak S}_\circ])$$ 
to the Maxwell equations \eqref{eq:del3-sigma'}--\eqref{eq:del4-ualpha'} arising from seed initial data prescribed on $\Sigma_0$, the following estimates hold.
\begin{itemize}
\item Horizon boundedness of zero modes: we have
\begin{align*}
&\Bigg|\int_{\mathbb S^2_{\tau,r_+}}\Sigma{\bsy\rho}[\bsy{\mathfrak S}_\circ]\,d\Omega\Bigg|^2+\Bigg|\int_{\mathbb S^2_{\tau,r_+}}\Sigma {\bsy\sigma}[\bsy{\mathfrak S}_\circ]\, d\Omega\Bigg|^2\\
&\quad \leq C\left( \lVert \Sigma{\bsy\rho}[\bsy{\mathfrak S}_\circ]\rVert^2_{L^2({\mathbb{S}^2_{0,r_+}})}+\lVert \Sigma{\bsy\sigma}[\bsy{\mathfrak S}_\circ]\rVert^2_{L^2({\mathbb{S}^2_{0,r_+}})}+\lVert r^2\bsy\alpha[\bsy{\mathfrak S}_\circ]\rVert^2_{L^2({\mathbb{S}^2_{0,r_+}})}+\lVert r^2\bsy\alpha[\bsy{\mathfrak S}_\circ]\rVert^2_{L^2({\mathbb{S}^2_{\tau,r_+}})}\right)
\end{align*}
for all $\tau\geq 0$.
\item Horizon decay of zero modes: there exist finite $\mathfrak q_1, \mathfrak q_2\in \mathbb R$, which can be read off from the seed initial data, such that the normalised solution $\widetilde{\bsy{\mathfrak S}}_{\circ}=\bsy{\mathfrak S}_{\circ}-\bsy{\mathfrak S}_{\circ}{}_{\mathfrak q_1,\mathfrak q_2}$ satisfies 
\begin{align*}
\lp|\int_{\mathbb S^2_{\tau,r_+}}\Sigma {\bsy\rho}[\widetilde{\bsy{\mathfrak S}}_\circ]\,d\Omega\rp|^2+\lp|\int_{\mathbb S^2_{\tau,r_+}}\Sigma {\bsy\sigma}[\widetilde{\bsy{\mathfrak S}}_\circ]\,d\Omega\rp|^2\leq C \lVert r^2\bsy\alpha[\bsy{\mathfrak S}_\circ]\rVert^2_{L^2({\mathbb{S}^2_{\tau,r_+}})}
\end{align*}
for all $\tau\geq 0$.
\end{itemize}
\end{lemma}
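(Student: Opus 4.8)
Granting Theorem~\ref{thm:middle-EB-ED-sub}'s hypotheses are irrelevant here (this is a standalone statement), the plan is to produce, on the event horizon $\mc H^+$, a pair of \emph{exact conservation laws} for $\bsy\alpha$-corrected versions of the zero modes $\int_{\mathbb S^2_{\tau,r_+}}\Sigma\bsy\rho\,d\Omega$ and $\int_{\mathbb S^2_{\tau,r_+}}\Sigma\bsy\sigma\,d\Omega$, and then read off the lemma by Cauchy--Schwarz. Since $\mathbb S^2_{\tau,r_+}$ is a coordinate sphere at fixed $t^*=\tau$, one has $\partial_\tau\int_{\mathbb S^2_{\tau,r_+}}f\,d\Omega=\int_{\mathbb S^2_{\tau,r_+}}Tf\,d\Omega$; by \eqref{intro_e4_horizon_killing}, on $\mc H^+$ one may write $T=\tfrac{\Sigma(r_+)}{2(r_+^2+a^2)}e_4^{\rm as}-\tfrac{a}{r_+^2+a^2}\widetilde{Z}$, and the $\widetilde{Z}=\partial_{\phi^*}$ term contributes nothing to a sphere integral, so that
\[
\partial_\tau\!\int_{\mathbb S^2_{\tau,r_+}}\!\!\!\Sigma\bsy\rho\,d\Omega=\frac{1}{2(r_+^2+a^2)}\int_{\mathbb S^2_{\tau,r_+}}\!\!\!\Sigma(r_+)\,\nablasl_4(\Sigma\bsy\rho)\,d\Omega\,,
\]
and similarly for $\Sigma\bsy\sigma$. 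Because the $\Delta$-proportional coefficients $\trchi$ and $\epschi$ vanish on $\mc H^+$, equations \eqref{eq:del4-rho'}--\eqref{eq:del4-sigma'} reduce there to $\nablasl_4(\Sigma\bsy\rho)=\divsl(\Sigma\bsy\alpha)$ and $\nablasl_4(\Sigma\bsy\sigma)=-\curlsl(\Sigma\bsy\alpha)$.

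Next I would invoke the $\mathbb S^2$-projection formulae of Proposition~\ref{prop:proj-formulas}, valid on $\mc H^+$ where $\check{\slashed g}=\slashed g_{\mathbb S^2_{t^*,r_+}}$ and $\slashed\varepsilon_{\check{\slashed g}}=(r_+^2+a^2)\,d\Omega$, to rewrite $\divsl(\Sigma\bsy\alpha)$ and $\curlsl(\Sigma\bsy\alpha)$ as genuine spherical $\check{\slashed{\textup{div}}},\check{\slashed{\textup{curl}}}$ of $\widetilde{\Sigma\bsy\alpha}$ plus correction terms of the form $(\mathfrak k,\check{\nablasl}_4\widetilde{\Sigma\bsy\alpha})$, resp.\ $\mathfrak k\wedge\check{\nablasl}_4\widetilde{\Sigma\bsy\alpha}$. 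Using the horizon identity $\nablasl_4\mathfrak k|_{\mc H^+}=0$ of Proposition~\ref{prop:proj-formulas}, these corrections become total $\check{\nablasl}_4$-derivatives of the scalars $(\mathfrak k,\widetilde{\Sigma\bsy\alpha})$, resp.\ $\mathfrak k\wedge\widetilde{\Sigma\bsy\alpha}$, hence --- after the $\Sigma(r_+)$-weighted sphere integral and the same $T$-versus-$e_4^{\rm as}$ manipulation as above --- total $\partial_\tau$-derivatives. The surviving $\check{\slashed{\textup{div}}},\check{\slashed{\textup{curl}}}$ terms are integrated by parts on $(\mathbb S^2_{t^*,r_+},\check{\slashed g})$, which transfers the weight $\Sigma(r_+)=r_+^2+a^2\cos^2\theta$ onto a gradient $\check{\nablasl}\Sigma(r_+)$; invoking $\nablasl\Sigma=\Sigma(\eta+\etab)$ from \eqref{eq:ang-derivative-Sigma}, the Codazzi identity \eqref{formula_nablasl_atrchib} and the transport identities of Section~\ref{sec:algebraically-special-props}, together with the algebra of the entries $r_+^2-a^2\cos^2\theta$, $2ar_+\cos\theta$ of the rotation diagonalising \eqref{eq:def-rho-hat}--\eqref{eq:def-sigma-hat} at $r=r_+$, one checks that these too assemble into total $\partial_\tau$-derivatives. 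The upshot is two conserved quantities $\mathfrak Q_1(\tau),\mathfrak Q_2(\tau)$, each of the form $\int_{\mathbb S^2_{\tau,r_+}}\Sigma\bsy\rho\,d\Omega-R_\rho(\tau)$ (resp.\ $\int_{\mathbb S^2_{\tau,r_+}}\Sigma\bsy\sigma\,d\Omega-R_\sigma(\tau)$) with $R_\rho,R_\sigma$ explicit bilinear pairings of $\widetilde{\bsy\alpha}$ with background quantities, zeroth order in $\bsy\alpha$; moreover $\mathfrak Q_1,\mathfrak Q_2$ are computable from the seed data by evaluating at $\tau=0$, using that $\Sigma\bsy\rho|_{\mc H^+},\Sigma\bsy\sigma|_{\mc H^+}$ are the explicit combinations of $\widehat{\bsy\rho},\widehat{\bsy\sigma}$ given by the Boyer--Lindquist formulae following \eqref{eq:def-sigma-hat}, hence of $(\mathfrak r,\mathfrak s,\mathfrak f,\underline{\mathfrak f},\mathfrak h,\underline{\mathfrak h})$.

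Granting the conservation laws, the lemma follows quickly. On $\mc H^+$ one has $r\equiv r_+\sim M$ and $|\widetilde{\bsy\alpha}|_{\check{\slashed g}}=|\bsy\alpha|_{\slashed g}$, so Cauchy--Schwarz and boundedness of the background weights give $|R_\rho(\tau)|^2+|R_\sigma(\tau)|^2\lesssim\|r^2\bsy\alpha\|^2_{L^2(\mathbb S^2_{\tau,r_+})}$. Hence $\big|\int_{\mathbb S^2_{\tau,r_+}}\Sigma\bsy\rho\,d\Omega\big|^2\le 3\big(|\mathfrak Q_1|^2+|R_\rho(0)|^2+|R_\rho(\tau)|^2\big)$ with $|\mathfrak Q_1|^2=\big|\int_{\mathbb S^2_{0,r_+}}\Sigma\bsy\rho\,d\Omega-R_\rho(0)\big|^2\lesssim\|\Sigma\bsy\rho\|^2_{L^2(\mathbb S^2_{0,r_+})}+\|r^2\bsy\alpha\|^2_{L^2(\mathbb S^2_{0,r_+})}$, and likewise for $\bsy\sigma$; summing yields horizon boundedness of zero modes. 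For horizon decay, set $\mathfrak q_1:=\mathfrak Q_1/I_{a,M}$, $\mathfrak q_2:=\mathfrak Q_2/I_{a,M}$ with $I_{a,M}:=\int_{\mathbb S^2}\tfrac{r_+^2-a^2\cos^2\theta}{r_+^2+a^2\cos^2\theta}\,d\Omega>0$; the stationary profiles of Section~\ref{sec:stationary-solutions} give, by the parity $\theta\mapsto\pi-\theta$, $\int_{\mathbb S^2_{\tau,r_+}}\Sigma\bsy\rho[\bsy{\mathfrak S}_{\circ}{}_{\mathfrak q_1,\mathfrak q_2}]\,d\Omega=\mathfrak q_1 I_{a,M}=\mathfrak Q_1$ and $\int_{\mathbb S^2_{\tau,r_+}}\Sigma\bsy\sigma[\bsy{\mathfrak S}_{\circ}{}_{\mathfrak q_1,\mathfrak q_2}]\,d\Omega=\mathfrak Q_2$. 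Since subtracting a stationary solution leaves $\bsy\alpha$, hence $R_\rho,R_\sigma$, unchanged, the normalised solution $\widetilde{\bsy{\mathfrak S}}_\circ$ satisfies $\int_{\mathbb S^2_{\tau,r_+}}\Sigma\bsy\rho[\widetilde{\bsy{\mathfrak S}}_\circ]\,d\Omega=R_\rho(\tau)$ and $\int_{\mathbb S^2_{\tau,r_+}}\Sigma\bsy\sigma[\widetilde{\bsy{\mathfrak S}}_\circ]\,d\Omega=R_\sigma(\tau)$, which are $\lesssim\|r^2\bsy\alpha[\bsy{\mathfrak S}_\circ]\|_{L^2(\mathbb S^2_{\tau,r_+})}$ in absolute value, as required.

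The main obstacle is the verification, in the second paragraph, that once the $\mathfrak k$-correction has been peeled off as a total derivative the residual $\check{\slashed{\textup{div}}}$/$\check{\slashed{\textup{curl}}}$ terms weighted by $\Sigma(r_+)$ leave behind no genuine flux-type term, i.e.\ that the conservation laws are \emph{exact}; this is exactly where the enhanced properties of $\mc N_{\rm as}$ and of the $\mathbb S^2$-projection at $\mc H^+$ are indispensable ($\nablasl_4\Gamma|_{\mc H^+}=0$, $\nablasl_4\mathfrak k|_{\mc H^+}=0$, $\nablasl\Sigma=\Sigma(\eta+\etab)$, the Codazzi identities), and no analogous cancellation survives away from the horizon.
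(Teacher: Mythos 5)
Your overall architecture coincides with the paper's: restrict the $\nablasl_4$-equations \eqref{eq:del4-sigma'}--\eqref{eq:del4-rho'} to $\mc H^+$, apply the $\mathbb S^2$-projection formulae of Proposition~\ref{prop:proj-formulas}, use $\check{\nablasl}_4\mathfrak k|_{\mc H^+}=0$ to absorb the $\mathfrak k$-correction into the conserved quantity, and conclude by Cauchy--Schwarz (via $|\widetilde{\bsy\alpha}|_{\check{\slashed g}}=|\bsy\alpha|_{\slashed g}$ and $\sup_{\mc H^+}|\mathfrak k|_{\check{\slashed g}}\lesssim 1$) together with a choice of charges killing the initial-data term; your normalisation $\mathfrak q_i=\mathfrak Q_i/I_{a,M}$ and the parity check against the stationary family agree with \eqref{eq:choice-charges}.

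The gap is exactly where you flag it, and it is not a formality. By writing $\partial_\tau=\tfrac{\Sigma(r_+)}{2(r_+^2+a^2)}e_4^{\rm as}-\tfrac{a}{r_+^2+a^2}\widetilde Z$ \emph{before} integrating, you import the $\theta$-dependent weight $\Sigma(r_+)=r_+^2+a^2\cos^2\theta$ into the sphere integral, so that after substituting the projected equation the surviving term is
\begin{align*}
\int_{\mathbb S^2_{\tau,r_+}}\Sigma(r_+)\,\check{\slashed{\textup{div}}}\,(\Sigma\widetilde{\bsy\alpha})\,d\Omega
=-\int_{\mathbb S^2_{\tau,r_+}}\big(\Sigma\widetilde{\bsy\alpha},\check{\nablasl}\Sigma(r_+)\big)_{\check{\slashed g}}\,d\Omega\,,
\end{align*}
a genuinely nonvanishing, zeroth-order pairing of the dynamical field $\widetilde{\bsy\alpha}$ against the fixed one-form $\check{\nablasl}\Sigma(r_+)\propto a^2\sin(2\theta)\,d\theta$. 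The identities you invoke ($\nablasl\Sigma=\Sigma(\eta+\etab)$, Codazzi \eqref{formula_nablasl_atrchib}, the transport relations of Section~\ref{sec:algebraically-special-props}) relate background quantities to one another; they cannot convert such a pairing of the \emph{dynamical} field into a total $\partial_\tau$-derivative, and on $\mc H^+$ the only evolution equation available for $\bsy\alpha$ is the Teukolsky equation, which offers no transport identity usable for this purpose. If this term were accepted as a bulk contribution, the boundedness estimate would acquire a spacetime integral of $\bsy\alpha$ along $\mc H^+$ rather than the two sphere norms in the statement. The paper's derivation produces no such residue: it integrates the projected pointwise identity directly against the projected volume form $\slashed\varepsilon_{\check{\slashed g}}$ (a constant multiple of $d\Omega$ on $\mc H^+$ by \eqref{identity_event_horizon_volume_forms_bis}), so the $\check{\slashed{\textup{div}}}$ and $\check{\slashed{\textup{curl}}}$ terms drop out by the divergence theorem outright and the conservation law \eqref{eq:zero-mode-horizon-conservation} is exact, with no integration by parts onto $\check{\nablasl}\Sigma(r_+)$ ever occurring. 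As written, your proposal leaves its crux unverified and therefore does not constitute a proof.
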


\begin{proof} We start from the Maxwell equations \eqref{eq:del4-sigma'}--\eqref{eq:del4-rho'}. Restricted to $\mc H^+$, these take the form
\begin{align}
\nablasl_4(\Sigma\bsy{\rho}) &= {\slashed{\textup{div}}}\, (\Sigma {\bsy{\alpha}}) \, , & \nablasl_4(\Sigma\bsy{\sigma})&= -{\slashed{\textup{curl}}}\, (\Sigma {\bsy{\alpha}})  \, . \label{eq:del4-on-H}
\end{align}
We recall that, for $|a|\neq 0$, the differential operators on the right hand side of equations \eqref{eq:del4-on-H} are defined relative to the (non-integrable) distribution $\mathfrak D_{\mc N_{\rm as}}$. By applying the $\mathbb{S}^2$-projection formulae of Proposition~\ref{prop:proj-formulas}, we project equations \eqref{eq:del4-on-H} over the horizon foliation spheres $\mathbb{S}^2_{\tau,r_+}$ and obtain
\begin{align*}
\check{\nablasl}_4\left(\Sigma\bsy{\rho}-\frac{1}{2}\, \left(\mathfrak k, \Sigma\widetilde{\bsy{\alpha}}\right)_{\check{\slashed{g}}}\right)=\check{\nablasl}_4(\Sigma\bsy{\rho})-\frac{1}{2}\, \left(\mathfrak k, \check{\nablasl}_4\,(\Sigma\widetilde{\bsy{\alpha}})\right)_{\check{\slashed{g}}}&= \check{\slashed{\textup{div}}}\, (\Sigma\widetilde{\bsy{\alpha}})  \, ,\\
\check{\nablasl}_4\left(\Sigma\bsy{\sigma}+\frac{1}{2}\, \mathfrak k\wedge_{\check{\slashed{g}}} \Sigma\widetilde{\bsy{\alpha}}\right)=\check{\nablasl}_4(\Sigma\bsy{\sigma})+\frac{1}{2}\, \mathfrak k\wedge_{\check{\slashed{g}}} \check{\nablasl}_4\,(\Sigma\widetilde{\bsy{\alpha}})&= -\check{\slashed{\textup{curl}}}\, (\Sigma\widetilde{\bsy{\alpha}})  \, , 
\end{align*}
where we used the identity $\check{\nablasl}_4\mathfrak k|_{\mathcal{H}^+}=0$ (see Proposition~\ref{prop:proj-formulas}). We note that the differential operators $\check{\slashed{\textup{div}}}$ and $ \check{\slashed{\textup{curl}}}$ are now angular operators on $\mathbb{S}^2_{\tau,r_+}$-spheres acting on $\mathbb{S}^2_{\tau,r_+}$-tensors. One can therefore integrate over the $\mathbb{S}^2_{\tau,r_+}$-spheres to obtain the conservation laws
\begin{align}
\check{\nablasl}_4\left[\int_{\mathbb{S}^2_{\tau,r_+}}\left(\Sigma\bsy{\rho}-\frac{1}{2}\, \left(\mathfrak k, \Sigma\widetilde{\bsy{\alpha}}\right)_{\check{\slashed{g}}}\right)\slashed{\varepsilon}_{\check{\slashed{g}}}\right]&= 0  \, , &
\check{\nablasl}_4\left[\int_{\mathbb{S}^2_{\tau,r_+}}\left(\Sigma\bsy{\sigma}+\frac{1}{2}\, \mathfrak k\wedge_{\check{\slashed{g}}} \Sigma\widetilde{\bsy{\alpha}}\right)\slashed{\varepsilon}_{\check{\slashed{g}}}\right]&=0  \label{eq:zero-mode-horizon-conservation}
\end{align}
or, equivalently, the identities
\begin{align}\label{eq:zero-mode-horizon-oldsystem-int}
\begin{split}
\int_{\mathbb{S}^2_{\tau,r_+}}\Sigma\bsy{\rho}\,\slashed{\varepsilon}_{\check{\slashed{g}}}&=\int_{\mathbb{S}^2_{0,r_+}}\left(\Sigma\bsy{\rho}-\frac{1}{2}\, \left(\mathfrak k, \Sigma\widetilde{\bsy{\alpha}}\right)_{\check{\slashed{g}}}\right)\slashed{\varepsilon}_{\check{\slashed{g}}} + \frac{1}{2}\int_{\mathbb{S}^2_{\tau,r_+}} \left(\mathfrak k, \Sigma\widetilde{\bsy{\alpha}}\right)_{\check{\slashed{g}}} \slashed{\varepsilon}_{\check{\slashed{g}}} \,,\\
\int_{\mathbb{S}^2_{\tau,r_+}}\Sigma\bsy{\sigma}\,\slashed{\varepsilon}_{\check{\slashed{g}}} &=\int_{\mathbb{S}^2_{0,r_+}}\left(\Sigma\bsy{\sigma}+\frac{1}{2}\, \mathfrak k\wedge_{\check{\slashed{g}}} \Sigma\widetilde{\bsy{\alpha}}\right)\slashed{\varepsilon}_{\check{\slashed{g}}} +\frac{1}{2}\int_{\mathbb{S}^2_{\tau,r_+}} \left(\mathfrak k\wedge_{\check{\slashed{g}}} \Sigma\widetilde{\bsy{\alpha}}\right)\slashed{\varepsilon}_{\check{\slashed{g}}} \,.
\end{split}
\end{align}
By using the identity \eqref{identity_event_horizon_volume_forms_bis}, we can replace $\slashed{\varepsilon}_{\check{\slashed{g}}}$ by $d\Omega_{\mathbb{S}^2_{\tau,r_+}}$ in identities \eqref{eq:zero-mode-horizon-conservation}--\eqref{eq:zero-mode-horizon-oldsystem-int}. We can then estimate the last term of each of the identities \eqref{eq:zero-mode-horizon-oldsystem-int} as follows
\begin{align*}
\Bigg|\int_{\mathbb{S}^2_{\tau,r_+}} \left(\mathfrak k, \Sigma\widetilde{\bsy{\alpha}}\right)_{\check{\slashed{g}}}d\Omega\Bigg|^2+\Bigg|\int_{\mathbb{S}^2_{\tau,r_+}} \left(\mathfrak k\wedge_{\check{\slashed{g}}} \Sigma\widetilde{\bsy{\alpha}}\right)d\Omega\Bigg|^2\lesssim  \int_{\mathbb{S}^2_{\tau,r_+}} |\widetilde{\bsy\alpha}|_{\check{\slashed{g}}}^2\, d\Omega = \int_{\mathbb{S}^2_{\tau,r_+}} |{\bsy\alpha}|^2_{\slashed{g}}\, d\Omega \, ,
\end{align*}
where we estimated $\sup_{\mc H^+}|\mathfrak{k}|_{\check{\slashed{g}}}\lesssim 1$ depending only on the spacetime parameters and used the property \eqref{equality_proj_norms} in the last equality. On the other hand, the first term on the right hand side of identities \eqref{eq:zero-mode-horizon-oldsystem-int} is determined by initial data. In particular, by setting 
\begin{gather*}
\begin{gathered}
\mathfrak q_1 :=\frac{1}{I_{a,M}}\int_{\mathbb{S}^2_{0,r_+}}\left(\Sigma\bsy{\rho}-\frac{1}{2}\, \left(\mathfrak k, \Sigma\widetilde{\bsy{\alpha}}\right)_{\check{\slashed{g}}}\right)d\Omega\,, \qquad \mathfrak q_2 :=\frac{1}{I_{a,M}}\int_{\mathbb{S}^2_{0,r_+}}\left(\Sigma\bsy{\sigma}+\frac{1}{2}\, \mathfrak k\wedge_{\check{\slashed{g}}} \Sigma\widetilde{\bsy{\alpha}}\right)d\Omega\,,
\end{gathered} \numberthis\label{eq:choice-charges}\\
I_{a,M}:=\int_{\mathbb{S}^2_{0,r_+}}\frac{1-(a/r_+)^2\cos^2\theta}{1+(a/r_+)^2\cos^2\theta}\, d\Omega = 4\pi\lp(\frac{2\arctan(a/r_+)}{(a/r_+)}-1\rp), 
\end{gather*}
those terms vanish for the renormalised solution $\widetilde{\bsy{\mathfrak S}}_{\circ}=\bsy{\mathfrak S}_{\circ}-\bsy{\mathfrak S}_{\circ}{}_{\mathfrak q_1,\mathfrak q_2}$.
\end{proof}

\begin{remark}
    By expressing the Maxwell tensor $\bsy{F}$ in terms of $(\bsy{\alpha},\bsy{\alphab},\bsy{\rho},\bsy{\sigma})$, one can check that the conserved (along $\mathcal{H}^+$) quantities in \eqref{eq:zero-mode-horizon-conservation} correspond to the (everywhere) conserved topological charges
    \begin{align}
        &\int_{\mathbb{S}^2_{\tau,r}}\bsy{F} \, , & &\int_{\mathbb{S}^2_{\tau,r}}\bsy{\star}\bsy{F}
    \end{align}
    when the latter integrals are computed at $\mathcal{H}^+$.
\end{remark}

\begin{remark} \label{rmk:zero-mode-horizon-oldsystem} For $|a|=0$, it follows from equations \eqref{eq:del4-sigma'}--\eqref{eq:del4-rho'} that one has the conservation laws
\begin{align*}
{\nablasl}_4\left[\int_{\mathbb{S}^2_{\tau,r}}\Sigma\bsy{\rho}\,d\Omega\right]&= 0  \, , &
{\nablasl}_4\left[\int_{\mathbb{S}^2_{\tau,r}}\Sigma\bsy{\sigma}\,d\Omega\right]&=0 
\end{align*}
for \textit{any} $r\in [2M,\infty)$. Indeed, the differential operators on the right hand side of equations \eqref{eq:del4-sigma'}--\eqref{eq:del4-rho'} are now angular operators on the foliation spheres, thus giving rise to vanishing integrals over the spheres. 
\end{remark}

The Maxwell equations \eqref{eq:del3-sigma'}--\eqref{eq:del4-ualpha'} are \emph{coupled} equations for derivatives of $\Sigma\bsy{\rho}$ and $\Sigma\bsy{\sigma}$. For the remainder of the analysis, it will be more convenient to work with the modified Maxwell equations \eqref{D3_sigma}--\eqref{DA_rho}, which are \emph{decoupled} equations for derivatives of $\widehat{\bsy{\rho}}$ and $\widehat{\bsy{\sigma}}$. To this end, we next show that the previous Lemma~\ref{lemma:zero-mode-horizon-oldsystem} yields control over the zero mode of the modified middle Maxwell components $\widehat{\bsy{\rho}}$ and $\widehat{\bsy{\sigma}}$ along the event horizon.

\begin{lemma} \label{lemma:zero-mode-horizon}
Fix $M>0$ and let $p_\alpha>0$. Assume that, for any $|a|\leq M$ and any solution 
$$\bsy{\mathfrak S}=(\bsy{\alpha}[\bsy{\mathfrak S}],\bsy{\alphab}[\bsy{\mathfrak S}],\widehat{\bsy{\rho}}[\bsy{\mathfrak S}],\widehat{\bsy{\sigma}}[\bsy{\mathfrak S}])$$ 
to the modified Maxwell equations \eqref{D3_sigma}--\eqref{DA_rho} arising from seed initial data prescribed on $\Sigma_0$, the qualitative statement
\begin{align*}
\lim_{\tau\to \infty}\int_{\mathbb{S}^2_{\tau,r_+}}\lp(|\mathfrak{T}(\bsy\alpha[\bsy{\mathfrak S}])|^2+|\underline{\mathfrak{T}}(\bsy\alphab[\bsy{\mathfrak S}])|^2\rp)d\Omega = 0
\end{align*}
and the quantitative estimate
\begin{align*}
\mathbb E_{\mc H^+}[r^2\bsy\alpha[\bsy{\mathfrak S}]](\tau,\infty)\leq \frac{\mathbb D[r^2\bsy\alpha[\bsy{\mathfrak S}]](0)}{(1+\tau)^{1+p_{\alpha}}}
\end{align*}
hold, where $\mathbb D$ is an energy quantity depending only on the seed initial data. Then, there exists a uniform constant $C=C(M,p_{\alpha})>0$ such that the following estimates hold.
\begin{itemize}
\item Horizon boundedness of zero modes: we have
\begin{align*}
\Bigg|\int_{\mathbb S^2_{\tau,r_+}}\widehat{\bsy\rho}[\bsy{\mathfrak S}]\,d\Omega\Bigg|^2+\Bigg|\int_{\mathbb S^2_{\tau,r_+}}\widehat{\bsy\sigma}[\bsy{\mathfrak S}]\, d\Omega\Bigg|^2\leq C\left( \lVert \widehat{\bsy\rho}[\bsy{\mathfrak S}]\rVert^2_{L^2({\mathbb{S}^2_{0,r_+}})}+\lVert \widehat{\bsy\sigma}[\bsy{\mathfrak S}]\rVert^2_{L^2({\mathbb{S}^2_{0,r_+}})}+\mathbb D[r^2\bsy\alpha[\bsy{\mathfrak S}]](0)\right)
\end{align*}
for all $\tau\geq 0$.
\item Horizon decay of zero modes: there exist finite $\mathfrak q_1,\mathfrak q_2\in\mathbb{R}$, which can be read off from the seed initial data, such that the normalised solution $\widetilde{\bsy{\mathfrak S}}=\bsy{\mathfrak S}-\bsy{\mathfrak S}_{\mathfrak q_1,\mathfrak q_2}$ satisfies 
\begin{align*}
\lp|\int_{\mathbb S^2_{\tau,r_+}}\widehat{\bsy\rho}[\widetilde{\bsy{\mathfrak S}}]\,d\Omega\rp|^2+\lp|\int_{\mathbb S^2_{\tau,r_+}}\widehat{\bsy\sigma}[\widetilde{\bsy{\mathfrak S}}]\,d\Omega\rp|^2\leq C \frac{\mathbb D[r^2\bsy\alpha[\bsy{\mathfrak S}]](0)}{(1+\tau)^{p_{\alpha}}}
\end{align*}
for all $\tau\geq 0$.
\end{itemize}
\end{lemma}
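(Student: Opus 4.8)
The plan is to derive, along the event horizon $\mc H^+$, a transport identity for the unweighted zero modes $\int_{\mathbb{S}^2_{\tau,r_+}}\widehat{\bsy{\rho}}\,d\Omega$ and $\int_{\mathbb{S}^2_{\tau,r_+}}\widehat{\bsy{\sigma}}\,d\Omega$, in the spirit of the argument in Lemma~\ref{lemma:zero-mode-horizon-oldsystem} but now exploiting that the modified system \eqref{D3_sigma}--\eqref{DA_rho} is \emph{decoupled}. The starting point is that \eqref{D4_rho}, \eqref{D4_sigma} are pure transport equations in the $e_4^{\rm as}$-direction with forcing built \emph{solely} from $\bsy{\alpha}$; restricted to $\mc H^+$ they read, in Boyer--Lindquist form, $\nablasl_4\widehat{\bsy{\rho}}=w_1\,\divsl(\Sigma\bsy{\alpha})+w_2\,\curlsl(\Sigma\bsy{\alpha})$ and $\nablasl_4\widehat{\bsy{\sigma}}=w_2\,\divsl(\Sigma\bsy{\alpha})-w_1\,\curlsl(\Sigma\bsy{\alpha})$, where $w_1=\tfrac{r_+^2-a^2\cos^2\theta}{\Sigma(r_+)}$ and $w_2=\tfrac{2ar_+\cos\theta}{\Sigma(r_+)}$ are functions of $\theta$ only, and are annihilated by $\nablasl_4$ along $\mc H^+$ because $e_4^{\rm as}|_{\mc H^+}$ is spanned by the Killing fields $T,\widetilde Z$ (see \eqref{intro_e4_horizon_killing}).

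Next I would differentiate $\int_{\mathbb{S}^2_{\tau,r_+}}\widehat{\bsy{\rho}}\,d\Omega$ in $\tau$. Using that on $\mc H^+$ the volume form $d\Omega$ is $\tau$-independent, $\partial_{\tilde t^*}=T$, and $e_4^{\rm as}|_{\mc H^+}=2\tfrac{r_+^2+a^2}{\Sigma(r_+)}(T+\Omega_H\widetilde Z)$ with $\Omega_H=\tfrac{a}{r_+^2+a^2}$, together with $\int_{\mathbb{S}^2}\widetilde Z(\cdot)\,d\Omega=0$, I reduce to $\partial_\tau\int\widehat{\bsy{\rho}}\,d\Omega=\tfrac{1}{2(r_+^2+a^2)}\int\Sigma(r_+)\,\nablasl_4\widehat{\bsy{\rho}}\,d\Omega$. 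Substituting the horizon transport equation and applying the $\mathbb{S}^2$-projection formulae of Proposition~\ref{prop:proj-formulas}, I rewrite $\divsl(\Sigma\bsy{\alpha})$ and $\curlsl(\Sigma\bsy{\alpha})$ in terms of $\check{\nablasl}$-operators acting on the horizon-sphere tensor $\Sigma\widetilde{\bsy{\alpha}}$, plus $\mathfrak{k}$-correction terms; using $\check{\slashed g}|_{\mc H^+}=\slashed g_{\mathbb{S}^2_{t^*,r_+}}$ one integrates by parts on the spheres. The genuine-divergence contributions pair against the weights into expressions of the form $-\tfrac{1}{2(r_+^2+a^2)}\int(\check{\nablasl}(\text{weight}),\Sigma\widetilde{\bsy{\alpha}})_{\check{\slashed g}}\,d\Omega$, bounded by $C\lVert r^2\bsy{\alpha}\rVert_{L^2(\mathbb{S}^2_{\tau,r_+})}$ via $|\widetilde{\bsy{\alpha}}|_{\check{\slashed g}}=|\bsy{\alpha}|_{\slashed g}$ and $\Sigma(r_+)\sim 1$; the $\mathfrak{k}$-correction terms, since $\nablasl_4\mathfrak{k}|_{\mc H^+}=0$, the weights are $\nablasl_4$-annihilated, and $\check{\nablasl}$ is metric-compatible, collapse into $\partial_\tau$ of explicit quantities $\mathfrak{c}_\rho(\tau)$ with $|\mathfrak{c}_\rho(\tau)|\le C\lVert r^2\bsy{\alpha}\rVert_{L^2(\mathbb{S}^2_{\tau,r_+})}$. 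The upshot is an identity
\[
\partial_\tau\Big[\textstyle\int_{\mathbb{S}^2_{\tau,r_+}}\widehat{\bsy{\rho}}\,d\Omega-\mathfrak{c}_\rho(\tau)\Big]=\mathfrak{e}_\rho(\tau),\qquad |\mathfrak{c}_\rho(\tau)|+|\mathfrak{e}_\rho(\tau)|\le C\lVert r^2\bsy{\alpha}\rVert_{L^2(\mathbb{S}^2_{\tau,r_+})},
\]
and its analogue for $\widehat{\bsy{\sigma}}$; this refines the conservation laws \eqref{eq:zero-mode-horizon-conservation} to a near-conservation law for the modified zero modes, with error controlled entirely by $\bsy{\alpha}$ on $\mc H^+$.

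To close the argument, I convert the hypothesis $\mathbb{E}_{\mc H^+}[r^2\bsy{\alpha}](\tau,\infty)\le\mathbb{D}(1+\tau)^{-1-p_\alpha}$ into (a) a pointwise-in-$\tau$ sphere bound $\int_{\mathbb{S}^2_{\tau,r_+}}|r^2\bsy{\alpha}|^2\,d\Omega\le C\mathbb{D}(1+\tau)^{-1-p_\alpha}$ and (b) the integrability estimate $\int_\tau^\infty\lVert r^2\bsy{\alpha}\rVert_{L^2(\mathbb{S}^2_{\tau',r_+})}\,d\tau'\le C\mathbb{D}^{1/2}(1+\tau)^{-p_\alpha/2}$. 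For (a): via $T=\tfrac{\Sigma(r_+)}{2(r_+^2+a^2)}e_4^{\rm as}-\Omega_H\widetilde Z$, the $\tau$-derivative of $\tau\mapsto\int_{\mathbb{S}^2_{\tau,r_+}}|r^2\bsy{\alpha}|^2\,d\Omega$ is bounded by the integrand of $\mathbb{E}_{\mc H^+}$, and finiteness of $\mathbb{E}_{\mc H^+}(0,\infty)$ forces a sequence $\tau_n\to\infty$ along which the sphere flux vanishes, so integrating between $\tau$ and $\tau_n$ gives (a). For (b): a dyadic split of $[\tau,\infty)$ and Cauchy--Schwarz on each $[2^j,2^{j+1}]$ give $\int_{2^j}^{2^{j+1}}\lVert r^2\bsy{\alpha}\rVert\,d\tau'\le C2^{j/2}\mathbb{E}_{\mc H^+}(2^j,\infty)^{1/2}\le C\mathbb{D}^{1/2}2^{-jp_\alpha/2}$, summable since $p_\alpha>0$. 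Integrating the transport identity from $0$ to $\tau$ and invoking (a)--(b) yields the horizon boundedness statement; the limits $\mathfrak{q}_1:=\tfrac{1}{4\pi}\lim_{\tau\to\infty}\int_{\mathbb{S}^2_{\tau,r_+}}\widehat{\bsy{\rho}}\,d\Omega$ and $\mathfrak{q}_2:=\tfrac{1}{4\pi}\lim_{\tau\to\infty}\int_{\mathbb{S}^2_{\tau,r_+}}\widehat{\bsy{\sigma}}\,d\Omega$ exist by (b), are determined by the seed data (through $\bsy{\alpha}$ on $\mc H^+$, which is fixed by the Teukolsky Cauchy problem, and the seed scalars $\mathfrak{r},\mathfrak{s}$ on $\mathbb{S}^2_{0,r_+}$), and subtracting the stationary solution $\bsy{\mathfrak S}_{\mathfrak{q}_1,\mathfrak{q}_2}$ (for which $\widehat{\bsy{\rho}}\equiv\mathfrak{q}_1$, $\widehat{\bsy{\sigma}}\equiv\mathfrak{q}_2$, $\bsy{\alpha}\equiv 0$) and integrating the identity from $\tau$ to $\infty$ gives the horizon decay statement with rate $(1+\tau)^{-p_\alpha}$.

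\textbf{Main obstacle.} The delicate part is the bookkeeping in the second step: one must verify that \emph{all} $\mathfrak{k}$-correction terms generated by the $\mathbb{S}^2$-projection — together with their couplings to the $\theta$-dependent weights $w_1,w_2$ — genuinely reassemble into total $\tau$-derivatives plus $\bsy{\alpha}$-controlled errors, rather than leaving a residual non-decaying source term. This rests entirely on the enhanced structure of the algebraically special frame at $\mc H^+$: $\nablasl_4\mathfrak{k}|_{\mc H^+}=0$, $\check{\slashed g}|_{\mc H^+}=\slashed g_{\mathbb{S}^2_{t^*,r_+}}$, and $e_4^{\rm as}$ being tangent to $\mc H^+$ and spanned by Killing fields. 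A secondary, routine issue is the upgrade (a) of the spacetime horizon-flux decay to a pointwise-in-$\tau$ bound on the sphere energy of $\bsy{\alpha}$.
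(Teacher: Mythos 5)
Your proposal is correct in its essentials, but it follows a route that differs from the paper's in both halves of the lemma, and in fact it is close to the ``alternative decay argument'' that the paper itself sketches in the remark immediately following the lemma. For boundedness, the paper does not integrate by parts on the horizon spheres at all: it bounds $|f_\rho(\tau)-f_\rho(0)|\lesssim\int_0^\tau\int_{\mathbb{S}^2_{\tau',r_+}}(|\nablasl\bsy\alpha|+|\bsy\alpha|)\,d\Omega\,d\tau'$ directly from \eqref{D4_rho} and closes with the same dyadic Cauchy--Schwarz you use in your step (b); your pointwise-in-$\tau$ sphere bound (a) is therefore not needed there, though your derivation of (a) is sound and you do need it to control $\mathfrak c_\rho(0)$ and $\mathfrak c_\rho(\tau)$ in your version. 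For decay, the paper's route is: an \emph{exact} conservation law for the $\mathfrak k$-corrected zero modes of the \emph{unmodified} components $\Sigma\bsy\rho,\Sigma\bsy\sigma$ (Lemma \ref{lemma:zero-mode-horizon-oldsystem}), then a transfer to $\widehat{\bsy\rho},\widehat{\bsy\sigma}$ via the angular equations \eqref{DA_sigma}--\eqref{DA_rho}, a Poincar\'e inequality on the sphere, and the qualitative hypothesis on $\mathfrak T(\bsy\alpha),\underline{\mathfrak T}(\bsy\alphab)$ --- which is exactly why that hypothesis appears in the lemma. Your route, a near-conservation law derived directly for $\int\widehat{\bsy\rho}\,d\Omega$, bypasses the unmodified system and never uses the qualitative hypothesis; this is arguably cleaner, and your bookkeeping of the $\mathfrak k$-corrections does go through for the reasons you identify ($\nablasl_4\mathfrak k|_{\mc H^+}=0$, axisymmetric $\theta$-dependent weights, $e_4^{\rm as}|_{\mc H^+}$ spanned by Killing fields, $\check{\slashed g}=\slashed g_{\mathbb S^2_{t^*,r_+}}$ on $\mc H^+$).

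The one place your argument falls short of the literal statement is the clause that $\mathfrak q_1,\mathfrak q_2$ ``can be read off from the seed initial data.'' Your charges are the teleological limits \eqref{eq:choice-charges-2}; because your near-conservation law carries a genuinely nonzero error $\mathfrak e_\rho$ whose full time integral $\int_0^\infty\mathfrak e_\rho\,d\tau'$ enters the limit, you do not obtain an explicit formula for $\mathfrak q_1$ as an integral of seed quantities over $\mathbb S^2_{0,r_+}$. Appealing to well-posedness (``everything is determined by the data'') is weaker than what the paper proves: the identification of the teleological charges with the explicit initial-sphere integrals \eqref{eq:choice-charges} is precisely what requires the exact conservation law of Lemma \ref{lemma:zero-mode-horizon-oldsystem} for the unmodified zero modes, together with the qualitative hypothesis on $\mathfrak T,\underline{\mathfrak T}$ to pass from $\lim_{\tau\to\infty}\int\Sigma\bsy\rho\,d\Omega$ to $\lim_{\tau\to\infty}\int\widehat{\bsy\rho}\,d\Omega$. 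If the explicit initial-data formula is wanted you must supplement your argument with that step; if only existence of finite charges (and the two quantitative estimates) is wanted, your proof is complete.
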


\begin{proof} Let $\gamma:= \frac{a}{r_+}$. We note
\begin{align*}
I_\gamma &:= \int_{\mathbb{S}^2_{\tau,r_+}}\frac{1-\gamma^2\cos^2\theta}{1+\gamma^2\cos^2\theta}\, d\Omega = 4\pi\lp(\frac{2\arctan\gamma}{\gamma}-1\rp)\neq 0 \,, &
\int_{\mathbb{S}^2_{\tau,r_+}}\frac{\cos\theta \, d\Omega}{1+\gamma^2\cos^2\theta} &=0 \, .
\end{align*}
Let $f_\rho,f_\sigma\colon [0,\infty)\to \mathbb{R}$ such that
\begin{align*}
f_\rho(\tau) &:= \int_{\mathbb{S}^2_{\tau,r_+}} \widehat{\bsy\rho}\, d\Omega \, , & f_\sigma(\tau):= \int_{\mathbb{S}^2_{\tau,r_+}} \widehat{\bsy\sigma}\, d\Omega \, .
\end{align*}
By the fundamental theorem of calculus and equation \eqref{D4_rho}, we have
\begin{align}
|f_\rho (\tau)-f_\rho(0)| &\lesssim \lp|\int_0^{\tau}\int_{\mathbb{S}^2_{\tau',r_+}} \nablasl_4\widehat{\bsy\rho}\, d\Omega d\tau'\rp| \lesssim \int_0^{\tau}\int_{\mathbb{S}^2_{\tau',r_+}} (|\nablasl \bsy\alpha|+|\bsy\alpha|)\, d\Omega d\tau' \nonumber\\
&\lesssim \sqrt{\mathbb E_{\mc H^+}[\bsy\alpha](0,1)} + \int_1^{\tau}\int_{\mathbb{S}^2_{\tau',r_+}} (|\nablasl \bsy\alpha|+|\bsy\alpha|)\, d\Omega d\tau'\,. \label{fin_term_f_rho}
\end{align}
To control the second term in \eqref{fin_term_f_rho}, we consider the dyadic sequence $\{\tau_i=2^i\}_{i=0}^\infty$ and obtain
\begin{align*}
\int_{1}^{\infty}\int_{\mathbb{S}^2_{\tau,r_+}}\lp(| {\bsy \alpha}|+|\nablasl {\bsy \alpha}|\rp)d\Omega d\tau
&= \sum_{i=0}^\infty\int_{\tau_i}^{\tau_{i+1}}\int_{\mathbb{S}^2_{\tau,r_+}}\lp(| {\bsy \alpha}|+|\nablasl {\bsy \alpha}|\rp)d\Omega d\tau\\
&\leq \sum_{i=0}^\infty\sqrt{\tau_{i+1}-\tau_i}\Big(\int_{\tau_i}^{\tau_{i+1}}\int_{\mathbb{S}^2_{\tau,r_+}}\lp(| {\bsy \alpha}|^2+|\nablasl {\bsy \alpha}|^2\rp)d\Omega d\tau\Big)^{1/2}\\
&\lesssim \sum_{i=0}^\infty\sqrt{\tau_i}\lp(\mathbb{E}_{\mc H^+}[{\bsy \alpha}](\tau_i,\infty)\rp)^{1/2} \lesssim \sum_{i=0}^\infty\sqrt{\tau_i}\sqrt{\frac{\mathbb{D}[r^2{\bsy \alpha}](0)}{\tau_i^{1+p_{\alpha}}}}\lesssim \sqrt{\mathbb{D}[r^2{\bsy \alpha}](0)} \,. \numberthis\label{eq:rho-zero-mode-bddness}
\end{align*}
Combining \eqref{eq:rho-zero-mode-bddness} with \eqref{fin_term_f_rho}, we deduce
\begin{align*}
|f_\rho(\tau)|&\lesssim \lVert \widehat{\bsy \rho} \rVert_{L^2(\mathbb{S}^2_{0,r_+})}+\sqrt{\mathbb{D}[r^2{\bsy \alpha}](0)} \, , & |f_\sigma(\tau)|&\lesssim \lVert \widehat{\bsy \sigma} \rVert_{L^2(\mathbb{S}^2_{0,r_+})}+\sqrt{\mathbb{D}[r^2{\bsy \alpha}](0)} \, ,
\end{align*}
where the latter estimate is derived by an analogous argument. This concludes the proof of the first bullet point in the lemma. For the second bullet point in the lemma, we rely on the previous Lemma~\ref{lemma:zero-mode-horizon-oldsystem}. In view of \eqref{eq:def-rho-hat}--\eqref{eq:def-sigma-hat}, we have
\begin{align*}
\int_{\mathbb{S}^2_{\tau,r_+}}\widehat{\bsy \rho} \, d\Omega&=\frac{1}{I_\gamma}\int_{\mathbb{S}^2_{\tau,r_+}}\Sigma \bsy \rho \, d\Omega \\
&\qquad -\frac{1}{I_\gamma}\int_{\mathbb{S}^2_{\tau,r_+}}\frac{1-\gamma^2\cos^2\theta}{1+\gamma^2\cos^2\theta} (\widehat{\bsy\rho}-f_\rho) \, d\Omega +\frac{2\gamma}{I_\gamma} \int_{\mathbb{S}^2_{\tau,r_+}}\frac{\cos\theta}{1+\gamma^2\cos^2\theta}  (\widehat{\bsy\sigma}-f_\sigma) \, d\Omega \,, \\
\int_{\mathbb{S}^2_{\tau,r_+}}\widehat{\bsy \sigma} \, d\Omega&= \frac{1}{I_\gamma}\int_{\mathbb{S}^2_{\tau,r_+}}\Sigma \bsy \sigma \, d\Omega\\
&\qquad -\frac{1}{I_\gamma}\int_{\mathbb{S}^2_{\tau,r_+}}\frac{1-\gamma^2\cos^2\theta}{1+\gamma^2\cos^2\theta} (\widehat{\bsy\sigma}-f_\sigma) \, d\Omega -\frac{2\gamma}{I_\gamma} \int_{\mathbb{S}^2_{\tau,r_+}}\frac{\cos\theta}{1+\gamma^2\cos^2\theta}  (\widehat{\bsy\rho}-f_\rho) \, d\Omega \,.
\end{align*}
By using equations \eqref{DA_rho}--\eqref{DA_sigma}, we can estimate the last two terms on the right hand side by
\begin{align*}
&\Big|\int_{\mathbb{S}^2_{\tau,r_+}}\frac{1-\gamma^2\cos^2\theta}{1+\cos^2\theta} (\widehat{\bsy\rho}-f_\rho) \, d\Omega\Big|^2+\Big|\int_{\mathbb{S}^2_{\tau,r_+}}\frac{\cos\theta}{1+\gamma^2\cos^2\theta}  (\widehat{\bsy\sigma}-f_\sigma) \, d\Omega\Big|^2\\
&\qquad +\Big|\int_{\mathbb{S}^2_{\tau,r_+}}\frac{1-\gamma^2\cos^2\theta}{1+\cos^2\theta} (\widehat{\bsy\sigma}-f_\sigma) \, d\Omega\Big|^2+\Big|\int_{\mathbb{S}^2_{\tau,r_+}}\frac{\cos\theta}{1+\gamma^2\cos^2\theta}  (\widehat{\bsy\rho}-f_\rho) \, d\Omega\Big|^2\\
&\quad \lesssim \int_{\mathbb{S}^2_{\tau,r_+}}\lp[(\widehat{\bsy\rho}-f_\rho)^2+(\widehat{\bsy\sigma}-f_\sigma)^2\rp] \, d\Omega \lesssim  \int_{\mathbb{S}^2_{\tau,r_+}}\lp(|{\nablasl}\widehat{\bsy\rho}|^2+|{\nablasl}\widehat{\bsy\sigma}|^2\rp)d\Omega\lesssim\int_{\mathbb{S}^2_{\tau,r_+}}\lp(|\mathfrak T(\bsy\alpha)|^2+|\underline{\mathfrak T}(\bsy\alphab)|^2\rp)d\Omega\,.
\end{align*}
By the assumptions on the dynamics of $(\bsy\alpha,\bsy\alphab)$ in the statement of the lemma, we deduce that 
\begin{align*}
\lim_{\tau\to \infty}\int_{\mathbb{S}^2_{\tau,r_+}}\widehat{\bsy \rho} \, d\Omega=\frac{1}{I_\gamma}\lim_{\tau\to \infty}\int_{\mathbb{S}^2_{\tau,r_+}}\Sigma \bsy \rho \, d\Omega\,, \qquad 
\lim_{\tau\to \infty}\int_{\mathbb{S}^2_{\tau,r_+}}\widehat{\bsy \sigma} \, d\Omega= \frac{1}{I_\gamma}\lim_{\tau\to \infty}\int_{\mathbb{S}^2_{\tau,r_+}}\Sigma \bsy \sigma \, d\Omega\,. \numberthis \label{eq:zero-mode-horizon-newold}
\end{align*}
By Lemma~\ref{lemma:zero-mode-horizon-oldsystem}, we deduce $\tau$-decay (without a rate) of $f_\rho$ and $f_\sigma$ for the normalised solution $\tilde{\bsy{\mathfrak S}}$, i.e.~$\lim_{\tau\to \infty}f_\rho(\tau)=\lim_{\tau\to \infty}f_\sigma(\tau)=0$. By the fundamental theorem of calculus, the normalised solution $\tilde{\bsy{\mathfrak S}}$ satisfies
\begin{align*}
|f_\rho(\tau)|\lesssim \Bigg|\int_{\tau}^\infty \int_{\mathbb{S}^2_{\tau',r_+}}\nablasl_4\widehat{\bsy\rho}\, d\Omega d\tau'\Bigg|\,, \qquad |f_\sigma(\tau)|\lesssim \Bigg|\int_{\tau}^\infty \int_{\mathbb{S}^2_{\tau',r_+}}\nablasl_4\widehat{\bsy\sigma}\, d\Omega d\tau'\Bigg|\,.
\end{align*}
By applying the same strategy as in the boundedness argument, one considers the sequence $\left\lbrace \tau_i=2^i \tau_0\right\rbrace_{i=0}^\infty$ with $\tau_0=1+\tau$ and uses the assumptions on the dynamics of $(\bsy\alpha,\bsy\alphab)$ in the statement of the lemma to obtain
\begin{align*}
|f_\rho(\tau)|^2+|f_\sigma(\tau)|^2&\lesssim \mathbb{E}_{\mc H^+}[{\bsy \alpha}](\tau,\tau_0)+\lp(\sum_{i=0}^\infty\sqrt{\tau_i}\sqrt{\frac{\mathbb{D}[r^2{\bsy \alpha}](0)}{\tau_i^{1+p_{\alpha}}}}\rp)^2\lesssim \frac{\mathbb{D}[r^2{\bsy \alpha}](0)}{(1+\tau)^{p_{\alpha}}}\,, 
\end{align*}
which concludes the proof of the lemma.
\end{proof}

\begin{remark}  
In the proof of Lemma~\ref{lemma:zero-mode-horizon}, we obtain the boundedness part of the statement by relying solely on the modified Maxwell equations. In contrast, the decay argument relies on the prior choice of charges and decay estimates of Lemma~\ref{lemma:zero-mode-horizon-oldsystem} for the original middle Maxwell components $\Sigma\bsy{\rho}$ and $\Sigma\bsy{\sigma}$. However, we point out that, once the boundedness part of Lemma~\ref{lemma:zero-mode-horizon} is established, one may choose finite charges on the \emph{final} horizon sphere such that
\begin{align*}
{\mathfrak q}_1 &:=\lim_{\tau\to \infty} \int_{\mathbb S^2_{\tau,r_+}}\widehat{\bsy\rho}\, d\Omega\,, & {\mathfrak q}_2 &:=\lim_{\tau\to \infty} \int_{\mathbb S^2_{\tau,r_+}}\widehat{\bsy\sigma}\, d\Omega  \numberthis\label{eq:choice-charges-2}
\end{align*}
and directly prove the decay part of Lemma~\ref{lemma:zero-mode-horizon} for the modified middle Maxwell components $\widehat{\bsy\rho}$ and $\widehat{\bsy\sigma}$ by subtracting the modified stationary solution $\bsy{\mathfrak S}_{\mathfrak{q}_1,\mathfrak{q}_2}$. This alternative decay argument would only employ the modified Maxwell equations \eqref{D4_sigma}--\eqref{D4_rho}, thus producing a full proof of Lemma~\ref{lemma:zero-mode-horizon} which never invokes Lemma~\ref{lemma:zero-mode-horizon-oldsystem} nor the original system of Maxwell equations. The teleological choice of charges \eqref{eq:choice-charges-2} and alternative decay argument would moreover allow one to dispense with any projection procedure over spheres. Note, however, that, in view of \eqref{eq:zero-mode-horizon-newold}, \eqref{eq:zero-mode-horizon-conservation} and \eqref{eq:choice-charges}, invoking the projection procedure over spheres \textit{a posteriori} would allow us to deduce that the teleological charges can be read off from the initial data: in other words, the choices \eqref{eq:choice-charges} and \eqref{eq:choice-charges-2} turn out to be the same. 
\end{remark}

\begin{remark} \label{rmk_a_priori_horizon_flux}
Once combined with the modified Maxwell equations for derivatives of $\widehat{\bsy\rho}$ and $\widehat{\bsy\sigma}$, Lemma~\ref{lemma:zero-mode-horizon} immediately provides a-priori control over the $L^2(\mathbb{S}^2_{\tau,r})$-norm of $\widehat{\bsy\rho}$ and $\widehat{\bsy\sigma}$ along the event horizon.
\end{remark}

Lemma \ref{lemma:zero-mode-horizon} can be applied to establish control over the $L^2(\Sigma_{\tau})$-norm of the modified middle Maxwell components $\widehat{\bsy\rho}$ and $\widehat{\bsy\sigma}$, as stated in the following proposition. We remark that, in the estimates stated in the proposition, all future energy norms of the extremal Maxwell components appearing on the right hand side are degenerate at $\mathcal{H}^+$. We also note that the estimates possess near-horizon integrals of transversal derivatives of $\widehat{\bsy\rho}$ and $\widehat{\bsy\sigma}$ on the right hand side. These terms will be treated in the sequel (see Sections \ref{sec:conclusion-middle-EB-ED-sub}--\ref{sec:conclusion-middle-EB-ED-ext}).

\begin{proposition} \label{prop:L2-norm} Let $\epsilon>0$.
Under the assumptions of Lemma~\ref{lemma:zero-mode-horizon}, there exists a uniform constant $C=C(M,p_{\alpha},\epsilon)>0$ such that the following estimates hold.
\begin{itemize}
\item $L^2(\Sigma_{\tau})$-boundedness: we have
\begin{align*}
\int_{\Sigma_\tau}r^{-2}(|\widehat{\bsy\rho}[\bsy{\mathfrak S}]|^2+|\widehat{\bsy\sigma}[\bsy{\mathfrak S}]|^2)d\Omega dr &\leq C \Bigg|\int_{r_+}^{r_++\epsilon}\int_{\mathbb S^2_{\tau,r_+}}\nablasl_3\widehat{\bsy \rho}[\bsy{\mathfrak S}]\,d\Omega dr\Bigg|^2+ C\Bigg|\int_{r_+}^{r_++\epsilon}\int_{\mathbb S^2_{\tau,r_+}} \nablasl_3\widehat{\bsy \sigma}[\bsy{\mathfrak S}]\,d\Omega dr\Bigg|^2\\
&\qquad + C\left(\mathbb{D}[r^2\bsy\alpha[\bsy{\mathfrak S}]](0)  + \mathbb{E}^1[r^2\bsy\alpha[\bsy{\mathfrak S}]](\tau)+\mathbb{E}^1[r\bsy\alphab[\bsy{\mathfrak S}]](\tau) \right)
\end{align*}
for all $\tau\geq 0$.
\item $L^2(\Sigma_{\tau})$-decay: there exist finite $\mathfrak q_1,\mathfrak q_2\in\mathbb{R}$, which can be read off from the seed initial data, such that the normalised solution $\widetilde{\bsy{\mathfrak S}}=\bsy{\mathfrak S}-\bsy{\mathfrak S}_{\mathfrak q_1,\mathfrak q_2}$ satisfies 
\begin{align*}
\int_{\Sigma_\tau}r^{-2}(|\widehat{\bsy\rho}[\widetilde{\bsy{\mathfrak S}}]|^2+|\widehat{\bsy\sigma}[\widetilde{\bsy{\mathfrak S}}]|^2)d\Omega dr &\leq C \Bigg|\int_{r_+}^{r_++\epsilon}\int_{\mathbb S^2_{\tau,r_+}} \nablasl_3\widehat{\bsy \rho}[\bsy{\mathfrak S}]\,d\Omega dr\Bigg|^2+ C\Bigg|\int_{r_+}^{r_++\epsilon}\int_{\mathbb S^2_{\tau,r_+}} \nablasl_3\widehat{\bsy \sigma}[\bsy{\mathfrak S}]\,d\Omega dr\Bigg|^2\\
&\qquad +C\left(  \frac{\mathbb{D}[r^2\bsy\alpha[\bsy{\mathfrak S}]](0)}{\tau^{p_{\alpha}}}  + \mathbb{E}^1[r^2\bsy\alpha[\bsy{\mathfrak S}]](\tau)+\mathbb{E}^1[r\bsy\alphab[\bsy{\mathfrak S}]](\tau) \right)
\end{align*}
for all $\tau\geq 0$.
\end{itemize}
\end{proposition}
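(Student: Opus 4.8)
The plan is to reduce the $L^2(\Sigma_\tau)$ estimate, by transversal integration of the modified Maxwell equations \eqref{D3_sigma}--\eqref{DA_rho}, to the horizon control of Lemma~\ref{lemma:zero-mode-horizon}. I would treat the spherical mean $\overline{\widehat{\bsy\rho}}(\tau,r)$ of $\widehat{\bsy\rho}$ on each sphere separately from its projection $\widehat{\bsy\rho}-\overline{\widehat{\bsy\rho}}$ onto the higher spherical harmonics, and split the near-horizon region $\{r_+\le r\le r_++\epsilon\}$ from the far region $\{r\ge r_++\epsilon\}$, on which $\Delta$ is bounded below by a constant depending only on $M$ and $\epsilon$. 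Everything is run simultaneously for $\widehat{\bsy\rho}$ (using \eqref{D3_rho}, \eqref{D4_rho}, \eqref{DA_rho}) and for $\widehat{\bsy\sigma}$ (using \eqref{D3_sigma}, \eqref{D4_sigma}, \eqref{DA_sigma}).

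For the higher modes, a Poincaré inequality on the spheres bounds $\int_{\Sigma_\tau}r^{-2}|\widehat{\bsy\rho}-\overline{\widehat{\bsy\rho}}|^2$ by $\int_{\Sigma_\tau}|\nablasl\widehat{\bsy\rho}|^2$ plus a correction involving $T\widehat{\bsy\rho}$ which, by the explicit form \eqref{explicit_e1_e2} of the algebraically special frame, carries an $a^2r^{-2}$ weight (and disappears for $a=0$). The angular equations \eqref{DA_sigma}--\eqref{DA_rho} express $\nablasl\widehat{\bsy\rho}$ through $\mathfrak T(\bsy\alpha)$ and $\underline{\mathfrak T}(\bsy\alphab)$ (recall \eqref{def_quantities_3_alpha_4_alphab}), which are controlled in $L^2(\Sigma_\tau)$ with weights $1$ and $r^{-2}$ respectively --- uniformly in $a$ --- by $\mathbb E^1[r^2\bsy\alpha]$ and $\mathbb E^1[r\bsy\alphab]$; meanwhile $T\widehat{\bsy\rho}$ decomposes through the frame into $\nablasl_4\widehat{\bsy\rho}$, an angular derivative, and a $\Delta$-weighted $\nablasl_3\widehat{\bsy\rho}$, the last of which is bounded by \eqref{D3_rho} by one angular derivative of $\bsy\alphab$. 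The $\Delta$- and $a^2r^{-2}$-weights here precisely offset the $(r-M)^2$- and $\Delta$-degeneracies of the degenerate extremal energy $\mathbb E^1[r\bsy\alphab]$ at $\mc H^+$, including in the extremal case, so that $\int_{\Sigma_\tau}r^{-2}|\widehat{\bsy\rho}-\overline{\widehat{\bsy\rho}}|^2\lesssim \mathbb E^1[r^2\bsy\alpha](\tau)+\mathbb E^1[r\bsy\alphab](\tau)$.

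For the spherical mean I would integrate the transport equations along the transversal field $\tilde X$. Near $\mc H^+$ one has $\tilde X=T-e_3^{\rm as}$; taking spherical averages kills the $\widetilde Z$-contribution inside $T$, so $\partial_r\overline{\widehat{\bsy\rho}}=\overline{T\widehat{\bsy\rho}}-\overline{\nablasl_3\widehat{\bsy\rho}}$ with $\overline{T\widehat{\bsy\rho}}$ controlled as above, and the fundamental theorem of calculus gives, on $[r_+,r_++\epsilon]$, $|\overline{\widehat{\bsy\rho}}(\tau,r)|^2\lesssim |\overline{\widehat{\bsy\rho}}(\tau,r_+)|^2+\bigl(\int_{r_+}^{r_++\epsilon}\int_{\mathbb S^2}\nablasl_3\widehat{\bsy\rho}\,d\Omega\,dr\bigr)^2+(\text{controlled})$, where $|\overline{\widehat{\bsy\rho}}(\tau,r_+)|^2$ --- the zero mode of $\widehat{\bsy\rho}$ on $\mc H^+$ --- is exactly what Lemma~\ref{lemma:zero-mode-horizon} estimates (by the data plus $\mathbb D[r^2\bsy\alpha](0)$ for boundedness, and by $\mathbb D[r^2\bsy\alpha](0)/(1+\tau)^{p_\alpha}$ for the normalised solution $\widetilde{\bsy{\mathfrak S}}$ for decay). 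On $\{r\ge r_++\epsilon\}$ the factor $\Delta^{-1}$ in $\tilde X$ (cf.\ \eqref{eq:tilde-X_bis}) is bounded by $C(M,\epsilon)$, so after a Hardy inequality --- in which the $r^{-2}$ weight in the $L^2(\Sigma_\tau)$-norm supplies the decay needed to close the estimate --- the whole transport contribution is absorbed into $\mathbb E^1[r^2\bsy\alpha]+\mathbb E^1[r\bsy\alphab]$, while the boundary value at $r=r_++\epsilon$ is controlled by the near-horizon analysis together with a trace estimate into $\{r_++\epsilon\le r\le r_++2\epsilon\}$. Collecting the contributions yields the claimed estimates, the near-horizon zero mode of $\nablasl_3\widehat{\bsy\rho}$ and $\nablasl_3\widehat{\bsy\sigma}$ being retained explicitly because it cannot, uniformly in $a$, be absorbed into the horizon-degenerate $\mathbb E^1[r\bsy\alphab]$.

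The main obstacle I expect is exactly this uniform-in-$a$ weight bookkeeping: one must check that every transversal $\nablasl_3$-derivative of $\widehat{\bsy\rho},\widehat{\bsy\sigma}$ other than the spherical-mean contribution enters multiplied by a power of $\Delta$ (equivalently $(r-r_+)$) compensating the $(r-M)^2$- and $\Delta$-vanishing of $\mathbb E^1[r\bsy\alphab]$ at $\mc H^+$, and that this survives at extremality, where $r_+-M=0$. This compensation relies on the enhanced structure of the algebraically special frame at $\mc H^+$ --- the vectors $e_4^{\rm as},e_1^{\rm as},e_2^{\rm as}$ being tangent to $\mc H^+$, so that $T$ and all angular derivatives decompose with a $\Delta$-suppressed $\nablasl_3$-component --- and, at the other end, on the large-$r$ behaviour of $\tilde X$, whose $e_3^{\rm as}$-coefficient decays faster than $r^{-1}$.
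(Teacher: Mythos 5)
Your proposal is correct and follows essentially the same route as the paper: a Poincar\'e inequality on the foliation spheres reduces the estimate to the zero mode plus angular derivatives controlled via the Maxwell equations by the degenerate energies, the zero mode is propagated in $r$ by the fundamental theorem of calculus from its horizon value (which is exactly what Lemma~\ref{lemma:zero-mode-horizon} supplies), the near-horizon $\nablasl_3$ integrals are retained explicitly because they cannot be absorbed into $\mathbb{E}^1[r\bsy\alphab]$, and the large-$r$ region is closed by a transport/Hardy argument exploiting the $r^{-2}$ weight. The only cosmetic difference is that the paper performs the mean/higher-mode split only in the bounded-$r$ region and handles $r\geq R$ with a $\nablasl_4$-multiplier identity anchored at $r=R$, whereas you split globally and integrate along $\tilde X$; these are interchangeable.
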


\begin{proof} The proof can be divided into two steps: in the first step we control the relevant fluxes for large $r\geq R\gg r_+$, whereas in the second step we control the relevant fluxes in the bounded-$r$ region up to $\mc H^+$. We present the proof of the estimates for $\widehat{\bsy\rho}$. The case of $\widehat{\bsy \sigma}$ follows by the same arguments.

\medskip
\noindent\textit{Step 1: the large-$r$ region.} We integrate the identity
\begin{align*}
\nablasl_4(r^{-1}|\widehat{\bsy\rho}|^2)+\frac{\Delta}{r^2\Sigma}|\widehat{\bsy\rho}|^2&= \frac{2}{r}\widehat{\bsy\rho}\, \nablasl_4\widehat{\bsy\rho}
\end{align*}
over the hypersurface $\Sigma_\tau$ and obtain
\begin{align*}
\int_{\Sigma_\tau\cap \{ r\geq R\}} \frac{\Delta}{r^2\Sigma}|\widehat{\bsy\rho}|^2 d\Omega dr 
&= -\int_{\Sigma_\tau\cap \{ r\geq R\}}\nablasl_4\lp(\frac{1}{r}|\widehat{\bsy\rho}|^2\rp)d\Omega dr +\int_{\Sigma_\tau\cap \{ r\geq R\}}\frac{2}{r}\widehat{\bsy\rho}\, \nablasl_4\widehat{\bsy\rho} \, d\Omega dr\\
&\leq  \frac{1}{R}\int_{\mathbb S^2_{\tau,R}}|\widehat{\bsy\rho}|^2 d\Omega+\int_{\Sigma_\tau\cap \{ r\geq R\}}\lp(\tilde X-\nablasl_4\rp)\lp(\frac{1}{r}|\widehat{\bsy\rho}|^2\rp)d\Omega dr \\
&\qquad +\int_{\Sigma_{\tau}\cap\{r\geq R\}}\frac{2}{r}\widehat{\bsy\rho}\, \nablasl_4\widehat{\bsy\rho}\,  d\Omega dr\,, 
\end{align*}
where we introduced the vector field $\tilde X$ defined in \eqref{eq:tilde-X} and dropped $\lim_{r\rightarrow\infty}\int_{\mathbb S^2_{\tau,r}}-\frac{1}{r}|\widehat{\bsy\rho}|^2 d\Omega\leq 0$. By a Cauchy--Schwarz inequality applied to the last two terms, we get
\begin{align*}
\int_{\Sigma_\tau\cap \{ r\geq R\}} r^{-2}|\widehat{\bsy\rho}|^2 d\Omega dr &\lesssim \int_{\mathbb S^2_{\tau,R}}|\widehat{\bsy\rho}|^2 d\Omega+\int_{\Sigma_\tau\cap \{ r\geq R\}}\Big|\lp(\tilde X-\nablasl_4\rp)(r^{-1})\Big||\widehat{\bsy\rho}|^2 d\Omega dr  \numberthis \label{eq:L2-norm-int} \\
&\qquad +\int_{\Sigma_{\tau}\cap\{r\geq R\}}\lp[|\nablasl_4\widehat{\bsy\rho}|^2+|\lp(\tilde X-\nablasl_4\rp)\widehat{\bsy\rho}|^2\rp]  d\Omega dr.
\end{align*}
By direct inspection of definition \eqref{eq:tilde-X}, we have 
\begin{align*}
|\lp(\tilde X-\nablasl_4\rp)(r^{-1})| \lesssim r^{-3}\,,
\end{align*}
and therefore the second term on the right hand side of \eqref{eq:L2-norm-int} can be absorbed into the left hand side for $R$ sufficiently large, which we fix accordingly. Then, using the modified Maxwell equations, we observe
\begin{align*}
\int_{\Sigma_{\tau}\cap\{r\geq R\}}\lp[|\nablasl_4\widehat{\bsy\rho}|^2+|\lp(\tilde X-\nablasl_4\rp)\widehat{\bsy\rho}|^2\rp]d\Omega dr \lesssim \mathbb{E}^1[r^2\bsy\alpha](\tau)+ \mathbb{E}^1[r\bsy\alphab](\tau).
\end{align*}
Putting everything together, we conclude that 
\begin{align*}
\int_{\Sigma_\tau\cap \{ r\geq R\}} r^{-2}|\widehat{\bsy\rho}|^2 d\Omega dr &\lesssim \int_{\mathbb S^2_{\tau,R}}|\widehat{\bsy\rho}|^2 d\Omega+\mathbb{E}^1[r^2\bsy\alpha](\tau)+ \mathbb{E}^1[r\bsy\alphab](\tau)\,.
\end{align*}

\medskip
\noindent\textit{Step 2: the bounded-$r$ region.} 
Let 
\begin{align*}
f(\tau,r):= \int_{\mathbb{S}^2_{\tau,r}}\widehat{\bsy\rho}\,d\Omega
\end{align*}
and note the Poincar\'{e} inequality
\begin{align*}
\int_{\mathbb{S}^2_{\tau,r}}|\widehat{\bsy\rho}|^2d\Omega \lesssim \lp|f(\tau,r)\rp|^2 +\int_{\mathbb{S}^2_{\tau,r}}r^2|{\nablasl}_{\mathbb S^2_{\tau,r}}\widehat{\bsy\rho}|^2d\Omega\,.
\end{align*}
We can integrate the above inequality over the hypersurface $\Sigma_\tau$ and obtain (note that, since we are integrating in a bounded-$r$ region, the $r$-weights in the following estimates are in fact irrelevant)
\begin{align}
\int_{\Sigma_\tau\cap\{r\leq R\}}r^{-2}|\widehat{\bsy\rho}|^2d\Omega dr &\lesssim \int_{r_+}^R r^{-2}\lp|f(\tau,r)\rp|^2 dr +\int_{\Sigma_{\tau}\cap\{r\leq R\}}|{\nablasl}_{\mathbb S^2_{\tau,r}}\widehat{\bsy\rho}|^2d\Omega dr \nonumber\\
&\lesssim \int_{r_+}^R r^{-2}\lp|f(\tau,r)\rp|^2 dr + \mathbb{E}^1[r^2\bsy\alpha](\tau)+ \mathbb{E}^1[r\bsy\alphab](\tau)\,, \label{aux_ineq_deg_energies_alphab}
\end{align}
where we have used the modified Maxwell equations in the second inequality. We remark that the inequality
\begin{equation*}
    \int_{\Sigma_{\tau}\cap\{r\leq R\}}|{\nablasl}_{\mathbb S^2_{\tau,r}}\widehat{\bsy\rho}|^2d\Omega dr\lesssim \int_{ \Sigma_{\tau}\cap \{r\leq R\}} \lp(\lp|\nablasl_4\widehat{\bsy \rho}\rp|^2+\lp|\Delta\nablasl_3\widehat{\bsy \rho}\rp|^2+\lp|\nablasl\widehat{\bsy \rho}\rp|^2\rp)d\Omega dr
\end{equation*}
and the structure of the equations are, in particular, used to conclude that inequality \eqref{aux_ineq_deg_energies_alphab} can in fact be written in terms of degenerate-at-$\mc H^+$ energy norms of $\bsy\alpha$ and $\bsy\alphab$. For the first term in \eqref{aux_ineq_deg_energies_alphab}, we observe
\begin{align*}
\int_{r_+}^R r^{-2}\lp|f(\tau,r)\rp|^2 dr\leq \sup_{r\in[r_+R]} \lp|f(\tau,r)\rp|^2 \int_{r_+}^R r^{-2} dr \leq \frac{1}{r_+}\sup_{r\in[r_+R]} \lp|f(\tau,r)\rp|^2.
\end{align*}
We then apply the fundamental theorem of calculus and obtain
\begin{align*}
\sup_{r\in[r_+,R]} \lp|f(\tau,r)\rp|^2&\leq   |f(\tau,r_+)|^2 + \Bigg|\int_{r_+}^{R} \p_r  f(\tau,r) dr\Bigg|^2\\
&\lesssim_\epsilon |f(\tau,r_+)|^2+ \Bigg|\int_{ \Sigma_{\tau}\cap \{r\leq r_++\epsilon\}} \nablasl_3\widehat{\bsy \rho}\, d\Omega dr\Bigg|^2\\
&\qquad+ \int_{ \Sigma_{\tau}\cap \{r\leq R\}} \lp(\lp|\nablasl_4\widehat{\bsy \rho}\rp|^2+\lp|\frac{\Delta}{\Sigma}\nablasl_3\widehat{\bsy \rho}\rp|^2+\lp|\nablasl\widehat{\bsy \rho}\rp|^2\rp)d\Omega dr\\
&\lesssim_\epsilon \lp| f(\tau,r_+)\rp|^2 + \Bigg|\int_{ \Sigma_{\tau}\cap \{r\leq r_++\epsilon\}} \nablasl_3\widehat{\bsy \rho}\,d\Omega dr\Bigg|^2 + \mathbb{E}^1[r^2\bsy\alpha](\tau)+\mathbb{E}^1[r\bsy\alphab](\tau)
\end{align*}
for any $\epsilon>0$, where we used the modified Maxwell equations in the last inequality. We remark that the equations do not allow to estimate the second term in the last inequality in terms of the degenerate-at-$\mc H^+$ energy norm $\mathbb{E}^1[r\bsy\alphab](\tau)$. 

Putting everything together, we obtain 
\begin{align*}
\int_{\Sigma_\tau\cap\{r\leq R\}}r^{-2}|\widehat{\bsy\rho}|^2d\Omega dr &\lesssim_\epsilon \Bigg| \int_{\mathbb{S}^2_{\tau,r_+}}\widehat{\bsy\rho}\,d\Omega\Bigg|^2  + \Bigg|\int_{ \Sigma_{\tau}\cap \{r\leq r_++\epsilon\}} \nablasl_3\widehat{\bsy \rho}\,d\Omega dr\Bigg|^2 + \mathbb{E}^1[r^2\bsy\alpha](\tau)+\mathbb{E}^1[r\bsy\alphab](\tau) \, .
\end{align*}
By combining with the previous step and with Lemma~\ref{lemma:zero-mode-horizon}, we conclude the proof of the proposition. 
\end{proof}

\subsection{Control over higher order fluxes}
\label{sec:middle-higher-order}

Control over first (and higher) order energy fluxes of the modified middle Maxwell components $\widehat{\bsy{\rho}}$ and $\widehat{\bsy{\sigma}}$ can be immediately read off from the modified Maxwell equations \eqref{D3_sigma}--\eqref{DA_rho}.

\begin{proposition} \label{prop:higher-derivatives} 
Fix $M>0$ and let $J\geq 0$. Then, there exists a uniform constant $C=C(M,J)>0$ such that, for any $|a|\leq M$ and any solution 
$$\bsy{\mathfrak S}=(\bsy{\alpha}[\bsy{\mathfrak S}],\bsy{\alphab}[\bsy{\mathfrak S}],\widehat{\bsy{\rho}}[\bsy{\mathfrak S}],\widehat{\bsy{\sigma}}[\bsy{\mathfrak S}])$$ 
to the modified Maxwell equations \eqref{D3_sigma}--\eqref{DA_rho}, the energy estimates
\begin{align*}
\dot{\mathbb{E}}^{J}\big[\widehat{\bsy\rho}[\bsy{\mathfrak S}]\big](\tau)+\dot{\mathbb{E}}^{J}\big[\widehat{\bsy\sigma}[\bsy{\mathfrak S}]\big](\tau) &\leq C\left( {\mathbb{E}}^{\max\{J,1\}}\big[r^2{\bsy\alpha}[\bsy{\mathfrak S}]\big](\tau)+{\mathbb{E}}^{\max\{J,1\}}\big[r{\bsy\alphab}[\bsy{\mathfrak S}]\big](\tau) \right) \,,\\
\dot{\overline{\mathbb{E}}}^{J}\big[\widehat{\bsy\rho}[\bsy{\mathfrak S}]\big](\tau)+\dot{\overline{\mathbb{E}}}^{J}\big[\widehat{\bsy\sigma}[\bsy{\mathfrak S}]\big](\tau) &\leq C \left( \overline{\mathbb{E}}^{\max\{J,1\}}\big[r^2{\bsy\alpha}[\bsy{\mathfrak S}]\big](\tau)+{\overline{\mathbb{E}}}^{\max\{J,1\}}\big[r{\bsy\alphab}[\bsy{\mathfrak S}]\big](\tau) \right)
\end{align*}
hold for all $\tau\in\mathbb{R}$.
\end{proposition}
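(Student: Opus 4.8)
The plan is to read off both families of estimates directly from the modified Maxwell equations \eqref{D3_sigma}--\eqref{DA_rho}, which are the only input allowed. The structural point, already visible in Proposition~\ref{prop:reduced-system}, is that \emph{every} first-order frame derivative of $\widehat{\bsy\rho}[\bsy{\mathfrak S}]$ and $\widehat{\bsy\sigma}[\bsy{\mathfrak S}]$ is given \emph{algebraically} in terms of quantities which are first order in $\bsy\alpha[\bsy{\mathfrak S}]$ and $\bsy\alphab[\bsy{\mathfrak S}]$: $\nablasl_4$ produces angular derivatives of $\Sigma\bsy\alpha$, $\nablasl_3$ produces angular derivatives of $\bsy\alphab$, and $\nablasl$ produces the combinations $\mathfrak T(\bsy\alpha)$, $\underline{\mathfrak T}(\bsy\alphab)$ of \eqref{def_quantities_3_alpha_4_alphab} (which are $\Sigma$ times $\nablasl_3\bsy\alpha$, resp.\ $\nablasl_4\bsy\alphab$, modulo connection-coefficient terms). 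Hence the proof reduces to weight bookkeeping against the norms of Sections~\ref{sec:Teukolsky-norms} and~\ref{sec:middle-norms}, with no PDE analysis involved.

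First I would treat the case $J=0$, working with $\widehat{\bsy\rho}$ (the argument for $\widehat{\bsy\sigma}$ being identical). Inserting \eqref{D4_rho}, \eqref{D3_rho} and \eqref{DA_rho} into the definitions of $\dot{\mathbb E}[\widehat{\bsy\rho}]$ and $\dot{\overline{\mathbb E}}[\widehat{\bsy\rho}]$, I would bound the background coefficients using the explicit formulas of Section~\ref{sec_Kerr_connection_coeff_curv_comps}, e.g.\ $\tfrac14\Sigma^2[(\truchi)^2-\epsuchi^2]=r^2-a^2\cos^2\theta$ and $\tfrac12\Sigma^2\epsuchi(\truchi)=-2ar\cos\theta$ together with their $\Sigma^{-1}$-rescalings, which are explicit with the correct $r$-growth; I would then pass between $\bsy\alpha,\bsy\alphab$ and the rescaled unknowns $r^2\bsy\alpha$, $r\bsy\alphab$ appearing in the Teukolsky norms using $\nablasl r=0$, $\nablasl_3 r=-1$, $\nablasl_4 r=\Delta/\Sigma$, the identity $\nablasl\Sigma=\Sigma(\eta+\etab)$ of \eqref{eq:ang-derivative-Sigma}, and the decays $|\eta|_{\slashed{g}},|\etab|_{\slashed{g}},|\chi^{\sharp_2}|_{\slashed{g}},|\chib^{\sharp_2}|_{\slashed{g}}\lesssim r^{-1}$ of Section~\ref{sec:algebraically-special-props}. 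A short computation then matches each term with a term of $\mathbb E^1[r^2\bsy\alpha]$ and $\mathbb E^1[r\bsy\alphab]$ (and with the analogous terms of $\overline{\mathbb E}^1$ for the non-degenerate statement). The degenerate and non-degenerate statements would be obtained at one stroke: the only discrepancy between $\dot{\mathbb E}$ and $\dot{\overline{\mathbb E}}$ is the weight of the $\nablasl_3\widehat{\bsy\rho}$-term, and for the degenerate norm this closes against the (likewise horizon-degenerate) $\nablasl(r\bsy\alphab)$- and $\nablasl^2(r\bsy\alphab)$-terms of $\mathbb E^1[r\bsy\alphab]$ thanks to the elementary bound $\Delta=(r-M)^2-(M^2-a^2)\le(r-M)^2$, which is uniform in $|a|\le M$.

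The one non-algebraic point I expect — and the step to be careful about — is the $\nablasl\widehat{\bsy\rho}$- and $\nablasl\widehat{\bsy\sigma}$-estimates near null infinity. There $\underline{\mathfrak T}(\bsy\alphab)=r^{-1}\underline{\mathfrak T}(r\bsy\alphab)-\tfrac{\Delta}{2r^2}(r\bsy\alphab)$ produces a bare zeroth-order term $\tfrac{\Delta^2}{r^4}|r\bsy\alphab|^2\sim|r\bsy\alphab|^2$ as $r\to\infty$, which is \emph{not} controlled by the naked zeroth-order term of $\mathbb E^1[r\bsy\alphab]$; it must be absorbed into the $|\nablasl(r\bsy\alphab)|^2$ (or $|\nablasl^2(r\bsy\alphab)|^2$) term of the norm by the Poincar\'e/spectral-gap inequality on spheres, i.e.\ by the absence of an $\ell=0$ mode for the spin $\mp1$ quantities identified with $\bsy\alpha,\bsy\alphab$ through Lemma~\ref{lemma:isomorphism}. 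The same device handles the analogous contribution to the non-degenerate estimate (where $\overline{\mathbb E}^1[r\bsy\alphab]$ additionally carries a bare $\nablasl_4(r\bsy\alphab)$-term). Apart from this, the $J=0$ step is routine.

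Finally, for $J\ge1$ I would commute the modified Maxwell equations with the derivative strings $D=((1-Mr^{-1})\nablasl_3)^{J_1}\nablasl_4^{J_2}\nablasl^{J_3}$, $J_1+J_2+J_3\le J$, so as to write each of $\nablasl_4 D\widehat{\bsy\rho}$, $\nablasl_3 D\widehat{\bsy\rho}$, $\nablasl D\widehat{\bsy\rho}$ as a derivative string of order $\le J$ applied to a first-order expression in $\bsy\alpha,\bsy\alphab$, up to commutator terms; the commutators $[\nablasl_4,\nablasl_3]$, $[\nablasl_4,\nablasl]$, $[\nablasl_3,\nablasl]$ acting on $\mathfrak D_{\mc N_{\rm as}}$ tensors produce only strictly lower-order derivatives multiplied by background connection coefficients and curvature components, bounded with the appropriate $r$-weights and hence absorbed into the energy norms (using the $J=0$ estimate, or a trivial induction on $J$). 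Repeating the weight bookkeeping then gives the stated bounds, the norm $\mathbb E^J$ for $J\ge1$ already containing the $(J+1)$-th order derivatives of $\bsy\alpha,\bsy\alphab$ that are produced. The main obstacle throughout is organizational rather than conceptual: matching the somewhat intricate $r$-weights of the Teukolsky norms near $\mc I^+$, treating the horizon weights uniformly in $a$, and bookkeeping the commutator terms for $J\ge1$.
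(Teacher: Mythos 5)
Your overall strategy coincides with the paper's: Proposition~\ref{prop:higher-derivatives} is stated there without proof, precisely because the bounds are meant to be ``read off'' by substituting \eqref{D3_sigma}--\eqref{DA_rho} into the norm definitions, bookkeeping the explicit coefficients $\frac14\Sigma[(\truchi)^2-\epsuchi^2]$, $\frac12\Sigma\epsuchi(\truchi)$ and their $\Sigma^{-1}$-rescalings, and commuting for $J\geq 1$. Your weight checks at the horizon (in particular $\Delta\leq (r-M)^2$ uniformly in $|a|\leq M$, matching $\Delta r^{-4}|\nablasl_3\widehat{\bsy\rho}|^2$ against the $r^{-2}(r-M)^2|\nablasl(r\bsy\alphab)|^2$ term of $\mathbb E^1[r\bsy\alphab]$) are correct, as is the identification of the one genuinely delicate point, namely the large-$r$ behaviour of the $\underline{\mathfrak T}(\bsy\alphab)$ contribution to $\nablasl\widehat{\bsy\rho}$, $\nablasl\widehat{\bsy\sigma}$.

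However, your proposed resolution of that point does not work. You split
\begin{equation*}
\underline{\mathfrak T}(\bsy\alphab)=\frac1r\,\underline{\mathfrak T}(r\bsy\alphab)-\frac{\Delta}{2r^2}(r\bsy\alphab)
\end{equation*}
by the triangle inequality and then try to recover the resulting bare $|r\bsy\alphab|^2$ term from the angular-derivative terms of the norm via the spherical spectral gap. But the Poincar\'e inequality for a spin $\mp 1$ quantity on the foliation spheres reads $\int_{\mathbb S^2}|\varsigma|^2\,d\Omega\lesssim \int_{\mathbb S^2}r^2|\nablasl\varsigma|^2\,d\Omega$ (the unit-sphere gradient is $r\nablasl$, not $\nablasl$), so the $|\nablasl(r\bsy\alphab)|^2$ term of $\mathbb E^1[r\bsy\alphab]$ only returns $r^{-2}|r\bsy\alphab|^2$ -- exactly the zeroth-order term already present in the norm and two powers of $r$ short of $|r\bsy\alphab|^2$; iterating through $r^p|\nablasl^2(r\bsy\alphab)|^2$ with $p<2$ gives $r^{p-4}\cdot r^2|r\bsy\alphab|^2$, still short. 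No spectral-gap argument can buy those two powers. The correct move is not to perform the split at all: by \eqref{eq:def-Phi}, the combination $\underline{\mathfrak T}(\bsy\alphab)+\frac{\Delta}{\Sigma}\big(\frac{a^2 r\sin^2\theta}{r^2+a^2}+ia\cos\theta\big)\bsy\alphab$ is, up to a prefactor of size $r^{-1}$, the transformed quantity $\swei{\Phi}{-1}$, whence
\begin{equation*}
|\underline{\mathfrak T}(\bsy\alphab)|^2\lesssim r^{-2}\big|\swei{\Phi}{-1}\big|^2+r^{-2}|r\bsy\alphab|^2\,,
\end{equation*}
and both terms on the right (the first being comparable to $r^{-2}|\underline{\mathfrak T}(r\bsy\alphab)|^2$ plus a horizon-degenerate zeroth-order term, cf.\ Section~\ref{rmk:translation-SRTdC}) are manifestly part of $\mathbb E^1[r\bsy\alphab]$ and $\overline{\mathbb E}^1[r\bsy\alphab]$. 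In other words, the dangerous $-\frac{\Delta}{2r^2}(r\bsy\alphab)$ piece is cancelled to leading order by the corresponding piece hidden inside $r^{-1}\underline{\mathfrak T}(r\bsy\alphab)$, and the estimate must exploit this cancellation rather than undo it. With this replacement your argument closes; the rest of the proposal (the $J=0$ bookkeeping and the commutation scheme for $J\geq 1$, whose commutators produce only lower-order terms with admissible weights) is sound.
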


\subsection{Proof of Theorem \ref{thm:middle-EB-ED-sub}}
\label{sec:conclusion-middle-EB-ED-sub}

By combining the results of Sections \ref{sec:middle-zeroth-order}--\ref{sec:middle-higher-order}, one obtains:

\begin{proof}[Proof of Theorem~\ref{thm:middle-EB-ED-sub}]
By the modified Maxwell equations~\eqref{D3_sigma}--\eqref{D3_rho}, we have 
\begin{align*}
\Bigg|\int_{r_+}^{r_++\epsilon}\int_{\mathbb S^2_{\tau,r}}\nablasl_3\widehat{\bsy \rho}\,d\Omega dr\Bigg|^2+ \Bigg|\int_{r_+}^{r_++\epsilon}\int_{\mathbb S^2_{\tau,r}} \nablasl_3\widehat{\bsy \sigma}\,d\Omega dr\Bigg|^2 \lesssim \int_{r_+}^{r_++\epsilon}\int_{\mathbb S^2_{\tau,r}}\lp(|\nablasl\bsy\alphab|^2+|\bsy\alphab|^2\rp)d\Omega dr \lesssim  \overline{\mathbb{E}}^1[r\bsy\alphab](\tau)\,,
\end{align*}
where the implicit constants are independent of $\epsilon$. By combining the above inequality with Propositions~\ref{prop:L2-norm} and \ref{prop:higher-derivatives}, we conclude the proof of the theorem.
\end{proof}

We recall that the reader  interested only in the analysis for the full sub-extremal $|a|<M$ case can now skip ahead to Section~\ref{sec:analysis-teukolsky}. The reader  interested also in the nearly extremal ($|a|\approx M$) and exactly extremal ($|a|=M$) cases should continue to the next sections.

\subsection{Proof of Theorem \ref{thm:middle-EB-ED-ext}}
\label{sec:conclusion-middle-EB-ED-ext}

The aim of this section is to prove Theorem \ref{thm:middle-EB-ED-ext} by combining Propositions~\ref{prop:L2-norm} and \ref{prop:higher-derivatives}. This requires one to control the first two terms on the right hand side of the estimates in Proposition~\ref{prop:L2-norm} in terms of \emph{degenerate}-at-$\mc H^+$ energy norms of the extremal Maxwell components $\bsy \alpha$ and $\bsy\alphab$. 

To this end, we recall the notation introduced in Sections~\ref{sec:Maxwell-Kerr} and \ref{sec:Teukolsky-norms} and the quantities
\begin{equation*}
\swei{\upupsilon}{0}=-\widehat{\bsy\rho }+i\widehat{\bsy\sigma}
\end{equation*}
and
\begin{align*}
\swei{\tilde \upphi}{+ 1}_0 &=\Delta\swei{\upalpha}{+1}=-\frac{1}{\sqrt{2}}\Sigma\bsy\alpha(e_1^{\rm as}+ie_2^{\rm as})\,, \\
 \swei{\tilde \upphi}{- 1}_0 &= \sqrt{r^2+a^2}\Delta^{-1}\swei{\upalpha}{-1}=\frac{\kappa^2}{\sqrt{2}\Sigma}(r^2+a^2)^{1/2}\bsy\alphab(e_1^{\rm as}-ie_2^{\rm as})\,, \\
\swei{\Phi}{-1}&=\frac{r^2+a^2}{\Delta} \Sigma e_4^{\rm as} (\swei{\upphi}{- 1}_0)\,,\\
\swei{\Phi}{+ 1}&= (r^2+a^2) e_3^{\rm as} (\swei{\upphi}{+ 1}_0) \, .
\end{align*}
We will also employ the scalar function
\begin{align*}
    w(r):=\frac{\Delta}{(r^2+a^2)^2} \,.
\end{align*}
It will be useful to keep in mind Lemma~\ref{lemma:isomorphism} and Remark~\ref{rmk:spin-weighted} (see also Appendix~\ref{sec:appendix-MaxwellNP}) throughout this section.

The core of the section is to derive a suitable estimate for the quantity 
\begin{align*}
\Bigg|\int_{r_+}^{r_++\epsilon}\int_{\mathbb S^2_{\tau,r}}\nablasl_3\widehat{\bsy \rho}\,d\Omega dr\Bigg|^2+ \Bigg|\int_{r_+}^{r_++\epsilon}\int_{\mathbb S^2_{\tau,r}} \nablasl_3\widehat{\bsy \sigma}\,d\Omega dr\Bigg|^2 = \Bigg|\int_{r_+}^{r_++\epsilon}\int_{\mathbb S^2_{\tau,r}} e_3^{\rm as}\big(\swei{\upupsilon}{0}\big)d\Omega dr\Bigg|^2 \numberthis \label{eq:to-estimate-extremal}
\end{align*}
with $\epsilon>0$ to be fixed.

\subsubsection{Warm-up: from Schwarzschild to extremal Reissner--Nordstr\"om}
\label{sec:warm-up-spherically-symmetric}

To illustrate our approach, we first consider the Schwarzschild $|a|=0$ case. The following proposition and its proof will serve as a guide for the more intricate calculations in the $|a|\neq 0$ case.

\begin{proposition}[An identity on Schwarzschild] \label{prop:upupsilon-Schwarzschild} Fix $M>0$ and let $|a|=0$. For $L\geq |s|$, let
\begin{align*}
S_{L}^{[s]}(\theta,\phi):=\sum_{\max\{|m|,|s|\}\leq L}S_{m,L}^{[s]}(\theta)e^{im\phi}
\end{align*}
be the $s$ spin-weighted spherical harmonics, which form a complete orthonormal basis of $L^2(\sin\theta d\theta)$, see for instance \cite[Section 3.1.2]{SRTdC2023} for a rigorous definition. Then, by writing
\begin{align*}
\swei{\upupsilon}{0}_L:=\int_{\mathbb{S}^2}\swei{\upupsilon}{0}S_{L}^{[0]} d\Omega\,, \quad L\geq 0\,,\qquad\qquad 
\swei{\Phi}{-1}_L:=\int_{\mathbb{S}^2}\swei{\Phi}{-1} S_{L}^{[-1]} d\Omega\,, \quad L\geq 1\,,\\ 
\end{align*}
we have the pointwise identity
\begin{align*}
e_3^{\rm as}\big(\swei{\upupsilon}{0}_L\big)=\frac{1}{\sqrt{2r^2}}\begin{dcases} 0\,, & L=0\\ \frac{e_3^{\rm as}\big(\swei{\Phi}{-1}_{L}\big)}{\sqrt{L(L+1)}}\,, &L\geq 1\end{dcases}\,. \numberthis\label{eq:sphsym-identity}
\end{align*}
\end{proposition}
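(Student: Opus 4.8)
\textbf{Proof plan for Proposition~\ref{prop:upupsilon-Schwarzschild}.}

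The plan is to work entirely in the Schwarzschild case $a=0$, where $\Sigma=r^2$, $\kappa=\overline\kappa=r$, $\mathfrak k=0$, the horizontal distribution is integrable, and the foliation-sphere projection is trivial, so that $\widehat{\bsy\rho}=r^2\bsy\rho$, $\widehat{\bsy\sigma}=r^2\bsy\sigma$ and $\swei{\upupsilon}{0}=-r^2\bsy\rho+ir^2\bsy\sigma$. First I would specialize the Maxwell system in Newman--Penrose form \eqref{eq:Maxwell-m}--\eqref{eq:Maxwell-n} to $a=0$. Two of these become the ``angular'' equations
\begin{align*}
\tfrac{1}{\sqrt2}(e_1^{\rm as}+ie_2^{\rm as})\big(\swei{\upupsilon}{0}\big) &= \tfrac{r^2}{2r^2}e_3^{\rm as}\big(r^{-1}\Delta\swei{\upalpha}{+1}\cdot r\big)\quad\text{(suitably simplified at }a=0\text{)},\\
\tfrac{1}{\sqrt2}(e_1^{\rm as}-ie_2^{\rm as})\big(\swei{\upupsilon}{0}\big) &= \tfrac{r^2}{2\Delta}e_4^{\rm as}\big(\swei{\upalpha}{-1}\big)
\end{align*}
and the other two become the ``$e_3^{\rm as}$, $e_4^{\rm as}$'' equations, of which the relevant one is \eqref{eq:Maxwell-n} at $a=0$:
\[
\tfrac{\Delta}{2r^2}e_3^{\rm as}\big(\swei{\upupsilon}{0}\big)=\tfrac{r^2}{2\sqrt2\, r^2}\big(r(e_1^{\rm as}+ie_2^{\rm as})+\cot\theta\big)\big(r^{-1}\swei{\upalpha}{-1}\big).
\]
Recalling from \eqref{eq:def-upphi} that $\swei{\upphi}{-1}_0=\frac{r}{\sqrt{r^2+a^2}}\Delta^{-1}\swei{\upalpha}{-1}$, which at $a=0$ is just $\Delta^{-1}\swei{\upalpha}{-1}$, and from \eqref{eq:def-Phi} that $\swei{\Phi}{-1}=\frac{r^2+a^2}{\Delta}\Sigma e_4^{\rm as}(\swei{\upphi}{-1}_0)=r^4\Delta^{-1}e_4^{\rm as}(\Delta^{-1}\swei{\upalpha}{-1})$ at $a=0$.

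The key step is to project onto spin-weighted spherical harmonics. I would decompose $\swei{\upupsilon}{0}$ into $S^{[0]}_L$ and $\swei{\upalpha}{-1}$ (equivalently $\swei{\Phi}{-1}$) into $S^{[-1]}_L$. The operators $\mathring{\slashed\nabla}^{[s]}$ built from $e_1^{\rm as}\pm ie_2^{\rm as}$ act as raising/lowering operators on the spin-weighted spherical harmonics, so $\frac{1}{\sqrt2}(r(e_1^{\rm as}+ie_2^{\rm as})+\cot\theta)$ applied to a spin $-1$ quantity lands in spin $0$, and on the $L$-th mode it multiplies by $\pm\sqrt{L(L+1)}/r$ (up to a normalization constant I would pin down from \cite[Section 3.1.2]{SRTdC2023}). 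Applying $\int_{\mathbb S^2}(\cdot)\,S^{[0]}_L\,d\Omega$ to the $a=0$ version of \eqref{eq:Maxwell-n}, the right-hand side becomes proportional to $(\swei{\upalpha}{-1})_L$ (the $S^{[-1]}_L$-mode), and then one feeds in the relation between $(\swei{\upalpha}{-1})_L$ and $(\swei{\Phi}{-1})_L$ via the $e_4^{\rm as}$-transport equation. The $L=0$ case is immediate: there is no spin $-1$, $\ell=0$ harmonic, so the right-hand side of \eqref{eq:Maxwell-n} vanishes identically for the $L=0$ projection, giving $e_3^{\rm as}(\swei{\upupsilon}{0}_0)=0$; equivalently, $L(L+1)=0$ kills the mode. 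For $L\geq 1$ I would carefully track the constants: the left side is $\frac{\Delta}{2r^2}e_3^{\rm as}(\swei{\upupsilon}{0}_L)$ and after dividing through one should recover exactly $e_3^{\rm as}(\swei{\upupsilon}{0}_L)=\frac{1}{\sqrt{2r^2}}\frac{e_3^{\rm as}(\swei{\Phi}{-1}_L)}{\sqrt{L(L+1)}}$; in particular the $\Delta$-factors must cancel, which is the consistency check that the identity is a genuine pointwise relation near $\mathcal H^+$ and not merely asymptotic.

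The main obstacle I anticipate is purely bookkeeping: getting the normalization constants of the spin-weighted spherical harmonics and of the spinorial gradients $\mathring{\slashed\nabla}^{[\pm1]}$ consistent with the conventions of \cite{SRTdC2023}, and making sure the powers of $r$, $\Delta$, and $\kappa$ track correctly through the chain $\swei{\upalpha}{-1}\to\swei{\upphi}{-1}_0\to\swei{\Phi}{-1}$ so that the $\Delta$-weights cancel exactly as claimed. A secondary subtlety is that $e_3^{\rm as}$ does not commute with the angular projection onto $S^{[s]}_L$ unless the mode count is respected — but since in Schwarzschild $e_3^{\rm as}$ has no angular component (being a combination of $\partial_t$ and $\partial_r$ only, cf.\ \eqref{as_null_frame_vectors_2} at $a=0$, where the $\partial_\phi$ term drops), it commutes with the $L^2(\mathbb S^2)$-projection, so this causes no trouble. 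Once the constants are fixed, the identity \eqref{eq:sphsym-identity} follows by equating the two sides mode by mode.
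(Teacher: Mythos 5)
Your proposal correctly reproduces one of the two pillars of the paper's argument: specializing the Newman--Penrose form of the Maxwell system to $a=0$, using \eqref{eq:Maxwell-n} to write $e_3^{\rm as}\big(\swei{\upupsilon}{0}\big)$ as a spin-raising operator applied to $\swei{\tilde\upphi}{-1}_0$, projecting onto spin-weighted spherical harmonics, and noting that $e_3^{\rm as}$ commutes with the projection because it has no angular component at $a=0$. This is exactly Step 3 of the paper's proof, and your treatment of the $L=0$ mode (there is no spin $-1$, $\ell=0$ harmonic) is also the intended one.

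The gap is in how you close the argument. After the mode projection you have, schematically, $e_3^{\rm as}\big(\swei{\upupsilon}{0}_L\big)\propto \sqrt{L(L+1)}\,\big(\swei{\tilde\upphi}{-1}_0\big)_L$, and you propose to finish by ``feeding in the relation between $(\swei{\upalpha}{-1})_L$ and $(\swei{\Phi}{-1})_L$ via the $e_4^{\rm as}$-transport equation.'' But that transport relation is just the \emph{definition} \eqref{eq:def-Phi} of $\swei{\Phi}{-1}$ as (a rescaling of) $e_4^{\rm as}\big(\swei{\upphi}{-1}_0\big)$: it expresses $\swei{\Phi}{-1}_L$ in terms of an outgoing derivative of $\big(\swei{\upphi}{-1}_0\big)_L$, and it cannot produce the relation you actually need, namely $\big(\swei{\tilde\upphi}{-1}_0\big)_L=-\tfrac{1}{L(L+1)}\,e_3^{\rm as}\big(\swei{\Phi}{-1}_L\big)$, which trades a zeroth-order quantity for a second mixed derivative $e_3^{\rm as}e_4^{\rm as}\swei{\upphi}{-1}_0$. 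That relation is precisely the spin $-1$ Teukolsky wave equation, rewritten as the constraint $e_3^{\rm as}\big(\swei{\Phi}{-1}\big)=-(\mathring{\slashed\triangle}^{[-1]}+1)\swei{\tilde\upphi}{-1}_0$ and then inverted mode by mode using the eigenvalue $L(L+1)\geq 2$; these are Steps 1 and 2 of the paper's proof (cf.\ Lemmas~\ref{lemma:Teukolsky-invertible-identity} and \ref{lemma:invertibility} for the $a\neq 0$ analogue). Your plan never invokes the Teukolsky equation, so as written it does not close. One could in principle avoid quoting Teukolsky by instead combining \eqref{eq:Maxwell-n} with the other Maxwell equation \eqref{eq:Maxwell-m-bar}, which identifies $\swei{\Phi}{-1}$ with a spin-lowering of $\swei{\upupsilon}{0}$, but that amounts to re-deriving the same constraint and additionally requires handling the commutator of $e_3^{\rm as}$ with the $r$-dependent prefactor; your proposal does not describe this route either.
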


\begin{proof} We divide the proof into three steps.

\medskip
\noindent \textit{Step 1: an identity from the Teukolsky equation}. From the spin $-1$ Teukolsky equation \eqref{eq:Teukolsky-spin}, using  \eqref{eq:def-upphi}, we deduce the constraint identity 
\begin{align*}
e_3^{\rm as}\big( \swei{\Phi}{-1}\big) = -(\swei{\mathring{\slashed\triangle}}{-1}+1) \swei{\tilde \upphi}{-1}_0 \,,
\end{align*}
where $\swei{\mathring{\slashed\triangle}}{-1}$ is the $(-1)$-spin-weighted Laplacian. See already Lemma~\ref{lemma:Teukolsky-invertible-identity} below for an explicit formula and more detail regarding the derivation of this identity.

\medskip
\noindent \textit{Step 2: inverting an angular operator}.  The spin-weighted spherical harmonics are eigenfunctions for the angular operator $\swei{\mathring{\slashed\triangle}}{-1}+1$, and
\begin{align*}
\lp(\swei{\mathring{\slashed\triangle}}{-1}+1\rp) S_{L}^{[-1]}(\theta,\phi)=L(L+1)S_{L}^{[-1]}(\theta,\phi)\,.
\end{align*}
Since $L\geq 1$, we have $L(L+1)\geq 2$, and hence we can define the inverse operator $\lp(\swei{\mathring{\slashed\triangle}}{-1}+1\rp)^{-1}$ on $L^2(\sin\theta d\theta)$. Indeed, we can explicitly write
\begin{align*}
\lp(\swei{\mathring{\slashed\triangle}}{-1}+1\rp)^{-1}S_{L}^{[-1]}(\theta,\phi) = \frac{1}{L(L+1)}S_{L}^{[-1]}(\theta,\phi)\,.
\end{align*}

\medskip
\noindent \textit{Step 3: an identity from the Maxwell equations}. From the Maxwell equation \eqref{eq:Maxwell-n}, we have
\begin{align*}
e_3^{\rm as}\,\swei{\upupsilon}{0} &= \frac{1}{\sqrt{2r^2}}\lp(\p_\theta+\frac{i}{\sin\theta}\p_\phi +\cot\theta\rp)\swei{\tilde\upphi}{-1}_0\,.
\end{align*}

\medskip
\noindent \textit{Conclusion}. By combining the previous steps, we have
\begin{align*}
e_3^{\rm as}\,\swei{\upupsilon}{0}= -\frac{1}{\sqrt{2r^2}}\lp(\p_\theta+\frac{i}{\sin\theta}\p_\phi +\cot\theta\rp)(\swei{\mathring{\slashed\triangle}}{-1}+1)^{-1}e_3^{\rm as}\big( \swei{\Phi}{-1}\big)\,.
\end{align*}
The conclusion follows by a computation, and using the orthogonality of the spin-weighted spherical harmonics.  It will be useful to use the identity
\begin{align*}
-\lp(\p_\theta+\frac{i}{\sin\theta}\p_\phi +\cot\theta\rp)S_{L}^{[-1]}(\theta,\phi) = \sqrt{L(L+1)}S_{L}^{[0]}(\theta,\phi), \quad L\geq 1,
\end{align*}
as the differential operator on the left hand side is the spin-raising operator.
\end{proof}

Proposition~\ref{prop:upupsilon-Schwarzschild} does \emph{not} exploit the sub-extremality, or positive surface gravity, of the Schwarzschild solution. In fact, the pointwise identity \eqref{eq:sphsym-identity} continues to hold if one replaces the Schwarzschild solution $(\mc M,g_{0,M})$ with any Reissner--Nordstr\"om solution $(\mc M,g_{e,M})$ with $|e|\leq M$. Indeed, the identities
\begin{align*}
e_3^{\rm as}\big(\swei{\upupsilon}{0}\big) &= \frac{1}{\sqrt{2r^2}}\lp(\p_\theta+\frac{i}{\sin\theta}\p_\phi +\cot\theta\rp)\swei{\tilde\upphi}{-1}_0\,, \\
\lp(\p_\theta-\frac{i}{\sin\theta}\p_\phi\rp)\swei{\upupsilon}{0}&=\frac{1}{\sqrt{2r^2}}\swei{\Phi}{-1}
\end{align*}
still hold for the Maxwell equations on Reissner--Nordstr\"{o}m spacetimes, where one can check that equations \eqref{eq:Maxwell-m}--\eqref{eq:Maxwell-n} hold on Reissner--Nordstr\"{o}m after replacing $\Delta$ with $r^2-2M r+e^2$ and then setting $|a|=0$ (see \cite[Chapter 44, eq.\ (200)--(203)]{Chandrasekhar}). For $|e|=M$, We remark that the relation $e_3^{\rm as}\big(\swei{\upupsilon}{0}_L\big)\sim_{r,L} e_3^{\rm as}\big(\swei{\Phi}{-1}_L\big)$ for $L\geq 1$ implies that $\swei{\upupsilon}{0}$ inherits the dynamics of the transversal derivatives of $\swei{\Phi}{-1}$ along $\mc H^+$, and in particular the conservation laws for transversal derivatives of $\swei{\Phi}{-1}$ which can be derived for example by adapting \cite[Section 6]{Apetroaie2022}. This would provide information on the nature of the Maxwell horizon hair formed in the evolution of the Maxwell equations on extremal Reissner--Nordstr\"{o}m spacetimes\footnote{We again emphasize that the evolution problem for the Maxwell equations on extremal Reissner--Nordstr\"om black holes is distinct from the evolution problem for the (linearised) Einstein--Maxwell system around extremal Reissner--Nordstr\"om. It is the latter (not the former) which is the object of study in \cite{Apetroaie2022}.}.

\subsubsection{Three key lemmas} \label{sec:three_key_lemmas}

We now consider the $|a|\neq 0$ case. The aim of the section is to generalise the three steps in the proof of Proposition~\ref{prop:upupsilon-Schwarzschild} to the full $|a|\leq M$ range. First, we use the spin $-1$ Teukolsky equation~\eqref{eq:Teukolsky-spin} to deduce a useful identity for $\swei{\Phi}{-1}$ and $\swei{\tilde\upphi}{-1}_0$. In the remainder of the section, we adopt the notation $f^\prime=\frac{\Delta}{r^2+a^2}\frac{d}{dr}f$ for any scalar function $f$.

\begin{lemma} \label{lemma:Teukolsky-invertible-identity} Fix $M>0$ and let $|a|\leq M$. For $\swei{\Phi}{-1}$ and $\swei{\tilde\upphi}{-1}_0$ as above, we have the pointwise identity
\begin{align*}
\mathring{\slashed\triangle}^{[-1]}_{\mc H}\swei{\tilde \upphi}{-1}_0&=-e_3^{\rm as}\swei{\Phi}{-1}
\\
&\qquad +\frac{1}{r^2+a^2}\lp[a^2\sin^2\theta\lp(\frac{\Sigma e_4^{\rm as}}{4(r^2+a^2)}-\frac{a}{r^2+a^2} Z+\frac{\Delta e_3^{\rm as}}{2(r^2+a^2)}-\frac{w'}{4w}\rp)+a(Z+i\cos\theta)\rp]\swei{\Phi}{-1} \\
&\qquad +\lp(\frac14a^2\sin^2\theta e_3^{\rm as}\lp(\frac{\Delta}{r^2 +a^2}e_3^{\rm as}\rp)+\frac{a\Sigma}{r^2+a^2}Z e_3^{\rm as}  +ia\cos\theta e_3^{\rm as}\rp)\lp(\frac{\Delta}{r^2 +a^2}\swei{\tilde \upphi}{-1}_0\rp) \\
&\qquad +aw\lp[a-\frac{a^2r\sin^2\theta }{r^2+a^2}Z+\frac{(r^2+a^2)}{M}Z\rp]\swei{\tilde \upphi}{-1}_0\\
&\qquad +\frac{a^2}{Mr_+}\frac{r^2-r_+^2}{r^2+a^2}\lp[Z +i\cos\theta -\frac{a^2\sin^2\theta}{2Mr_+}\lp(1 -\frac{r^2-r_+^2}{2(r^2+a^2)}Z\rp)\rp]Z\swei{\tilde \upphi}{-1}_0 \,, \numberthis\label{eq:Teukolsky-invertible-identity}
\end{align*}
where, letting $\swei{\mathring{\slashed\triangle}}{-1}$ denote the $- 1$ spin-weighted spherical Laplacian
\begin{align} \label{eq:spin-weighted-laplacian}
\mathring{\slashed\triangle}^{[- 1]}&:=  -\frac{1}{\sin \theta} \frac{\partial}{\partial \theta} \left(\sin \theta \frac{\partial}{\partial \theta}\right) - \frac{1}{\sin^2 \theta} Z^2 - 2 i(- 1)\frac{ \cos \theta}{\sin^2 \theta} Z + (- 1)^2\cot^2 \theta \,,
\end{align}
we write $\mathring{\slashed\triangle}^{[-1]}_{\mc H}$ for the related operator
\begin{align} \label{eq:spin-weighted-laplacian-H}
\mathring{\slashed\triangle}^{[-1]}_{\mc H}:= \swei{\mathring{\slashed\triangle}}{-1}+1+\frac{a^2}{Mr_+}\lp(1-\frac{a^2}{4Mr_+}\sin^2\theta\rp)Z^2+\frac{a}{Mr_+}\lp(r_++ia\cos\theta\rp)Z\,.
\end{align}
\end{lemma}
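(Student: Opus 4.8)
\textbf{Proof strategy for Lemma~\ref{lemma:Teukolsky-invertible-identity}.} The plan is to derive the identity directly from the spin $-1$ Teukolsky equation \eqref{eq:Teukolsky-spin} by re-expressing it in terms of the algebraically special frame vector fields $e_3^{\rm as}, e_4^{\rm as}, Z$ and the rescaled quantities $\swei{\upphi}{-1}_0, \swei{\tilde\upphi}{-1}_0, \swei{\Phi}{-1}$. First, I would recall from \eqref{eq:def-Phi} that $\swei{\Phi}{-1} = \frac{r^2+a^2}{\Delta}\Sigma e_4^{\rm as}(\swei{\upphi}{-1}_0)$, i.e. $\swei{\Phi}{-1} = (r^2+a^2)(\swei{\upphi}{-1}_0)'$ with the prime notation $f' := \frac{\Delta}{r^2+a^2}\frac{d}{dr}f$ combined appropriately with the $\partial_t$ and $\partial_\phi$ parts of $e_4^{\rm as}$ — more precisely I would work with $e_4^{\rm as}$ acting on $\swei{\upphi}{-1}_0$. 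The key algebraic input is that $\Box_g$, applied to a $(-1)$-spin-weighted function and rewritten via the standard separation-of-variables structure, decomposes into a radial part built from $e_3^{\rm as}, e_4^{\rm as}$ (the operators $\frac{1}{\Delta}\partial_r(\Delta\partial_r)$ and mixed $\partial_t\partial_r$, $\partial_\phi\partial_r$ pieces) plus the spin-weighted angular Laplacian $\mathring{\slashed\triangle}^{[-1]}$ of \eqref{eq:spin-weighted-laplacian}, together with the $\partial_t^2, \partial_t\partial_\phi, \partial_\phi^2$ lower-order terms. Using $T=\partial_t = \partial_{t^*}$ vanishing is \emph{not} available here (we are not restricting to stationary solutions), so all the $T$-derivatives must be retained and rewritten: the trick is to express $\partial_t$ in terms of $e_3^{\rm as}, e_4^{\rm as}, Z$ via \eqref{as_null_frame_vectors_1}--\eqref{as_null_frame_vectors_2}, namely $\partial_t = \frac{\Delta}{r^2+a^2}\left(\frac{r^2+a^2}{\Delta}e_4^{\rm as} - \frac{\Delta}{\Sigma}\cdot\frac{\Sigma}{\Delta}(\cdots)\right)$ — concretely one solves the linear system for $\partial_t,\partial_r,\partial_\phi$ in terms of $e_3^{\rm as},e_4^{\rm as},Z$.

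The bulk of the work is then bookkeeping: substitute these frame expressions into \eqref{eq:Teukolsky-spin} with $s=-1$, multiply through by the appropriate power of $r^2+a^2$ and $\Delta$ to pass from $\swei{\upalpha}{-1}$ to $\swei{\tilde\upphi}{-1}_0$ and $\swei{\Phi}{-1}$, and collect terms. I would organize the computation so that the leading term is $e_3^{\rm as}\swei{\Phi}{-1}$ (this comes from the principal radial part, where one $e_3^{\rm as}$ hits the $e_4^{\rm as}$-derivative defining $\swei{\Phi}{-1}$), the next block is $\left(\swei{\mathring{\slashed\triangle}}{-1}+1\right)\swei{\tilde\upphi}{-1}_0$ from the angular part, and everything else — all terms carrying an explicit factor of $a$ — is moved to the right-hand side and then regrouped. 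The appearance of $r_+$ and the combination $\frac{a^2}{Mr_+}$ in \eqref{eq:spin-weighted-laplacian-H} is the signature of having used the extremality-agnostic identity $\omegah|_{\mc H^+}$-type relations and the surface-gravity normalization; concretely, terms like $\frac{\Delta}{(r^2+a^2)^2}=w$ and $\frac{w'}{4w}$ appear from commuting $e_3^{\rm as}$ with the $\Delta$-weights, and the $\frac{r^2-r_+^2}{r^2+a^2}$ factors come from writing $\Delta = (r-r_+)(r-r_-)$ and isolating the part that vanishes on $\mc H^+$. The definition of $\mathring{\slashed\triangle}^{[-1]}_{\mc H}$ in \eqref{eq:spin-weighted-laplacian-H} is then simply \emph{chosen} to absorb precisely those $Z^2$ and $Z$ terms on the left that do not decay at the horizon, so that the identity is stated in its cleanest form; I would present this as a definition motivated by the computation rather than deriving \eqref{eq:spin-weighted-laplacian-H} independently.

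The main obstacle I anticipate is purely computational stamina and sign/coefficient control: the spin $-1$ Teukolsky operator in \eqref{eq:Teukolsky-spin} has many first-order terms with $\theta$- and $r$-dependent coefficients, and converting from Boyer--Lindquist $\partial_t,\partial_r,\partial_\phi$ to the frame derivatives $e_3^{\rm as},e_4^{\rm as},Z$ introduces further $\Sigma$-, $\Delta$-, $\sin\theta$-dependent factors, so the risk is an arithmetic slip in one of the $O(a)$ or $O(a^2)$ terms. To mitigate this I would (i) first carry out the computation with $a=0$ and check it reduces to Step~1 of Proposition~\ref{prop:upupsilon-Schwarzschild}, i.e. $\mathring{\slashed\triangle}^{[-1]}_{\mc H}\to \swei{\mathring{\slashed\triangle}}{-1}+1$ and all right-hand-side terms vanish; (ii) check the restriction to $\mc H^+$ separately, where $\Delta=0$ kills many terms and one can cross-check against the known horizon structure of the Teukolsky equation (using $\nablasl_4\Gamma|_{\mc H^+}=0$ from Section~\ref{sec:algebraically-special-props} and \eqref{omegah_extremal}); and (iii) track the $\kappa = r-ia\cos\theta$ factors carefully, since the $ia\cos\theta$ terms in \eqref{eq:Teukolsky-invertible-identity} and \eqref{eq:spin-weighted-laplacian-H} arise from derivatives hitting $\kappa^2$ in the definition of $\swei{\tilde\upphi}{-1}_0$. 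The identity being an \emph{exact} pointwise statement (no error terms), there is no analytic subtlety — it is entirely a matter of careful algebra, and once the substitution is set up the result follows by direct, if lengthy, computation.
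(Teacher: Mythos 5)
Your plan is sound and would arrive at the same identity, but it routes through a substantially heavier computation than the paper's. You propose to expand $\Box_g$ in the raw spin $-1$ Teukolsky equation \eqref{eq:Teukolsky-spin} in Boyer--Lindquist derivatives and then convert $\partial_t,\partial_r,\partial_\phi$ to $e_3^{\rm as},e_4^{\rm as},Z$ wholesale. The paper instead starts from the first-order \emph{constraint} form of the same equation, quoted from \cite{SRTdC2023}, namely \eqref{eq:constraint}: $\uL\swei{\Phi}{-1}= -(\swei{\mathring{\slashed\triangle}}{-1}_T+1)\swei{\upphi}{-1}_0-\frac{2ar}{r^2+a^2}Z\swei{\upphi}{-1}_0+a^2w\swei{\upphi}{-1}_0$, where $\swei{\mathring{\slashed\triangle}}{-1}_T$ is the Carter-type operator \eqref{eq:angular-op} containing $TT$, $TZ$ and $T$ terms. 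With that as the starting point, the only remaining work is to substitute the two explicit formulas \eqref{eq:Tupphi0} and \eqref{eq:T^2upphi0} expressing $T\swei{\upphi}{-1}_0$ and $T^2\swei{\upphi}{-1}_0$ in terms of $\uL$, $L$, $Z$ and $\swei{\Phi}{-1}$, and then to add and subtract the non-decaying $Z^2$ and $Z$ coefficients evaluated at $r=r_+$ --- which is exactly your a posteriori definition of $\mathring{\slashed\triangle}^{[-1]}_{\mc H}$. Your ``absorb the horizon values into the operator'' step and your $a=0$ and on-horizon consistency checks therefore match the paper; what you would gain by adopting the paper's starting point is that you never have to re-derive the full decomposition of $\Box_g$, and the risk of coefficient slips is confined to two substitution identities rather than the entire second-order operator. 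One small correction of interpretation: the factors $\frac{a^2}{Mr_+}$ in \eqref{eq:spin-weighted-laplacian-H} are not a surface-gravity normalization; they arise simply because the coefficients $\frac{2a^2}{r^2+a^2}$ and $\frac{2a}{r^2+a^2}$ of the $Z^2$ and $Z$ terms, when evaluated at $r=r_+$, reduce via $r_+^2+a^2=2Mr_+$ to the stated form.
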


\begin{remark} We note that, schematically, identity \eqref{eq:Teukolsky-invertible-identity} has the form
\begin{align*}
\mathring{\slashed\triangle}^{[-1]}_{\mc H}\swei{\tilde \upphi}{-1}_0= -e_3^{\rm as}\swei{\Phi}{-1}+ \dots\,,
\end{align*}
where the ellipses represent either terms in which $\swei{\tilde \upphi}{-1}_0$ and its $(e_3^{\rm as}, e_2^{\rm as}, e_1^{\rm as})$-derivatives are multiplied by functions of $(r,\theta)$ which, if $|a|=M$, vanish at least linearly as $r\to r_+$, or terms proportional to $\swei{\Phi}{-1}$ and its $((r^2+a^2)^{-1}\Delta e_3^{\rm as}, e_2^{\rm as}, e_1^{\rm as})$-derivatives.
\end{remark}

\begin{proof}[Proof of Lemma~\ref{lemma:Teukolsky-invertible-identity}]
As a preliminary step, we rewrite $T$-derivatives of $\swei{\upphi}{-1}_0$ in terms of the vector fields $e_3^{\rm as}$ and $e_4^{\rm as}$, or, to make contact with \cite{SRTdC2023}, in terms of $\uL$ and $L$ (recall definition \eqref{def_L_Lbar}). Making use of \eqref{eq:def-upphi}, we have
\begin{align*}
T\swei{\upphi}{-1}_0 &= \lp(\frac{\uL+L}{2}-\frac{a}{r^2+a^2}Z\rp)\swei{\upphi}{-1}_0 =\lp(\frac12 \uL-\frac{a}{r^2+a^2}Z\rp)\swei{\upphi}{-1}_0 +\frac12 w\swei{\Phi}{-1}\,.\numberthis\label{eq:Tupphi0}
\end{align*}
We can repeat this calculation to obtain a formula for $T^2$-derivatives as follows
\begin{align*}
T^2\swei{\upphi}{-1}_0 &=\frac12 \uL T\swei{\upphi}{-1}_0-\frac{a}{r^2+a^2}Z T\swei{\upphi}{-1}_0 +\frac12 w T \swei{\Phi}{-1}\\
&=\lp(\frac12 \uL -\frac{a}{r^2+a^2}Z \rp)\lp(\frac12 \uL\swei{\upphi}{-1}_0 +\frac12 w\swei{\Phi}{-1}-\frac{a}{r^2+a^2}Z\swei{\upphi}{-1}_0\rp)+\frac12w \lp(\frac{L+\uL}{2}-\frac{a}{r^2+a^2}Z\rp)\swei{\Phi}{-1}\\
&=\frac14 \uL\uL\swei{\upphi}{-1}_0+\frac14 \lp(w\uL\swei{\Phi}{-1}-w'\swei{\Phi}{-1}\rp)-\frac{a}{r^2+a^2}Z\lp(\uL\swei{\upphi}{-1}_0+rw\swei{\upphi}{-1}_0\rp) \\
&\qquad-\frac{aw}{2(r^2+a^2)} Z\swei{\Phi}{-1}+\frac{a^2}{(r^2+a^2)^2}Z^2\swei{\upphi}{-1}_0+\frac14w (L+\uL)\swei{\Phi}{-1}-\frac{aw}{2(r^2+a^2)}Z\swei{\Phi}{-1}\\
&=\frac14 w L\swei{\Phi}{-1}-\frac{aw}{r^2+a^2} Z\swei{\Phi}{-1}+\frac{a^2}{(r^2+a^2)^2}Z^2\swei{\upphi}{-1}_0 \\
&\qquad + \frac12w \uL\swei{\Phi}{-1}-\frac14 w'\swei{\Phi}{-1}+ \frac14 \uL\uL\swei{\upphi}{-1}_0-\frac{a}{r^2+a^2}Z\uL\swei{\upphi}{-1}_0 -\frac{awr}{r^2+a^2}Z\swei{\upphi}{-1}_0 . \numberthis\label{eq:T^2upphi0}
\end{align*}
We now consider the spin $-1$ Teukolsky equation \eqref{eq:Teukolsky-spin}. From \eqref{eq:def-upphi}, we deduce that it can be written as the constraint equation, cf.\ \cite[Equation (3.19)]{SRTdC2023},
\begin{align}
\uL\swei{\Phi}{-1}= -(\swei{\mathring{\slashed\triangle}}{-1}_T+1)\swei{\upphi}{-1}_0-\frac{2ar}{r^2+a^2}Z\swei{\upphi}{-1}_0+a^2w\swei{\upphi}{-1}_0, \label{eq:constraint}
\end{align}
where, following \cite[Equation (3.13)]{SRTdC2023}, we have defined
\begin{align}
\swei{\mathring{\slashed\triangle}}{-1}_T=\swei{\mathring{\slashed\triangle}}{-1}-a^2\sin^2\theta TT -2aTZ-2ia\cos\theta T.\label{eq:angular-op}
\end{align}
Using the computations in \eqref{eq:Tupphi0}--\eqref{eq:T^2upphi0}, we obtain that the angular operator \eqref{eq:angular-op}  acts on $\swei{\upphi}{-1}_0$ via
\begin{align*}
&\swei{\mathring{\slashed\triangle}}{-1}\swei{\upphi}{-1}_0-\swei{\mathring{\slashed\triangle}}{-1}_T\swei{\upphi}{-1}_0\\
&\quad=\lp(a^2\sin^2\theta TT +2aTZ+2ia\cos\theta T\rp)\swei{\upphi}{-1}_0\\
%
&\quad = w\lp[a^2\sin^2\theta\lp(\frac14 L-\frac{a}{r^2+a^2} Z+\frac12 \uL-\frac{w'}{4w}\rp)+a(Z+i\cos\theta)\rp]\swei{\Phi}{-1}\\
&\quad\qquad -\frac{2a^2}{r^2+a^2}\lp[\lp(1-\frac{a^2}{2(r^2+a^2)}\sin^2\theta\rp)Z+i\cos\theta\rp]Z\swei{\upphi}{-1}_0\\
&\quad\qquad +  \lp(\frac14a^2\sin^2\theta \uL+\frac{a\Sigma}{r^2+a^2}Z  +ia\cos\theta\rp)\uL\swei{\upphi}{-1}_0 -\frac{a^3rw\sin^2\theta}{r^2+a^2}Z\swei{\upphi}{-1}_0. 
\end{align*}
We can plug the above into \eqref{eq:constraint} to obtain
\begin{align*}
e_3^{\rm as}\swei{\Phi}{-1}&=\frac{1}{r^2+a^2}\lp[a^2\sin^2\theta\lp(\frac14 L-\frac{a}{r^2+a^2} Z+\frac12 \uL-\frac{w'}{4w}\rp)+a(Z+i\cos\theta)\rp]\swei{\Phi}{-1} \\
&\qquad +\lp(\frac14a^2\sin^2\theta e_3^{\rm as}\uL+\frac{a\Sigma}{r^2+a^2}Z e_3^{\rm as}  +ia\cos\theta e_3^{\rm as}\rp)\lp(\frac{\Delta}{r^2 +a^2}\swei{\tilde \upphi}{-1}_0\rp) +a^2w\lp[1-\frac{ar\sin^2\theta }{r^2+a^2}Z\rp]\swei{\tilde \upphi}{-1}_0\\
&\qquad -\lp[\swei{\mathring{\slashed\triangle}}{-1}+\frac{2a^2}{r^2+a^2}\lp(1-\frac{a^2}{2(r^2+a^2)}\sin^2\theta\rp)Z^2+\frac{2a(r+ia\cos\theta)}{r^2+a^2}Z\rp]\swei{\tilde \upphi}{-1}_0.
\end{align*}
Finally, we add and subtract the last line with the differential operator evaluated at $r=r_+$.
\end{proof}

Second, we show that the operator on the left hand side of \eqref{eq:Teukolsky-invertible-identity} is invertible for the full $|a|\leq M$ range.

\begin{lemma}\label{lemma:invertibility} Fix $M>0$ and let $|a|\leq M$. Then, the operator $\mathring{\slashed\triangle}^{[-1]}_{\mc H}\colon L^2(d\Omega)\to L^2( d\Omega)$ is invertible and its norm is bounded in terms of an explicitly computable constant that depends only on $M$.  
\end{lemma}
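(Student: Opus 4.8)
\textbf{Proof proposal for Lemma~\ref{lemma:invertibility}.} The plan is to analyze the operator $\mathring{\slashed\triangle}^{[-1]}_{\mc H}$ defined in \eqref{eq:spin-weighted-laplacian-H} by decomposing $L^2(d\Omega)$ into fixed azimuthal modes and then, within each mode, comparing the operator to the positive self-adjoint spin-weighted Laplacian $\swei{\mathring{\slashed\triangle}}{-1}+1$, whose spectrum is $\{L(L+1): L\geq 1\}$. Since $Z=\p_{\bar\phi}$ and the coefficients multiplying $Z$ and $Z^2$ in $\mathring{\slashed\triangle}^{[-1]}_{\mc H}$ depend only on $\theta$, the operator preserves the azimuthal decomposition; on the mode with azimuthal number $m\in\mathbb Z$ it acts as
\begin{align*}
\mathring{\slashed\triangle}^{[-1]}_{\mc H,m}=\swei{\mathring{\slashed\triangle}}{-1}_m+1-\frac{m^2 a^2}{Mr_+}\lp(1-\frac{a^2}{4Mr_+}\sin^2\theta\rp)+\frac{ima}{Mr_+}\lp(r_++ia\cos\theta\rp),
\end{align*}
where $\swei{\mathring{\slashed\triangle}}{-1}_m$ is the (self-adjoint, nonnegative) restriction of $\swei{\mathring{\slashed\triangle}}{-1}$ to the $m$-th azimuthal mode. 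The key structural observation is that $a^2/(Mr_+)=a^2/(M^2+M\sqrt{M^2-a^2})\in[0,1]$ for all $|a|\leq M$, with value $1$ precisely at extremality; this uniform bound is what makes the argument work all the way to $|a|=M$.

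The first step is to split $\mathring{\slashed\triangle}^{[-1]}_{\mc H,m}$ into a self-adjoint part $A_m$ (collecting $\swei{\mathring{\slashed\triangle}}{-1}_m+1$, the real $Z^2$-term, and the real $-ma\cos\theta\cdot (ia/(Mr_+))Z = (m a^2\cos\theta)/(Mr_+)$-type real contributions — one should be careful: $\frac{ima}{Mr_+}(ia\cos\theta)Z$ acting on the $m$-mode gives the \emph{real} number $-\frac{m^2a^2\cos\theta}{Mr_+}$, while $\frac{ima}{Mr_+}r_+ Z$ gives the \emph{real} number $-\frac{m^2 a}{M}$) and an anti-self-adjoint part, which in fact turns out to be zero on each azimuthal mode since every surviving term is multiplication by a real function of $\theta$. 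Thus $\mathring{\slashed\triangle}^{[-1]}_{\mc H,m}$ is \emph{self-adjoint} on each azimuthal mode, and invertibility reduces to a spectral lower bound: one must show the quadratic form $\langle \mathring{\slashed\triangle}^{[-1]}_{\mc H,m}\upphi,\upphi\rangle$ is bounded below by a positive constant (or at least bounded away from $0$) uniformly in $|a|\leq M$ and $m$. For $m=0$ the operator is just $\swei{\mathring{\slashed\triangle}}{-1}+1\geq 2$. For $m\neq 0$, write
\begin{align*}
\langle\mathring{\slashed\triangle}^{[-1]}_{\mc H,m}\upphi,\upphi\rangle = \langle(\swei{\mathring{\slashed\triangle}}{-1}_m+1)\upphi,\upphi\rangle-\frac{m^2a^2}{Mr_+}\int\lp(1-\frac{a^2}{4Mr_+}\sin^2\theta\rp)|\upphi|^2 d\Omega-\frac{m^2a^2}{Mr_+}\int\cos\theta\,|\upphi|^2 d\Omega-\frac{m^2 a}{M}\|\upphi\|^2,
\end{align*}
and use the Poincaré-type lower bound $\langle(\swei{\mathring{\slashed\triangle}}{-1}_m+1)\upphi,\upphi\rangle\geq \big(|m|(|m|+1)+1\big)\|\upphi\|^2$ valid on the $m$-th mode together with the pointwise bound $0\leq 1-\frac{a^2}{4Mr_+}\sin^2\theta\leq 1$, $|\cos\theta|\leq 1$, and $0\leq a^2/(Mr_+)\leq 1$, $0\leq a/M\leq 1$, to control the negative terms by a multiple of $m^2\|\upphi\|^2$. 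The hard part will be the bookkeeping: a naive bound gives something like $\geq (|m|(|m|+1)+1 - Cm^2)\|\upphi\|^2$, which is not obviously positive for large $|m|$, so one needs the \emph{exact} constants to see that the negative contributions are genuinely dominated. The cleanest route is to exploit that the $Z$-linear and $Z^2$ terms of $\mathring{\slashed\triangle}^{[-1]}_{\mc H}$ are, by design of \eqref{eq:spin-weighted-laplacian-H}, precisely the $r\to r_+$ limit of the corresponding terms in the Teukolsky-derived identity, and that $\swei{\mathring{\slashed\triangle}}{-1}_m$ itself contains a $\frac{1}{\sin^2\theta}Z^2$ contribution; after regrouping, the $m$-dependent corrections combine with the angular Laplacian into a manifestly nonnegative operator plus a bounded (in $m$) self-adjoint perturbation, so that $\mathring{\slashed\triangle}^{[-1]}_{\mc H,m}\geq c\,\mathrm{Id}$ for a constant $c=c(M)>0$, or at worst $\mathring{\slashed\triangle}^{[-1]}_{\mc H,m}$ has no kernel and $\|(\mathring{\slashed\triangle}^{[-1]}_{\mc H,m})^{-1}\|\leq C(M)$ by a compactness/Fredholm argument (the operator is a bounded self-adjoint perturbation of an operator with compact resolvent, hence Fredholm of index zero, so injectivity implies invertibility, and one rules out $0$ in the spectrum by the quadratic-form estimate).

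I would therefore carry out the steps in the order: (i) reduce to fixed azimuthal mode using $[\mathring{\slashed\triangle}^{[-1]}_{\mc H},Z]=0$; (ii) verify self-adjointness on each mode by checking that every non-elliptic term is real multiplication; (iii) establish the uniform bound $a^2/(Mr_+)\leq 1$ and assemble the quadratic form inequality, treating $m=0$ and $m\neq 0$ separately, with the $m\neq 0$ case using the sharp eigenvalue $|m|(|m|+1)$ of the angular part; (iv) conclude invertibility with operator-norm bound depending only on $M$ via the Fredholm alternative together with the quadratic-form lower bound excluding $0$ from the spectrum. The main obstacle, as noted, is step (iii): making the constants line up so the negative $m^2$-terms do not overwhelm the positive spectral gap of the angular operator, which requires either an exact regrouping of the $\frac{1}{\sin^2\theta}Z^2$ terms or a slightly more delicate argument isolating the highest-$|m|$ asymptotics. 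One expects this to go through because at extremality the problematic coefficient is exactly $1$ and the structure of \eqref{eq:spin-weighted-laplacian-H} was reverse-engineered from the Teukolsky equation precisely so that this operator remains elliptic and invertible in the limit; heuristically this is the Maxwell analogue of the invertibility of the horizon angular operator appearing in the extremal Kerr mode analysis of \cite{Casals2016,Gajic2023}.
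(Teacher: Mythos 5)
There is a genuine gap, and it originates in an algebra error in your step (ii). The only first-order-in-$Z$ term of $\mathring{\slashed\triangle}^{[-1]}_{\mc H}$ with a \emph{real} coefficient is $\frac{a}{Mr_+}\,r_+ Z=\frac{a}{M}Z$; restricted to the azimuthal mode $m$ this is multiplication by the purely \emph{imaginary} constant $\frac{iam}{M}$ (you have effectively substituted $Z\mapsto im$ twice to arrive at the ``real number $-\frac{m^2a}{M}$''). Consequently the restriction to a fixed azimuthal mode is \emph{not} self-adjoint: it is a self-adjoint operator plus the anti-self-adjoint constant $\frac{iam}{M}$, and its eigenvalues $\tilde\lambda^{[-1]}_{m\ell}$ satisfy $\Im\tilde\lambda^{[-1]}_{m\ell}=\frac{a}{M}m\neq 0$ whenever $am\neq 0$. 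This is not cosmetic: your strategy of bounding the real quadratic form below fails precisely where the lemma is hardest. The relevant angular potential contains the contribution $-\frac{a^2}{Mr_+}m\cos\theta$ together with the term $\lp(1-\frac{a^2}{Mr_+}\rp)\lp(1-\frac14\sin^2\theta\rp)m^2$, and at extremality ($a^2/(Mr_+)=1$, $m=1$, $\theta\to 0$) the potential tends to $-1$ while the centrifugal term $(m\cos\theta-1)^2/\sin^2\theta$ degenerates to $0$ there; so the pointwise potential bound you would rely on gives nothing positive for $|m|=1,2$ as $|a|\to M$.

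The paper's proof instead treats this as a genuinely non-self-adjoint Sturm--Liouville eigenvalue problem and bounds the \emph{modulus} $|\tilde\lambda^{[-1]}_{m\ell}|$ below by a case analysis: for $|m|\geq 3$, and for $|m|=1,2$ with $|a|\leq \frac12 M$, an explicit pointwise lower bound on the real part of the angular potential gives $\Re\tilde\lambda^{[-1]}_{m\ell}\geq\frac12$; but for $|m|=1,2$ with $|a|\geq\frac12 M$ it falls back on $|\Im\tilde\lambda^{[-1]}_{m\ell}|=\frac{|a||m|}{M}\geq\frac12$. The imaginary part generated by the very term you discarded is thus the essential mechanism that rescues invertibility near extremality for low azimuthal numbers. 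Your Fredholm fallback does not repair this: both ruling out $0$ from the spectrum and obtaining a resolvent bound uniform in $|a|\leq M$ require exactly this lower bound on $|\tilde\lambda^{[-1]}_{m\ell}|$, not on its real part alone.
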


\begin{proof} The proof is similar to the proof of the analogous statement for the spin-weighted Laplacian $\mathring{\slashed\triangle}^{[s]}$, see  e.g.\ \cite[Section 3.1.2]{SRTdC2023}. We apply Sturm--Liouville theory to deduce that the operator $\mathring{\slashed\triangle}^{[-1]}_{\mc H}$ admits a countable family of eigenfunctions of the form $e^{im\tilde{\phi}{}^*}\tilde S_{m\ell}^{[-1]}(\theta)$, normalized to have unit $L^2(d\Omega)$-norm,  with corresponding (complex) eigenvalues $\tilde \lambda_{m\ell}^{[-1]}$, with $\ell\in\mathbb Z_{\geq \max\{|m|,1\}}$ being our choice of parameter to index the family.  To show the desired invertibility, it is enough to show that $|\tilde \lambda_{m\ell}^{[-1]}|$ is bounded below uniformly in $(m,\ell)$ and with respect to the black hole parameters. 

For $m=0$ or $|a|=0$, we have $\tilde \lambda_{0\ell}^{[-1]}=\ell(\ell+1)\geq 2>0$ and $\tilde S_{0\ell}^{[-1]}(\theta)=S_{0\ell}^{[-1]}(\theta)$ are the usual spin-weighted spherical harmonics. For $|a|\neq 0$ and $m\neq 0$, we have
\begin{align*}
\Im\tilde \lambda_{m\ell}^{[-1]}&=\frac{a}{M}m,\\
\Re \tilde \lambda_{m\ell}^{[-1]}&= \int_0^{\pi}\lp(|\p_\theta\tilde \Xi|^2+\tilde V^{\rm ang}_m|\tilde \Xi|^2\rp)\sin\theta d\theta,
\end{align*}
where $\tilde\Xi= c\tilde S_{m\ell}^{[-1]}(\theta)$ for some constant $c$ chosen so that the normalization condition $\int_0^{\pi}|\tilde \Xi|^2\sin\theta d\theta=1$ holds, and  where
\begin{align*}
\tilde V^{\rm ang}_m = \frac{(m\cos\theta-1)^2}{\sin^2\theta}+\frac{1}{4}\sin^2\theta m^2-\frac{a^2}{Mr_+} m\cos\theta+\lp(1-\frac{a^2}{Mr_+}\rp)\lp(1-\frac14 \sin^2\theta\rp) m^2.
\end{align*}
We divide the analysis into three cases:
\begin{itemize}
\item Case $|m|\geq 3$: We have
\begin{align*}
\tilde V^{\rm ang}_m (\theta) &\geq \inf_{\theta\in (0,\pi)}\lp(\frac{(m\cos\theta-1)^2}{\sin^2\theta}+\frac{1}{4}\sin^2\theta m^2-\frac{a^2}{Mr_+} m\cos\theta\rp)\\
&= \inf_{\theta\in (0,\pi/2)}\lp(\frac{(m\cos\theta-1)^2}{\sin^2\theta}+\frac{1}{4}\sin^2\theta m^2-\frac{a^2}{Mr_+}|m|\cos\theta\rp) \geq \frac12
\end{align*}
and hence $\Re \tilde \lambda_{m\ell}^{[-1]}\geq \frac12 > 0$.
\item Case $|m|=1,2$ and $|a|\geq \frac12 M$: We have $|\Im \tilde \lambda_{m\ell}^{[-1]}|\geq \frac12$. 
\item Case $|m|=1,2$ and $|a|\leq \frac12 M$: We obtain
\begin{align*}
\tilde V^{\rm ang}_{\pm 1} (\theta)&\geq \inf_{\theta\in (0,\pi), |m|=1}\tilde V^{\rm ang}_m(\theta)=\tilde V^{\rm ang}_{1}(0)= 1-\frac{2a^2}{Mr_+}\geq \frac12 ,\\
\tilde V^{\rm ang}_{\pm 2} (\theta)&\geq \inf_{\theta\in (0,\pi), |m|=2, |a|\leq \frac12 M}\tilde V^{\rm ang}_m(\theta)=\inf_{\theta\in (0,\pi/2), |a|=\frac12 M}\tilde V^{\rm ang}_2(\theta)\geq 2
\end{align*}
by an explicit computation, and thus $\Re \tilde \lambda_{m\ell}^{[-1]}\geq \frac12 > 0$. 
\end{itemize}
We deduce that, for all $m\in\mathbb{Z}$, $\ell\in\mathbb{Z}_{\geq \max\{|m|,1\}}$ and $a\in[0,M]$, we have the uniform bound $|\tilde \lambda_{m\ell}^{[-1]}|\geq \frac12 >0$. This concludes the proof.
\end{proof}

As the third, and final step, we use the Maxwell equation \eqref{eq:Maxwell-n} to derive a useful identity relating $\swei{\upupsilon}{0}$ with $\swei{\tilde \upphi}{-1}_0$ and $\swei{\Phi}{-1}$.

\begin{lemma} \label{lemma:Maxwell-n-identity} Fix $M>0$ and let $|a|\leq M$. For $\swei{\upupsilon}{0}$, $\swei{\Phi}{-1}$ and $\swei{\tilde\upphi}{-1}_0$ as above, we have the pointwise identity 
\begin{align*}
e_3^{\rm as} \swei{\upupsilon}{0}&= \frac{\kappa} {\sqrt{2(r^2+a^2)}}\mathring{\nablasl}^{[-1]}_{\mc H} \swei{\tilde \upphi}{-1}_0 +\frac{ia\kappa \sin\theta}{2\sqrt 2 (r^2+a^2)^{3/2}} \swei{\Phi}{-1} \\
&\qquad+\frac{ia\sin\theta} {\sqrt{2(r^2+a^2)}} \lp[\frac{\kappa}{2}e_3^{\rm as}\lp(\frac{\Delta}{r^2+a^2} \swei{\tilde \upphi}{-1}_0\rp) + \frac{a \kappa}{2Mr_+}\frac{r^2-r_+^2}{r^2+a^2}Z\swei{\tilde \upphi}{-1}_0  +\frac{(r-r_+)}{\kappa (r_+)}\swei{\tilde \upphi}{-1}_0\rp]\,,\numberthis\label{eq:Maxwell-n-identity}
\end{align*}
where we have introduced the notation
\begin{align*}
\mathring{\nablasl}_{\mc H}^{[-1]}:=\lp(\p_\theta+\frac{i\Sigma(r_+)}{2Mr_+\sin\theta}Z-\frac{ia\sin\theta}{\kappa(r_+)}+\cot\theta\rp)\,.
\end{align*}
\end{lemma}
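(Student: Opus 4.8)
The plan is to mirror the three-step strategy already used in the Schwarzschild warm-up (Proposition~\ref{prop:upupsilon-Schwarzschild}), but now starting from the Maxwell equation \eqref{eq:Maxwell-n} in its full Kerr form rather than its $|a|=0$ reduction. First I would rewrite \eqref{eq:Maxwell-n} using the definitions \eqref{eq:def-upphi} and \eqref{eq:map-B-to-NP}: since $\swei{\upupsilon}{0}=-\widehat{\bsy\rho}+i\widehat{\bsy\sigma}$ and $\swei{\tilde\upphi}{-1}_0=\frac{\kappa^2}{\sqrt 2\,\Sigma}(r^2+a^2)^{1/2}\bsy\alphab(e_1^{\rm as}-ie_2^{\rm as})$, equation \eqref{eq:Maxwell-n} becomes, after multiplying through by $2\Sigma/\Delta$ and substituting $\swei{\upalpha}{-1}=\Delta\,\Delta^{-1}\swei{\upalpha}{-1}$,
\begin{align*}
e_3^{\rm as}\swei{\upupsilon}{0}=\frac{\kappa^2}{2\sqrt 2\,\Sigma}\Bigl(\overline\kappa(e_1^{\rm as}+ie_2^{\rm as})+\cot\theta\Bigr)\bigl(\kappa^{-1}\swei{\upalpha}{-1}\bigr)\,.
\end{align*}
Expressing $(e_1^{\rm as}+ie_2^{\rm as})$ in Boyer--Lindquist coordinates via \eqref{explicit_e1_e2}, one finds it equals $\frac{1}{\overline\kappa}\bigl(\p_\theta+\frac{i}{\sin\theta}Z+ia\sin\theta\,T\bigr)$ up to a $\Sigma$-dependent scalar, so that the right-hand side collapses to a first-order angular operator acting on $\swei{\tilde\upphi}{-1}_0$ together with a $T$-derivative term; the latter is then traded for $e_3^{\rm as}$ and $e_4^{\rm as}$ derivatives exactly as in \eqref{eq:Tupphi0}, which is what produces the $\swei{\Phi}{-1}$ term and the $\frac{\Delta}{r^2+a^2}e_3^{\rm as}$-type correction appearing on the right of \eqref{eq:Maxwell-n-identity}.

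Second, I would carefully organize the resulting angular operator. The raw computation produces $\p_\theta+\frac{i}{\sin\theta}Z+\cot\theta$ plus $a$-dependent pieces coming from (i) the $ia\sin\theta\,T$ term in $e_1^{\rm as}+ie_2^{\rm as}$ and (ii) the $\Sigma$, $\kappa$, $\overline\kappa$ prefactors, which depend on $\theta$. The key algebraic simplification is to recognize that, once everything is evaluated with the horizon-adapted $T$-splitting \eqref{eq:Tupphi0} and the redundant $r$-dependence is isolated into the explicitly-vanishing-at-$r_+$ factors $\tfrac{\Delta}{r^2+a^2}$ and $\tfrac{r^2-r_+^2}{r^2+a^2}$, the remaining angular operator is precisely $\mathring{\nablasl}^{[-1]}_{\mc H}=\p_\theta+\frac{i\Sigma(r_+)}{2Mr_+\sin\theta}Z-\frac{ia\sin\theta}{\kappa(r_+)}+\cot\theta$, i.e.\ the $\p_\theta+\frac{i}{\sin\theta}Z+\cot\theta$ of Schwarzschild deformed by the horizon value of the relevant metric coefficients (note $\frac{\Sigma(r_+)}{2Mr_+}=\frac{r_+^2+a^2\cos^2\theta}{r_+^2+a^2}$, using $2Mr_+=r_+^2+a^2$). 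This is the step where one must be scrupulous: the $Z$-coefficient of $\mathring{\nablasl}^{[-1]}_{\mc H}$ is \emph{not} simply $\frac{i}{\sin\theta}$ but carries the $\Sigma(r_+)$ factor, and matching it requires combining the $\frac{i}{\sin\theta}Z$ from the naive operator with the $Z$-contribution hidden inside the $T$-splitting. I expect this bookkeeping — keeping track of which $a$-dependent terms are genuinely part of the horizon angular operator versus which are the "faster-decaying'' $\frac{\Delta}{r^2+a^2}$-weighted remainder — to be the main obstacle; it is routine but error-prone, and the sharp statement of the lemma depends on getting the split exactly right.

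Third, the $\frac{ia\sin\theta}{2\sqrt2(r^2+a^2)^{3/2}}\swei{\Phi}{-1}$ term and the three remainder terms in the bracket are simply what is left over: the first arises from the $T$-derivative in $e_1^{\rm as}+ie_2^{\rm as}$ hitting $\swei{\upphi}{-1}_0$ and producing $\tfrac12 w\swei{\Phi}{-1}$ via \eqref{eq:Tupphi0} (tracking the $\kappa$, $\Sigma$ prefactors gives the stated coefficient), the $e_3^{\rm as}\bigl(\tfrac{\Delta}{r^2+a^2}\swei{\tilde\upphi}{-1}_0\bigr)$ term comes from re-expressing $\uL$-derivatives, the $\frac{a\kappa}{2Mr_+}\frac{r^2-r_+^2}{r^2+a^2}Z\swei{\tilde\upphi}{-1}_0$ term is the piece of the $Z$-coefficient that must be subtracted to bring it to its horizon value, and the $\frac{r-r_+}{\kappa(r_+)}\swei{\tilde\upphi}{-1}_0$ term similarly accounts for the difference between $\frac{ia\sin\theta}{\kappa(r)}$ and its horizon value $\frac{ia\sin\theta}{\kappa(r_+)}$. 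I would verify the identity by checking it reduces correctly to the Schwarzschild case of Proposition~\ref{prop:upupsilon-Schwarzschild} when $a=0$ (where $\kappa=r$, $\Sigma=r^2$, all bracketed remainder terms vanish, and $\mathring{\nablasl}^{[-1]}_{\mc H}$ becomes $\p_\theta+\frac{i}{\sin\theta}Z+\cot\theta$, recovering \eqref{eq:sphsym-identity} after the angular operator of Lemma~\ref{lemma:Teukolsky-invertible-identity} is inverted). No separate proof idea is needed beyond the direct computation; the content of the lemma is entirely in the precise identification of the horizon operators, which is why I have front-loaded that concern.
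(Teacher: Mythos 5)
Your proposal follows essentially the same route as the paper's proof: start from the spin-weighted Maxwell equation \eqref{eq:Maxwell-n} with $(e_1^{\rm as}+ie_2^{\rm as})$ written out via \eqref{explicit_e1_e2}, trade the resulting $ia\sin\theta\, T$-term using the splitting \eqref{eq:Tupphi0} (which produces the $\swei{\Phi}{-1}$ and $e_3^{\rm as}$-correction terms), absorb the $\theta$-dependence of the $\kappa^{-1}$ prefactor into the angular operator, and then add and subtract the horizon values of the $Z$- and zeroth-order coefficients to isolate $\mathring{\nablasl}^{[-1]}_{\mc H}$ plus the $(r-r_+)$-weighted remainders. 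The only blemish is the prefactor in your first displayed equation (after multiplying \eqref{eq:Maxwell-n} by $2\Sigma/\Delta$ the right-hand side should carry $\kappa^2/(\sqrt{2}\,\Delta)$, not $\kappa^2/(2\sqrt{2}\,\Sigma)$), a slip that does not affect the correctness of the subsequent term-by-term accounting.
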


\begin{remark} We note that, schematically, identity \eqref{eq:Maxwell-n-identity} has the form
$$e_3^{\rm as}\swei{\upupsilon}{0}= \frac{\kappa}{\sqrt{2(r^2+a^2)}}\mathring{\nablasl}_{\mc H}^{[-1]} \swei{\tilde \upphi}{-1}_0 +\dots$$
where the ellipses represent either terms in which $\swei{\tilde \upphi}{-1}_0$ and its $(e_3^{\rm as}, e_2^{\rm as}, e_1^{\rm as})$-derivatives are multiplied by functions of $(r,\theta)$ which, if $|a|=M$, vanish at least linearly as $r\to r_+$, or terms proportional to $\swei{\Phi}{-1}$.
\end{remark}

\begin{proof}[Proof of Lemma~\ref{lemma:Maxwell-n-identity}]
We recall identity \eqref{eq:Tupphi0} and the notation from the proof of Lemma~\ref{lemma:Teukolsky-invertible-identity}. By using the spin-weighted Maxwell equation~\eqref{eq:Maxwell-n}, we obtain
\begin{align*}
\uL\swei{\upupsilon}{0} &= \frac{\kappa^2}{\sqrt{2}(r^2+a^2)}\lp(\p_\theta+\frac{i}{\sin\theta}Z+ia\sin\theta T+\cot\theta\rp)\lp(\frac{\sqrt{r^2+a^2}}{\kappa}\swei{\upphi}{-1}_0\rp)\\
&=\frac{\kappa}{\sqrt{2(r^2+a^2)}}\lp(\p_\theta+\frac{i\Sigma}{(r^2+a^2)\sin\theta}Z-\frac{ia\sin\theta}{\kappa}+\cot\theta\rp)\swei{\upphi}{-1}_0 +\frac{ia\sin\theta \kappa}{2\sqrt{2(r^2+a^2)}}\lp(\uL \swei{\upphi}{-1}_0+w\swei{\Phi}{-1}\rp).
\end{align*}
To conclude,  we simply compare the operator acting on the first term on the right hand side with $\mathring{\nablasl}^{[-1]}_{\mc H}$. 
\end{proof}

\subsubsection{Combining the key lemmas} \label{sec:combining_extremal_lemmas}

We now combine the three key lemmas of Section \ref{sec:three_key_lemmas} to estimate the quantity \eqref{eq:to-estimate-extremal}, thus extending Proposition~\ref{prop:upupsilon-Schwarzschild} to the full $|a|\leq M$ range.

\begin{proposition} \label{prop:control-near-hor-spin} Fix $M>0$ and let $0<\epsilon\leq \frac14 M$. Then, there exists a uniform constant $C=C(M,\epsilon)>0$ such that, for any $|a|\leq M$, the estimate
\begin{align*}
&\lp|\int_{r_+}^{r_++\epsilon}\int_{\mathbb{S}^2_{\tau,r}}e_3^{\rm as}(\swei{\upupsilon}{0}) \, dr d\Omega -{\int_{\mathbb{S}^2_{\tau,r_+}}\frac{\kappa(r_+) }{\sqrt{4Mr_+}}\mathring{\nablasl}_{\mc H}^{[-1]}\big(\mathring{\slashed{\triangle}}^{[-1]}_{\mc H}\big)^{-1}\swei{\Phi}{-1}}\,d\Omega\rp|^2 \\
&\quad \leq C\left( \sum_{\mathfrak X\in\{\mathbb X_1,\mathbb X_1\mathring{\nablasl}_{\mc H}^{[-1]}\}} \int_{ \Sigma_{\tau}\cap\{r\leq r_++\epsilon\}}|\mathfrak X\swei{\Phi}{-1}|^2 dr d\Omega+\sum_{\mathfrak X\in\{\mathbb X_0,\mathbb X_0\mathring{\nablasl}_{\mc H}^{[-1]}\}} \int_{ \Sigma_{\tau}\cap\{r\leq r_++\epsilon\}}|\mathfrak X\swei{\tilde\upphi}{-1}_0|^2 dr d\Omega \right) \numberthis \label{eq:control-near-hor-spin}
\end{align*}
holds, where $\big(\mathring{\slashed{\triangle}}^{[-1]}_{\mc H}\big)^{-1}$ and $\mathring{\nablasl}_{\mc H}^{[-1]}$ are defined above in Lemmas~\ref{lemma:invertibility} and \ref{lemma:Maxwell-n-identity} respectively and 
$\mathbb{X}_0$, $\mathbb{X}_1$ denote the collections of vector fields
\begin{align*}
\mathbb{X}_1&=\{\Delta e_3^{\rm as}, e_4^{\rm as}, Z,1\}\,,\\
\mathbb{X}_0&=\{\Delta^2 \lp(e_3^{\rm as}\rp)^2,\Delta e_3^{\rm as}, (r-r_+)^2 Z^2,(r-r_+)Z,(r-M)\} \, .
\end{align*} 
\end{proposition}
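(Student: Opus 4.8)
\textbf{Proof plan for Proposition~\ref{prop:control-near-hor-spin}.} The strategy is to assemble the three lemmas of Section~\ref{sec:three_key_lemmas} into a single pointwise identity for $e_3^{\rm as}(\swei{\upupsilon}{0})$, then integrate over the near-horizon region $\{r_+\le r\le r_++\epsilon\}\cap\Sigma_\tau$ and carefully isolate the boundary contribution at $\mc H^+$. First I would substitute the identity of Lemma~\ref{lemma:Teukolsky-invertible-identity}, after inverting $\mathring{\slashed\triangle}_{\mc H}^{[-1]}$ using Lemma~\ref{lemma:invertibility}, to express $\swei{\tilde\upphi}{-1}_0$ in terms of $e_3^{\rm as}\swei{\Phi}{-1}$ plus error terms. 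Feeding this into the Maxwell identity of Lemma~\ref{lemma:Maxwell-n-identity} yields, schematically,
\begin{align*}
e_3^{\rm as}\swei{\upupsilon}{0}=\frac{\kappa}{\sqrt{2(r^2+a^2)}}\mathring{\nablasl}_{\mc H}^{[-1]}\big(\mathring{\slashed\triangle}_{\mc H}^{[-1]}\big)^{-1}\big({-}e_3^{\rm as}\swei{\Phi}{-1}+\mathscr E\big)+\mathscr E'\,,
\end{align*}
where $\mathscr E$ collects the error terms in \eqref{eq:Teukolsky-invertible-identity} and $\mathscr E'$ those in \eqref{eq:Maxwell-n-identity}; by the two remarks following those lemmas, every term in $\mathscr E,\mathscr E'$ is either proportional to $\swei{\Phi}{-1}$ and its $\mathbb X_1$-derivatives (possibly with an extra $\mathring{\nablasl}_{\mc H}^{[-1]}$) or to $\swei{\tilde\upphi}{-1}_0$ and its $\mathbb X_0$-derivatives, with all coefficients bounded on the compact region $\{r\le r_++\epsilon\}$ (and vanishing at least linearly at $r_+$ when $|a|=M$, though mere boundedness suffices here).

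Next I would handle the main term $-\frac{\kappa}{\sqrt{2(r^2+a^2)}}\mathring{\nablasl}_{\mc H}^{[-1]}\big(\mathring{\slashed\triangle}_{\mc H}^{[-1]}\big)^{-1}e_3^{\rm as}\swei{\Phi}{-1}$. Since $\mathring{\nablasl}_{\mc H}^{[-1]}$ and $\big(\mathring{\slashed\triangle}_{\mc H}^{[-1]}\big)^{-1}$ are purely angular operators (independent of $r$), they commute with $e_3^{\rm as}$; writing $\frac{\kappa}{\sqrt{2(r^2+a^2)}}=\frac{\kappa(r_+)}{\sqrt{2\cdot 2Mr_+}}+(\text{quantity vanishing at }r_+)$ — recall $\Delta(r_+)=0$ gives $r_+^2+a^2=2Mr_+$ — I would pull the constant $\frac{\kappa(r_+)}{\sqrt{4Mr_+}}$ out, observe that the remaining $r$-dependent factor is $O(r-r_+)$, and integrate $e_3^{\rm as}\swei{\Phi}{-1}=-\frac{\Delta}{r^2+a^2}\partial_r\swei{\Phi}{-1}\cdot(-1)$... more precisely, apply the fundamental theorem of calculus in $r$ (using $e_3^{\rm as}=\frac{r^2+a^2}{\Delta}T-\partial_r+\frac{a}{\Delta}Z$, so that on a $\Sigma_\tau$-slice $\int_{r_+}^{r_++\epsilon}e_3^{\rm as}(\cdot)\,dr$ telescopes up to $T,Z$-derivative terms that are themselves controlled by the $\mathbb X_1$-energy). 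The telescoping produces exactly $\int_{\mathbb S^2_{\tau,r_+}}\frac{\kappa(r_+)}{\sqrt{4Mr_+}}\mathring{\nablasl}_{\mc H}^{[-1]}\big(\mathring{\slashed\triangle}_{\mc H}^{[-1]}\big)^{-1}\swei{\Phi}{-1}\,d\Omega$ as the boundary term, matching the left-hand side of \eqref{eq:control-near-hor-spin}, while all remainders are bounded by $\int_{\Sigma_\tau\cap\{r\le r_++\epsilon\}}|\mathbb X_1\swei{\Phi}{-1}|^2+|\mathbb X_1\mathring{\nablasl}_{\mc H}^{[-1]}\swei{\Phi}{-1}|^2\,drd\Omega$ after Cauchy--Schwarz on the compact $r$-interval (the $r$-interval has length $\le\epsilon$, so $|\int\cdots dr|^2\lesssim_\epsilon\int|\cdots|^2dr$).

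The error terms $\mathscr E,\mathscr E'$ are then estimated directly: each is, after applying $\mathring{\nablasl}_{\mc H}^{[-1]}\big(\mathring{\slashed\triangle}_{\mc H}^{[-1]}\big)^{-1}$ (bounded on $L^2(d\Omega)$ by Lemma~\ref{lemma:invertibility} and the first-order nature of $\mathring{\nablasl}_{\mc H}^{[-1]}$, which one absorbs into the extra $\mathring{\nablasl}_{\mc H}^{[-1]}$ slot in the $\mathfrak X$-collections), a smooth bounded coefficient times one of the vector fields in $\mathbb X_0$ or $\mathbb X_1$ applied to $\swei{\tilde\upphi}{-1}_0$ or $\swei{\Phi}{-1}$; integrating $|\cdot|$ over the compact region and squaring via Cauchy--Schwarz gives the right-hand side of \eqref{eq:control-near-hor-spin}. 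I would double-check that the worst terms in \eqref{eq:Teukolsky-invertible-identity} — namely the $e_3^{\rm as}\big(\tfrac{\Delta}{r^2+a^2}e_3^{\rm as}\big)$ and $Ze_3^{\rm as}$ terms acting on $\tfrac{\Delta}{r^2+a^2}\swei{\tilde\upphi}{-1}_0$ — upon expansion indeed only produce $\Delta^2(e_3^{\rm as})^2$, $\Delta e_3^{\rm as}$, $(r-r_+)^2Z^2$, $(r-r_+)Z$, $(r-M)$ derivatives of $\swei{\tilde\upphi}{-1}_0$, which is precisely why $\mathbb X_0$ is defined with those five entries. \textbf{The main obstacle} is bookkeeping: one must verify that \emph{no} term in the composed identity falls outside the finite lists $\mathbb X_0,\mathbb X_1$ (with at most one extra $\mathring{\nablasl}_{\mc H}^{[-1]}$), in particular that the second $e_3^{\rm as}$-derivative of $\swei{\tilde\upphi}{-1}_0$ always comes dressed with a full $\Delta^2$ weight and never with just $\Delta$ or a bare $(r-r_+)$; this is what makes the resulting energy on the right-hand side genuinely \emph{degenerate}-at-$\mc H^+$, which is the whole point of the proposition. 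A secondary subtlety is ensuring the angular operator $\mathring{\slashed\triangle}_{\mc H}^{[-1]}$ evaluated at $r_+$ is the one whose invertibility Lemma~\ref{lemma:invertibility} establishes (it is, by construction in \eqref{eq:spin-weighted-laplacian-H}), and that commuting $\big(\mathring{\slashed\triangle}_{\mc H}^{[-1]}\big)^{-1}$ past $e_3^{\rm as}$ is legitimate because the operator is $r$-independent.
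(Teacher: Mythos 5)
Your proposal is correct and follows essentially the same route as the paper: invert $\mathring{\slashed{\triangle}}^{[-1]}_{\mc H}$ via Lemma~\ref{lemma:invertibility} in the identity of Lemma~\ref{lemma:Teukolsky-invertible-identity}, substitute into Lemma~\ref{lemma:Maxwell-n-identity} so that the leading term becomes the total derivative $-e_3^{\rm as}\big(\tfrac{\kappa}{\sqrt{2(r^2+a^2)}}\mathring{\nablasl}^{[-1]}_{\mc H}\big(\mathring{\slashed{\triangle}}^{[-1]}_{\mc H}\big)^{-1}\swei{\Phi}{-1}\big)$ (the angular operators being $r$-independent, hence commuting with $e_3^{\rm as}$ for $r\leq 9M/4$), integrate over the slab to extract the horizon boundary term using $r_+^2+a^2=2Mr_+$, and Cauchy--Schwarz the remainders over the length-$\epsilon$ interval. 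The one step to repair is your telescoping argument: the Boyer--Lindquist decomposition $e_3^{\rm as}=\tfrac{r^2+a^2}{\Delta}T-\partial_r+\tfrac{a}{\Delta}Z$ has coefficients singular at $r=r_+$, so the residual $T$- and $Z$-terms are \emph{not} directly controlled by the $\mathbb{X}_1$-energies as written; instead one should use that in Kerr-star coordinates $e_3^{\rm as}=T-\partial_r$ for $r\leq 17M/8$ and then re-express $T=\tfrac{\Sigma}{2(r^2+a^2)}e_4^{\rm as}+\tfrac{\Delta}{2(r^2+a^2)}e_3^{\rm as}-\tfrac{a}{r^2+a^2}Z$, whose bounded coefficients place the residuals exactly in $\mathbb{X}_1=\{\Delta e_3^{\rm as},e_4^{\rm as},Z,1\}$.
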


\begin{proof} Throughout the proof, we assume $r\leq 9M/4$. In view of Lemma~\ref{lemma:invertibility}, we let $\big(\mathring{\slashed\triangle}^{[-1]}_{\mc H}\big)^{-1}\colon L^2(d\Omega)\to L^2(d\Omega)$ be the inverse of the differential operator $\mathring{\slashed\triangle}^{[-1]}_{\mc H}$. From Lemma~\ref{lemma:Teukolsky-invertible-identity}, we readily derive the identity
\begin{align*}
\swei{\tilde \upphi}{-1}_0&=-e_3^{\rm as}\big(\mathring{\slashed\triangle}^{[-1]}_{\mc H}\big)^{-1}\swei{\Phi}{-1}\\
&\qquad +\frac{1}{r^2+a^2}\big(\mathring{\slashed\triangle}^{[-1]}_{\mc H}\big)^{-1}\lp[a^2\sin^2\theta\lp(\frac14 L-\frac{a}{r^2+a^2} Z+\frac12 \uL-\frac{w'}{4w}\rp)+a(Z+i\cos\theta)\rp]\swei{\Phi}{-1} \\
&\qquad +\big(\mathring{\slashed\triangle}^{[-1]}_{\mc H}\big)^{-1}\lp(\frac14a^2\sin^2\theta e_3^{\rm as}\uL+\frac{a\Sigma}{r^2+a^2}Z e_3^{\rm as}  +ia\cos\theta e_3^{\rm as}\rp)\lp(\frac{\Delta}{r^2 +a^2}\swei{\tilde \upphi}{-1}_0\rp) \\
&\qquad +aw\big(\mathring{\slashed\triangle}^{[-1]}_{\mc H}\big)^{-1}\lp[a-\frac{a^2r\sin^2\theta }{r^2+a^2}Z+\frac{(r^2+a^2)}{M}Z\rp]\swei{\tilde \upphi}{-1}_0\\
&\qquad +\frac{a^2}{Mr_+}\frac{r^2-r_+^2}{r^2+a^2}\big(\mathring{\slashed\triangle}^{[-1]}_{\mc H}\big)^{-1}\lp[\lp(Z +i\cos\theta\rp) -\frac{a^2\sin^2\theta}{2Mr_+}\lp(1 -\frac{r^2-r_+^2}{2(r^2+a^2)}Z\rp)\rp]Z\swei{\tilde \upphi}{-1}_0\,.\numberthis\label{eq:Teukolsky-invertible-identity-again}
\end{align*}
By combining the identities \eqref{eq:Maxwell-n-identity} and \eqref{eq:Teukolsky-invertible-identity-again}, we obtain  
\begin{align*}
e_3^{\rm as} \swei{\upupsilon}{0}
&=-e_3^{\rm as}\lp(\frac{\kappa} {\sqrt{2(r^2+a^2)}}\mathring{\nablasl}^{[-1]}_{\mc H} \big(\mathring{\slashed\triangle}^{[-1]}_{\mc H}\big)^{-1}\swei{\Phi}{-1}\rp)\\
&\qquad +\frac{a\kappa} {\sqrt{2}(r^2+a^2)^{3/2}}\lp\{\frac{i\sin\theta}{2} \swei{\Phi}{-1}+\mathring{\nablasl}^{[-1]}_{\mc H}\big(\mathring{\slashed\triangle}^{[-1]}_{\mc H}\big)^{-1}\lp[a\sin^2\theta\lp(\frac14 L-\frac{a}{r^2+a^2} Z+\frac12 \uL-\frac{w'}{4w}\rp)\rp.\rp.\\
&\qquad\qquad\qquad\qquad\qquad\qquad\qquad\qquad\qquad\qquad\qquad\qquad\quad \lp.\lp.+(Z+i\cos\theta)-\frac{(a+ir\cos\theta)}{\kappa}\rp]\rp\}\swei{\Phi}{-1}\\
&\qquad +\frac{a\kappa} {\sqrt{2(r^2+a^2)}}\lp\{\frac{i\sin\theta}{2} e_3^{\rm as}+\mathring{\nablasl}^{[-1]}_{\mc H}\big(\mathring{\slashed\triangle}^{[-1]}_{\mc H}\big)^{-1}\lp(\frac14a\sin^2\theta e_3^{\rm as}\uL+\frac{\Sigma}{r^2+a^2}Z e_3^{\rm as}  \rp.\rp.\\
&\qquad\qquad\qquad\qquad\qquad\qquad\qquad\qquad\qquad\qquad\qquad\qquad\qquad\quad \lp.\lp.\vphantom{\frac12}+i\cos\theta e_3^{\rm as}\rp)\rp\}\lp(\frac{\Delta}{r^2+a^2} \swei{\tilde \upphi}{-1}_0\rp)\\
&\qquad +\frac{a\kappa (r-r_+)} {Mr_+\sqrt{2(r^2+a^2)}} \lp\{i\sin\theta\lp[ \frac{a(r+r_+)}{2(r^2+a^2)}Z  +\frac{Mr_+}{\kappa (r_+)\kappa}\rp]\rp.\\
&\qquad\qquad\qquad \lp.+\frac{a(r+r_+)}{r^2+a^2}\mathring{\nablasl}^{[-1]}_{\mc H} \big(\mathring{\slashed\triangle}^{[-1]}_{\mc H}\big)^{-1}\lp[\lp(Z +i\cos\theta\rp) -\frac{a^2\sin^2\theta}{2Mr_+}\lp(1 -\frac{r^2-r_+^2}{2(r^2+a^2)}Z\rp)\rp]Z\rp\}\swei{\tilde \upphi}{-1}_0\\
&\qquad  +\frac{a\kappa w} {\sqrt{2(r^2+a^2)}}\mathring{\nablasl}^{[-1]}_{\mc H} \big(\mathring{\slashed\triangle}^{[-1]}_{\mc H}\big)^{-1}\lp[a-\frac{a^2r\sin^2\theta }{r^2+a^2}Z+\frac{(r^2+a^2)}{M}Z\rp]\swei{\tilde \upphi}{-1}_0\,, \numberthis\label{eq:e3upsilon-identity-final}
\end{align*}
where we used the commutation relations $[e_3^{\rm as}, \big(\mathring{\slashed{\triangle}}^{[-1]}_{\mc H}\big)^{-1}]=0=[e_3^{\rm as}, \mathring{\slashed{\nabla}}^{[-1]}_{\mc H}]$ holding for $r\leq 9M/4$ (see Section~\ref{sec_Kerr_star_coords}). By integrating identity \eqref{eq:e3upsilon-identity-final} over $\mathbb{S}^2_{\tau,r}$-spheres and over the interval $r\in[r_+,r_++\epsilon]$, we deduce that, for $\epsilon\leq M/4$, we have
\begin{align*}
&\lp|\int_{r_+}^{r_++\epsilon}\int_{\mathbb{S}^2_{\tau,r}}e_3^{\rm as}(\swei{\upupsilon}{0}) dr d\Omega-\int_{\mathbb{S}^2_{\tau,r_+}}\frac{\kappa(r_+) }{\sqrt{4Mr_+}}\mathring{\nablasl}_{\mc H}^{[-1]}\big(\mathring{\slashed{\triangle}}^{[-1]}_{\mc H}\big)^{-1}\swei{\Phi}{-1}d\Omega \rp|^2\\
&\quad\lesssim_\epsilon \frac{|a|}{M}\sum_{\mathfrak X\in\{\mathring{\nablasl},\mathrm{id}\}} \int_{\Sigma_{\tau}\cap\{r\leq r_++\epsilon\}}\lp[\lp( \Delta^2| e_3^{\rm as}\mathfrak X\swei{\Phi}{-1}|^2+|e_4^{\rm as}\mathfrak X\swei{\Phi}{-1}|^2+|Z\mathfrak X\swei{\Phi}{-1}|^2\rp)\rp] dr d\Omega\\
&\quad \qquad +\frac{|a|}{M}\sum_{\mathfrak X\in\{\mathring{\nablasl},\mathrm{id}\}}\int_{\Sigma_{\tau}\cap\{r\leq r_++\epsilon\}}\lp[\Delta^4| (e_3^{\rm as})^2\mathfrak X\swei{\tilde\upphi}{-1}_0|^2+\Delta^2| e_3^{\rm as}\mathfrak X\swei{\tilde\upphi}{-1}_0|^2\rp]drd\Omega \\
&\quad \qquad +\frac{|a|}{M}\sum_{\mathfrak X\in\{\mathring{\nablasl},\mathrm{id}\}}\int_{\Sigma_{\tau}\cap\{r\leq r_++\epsilon\}}\lp[(r-r_+)^4|Z^2\mathfrak X\swei{\tilde\upphi}{-1}_0|^2 +(r-r_+)^2|Z\mathfrak X\swei{\tilde\upphi}{-1}_0|^2\rp]dr d\Omega\\
&\quad \qquad +\sum_{\mathfrak X\in\{\mathring{\nablasl},\mathrm{id}\}}\int_{\Sigma_{\tau}\cap\{r\leq r_++\epsilon\}}\lp[\lp(\frac{|a|}{M}+(r-M)^2\rp)|\mathfrak X\swei{\Phi}{-1}|^2+\frac{|a|}{M}(r-M)^2|\swei{\mathfrak X\tilde\upphi}{-1}_0|^2\rp]dr d\Omega,
\end{align*}
as claimed. In the estimates, the implicit constants depend on the norm of the inverse operator $\big(\mathring{\slashed\triangle}^{[-1]}_{\mc H}\big)^{-1}$ obtained in Lemma~\ref{lemma:invertibility}.
\end{proof}

We now return to our original tensorial notation to control the first two terms on the right hand side of the estimates in Proposition~\ref{prop:L2-norm} in terms of degenerate-at-$\mc H^+$ energy norms of the extremal Maxwell components $\bsy \alpha$ and $\bsy\alphab$ and conclude the proof of Theorem~\ref{thm:middle-EB-ED-ext}.

\begin{proof}[Proof of Theorem~\ref{thm:middle-EB-ED-ext}]
We combine Propositions~\ref{prop:L2-norm} and \ref{prop:higher-derivatives} with Proposition~\ref{prop:control-near-hor-spin}. By appealing to the isomorphism of Lemma~\ref{lemma:isomorphism}, one can express Proposition~\ref{prop:control-near-hor-spin} in our original tensorial notation.
\end{proof}

\subsection{Horizon conservation laws in the \texorpdfstring{$|a|=M$}{extremal Kerr} case} \label{rmk:extremal-axisym}

In the extremal $|a|=M$ case, Lemmas~\ref{lemma:Teukolsky-invertible-identity} and \ref{lemma:Maxwell-n-identity} allow one to derive conservation laws for the middle Maxwell components along the event horizon (i.e.~at $r=r_+=|a|=M$).

We consider an axisymmetric Teukolsky field $\swei{\upalpha}{-1}$, i.e.~such that $Z\swei{\upalpha}{-1}=0$, and introduce the notation
\begin{gather*}
\swei{\upupsilon}{0}_0 :=\frac{1}{\sqrt{4\pi}}\int_{\mathbb S^2_{\tau,r_+}}\swei{\upupsilon}{0}d\Omega\,,\\
\swei{\tilde\upphi}{-1}_{0,L} :=\int_{\mathbb S^2_{\tau,r_+}}\swei{\tilde\upphi}{-1}_0 S_{0,L}^{[-1]}d\Omega\,,\qquad
\swei{\Phi}{-1}_{L} :=\int_{\mathbb S^2_{\tau,r_+}}\swei{\Phi}{-1} S_{0,L}^{[-1]}d\Omega\,,\qquad L\geq 1\,,
\end{gather*}
where $S_{m,L}^{[s]}(\theta,\tilde{\phi}{}^*)$ denote the $s$-spin-weighted spherical harmonics already introduced in Proposition~\ref{prop:upupsilon-Schwarzschild}. From identity \eqref{eq:Maxwell-n-identity}, we have
\begin{align*}
&e_3^{\rm as} \big(\swei{\upupsilon}{0}\big) = \frac{1}{2M}\lp(\p_\theta-\frac{2ia\sin\theta}{M-ia\cos\theta}+\cot\theta\rp)\lp((M-ia\cos\theta)\swei{\tilde \upphi}{-1}_0\rp)+\frac{(M\cos\theta-ia) \sin\theta}{8M^2}\swei{\Phi}{-1}
\end{align*}
along $\mc H^+$, which yields
\begin{align*}
e_3^{\rm as} \lp(\swei{\upupsilon}{0}_0\rp)
&= \frac{1}{4\sqrt{30}M}\lp(\frac{ia}{M}\sqrt{5}\swei{\Phi}{-1}_{1}- \swei{\Phi}{-1}_2\rp)+ \frac{ia}{M}\frac{(\sqrt{16}+1)}{2\sqrt{6}}\swei{\tilde \upphi}{-1}_{0,1}\\
&\qquad -\sum_{L\in 2\mathbb{\mathbb{N}}}\sqrt{\frac{(2L+1)}{4L(L+1)}} \swei{\tilde \upphi}{-1}_{0,L}+\frac{ia}{M}\sum_{L\in 2\mathbb{\mathbb{N}}+1}\sqrt{\frac{(2L+1)}{4L(L+1)}}\swei{\tilde \upphi}{-1}_{0,L}\,.\numberthis\label{eq:Maxwell-n-identity-axisym}
\end{align*}
On the other hand, from identity \eqref{eq:Teukolsky-invertible-identity}, we have
\begin{align*}
\lp(\mathring{\slashed\triangle}^{[-1]}+1\rp)\swei{\tilde \upphi}{-1}_0=
-e_3^{\rm as}\lp(\swei{\Phi}{-1}\rp)+\frac{1}{16}\sin^2\theta(1+\cos^2\theta)e_4^{\rm as}\lp(\swei{\Phi}{-1}\rp)+\frac{i a\cos\theta}{2M^2}\swei{\Phi}{-1}
\end{align*}
along $\mc H^+$, from which we obtain
\begin{align*}
\swei{\tilde \upphi}{-1}_{0,L}
&=-e_3^{\rm as}\lp(\frac{1}{L(L+1)}\swei{\Phi}{-1}_L\rp)+\frac{1}{40L(L+1)}e_4^{\rm as}\lp(\swei{\Phi}{-1}_{L}\rp) \numberthis\label{eq:Teukolsky-invertible-identity-axisym}\\
&\qquad +\frac{1}{2}\sum_{L'\geq 1}\frac{\sqrt{\left(2L+1\right)\left(2L'+1\right)}}{L(L+1)} \lp\{\frac{\sqrt{9}}{105}
\begin{pmatrix}
  L & L' & 4\\
  0 & 0 & 0
\end{pmatrix}
\begin{pmatrix}
  L & L' & 4\\
  -1 & +1 & 0
\end{pmatrix}e_4^{\rm as}\lp(\swei{\Phi}{-1}_{L'}\rp)\rp.\\
&\qquad\qquad\qquad\qquad\qquad\qquad\qquad\qquad \lp. 
+\frac{1}{14}
\begin{pmatrix}
  L & L' & 2\\
  0 & 0 & 0
\end{pmatrix}
\begin{pmatrix}
  L & L' & 2\\
  -1 & +1 & 0
\end{pmatrix} e_4^{\rm as}\lp(\swei{\Phi}{-1}_{L'}\rp)\rp.\\
&\qquad\qquad\qquad\qquad\qquad\qquad\qquad\qquad \lp. -\frac{i a}{M^2}\begin{pmatrix}
  L & L' & 1\\
  0 & 0 & 0
\end{pmatrix}
\begin{pmatrix}
  L & L' & 1\\
  -1 & +1 & 0
\end{pmatrix}\swei{\Phi}{-1}_{L'}\rp\}
\end{align*}
for any $L\in\mathbb{N}$, where we used the standard notation
$$
\begin{pmatrix}
  j_1 & j_2 & j_3\\
  -s_1 & -s_2 & -s_3
\end{pmatrix}
$$
to denote the so-called $3j$-symbol. Inserting \eqref{eq:Teukolsky-invertible-identity-axisym} into \eqref{eq:Maxwell-n-identity-axisym}, we deduce that the evolution of the spherical mean of $\swei{\upupsilon}{0}$ along the event horizon is completely determined by $\swei{\Phi}{-1}$ in an explicitly computable fashion. 

These formulas allow one to prove conservation laws (and thus, non-decay) for transversal derivatives of $\swei{\upupsilon}{0}_0$ along $\mc H^+$ provided  transversal derivatives of $\swei{\tilde\upphi}{-1}_{0,L}$ and $\swei{\Phi}{-1}_L$ themselves satisfy suitable conservation laws. Indeed, work \cite{Lucietti2012} obtains conservation laws for  $\swei{\tilde\upphi}{-1}_{0,1}$ and $\swei{\Phi}{-1}_1$; it is expected, see e.g.~\cite{Angelopoulos2018a}, that conservation laws for $\swei{\tilde\upphi}{-1}_{0,1}$ and $\swei{\Phi}{-1}_1$ with $L\geq 2$ can also be derived.


%
%

\section{Analysis of the extremal Maxwell components}
\label{sec:analysis-teukolsky}

In this section, we state energy boundedness and decay for the extremal Maxwell components.

\subsection{The \texorpdfstring{$|a|<M$}{subextremal Kerr} case after \texorpdfstring{\cite{SRTdC2023}}{[SRTdC23]}}
\label{sec:teukolsky-subextremal}

Works \cite{SRTdC2020,SRTdC2023} obtain energy boundedness and decay for general solutions to the Teukolsky equations \eqref{eq:Teukolsky-spin}. Using Lemma~\ref{lemma:isomorphism}, one can translate those statements into energy boundedness and decay for the extremal Maxwell components. 

\begin{theorem}[\cite{SRTdC2020,SRTdC2023}] \label{thm:teukolsky-EB-ILED} Fix $M>0$ and $a_0\in[0,M)$. Let $J\geq 1$, $p\in[0,2)$ and $\eta\in(0,1)$. Then, there exists a uniform constant $C=C(a_0,M,J,p,\eta)>0$ such that, for any $|a|\leq a_0$ and any solution 
$$\bsy{\mathfrak S}=(\bsy{\alpha}[\bsy{\mathfrak S}],\bsy{\alphab}[\bsy{\mathfrak S}],\widehat{\bsy{\rho}}[\bsy{\mathfrak S}],\widehat{\bsy{\sigma}}[\bsy{\mathfrak S}])$$ 
to the modified Maxwell equations \eqref{D3_sigma}--\eqref{DA_rho} arising from seed initial data prescribed on $\Sigma_0$, the following energy estimates hold.
\begin{itemize}
\item Non-degenerate energy boundedness: we have
\begin{align*}
\overline{\mathbb{E}}_p^{J}\big[r^2\bsy \alpha[\bsy{\mathfrak S}]\big](\tau)&\leq C\, \overline{\mathbb{E}}_p^{J}\big[r^2\bsy\alpha [\bsy{\mathfrak S}]\big](0)\,,\\
\overline{\mathbb{E}}_p^{J}\big[r\bsy \alphab[\bsy{\mathfrak S}]\big](\tau)&\leq C\, \overline{\mathbb{E}}^{J}_p\big[r\bsy\alphab[\bsy{\mathfrak S}]\big](0)
\end{align*}
for all $\tau\geq 0$.
\item Non-degenerate energy decay: we have
\begin{align*}
\overline{\mathbb{E}}^{J}\big[r^2\bsy \alpha [\bsy{\mathfrak S}]\big](\tau)&\leq \frac{C}{(1+\tau)^{2-\eta}}\,\overline{\mathbb{E}}_{2-\eta}^{J+2}\big[r^2\bsy \alpha [\bsy{\mathfrak S}]\big](0)\,,\\
\overline{\mathbb{E}}^J\big[r\bsy \alphab[\bsy{\mathfrak S}]\big](\tau)
&\leq \frac{C}{(1+\tau)^{2-\eta}}\,\overline{\mathbb{E}}^{J+2}_{2-\eta}\big[ r\bsy\alphab[\bsy{\mathfrak S}]\big](0)
\end{align*}
for all $\tau\geq 0$.
\item Energy decay at the event horizon: we have
\begin{align*}
    \overline{\mathbb{E}}_{\mc H^+}^1\big[r^2\bsy \alpha[\bsy{\mathfrak S}]\big](\tau,\infty)&\leq \frac{C}{(1+\tau)^{2-\eta}}\,\overline{\mathbb{E}}_{2-\eta}^{3}\big[r^2\bsy \alpha [\bsy{\mathfrak S}]\big](0)\,,\\
\overline{\mathbb{E}}_{\mc H^+}^1\big[r\bsy \alphab[\bsy{\mathfrak S}]\big](\tau,\infty)&\leq \frac{C}{(1+\tau)^{2-\eta}}\,\overline{\mathbb{E}}_{2-\eta}^{3}\big[r\bsy \alphab [\bsy{\mathfrak S}]\big](0)
\end{align*}
for all $\tau\geq 0$.
\end{itemize}
\end{theorem}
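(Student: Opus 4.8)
The plan is to obtain Theorem~\ref{thm:teukolsky-EB-ILED} as a translation, into the tensorial formalism, of the energy boundedness, integrated local energy decay and flux-decay estimates established for the spin $\pm 1$ Teukolsky equations \eqref{eq:Teukolsky-spin} in \cite{SRTdC2020,SRTdC2023}, the bulk of which is imported as a black box. First I would pass to spin-weighted quantities: by Lemma~\ref{lemma:isomorphism} the tensorial fields $\bsy\alpha[\bsy{\mathfrak S}]$, $\bsy\alphab[\bsy{\mathfrak S}]$ correspond under $\mathrm i^{[\pm 1]}$ to functions $\Delta^{\pm 1}\swei{\upalpha}{\pm 1}\in\mathscr S^{[\pm 1]}_\infty(\mc M)$ solving \eqref{eq:Teukolsky-spin}, and I would then form the rescaled and transformed quantities $\swei{\tilde\upphi_0}{\pm 1}$ and $\swei{\Phi}{\pm 1}$ of \eqref{eq:def-upphi}--\eqref{eq:def-Phi}. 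To these the results of \cite{SRTdC2023} apply directly: for $|a|\le a_0<M$ they provide (uniform-in-$a$) energy boundedness $\overline{\mathbb E}_p[\swei{\tilde\upphi_0}{\pm 1}](\tau)+\overline{\mathbb E}_p[\swei{\Phi}{\pm 1}](\tau)\lesssim\overline{\mathbb E}_p[\swei{\tilde\upphi_0}{\pm1}](0)+\overline{\mathbb E}_p[\swei{\Phi}{\pm 1}](0)$, a degenerate-at-trapping integrated local energy decay estimate commuting with $T$, $Z$, the red-shift vector field and the $r^p$-weight hierarchy, the consequent energy decay with rate $(1+\tau)^{-(2-\eta)}$ after a loss of two derivatives and of the $r^{2-\eta}$-weight, and a decaying energy flux through $\mc H^+$ --- with the constant degenerating as $a_0\to M$.

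Next I would translate the norms. Using the pointwise identities collected in Section~\ref{rmk:translation-SRTdC} --- e.g.\ $|\swei{\tilde\upphi_0}{+1}|^2=\tfrac12|\Sigma\bsy\alpha|^2_{\slashed g}$, $|\swei{\tilde\upphi_0}{-1}|^2=\tfrac12|(r^2+a^2)^{1/2}\bsy\alphab|^2_{\slashed g}$, $|\swei{\Phi}{-1}|^2\sim|\underline{\mathfrak T}(r\bsy\alphab)|^2_{\slashed g}+\tfrac{\Delta}{r^4}|r\bsy\alphab|^2_{\slashed g}$ and the analogues for $\swei{\Phi}{+1}$ --- summing over the two spin signs the spin-weighted fluxes of $\swei{\tilde\upphi_0}{\pm 1}$ and $\swei{\Phi}{\pm 1}$ reproduces all but finitely many of the terms of the tensorial fluxes $\overline{\mathbb E}_p^1[r^2\bsy\alpha]$, $\overline{\mathbb E}_p^1[r\bsy\alphab]$ defined in Section~\ref{sec:Teukolsky-norms}. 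The missing terms --- the purely angular second derivatives and the mixed $\nablasl\nablasl_4$, $\nablasl\nablasl_3$ combinations --- would be recovered by sphere-by-sphere elliptic estimates on the foliation spheres $\mathbb S^2_{\tilde t^*,r}$: using the first-order Maxwell equations \eqref{eq:del3-alpha'}--\eqref{eq:del4-ualpha'} (equivalently the tensorial Teukolsky equations \eqref{eq:Teukolsky-plus}--\eqref{eq:Teukolsky-minus} of Proposition~\ref{prop:Teukolsky-tensor}) one expresses each missing derivative combination in terms of $\nablasl_3$- and $\nablasl_4$-derivatives already controlled plus lower-order terms, and then invokes the standard elliptic estimate for $\mathfrak D_{\mc N_{\rm as}}$ one-forms integrated in $r$ against the appropriate weight, exactly mirroring \cite[Section 4]{SRTdC2023} but in tensorial language.

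The $J$-th order fluxes would then follow by commuting with $\{(1-Mr^{-1})\nablasl_3,\nablasl_4,\nablasl\}$ and with the Killing fields $T$, $Z$; the commutators of these with the Teukolsky operator produce only strictly-lower-order terms (absorbed in the hierarchy) together with the red-shift structure near $\mc H^+$ responsible for the non-degeneracy of the $\overline{\mathbb E}$-norms, again as in \cite{SRTdC2023}. One then reads off the three bullet points: boundedness from the first two steps, decay with the two-derivative and $r^{2-\eta}$-weight loss from the decay statements, and the horizon-flux decay from the output of the $r^p$-hierarchy together with the trace norms $\overline{\mathbb E}^1_{\mc H^+}[r^2\bsy\alpha]$, $\overline{\mathbb E}^1_{\mc H^+}[r\bsy\alphab]$. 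The hard part will not be any new PDE analysis --- all of that is in \cite{SRTdC2020,SRTdC2023} --- but the careful bookkeeping in the translation step: checking that summing the two spin-weighted energies and supplementing by the sphere elliptic estimates produces \emph{every} term of the tensorial norms, including the full non-degeneracy at $\mc H^+$, with the correct $r$-weights and derivative counts.
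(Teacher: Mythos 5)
Your proposal coincides with the paper's own treatment: Theorem~\ref{thm:teukolsky-EB-ILED} is imported as a black box from \cite{SRTdC2020,SRTdC2023} via the isomorphism of Lemma~\ref{lemma:isomorphism}, with the norm comparison carried out exactly through the pointwise identities and spin-weighted fluxes of Section~\ref{rmk:translation-SRTdC}, the missing terms recovered by the elliptic estimates of \cite[Section 4]{SRTdC2023}, and the non-degenerate norms obtained by revisiting those elliptic estimates for the $\overline{\mathbb E}_p$-versions. No substantive difference from the paper's argument.
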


\begin{remark} The proof of Theorem~\ref{thm:teukolsky-EB-ILED} in \cite{SRTdC2020,SRTdC2023} makes use of the spin-weighted version of the Teukolsky equation \eqref{eq:Teukolsky-spin}, rather than the tensorial version of Proposition~\ref{prop:Teukolsky-tensor}, in the analysis. We direct the reader to \cite[Theorem 5.1]{SRTdC2023} for (energy) boundedness estimates and   \cite[Corollary 5.2 and Theorem 9.3]{SRTdC2023} for (energy) decay estimates in the spin-weighted notation, and to Section~\ref{rmk:translation-SRTdC} for a quick summary of how to translate between the tensorial and spin-weighted notations.
\end{remark}

\subsection{The \texorpdfstring{$|a|\approx M$}{nearly extremal Kerr} case: a conjectural statement for bounded azimuthal number}
\label{sec:teukoslsky-extremal}

In Theorem~\ref{thm:teukolsky-EB-ILED}, we have $C(a_0)\to \infty$ as $a_0\to M$, and thus the energy boundedness and decay statements for the extremal Maxwell components break down towards extremality. As already noted in Section \ref{sec:statement_theorems}, it is nonetheless expected that boundedness and (weaker) decay for the extremal Maxwell components still hold in the nearly extremal $|a|\approx M$ case. The formulation of this expectation is the content of Conjecture \ref{conj:teukolsky-EB-ILED-ext}. In this section, we shall review recent literature concerning scalar fields on Kerr spacetimes in the (nearly) extremal case and comment on the results motivating the formulation of Conjecture \ref{conj:teukolsky-EB-ILED-ext}.

A simpler toy problem for understanding the Teukolsky equation \eqref{eq:Teukolsky-spin} is the scalar wave equation. The first results on the behavior of scalar waves on extremal black holes are due to Aretakis. They showed that there are conservation laws for certain transverse derivatives of axisymmetric scalar field along $\mc H^+$ \cite{Aretakis2012} but that, though these conservation laws may lead to a slower decay rate for the axisymmetric scalar field itself, it does decay \cite{Aretakis2012a}. Indeed, for  initial data compactly supported away from $r=r_+$, axisymmetric scalar waves are expected to decay at a rate $(1+\tau)^{-2}$ along $\mc H^+$ and faster (on constant $r$ hypersurfaces) away from it \cite{Angelopoulos2018a,Casals2016}.

Outside of axisymmetry, Casals, Gralla and Zimmerman \cite{Casals2016} used a heuristic argument (based on the assumption that the worse behavior is related to the so-called superradiant threshold, cf.\ Remark~\ref{rmk:SRTdC20-extremal} above) to compute the expected sharp decay rates for scalar fields supported on a fixed, but not necessarily zero, azimuthal mode $m$. They found that, even with compactly initial data supported away from $r=r_+$, the leading order trace of the scalar field on $\mc H^+$ at late-times had a significantly slower decay rate, $(1+\tau)^{-1/2}$, if $m\neq 0$. Very recently, these heuristics were made rigorous in a remarkable work of Gajic \cite{Gajic2023}, which not only confirms that $(1+\tau)^{-1/2}$ is the fastest decay one can hope for along $\mc H^+$ for general scalar fields on extremal Kerr but moreover obtains\footnote{Strictly speaking, this is conditional on global, qualitative integrability assumptions for the scalar field, see \cite[Remark 1.1]{Gajic2023}.} a precise, explicit, pointwise form for the late-time leading order contribution to the scalar field along $\mc H^+$. 

In two follow-up works \cite{Gralla2018,Casals2019a}, Casals, Gralla and Zimmerman extended their analysis to the Teukolsky equation~\eqref{eq:Teukolsky-spin}. They found that, along $\mc H^+$, one has $\bsy\alpha \sim (1+\tau)^{-3/2}$ but $\bsy\alphab \sim (1+\tau)^{1/2}$, i.e.\ growth for $\bsy\alphab$! Their work again suggests an explicit form for the late-time, near-horizon, $\upalpha^{\pm 1}$: for instance, using Lemma~\ref{lemma:isomorphism} to convert to tensorial notation, they propose
\begin{align*}
(r-M)\bsy\alphab \sim (1+\tau)^{-\frac12}\,, \quad (r-M)\nablasl\bsy\alphab \sim (1+\tau)^{-\frac12}\,, \quad (r-M)^{3/2}\nablasl_3\bsy\alphab \sim 1\,, \quad \nablasl_4  \bsy\alphab+\omegah\, \bsy\alphab \sim (1+\tau)^{-\frac12}\,,
\end{align*}
for initial data supported on a fixed azimuthal mode and compactly supported away from horizon. Note that these rates are consistent with the boundedness and decay estimates in Conjecture~\ref{conj:teukolsky-EB-ILED-ext}. In view of the previous paragraph, it seems likely that these claims, and thus Conjecture~\ref{conj:teukolsky-EB-ILED-ext}, can be made rigorous by following the techniques introduced by \cite{Gajic2023}.

\begin{remark} \label{rmk:ext-sum-m} As discussed above, most of the literature on extremal Kerr, $|a|=M$, or nearly extremal Kerr $|a|\approx M$, deals with the case of solutions to the fixed azimuthal mode, $m$, solutions to the Teukolsky and scalar wave equations. Understanding how to deal with the limit $m\to \infty$, and thus with general initial data, is a major open problem already for scalar wave equations. Therefore, we purposely exclude this regime from Conjecture~\ref{conj:teukolsky-EB-ILED-ext}. 
\end{remark}

%
%
\section{Proof of the main theorems} \label{sec:proof_main_theorems}

In this section, we combine the analysis of the middle Maxwell components of Section~\ref{sec:analysis-middle} with that of the extremal Maxwell components of Section~\ref{sec:analysis-teukolsky} to prove the two main theorems of the paper.

\begin{proof}[Proof of Theorem~\ref{thm:precise-EB-ED-sub}] In view of Theorem~\ref{thm:teukolsky-EB-ILED}, both the qualitative assumption for $\bsy\alphab$ and the quantitative assumption for $\bsy\alpha$ in Theorem~\ref{thm:middle-EB-ED-sub} hold true, where one identifies $\mathbb{D}[r^2\bsy\alpha](0)=\overline{\mathbb{E}}_{2-\eta}^3[r^2\bsy\alpha](0)$ and $p_\alpha=1-\eta$. Combining Theorem~\ref{thm:teukolsky-EB-ILED} with Theorem~\ref{thm:middle-EB-ED-sub} yields the conclusion.
\end{proof}

\begin{proof}[Proof of Theorem~\ref{thm:precise-EB-ED-ext}]
In view of Conjecture~\ref{conj:teukolsky-EB-ILED-ext}, both the qualitative assumption for $\bsy\alphab$ and the quantitative assumption for $\bsy\alpha$ in Theorem~\ref{thm:middle-EB-ED-ext} hold true. Combining Conjecture~\ref{conj:teukolsky-EB-ILED-ext} with Theorem~\ref{thm:middle-EB-ED-ext} yields the conclusion.
\end{proof}

\appendix 

\section{Further equations and identities from the Maxwell system} \label{sec:appendix_head}

In this appendix, we derive several identities which are used in the paper. The identities are derived starting from the Maxwell equations.

For later convenience, we recall the commutation identities 
\begin{alignat}{3}
[e_3^{\rm as},e_A^{\rm as}]&=\slashed{\underline{M}}{}_A^Be_B^{\rm as} -{\chib^{\sharp_2}}{}^B_Ae_B^{\rm as}&&=\slashed{\underline{M}}{}_A^Be_B^{\rm as} -\frac12\tr\chib e_A^{\rm as}-\frac12 \epsuchi {\slashed{\varepsilon}_{A}}^{B}e_B^{\rm as}\,,\label{eq:comm-e3eA}\\
[e_4^{\rm as},e_A^{\rm as}]&=\slashed{M}{}_A^Be_B^{\rm as} -{\chi^{\sharp_2}}{}^B_Ae_B^{\rm as}+(\eta+\etab)_A e_4^{\rm as}&&=\slashed{M}{}_A^Be_B^{\rm as} -\frac12\tr\chi e_A^{\rm as}-\frac12 \epschi {\slashed{\varepsilon}_{A}}^{B}e_B^{\rm as}+(\eta+\etab)_A e_4^{\rm as} \,,\label{eq:comm-e4eA}
\end{alignat}
which are obtained by specialising the general commutation formulae of \cite[Section 4.6]{Benomio2022} under the identification of $(\bsy{\mathcal{M}},\bsy{g})$ with the Kerr exterior manifold $(\mathcal{M},g)$ and $\bsy{\mathfrak{D}}_{\bsy{\mathcal{N}}}$ with the algebraically special horizontal distribution $ \mathfrak{D}_{\mathcal{N}_{\text{as}}}$ (see, in particular, \cite[Equations (74)--(75)]{Benomio2022}). We also recall the commutation identities
\begin{align*}
[\nablasl_3,\divsl]\varsigma &=-\frac12\tr\chib\divsl\varsigma+\frac12\epsuchi\curlsl\varsigma-\epsuchi\eta \wedge\varsigma \, ,\\
[\nablasl_3,\curlsl]\varsigma &= -\frac12\tr\chib\curlsl\varsigma-\frac12\epsuchi\divsl\varsigma
\end{align*}
for any $ \mathfrak{D}_{\mathcal{N}_{\text{as}}}$ one-form $\varsigma$ (see \cite[Equation (142)]{Benomio2022}).

\subsection{The tensorial Teukolsky--Starobinsky identities}
\label{app:TS-identities}

In the following proposition, we show that the extremal Maxwell components $\bsy\alpha$ and $\bsy\alphab$ are related by differential identities akin to the Weyl identities for linearized gravity, see e.g.\ \cite{Silva-Ortigoza1997}, or the Teukolsky--Starobinsky identities \cite{Starobinsky1974, Teukolsky1974} in the spin-weighted  formalism.

\begin{proposition}[Teukolsky--Starobinsky] \label{prop:TS-identities} For any solution to the Maxwell equations \eqref{eq:del3-sigma}--\eqref{eq:del4-ualpha}, the identities
\begin{align}
\begin{split}&\frac{1}{\Sigma}\lp(\nablasl_3+\frac12\tr\chib+\frac32 \epsuchi{}\,\,^\star\rp)\lp[\Sigma \lp(\nablasl_3+\frac12\tr\chib+\frac12 \epsuchi\,\,{}^\star\rp)\,\bsy{\alpha}\rp]  \\
&\quad\qquad = -(\nablasl +2\eta) \divsl\bsy{\alphab}-{}^{\star}(\nablasl +2\eta)\curlsl\bsy{\alphab} \, ,
\end{split}\label{eq:TSplus}\\
\begin{split}
&\lp(\nablasl_4+\frac12 \tr \chi +\frac32 \epschi \,\,{}^\star\rp)\lp[ \Sigma \lp(\nablasl_4+\,\hat\omega +\frac12 \tr \chi +\frac12 \epschi \,\,{}^\star\rp)\bsy{\alphab}\rp]\\
&\quad\qquad  =-\lp(\nablasl+2\etab\rp)\divsl(\Sigma\bsy{\alpha})-{}^\star\lp(\nablasl+2\etab\rp)\curlsl(\Sigma\bsy{\alpha})
\end{split}\label{eq:TSminus}
\end{align}
hold.
\end{proposition}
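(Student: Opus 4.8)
The proof of Proposition~\ref{prop:TS-identities} will proceed by direct manipulation of the Maxwell system \eqref{eq:del3-sigma}--\eqref{eq:del4-ualpha}, exploiting the vanishing of $\chih$, $\chibh$, $\xi$, $\yb$ and $\omegabh$ on the algebraically special frame (see Section~\ref{sec_Kerr_connection_coeff_curv_comps}). The plan is as follows.

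\textbf{Step 1: Isolating the right-hand sides of the $\nablasl_3$-transport equations.} Starting from \eqref{eq:del3-alpha}, the quantity $\bsy{\alpha}$ is already expressed in terms of $\nablasl\bsy\rho$, ${}^\star\nablasl\bsy\sigma$, $\eta\bsy\rho$ and ${}^\star\eta\bsy\sigma$. The idea is to apply to this identity the operator $\lp(\nablasl_3+\frac12\tr\chib+\frac32\epsuchi{}^\star\rp)$ (suitably conjugated by $\Sigma$ as in the statement), with the aim of producing, on the right-hand side, angular derivatives of $\nablasl_3(\Sigma\bsy\rho)$ and $\nablasl_3(\Sigma\bsy\sigma)$; these are then replaced using the ``primed'' Maxwell equations \eqref{eq:del3-sigma'}--\eqref{eq:del3-rho'}, which express $\nablasl_3(\Sigma\bsy\sigma)$ and $\nablasl_3(\Sigma\bsy\rho)$ purely in terms of $\divsl(\Sigma\bsy\alphab)$, $\curlsl(\Sigma\bsy\alphab)$, $\Sigma\bsy\alphab$ and $\Sigma\bsy\rho,\Sigma\bsy\sigma$. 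The outcome should be \eqref{eq:TSplus}, modulo lower-order terms. The conjugate identity \eqref{eq:TSminus} is obtained in exactly the same way starting from \eqref{eq:del4-ualpha} and using \eqref{eq:del4-sigma'}--\eqref{eq:del4-rho'}, with the extra $\hat\omega$ term accounted for as in the $\nablasl_4$-Teukolsky operator of Proposition~\ref{prop:Teukolsky-tensor}.

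\textbf{Step 2: Commuting $\nablasl_3$ past $\nablasl$ and ${}^\star\nablasl$.} Carrying out Step~1 requires commuting the transport derivative $\nablasl_3$ through the horizontal derivatives $\nablasl$, $\divsl$, $\curlsl$ acting on the scalars $\bsy\rho,\bsy\sigma$. The relevant commutators are \eqref{eq:comm-e3eA}, together with the scalar versions of the $[\nablasl_3,\divsl]$, $[\nablasl_3,\curlsl]$ identities recalled at the start of the appendix, and one must also keep track of $\nablasl_3$ acting on the coefficient functions $\tr\chib$, $\epsuchi$ and $\eta,\etab$ via their transport equations from Section~\ref{sec:algebraically-special-props}. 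The role of the precise coefficients $\tfrac12\tr\chib$, $\tfrac32\epsuchi{}^\star$ in the operator $\lp(\nablasl_3+\tfrac12\tr\chib+\tfrac32\epsuchi{}^\star\rp)$ (as opposed to the $\tfrac12$ appearing inside the bracket) is precisely to absorb the connection terms generated by these commutations, together with the $2\eta$ shift that appears in $(\nablasl+2\eta)$ on the right-hand side; this is analogous to the bookkeeping already visible in the derivation of \eqref{eq:Teukolsky-plus}--\eqref{eq:Teukolsky-minus}.

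\textbf{Step 3: Collecting terms and verifying the algebraic cancellations.} After Steps 1--2, one is left with \eqref{eq:TSplus} plus a collection of terms of the schematic form (connection coefficient)$\times(\bsy\alphab)$ and (connection coefficient)$\times(\bsy\rho,\bsy\sigma)$. One then substitutes \eqref{eq:del3-sigma}--\eqref{eq:del3-rho} once more to eliminate any residual $\nablasl_3\bsy\rho$, $\nablasl_3\bsy\sigma$, and uses the explicit Kerr identities of Section~\ref{sec:algebraically-special-props} (e.g.\ \eqref{formula_nablasl_atrchib}, the relations between $\nablasl\tr\chib$, $\nablasl\epsuchi$ and $\eta,\etab$, and $(\eta,\eta)-(\etab,\etab)=0$) to check that all such terms cancel identically, leaving only the stated right-hand side involving $\divsl\bsy\alphab$ and $\curlsl\bsy\alphab$. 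The main obstacle is precisely this final bookkeeping: the cancellation is not structural but relies on the special algebraic form of the Kerr connection coefficients, so the computation must be organised carefully (ideally grouping terms by the tensorial object they multiply) to confirm that the coefficients of $\eta\bsy\rho$, ${}^\star\etab\bsy\sigma$, $\tr\chib\,\bsy\alphab$, etc., all vanish. Since the paper states this derivation is included ``only as a reference,'' it suffices to indicate the manipulations and point to the relevant identities rather than display every intermediate line.
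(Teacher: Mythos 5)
Your proposal matches the paper's own derivation: the paper likewise takes a $\nablasl_3$-derivative of the ($\Sigma$-weighted) transport equation \eqref{eq:del3-alpha'}, commutes it through $\nablasl$ via \eqref{eq:comm-e3eA}, substitutes \eqref{eq:del3-sigma'}--\eqref{eq:del3-rho'} for $\nablasl_3(\Sigma\bsy\rho)$ and $\nablasl_3(\Sigma\bsy\sigma)$, and closes the computation with the geometric identities of Section~\ref{sec:algebraically-special-props} (and the analogous $\nablasl_4$ argument starting from \eqref{eq:del4-ualpha'} for \eqref{eq:TSminus}). The only cosmetic difference is that the paper differentiates first and then reads off the coefficients $\tfrac12\tr\chib$, $\tfrac32\epsuchi{}^\star$ from the resulting terms, rather than applying the full second-order operator at the outset.
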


\begin{proof}
We start by taking a $\nablasl_3$-derivative of equation \eqref{eq:del3-alpha'}. By using the commutation identity \eqref{eq:comm-e3eA}, we compute
\begin{align*}
&\nablasl_3\lp[\Sigma(\nablasl_3\bsy{\alpha}+\chib^{\sharp_2}\cdot\,\bsy{\alpha})\rp] \\
&\quad = (\nablasl+\,\eta-\etab)\nablasl_3(\Sigma \bsy{\rho})+{}^\star(\nablasl+\,\eta-\etab)\nablasl_3(\Sigma \bsy{\sigma})+[\nablasl_3,\nablasl](\Sigma\bsy{\rho})+[\nablasl_3,{}^\star\nablasl](\Sigma\bsy{\sigma})\\
&\quad\qquad +\nablasl_3(\eta-\etab)(\Sigma \bsy{\rho})+\nablasl_3({}^\star\eta-{}^\star\etab)(\Sigma \bsy{\sigma})\\
&\quad =(\nablasl +\,\eta-\etab)\lp[-\divsl(\Sigma\bsy{\alphab})+(\eta+\etab,\Sigma\bsy{\alphab})\rp]+{}^{\star}(\nablasl +\,\eta-\etab)\lp[-\curlsl(\Sigma\bsy{\alphab})+(\eta+\etab)\wedge(\Sigma\bsy{\alphab})\rp] \\
&\quad \qquad -\lp[{}^\star\nablasl\epsuchi -\nablasl_3(\eta-\etab)+{}^\star (\eta-\etab)\epsuchi \rp](\Sigma \bsy{\rho})+\lp[\nablasl\epsuchi +\nablasl_3({}^\star\eta-{}^\star\etab)+(\eta-\etab)\epsuchi \rp](\Sigma \bsy{\sigma})\\
&\quad\qquad +[\nablasl_3,\nablasl](\Sigma\bsy{\rho})+[\nablasl_3,{}^\star\nablasl](\Sigma\bsy{\sigma})-\epsuchi {}^\star\lp[\nablasl(\Sigma\bsy{\rho})+{}^\star\nablasl(\Sigma\bsy{\sigma})\rp]\\
&\quad=(\nablasl +\,\eta-\etab)\lp[-\divsl(\Sigma\bsy{\alphab})+(\eta+\etab,\Sigma\bsy{\alphab})\rp]+{}^{\star}(\nablasl +\,\eta-\etab)\lp[-\curlsl(\Sigma\bsy{\alphab})+(\eta+\etab)\wedge(\Sigma\bsy{\alphab})\rp]\\
&\qquad\quad -\frac32 \epsuchi\Sigma{}^\star (\nablasl_3\bsy{\alpha}+\chib^{\sharp_2}\cdot\,\bsy{\alpha})
-\frac12\tr\chib \Sigma (\nablasl_3\bsy{\alpha}+\chib^{\sharp_2}\cdot\,\bsy{\alpha}) \, ,
\end{align*}
where we also applied the geometric identities of Section~\ref{sec:algebraically-special-props}. We thus obtain
\begin{align*}
&\lp(\nablasl_3+\frac12\tr\chib+\frac32 \epsuchi{}\,\,^\star\rp)\lp[\Sigma \lp(\nablasl_3+\frac12\tr\chib+\frac12 \epsuchi\,\,{}^\star\rp)\,\bsy{\alpha}\rp]  \\
&\quad\qquad = (\nablasl +\,\eta-\etab)\lp[-\divsl(\Sigma\bsy{\alphab})+(\eta+\etab,\Sigma\bsy{\alphab})\rp]+{}^{\star}(\nablasl +\,\eta-\etab)\lp[-\curlsl(\Sigma\bsy{\alphab})+(\eta+\etab)\wedge(\Sigma\bsy{\alphab})\rp].
\end{align*}
To conclude the proof of \eqref{eq:TSplus}, we note
\begin{gather*}
-\divsl(\Sigma\bsy{\alphab})+(\eta+\etab,\Sigma\bsy{\alphab}) = -\Sigma \divsl\bsy{\alphab} \, , \quad\quad -\curlsl(\Sigma\bsy{\alphab})+(\eta+\etab)\wedge(\Sigma\bsy{\alphab})=-\Sigma\curlsl\bsy{\alphab} \, , \\
(\nablasl+\eta-\etab) (\Sigma \bsy{\alphab})= \Sigma (\nablasl+\eta-\etab)\bsy{\alphab}+\nablasl (\Sigma\bsy{\alphab}) = \Sigma (\nablasl+2\eta)\bsy{\alphab} \, .
\end{gather*}
To derive the identity \eqref{eq:TSminus}, we take a $\nablasl_4$-derivative of equation \eqref{eq:del4-ualpha'}. By using the commutation identity \eqref{eq:comm-e4eA} and the geometric identities of Section~\ref{sec:algebraically-special-props}, we obtain
\begin{align*}
&\nablasl_4\lp[\Sigma(\nablasl_4\bsy{\alphab}+\chi^{\sharp_2}\cdot\,\bsy{\alphab}+\hat\omega\bsy{\alphab})\rp] \\
%
%
&\quad = -(\nablasl+2\etab)\nablasl_4(\Sigma \bsy{\rho})+{}^\star(\nablasl+2\etab)\nablasl_4(\Sigma \bsy{\sigma})-\lp([\nablasl_4,\nablasl]-(\eta+\etab)\nablasl_4\rp)(\Sigma\bsy{\rho})\\
&\quad\qquad +\lp([\nablasl_4,{}^\star\nablasl]-{}^\star(\eta+\etab)\nablasl_4\rp)(\Sigma\bsy{\sigma})+\nablasl_4(\eta-\etab)(\Sigma \bsy{\rho})-\nablasl_4({}^\star\eta-{}^\star\etab)(\Sigma \bsy{\sigma})\\
&\quad= -(\nablasl+2\etab)\nablasl_4\divsl(\Sigma\bsy\alpha)-{}^\star(\nablasl+2\etab)\curlsl(\Sigma\bsy\alpha)\\
&\quad \qquad +\lp[\nablasl\epschi -\nablasl_4({}^\star\eta-{}^\star\etab)+ 2\epschi{}^\star\etab \rp](\Sigma \bsy{\rho})+\lp[{}^\star\nablasl\epschi +\nablasl_4(\eta-\etab)+ 2\epschi\etab \rp](\Sigma \bsy{\sigma})\\
&\quad\qquad -\lp([\nablasl_4,\nablasl]-(\eta+\etab)\nablasl_4\rp)(\Sigma\bsy{\rho})+\lp([\nablasl_4,{}^\star\nablasl]-{}^\star(\eta+\etab)\nablasl_4\rp)(\Sigma\bsy{\sigma})-\epschi {}^\star\lp[-\nablasl(\Sigma\bsy{\rho})+{}^\star\nablasl(\Sigma\bsy{\sigma})\rp]\\
&\quad=-(\nablasl+2\etab)\nablasl_4\divsl(\Sigma\bsy\alpha)-{}^\star(\nablasl+2\etab)\curlsl(\Sigma\bsy\alpha)\\
&\qquad\quad -\frac32 \epschi\Sigma{}^\star (\nablasl_4\bsy{\alphab}+\chi^{\sharp_2}\cdot\,\bsy{\alphab}+\hat\omega\bsy{\alphab})
-\frac12\tr\chi \Sigma (\nablasl_4\bsy{\alphab}+\chi^{\sharp_2}\cdot\,\bsy{\alphab}+\hat\omega\bsy{\alphab}) \, ,
\end{align*}
from which the identity \eqref{eq:TSminus} follows.
\end{proof}

\subsection{The tensorial Teukolsky equations}
\label{app:Teukolsky-equations}

In Proposition~\ref{prop:Teukolsky-tensor}, we stated that, for any solution to the Maxwell equations, the extremal Maxwell components $\bsy\alpha$ and $\bsy\alphab$ each satisfy a decoupled wave equation akin to the spin-weighted Teukolsky equations \eqref{eq:Teukolsky-spin} \cite{Teukolsky1973}. In this section, we prove Proposition~\ref{prop:Teukolsky-tensor}, i.e.~we derive the tensorial Teukolsky equations \eqref{eq:Teukolsky-plus}--\eqref{eq:Teukolsky-minus}.

\begin{proof}[Proof of Proposition~\ref{prop:Teukolsky-tensor}] By using the Maxwell equations \eqref{eq:del3-sigma}--\eqref{eq:del4-ualpha} and the geometric identities of Section~\ref{sec:algebraically-special-props}, we compute
\begin{align*}
&\nablasl_4(\nablasl_3 \bsy \alpha+\chib^{\sharp_2}\cdot \bsy \alpha)\\
&\quad = [\nablasl_4,\nablasl]\bsy\rho +{}^\star[\nablasl_4,\nablasl]\bsy\sigma + (\nablasl+2\eta)\nablasl_4\bsy\rho +{}^\star(\nablasl+2\eta)\nablasl_4\bsy\sigma +2\nablasl_4\eta\bsy\rho +2\nablasl_4({}^\star\eta)\bsy\sigma\\
&\quad = (\nablasl+3\eta+\etab)\nablasl_4\bsy\rho +{}^\star(\nablasl+3\eta+\etab)\nablasl_4\bsy\sigma +2\nablasl_4\eta\bsy\rho +2\nablasl_4({}^\star\eta)\bsy\sigma \\
&\quad\qquad +\Big\{[\nablasl_4,\nablasl]-(\eta+\etab)\nablasl_4\Big\}\bsy\rho+{}^\star\Big\{[\nablasl_4,\nablasl]-(\eta+\etab)\nablasl_4\Big\}\bsy\sigma\\
&\quad  =(\nablasl+3\eta+\etab)(\divsl\bsy\alpha+(\eta+\etab)\cdot \bsy\alpha) -{}^\star (\nablasl+3\eta+\etab)(\curlsl \bsy\alphab+(\eta+\etab)\wedge \bsy\alpha)\\
&\quad \qquad +\lp[2\nablasl_4\eta-(\nablasl+3\eta+\etab)\tr\chi +{}^\star(\nablasl +3\eta+\etab)\epschi\rp]\bsy\rho\\
&\qquad \quad +\lp[2\nablasl_4({}^\star\eta)-{}^\star(\nablasl+3\eta+\etab)\tr\chi -(\nablasl +3\eta+\etab)\epschi\rp]\bsy\sigma-\frac{1}{2}\lp[3\tr \chi -\epschi{}^\star\rp]\lp(\nablasl\bsy\rho+{}^\star\nablasl\bsy\sigma\rp)\\
&\quad  =(\nablasl+3\eta+\etab)(\divsl\bsy\alpha+(\eta+\etab)\cdot \bsy\alpha) -{}^\star (\nablasl+3\eta+\etab)(\curlsl \bsy\alphab+(\eta+\etab)\wedge \bsy\alpha)\\
&\quad\qquad -\lp[\frac{3}{2}\tr \chi -\frac{1}{2}\epschi{}^\star\rp]\lp(\nablasl\bsy\rho+{}^\star\nablasl\bsy\sigma+2\eta \bsy\rho +2{}^\star\eta \bsy\sigma\rp)
\end{align*}
and
\begin{align*}
&\nablasl_3(\nablasl_4\bsy\alphab+\chi^{\sharp_2}\cdot\bsy\alphab +\hat\omega\bsy\alphab) \\
&\quad= -[\nablasl_3,\nablasl]\bsy\rho +{}^\star[\nablasl_3,\nablasl]\bsy\sigma -(\nablasl+2\etab)\nablasl_3\bsy\rho +{}^\star(\nablasl+2\etab)\nablasl_3\bsy\sigma -2\nablasl_3\etab\bsy\rho +2\nablasl_3({}^\star\eta)\bsy\sigma \\
&\quad = (\nablasl+2\etab)\divsl\bsy\alphab -{}^\star (\nablasl+2\etab)\curlsl \bsy\alphab-\frac{1}{2}\lp[3\tr \chib -\epsuchi\rp]\lp(-\nablasl\bsy\rho+{}^\star\nablasl\bsy\sigma\rp) \\
&\quad\qquad +  \lp[-2\nablasl_3\etab+(\nablasl+2\etab)\tr\chib-{}^\star(\nablasl+2\etab)\epsuchi\rp]\bsy\rho+  \lp[2\nablasl_3({}^\star\etab)-{}^\star(\nablasl+2\etab)\tr\chib-(\nablasl+2\etab)\epsuchi\rp]\bsy\sigma\\
&\quad\qquad -\Big\{[\nablasl_3,\nablasl]+\frac12 (\tr\chib+\epsuchi{}^\star)\nablasl\Big\}\bsy\rho+{}^\star\Big\{[\nablasl_3,\nablasl]+\frac12 (\tr\chib+\epsuchi{}^\star)\nablasl\Big\}\bsy\sigma\\
&\quad= (\nablasl+2\etab)\divsl\bsy\alphab -{}^\star (\nablasl+2\etab)\curlsl \bsy\alphab-\lp[\frac{3}{2}\tr \chib -\frac{1}{2}\epsuchi{}^\star\rp]\lp(-\nablasl\bsy\rho+{}^\star\nablasl\bsy\sigma-2\etab \bsy\rho +2{}^\star\etab \bsy\sigma\rp) \, .
\end{align*}
To conclude the proof, we note
\begin{gather*}
\divsl\bsy{\alpha}+(\eta+\etab,\bsy{\alpha}) = \Sigma^{-1} \divsl(\Sigma\bsy{\alpha}) \, , \quad\quad \curlsl\bsy{\alpha}+(\eta+\etab)\wedge\bsy{\alpha}=\Sigma^{-1}\curlsl(\Sigma \bsy{\alpha}) \, ,\\
(\nablasl+3\eta+\etab) (\Sigma^{-1} \bsy{\alpha})= \Sigma^{-1} (\nablasl+3\eta+\etab)\bsy{\alpha}-\Sigma^{-2}\nablasl (\Sigma \bsy{\alpha}) = \Sigma^{-1} (\nablasl+2\eta) \bsy{\alpha}
\end{gather*}
and similarly for $\bsy{\alphab}$.
\end{proof}

\subsection{The tensorial Fackerell--Ipser equations}
\label{app:FI-equations}

Similarly to the extremal Maxwell components $\bsy\alpha$ and $\bsy\alphab$,  the middle Maxwell components $\bsy\rho$ and $\bsy\sigma$ satisfy (now coupled) wave equations.

\begin{proposition} For any solution to the Maxwell equations \eqref{eq:del3-sigma}--\eqref{eq:del4-ualpha}, the identities
\begin{align*}
\lp[\nablasl_3\nablasl_4 - \slashed\triangle -(\eta-\etab)\cdot \nablasl\rp](\Sigma \bsy \rho) &= 
-\chib \cdot (\eta \otimes (\Sigma\bsy \alpha)) -\lp[\epsuchi \nablasl_3 +{}^\star(\eta-\etab)\cdot  \nablasl -2\sigma_G\rp] (\Sigma \bsy \sigma) \, ,\\
\lp[\nablasl_3\nablasl_4 - \slashed\triangle - (\eta-\etab)\cdot \nablasl\rp] (\Sigma \bsy \sigma) &=\chib\wedge (\eta \otimes (\Sigma\bsy \alpha))+\lp[\epsuchi \nablasl_3-{}^\star (\eta-\etab)\cdot \nablasl-2\sigma_G\rp](\Sigma \bsy \rho)
\end{align*}
hold. We recall the definition of $\sigma_G$ from Section~\ref{sec:algebraically-special-props}.
\end{proposition}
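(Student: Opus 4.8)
The plan is to derive these two wave equations by commuting the $\nablasl_4$-transport equations \eqref{eq:del4-sigma'}--\eqref{eq:del4-rho'} for $\Sigma\bsy\sigma$ and $\Sigma\bsy\rho$ with $\nablasl_3$, in direct analogy with the derivations of the tensorial Teukolsky--Starobinsky identities (Proposition~\ref{prop:TS-identities}) and the tensorial Teukolsky equations (Proposition~\ref{prop:Teukolsky-tensor}) carried out above. Concretely, one applies $\nablasl_3$ to
\begin{align*}
\nablasl_4(\Sigma\bsy\rho)&=\divsl(\Sigma\bsy\alpha)-\epschi\,(\Sigma\bsy\sigma)\,, & \nablasl_4(\Sigma\bsy\sigma)&=-\curlsl(\Sigma\bsy\alpha)+\epschi\,(\Sigma\bsy\rho)\,.
\end{align*}
It is essential to start from these $\nablasl_4$-equations rather than from the $\nablasl_3$-transport equations \eqref{eq:del3-rho'}--\eqref{eq:del3-sigma'}: this way the only extremal component entering is $\bsy\alpha$, and the elliptic identity \eqref{eq:del3-alpha'} governing $\nablasl_3\bsy\alpha$ carries no $\omegah$ term (unlike \eqref{eq:del4-ualpha'}), matching the fact that the claimed right-hand sides involve neither $\omegah$ nor $\bsy\alphab$.

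The computation then proceeds in four steps. First, commute $\nablasl_3$ past $\divsl$ and $\curlsl$ using the appendix identities $[\nablasl_3,\divsl]\varsigma=-\tfrac12\truchi\divsl\varsigma+\tfrac12\epsuchi\curlsl\varsigma-\epsuchi\,\eta\wedge\varsigma$ and $[\nablasl_3,\curlsl]\varsigma=-\tfrac12\truchi\curlsl\varsigma-\tfrac12\epsuchi\divsl\varsigma$ with $\varsigma=\Sigma\bsy\alpha$; the trace terms $\mp\tfrac12\truchi\divsl(\Sigma\bsy\alpha)$, $\mp\tfrac12\truchi\curlsl(\Sigma\bsy\alpha)$ so produced will cancel against the trace part generated in the next step. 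Second, write $\nablasl_3(\Sigma\bsy\alpha)=\Sigma(\nablasl_3\bsy\alpha+\chib^{\sharp_2}\cdot\bsy\alpha)+(\nablasl_3\Sigma)\,\bsy\alpha-\Sigma\,\chib^{\sharp_2}\cdot\bsy\alpha$, use $\nablasl_3\Sigma=\Sigma\truchi$ (a direct computation, cf.\ Sections~\ref{sec_Kerr_connection_coeff_curv_comps} and \ref{sec:algebraically-special-props}), and substitute \eqref{eq:del3-alpha'},
\begin{align*}
\Sigma(\nablasl_3\bsy\alpha+\chib^{\sharp_2}\cdot\bsy\alpha)=\nablasl(\Sigma\bsy\rho)+{}^{\star}\nablasl(\Sigma\bsy\sigma)+(\eta-\etab)\,(\Sigma\bsy\rho)+{}^{\star}(\eta-\etab)\,(\Sigma\bsy\sigma)\,,
\end{align*}
so that $\divsl\nablasl(\Sigma\bsy\rho)=\slashed{\triangle}(\Sigma\bsy\rho)$ produces the horizontal Laplacian in the first equation and $\curlsl\,{}^{\star}\nablasl(\Sigma\bsy\sigma)=-\slashed{\triangle}(\Sigma\bsy\sigma)$ the one in the second. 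Third, dispose of the crossed operators $\divsl\,{}^{\star}\nablasl$ and $\curlsl\nablasl$ using the duality relations $\divsl\,{}^{\star}\varsigma=-\curlsl\varsigma$, $\curlsl\,{}^{\star}\varsigma=\divsl\varsigma$ together with the non-integrability corrections to $\curlsl\nablasl f$ (these involve $\nablasl_3,\nablasl_4$-derivatives weighted by $\epschi,\epsuchi$ and feed into the $\nablasl_3$-terms and $\sigma_G$-terms of the final answer). Fourth, expand $\nablasl_3(\epschi\,\Sigma\bsy\sigma)=(\nablasl_3\epschi)\,(\Sigma\bsy\sigma)+\epschi\,\nablasl_3(\Sigma\bsy\sigma)$ and insert \eqref{eq:del3-sigma'} for $\nablasl_3(\Sigma\bsy\sigma)$, and symmetrically for the $\bsy\sigma$ equation.

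What remains is the bookkeeping of the lower-order, connection-coefficient terms, which are collapsed using the identities of Section~\ref{sec:algebraically-special-props}: the transport equation for $\eta-\etab$; the Codazzi-type relations $\nablasl\epsuchi=\truchi\,{}^{\star}\eta-\epsuchi\,\etab$ and $\nablasl\truchi=-\epsuchi\,{}^{\star}(\eta-\etab)$; the relations $\curlsl(\eta+\etab)=0$, $\divsl(\eta-\etab)=0$ and $(\eta,\eta)-(\etab,\etab)=0$; and, crucially, $2\curlsl\eta=\nablasl_3\epschi+2\sigma_G$, which is the source of the curvature potential $\pm 2\sigma_G$ on the right-hand sides (the $\nablasl_3\epschi$ generated in the fourth step pairs with the $\curlsl\eta$-type terms produced by the commutators and by the crossed operators). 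The coupling terms $-\chib\cdot(\eta\otimes(\Sigma\bsy\alpha))$ and $+\chib\wedge(\eta\otimes(\Sigma\bsy\alpha))$ emerge from the $-\epsuchi\,\eta\wedge(\Sigma\bsy\alpha)$ commutator term together with $\divsl$ and $\curlsl$ of $\Sigma\,\chib^{\sharp_2}\cdot\bsy\alpha$, after using $\chibh=0$ (so that $\chib_{AB}=\tfrac12\truchi\,\slashed{g}_{AB}+\tfrac12\epsuchi\,\slashed{\varepsilon}_{AB}$). I expect the only real obstacle to be this cancellation bookkeeping --- in particular tracking the non-integrability (torsion) corrections when commuting $\divsl,\curlsl$ with $\nablasl$ and when using $\curlsl\nablasl f\neq 0$ --- with no new ideas required beyond those already used in the appendix proofs above.
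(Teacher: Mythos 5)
Your proposal follows essentially the same route as the paper's proof: apply $\nablasl_3$ to the $\nablasl_4$-transport equations \eqref{eq:del4-sigma'}--\eqref{eq:del4-rho'}, commute past $\divsl$ and $\curlsl$ with the appendix commutation identities, substitute \eqref{eq:del3-alpha'} so that $\divsl\nablasl$ produces the Laplacian, and collapse the lower-order terms via the identities of Section~\ref{sec:algebraically-special-props} (with $2\curlsl\eta=\nablasl_3\epschi+2\sigma_G$ indeed sourcing the curvature potential and the $-\epsuchi\,\eta\wedge(\Sigma\bsy\alpha)$ commutator term combining with $\nablasl\truchi$, $\nablasl\epsuchi$ to give the $\chib\cdot(\eta\otimes(\Sigma\bsy\alpha))$ coupling). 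You also correctly identify the two points where the argument could otherwise go wrong --- starting from the $\nablasl_4$-equations so that $\bsy\alphab$ and $\omegah$ never enter, and tracking the torsion corrections to $\curlsl\nablasl$ coming from non-integrability --- so the remaining work is exactly the bookkeeping you describe.
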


\begin{proof} We take a $\nablasl_3$-derivative of equation \eqref{eq:del4-rho'} and obtain
\begin{align*}
\nablasl_3\nablasl_4 (\Sigma\bsy \rho)&= [\nablasl_3,\divsl](\Sigma\bsy \alpha)+\divsl \nablasl_3 (\Sigma \bsy \alpha)-\nablasl_3\lp(\epsuchi(\Sigma\bsy \rho)\rp)\\
&=-\frac12\tr\chib\divsl (\Sigma\bsy\alpha) +\frac12\epsuchi\curlsl(\Sigma\bsy\alpha)-\epsuchi\eta \wedge(\Sigma\bsy \alpha) +\frac12\divsl  \lp(\tr\chib\Sigma\bsy\alpha -\epsuchi{}^\star(\Sigma\bsy\alpha) \rp)\\
&\qquad +\divsl\lp(\nablasl (\Sigma\bsy \rho)+{}^\star\nablasl (\Sigma\bsy \sigma)\rp) +\divsl\lp((\eta-\etab)\Sigma\bsy \rho+{}^\star(\eta-\etab)\Sigma\bsy \sigma\rp) \\ 
&\qquad -\nablasl_3\epsuchi(\Sigma\bsy \rho)-\epsuchi \nablasl_3(\Sigma \bsy \sigma)\\
&= -\epsuchi\eta \wedge(\Sigma\bsy \alpha)+\frac12 (\nablasl \tr\chib)\cdot (\Sigma\bsy \alpha)-\frac12\nablasl \epsuchi \wedge (\Sigma \bsy \alpha)\\
&\qquad +\slashed\triangle (\Sigma\bsy \rho) +(\eta-\etab)\cdot[\nablasl(\Sigma \bsy \rho)-{}^\star \nablasl (\Sigma \bsy \sigma)]+(2\curlsl\eta -\nablasl_3\epsuchi) (\Sigma\bsy \sigma)-\epsuchi \nablasl_3(\Sigma \bsy \sigma)\\
&=-\chib \cdot (\eta \otimes (\Sigma\bsy \alpha)) +\slashed\triangle (\Sigma\bsy \rho) +(\eta-\etab)\cdot[\nablasl(\Sigma \bsy \rho)-{}^\star \nablasl (\Sigma \bsy \sigma)]+2\sigma_G\Sigma\bsy \sigma-\epsuchi \nablasl_3(\Sigma \bsy \sigma) \, .
\end{align*}
Similarly, we have
\begin{align*}
\nablasl_3\nablasl_4 (\Sigma\bsy \sigma)&=\chib\wedge (\eta \otimes (\Sigma\bsy \alpha))+\slashed\triangle (\Sigma\bsy \sigma)-{}^\star (\eta-\etab)\cdot [\nablasl(\Sigma \bsy \rho)-{}^\star \nablasl (\Sigma \bsy \sigma)]-2\sigma_G\Sigma\bsy \rho +\epsuchi \nablasl_3(\Sigma\bsy \rho) \, ,
\end{align*}
which concludes the proof.
\end{proof}

It was shown by Fackerell and Ipser \cite{Fackerell1972} that, in the Newman--Penrose formalism, the middle Maxwell components satisfy a \textit{decoupled} wave equation with a complex potential. By introducing $\swei{\upupsilon}{0}$ as defined in \eqref{eq:def-upupsilon}, one obtains the following.

\begin{proposition}[Fackerell--Ipser equation] \label{prop:FI-NP} For any solution to the Maxwell equations \eqref{eq:Maxwell-m}--\eqref{eq:Maxwell-n}, the identity
\begin{align}
\Box_{g} (\kappa^{-1}\swei{\upupsilon}{0})+\frac{2M}{(r-ia\cos\theta)^3}(\kappa^{-1}\swei{\upupsilon}{0})=0 \label{eq:FI-NP}
\end{align}
holds.
\end{proposition}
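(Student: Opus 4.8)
The plan is to start from the two coupled wave equations for $(\Sigma\bsy\rho,\Sigma\bsy\sigma)$ established in the previous proposition and to diagonalize them by introducing the complex combination $\swei{\upupsilon}{0}=-\widehat{\bsy\rho}+i\widehat{\bsy\sigma}$. The first step is to rewrite those two real equations as a single complex equation for $\Sigma\bsy\rho - i\,\Sigma\bsy\sigma$ (or its conjugate, depending on sign conventions), observing that the coupling terms on the right-hand sides combine precisely into the imaginary structure $\epsuchi \nablasl_3 + {}^\star(\eta-\etab)\cdot\nablasl$. Since, in Boyer--Lindquist coordinates, $\widehat{\bsy\rho}+i\widehat{\bsy\sigma}=(r^2-a^2\cos^2\theta + 2iar\cos\theta)(\bsy\rho+i\bsy\sigma)=\kappa^2(\bsy\rho + i\bsy\sigma)$ with $\kappa = r-ia\cos\theta$, and $\Sigma = r^2+a^2\cos^2\theta = \kappa\overline\kappa$, one has $\widehat{\bsy\rho}+i\widehat{\bsy\sigma} = \kappa\, (\Sigma\bsy\rho + i\Sigma\bsy\sigma)/\overline\kappa$; I would carefully fix which of $\swei{\upupsilon}{0}$ or its conjugate corresponds to which Weyl-type combination, then observe that $\kappa^{-1}\swei{\upupsilon}{0}$ is proportional to $\overline\kappa^{-1}(\Sigma\bsy\rho \pm i\Sigma\bsy\sigma)$ up to a factor that is annihilated in the right way.

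The second step is to transcribe the coupled system into the operator $\Box_g$. Here I would use the identity $\Box_g = -\slashed g^{AB}\nablasl_A\nablasl_B + (\text{first-order terms}) + \nablasl_3\nablasl_4 + \dots$ for the Kerr wave operator relative to the algebraically special frame — more precisely, I expect a relation of the schematic form $\Box_g f = \Sigma^{-1}[\nablasl_3\nablasl_4 f - \slashed\triangle f - (\eta-\etab)\cdot\nablasl f + \dots]$ for scalar $f$, which is exactly the combination of operators appearing on the left of the Fackerell--Ipser-type system in the previous proposition. Thus dividing those equations by $\Sigma$ and assembling the complex combination should produce $\Box_g$ acting on $\Sigma\bsy\rho \pm i\Sigma\bsy\sigma$, up to zeroth- and first-order correction terms coming from the $\sigma_G$, $\epsuchi$ and $(\eta-\etab)$ pieces. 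The decisive computation is then to show that conjugating by $\kappa^{-1}$ (i.e.\ writing everything in terms of $\kappa^{-1}\swei{\upupsilon}{0}$ rather than $\swei{\upupsilon}{0}$) removes all the first-order terms and leaves only the potential $\frac{2M}{\kappa^3}$; this is the Kerr analogue of the classical fact that the Fackerell--Ipser scalar is $\kappa^{-1}$ times the Newman--Penrose middle component. Concretely, I would compute $\Box_g(\kappa^{-1}h) = \kappa^{-1}\Box_g h - 2\kappa^{-2}\nabla^\mu\kappa\,\nabla_\mu h + (\Box_g(\kappa^{-1}))h$ and check, using $\nablasl\kappa$, $e_4^{\rm as}\kappa$, $e_3^{\rm as}\kappa$ from the explicit frame formulas in Section~\ref{sec_Kerr_connection_coeff_curv_comps}, that the cross term cancels exactly the first-order terms in the coupled system, while the $(\Box_g\kappa^{-1})$ term combines with $2\sigma_G$ and friends to yield $-\frac{2M}{\kappa^3}\kappa^{-1}\swei{\upupsilon}{0}$.

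An alternative, and probably cleaner, route — which I would pursue if the direct computation proves unwieldy — is to work entirely in the Newman--Penrose / spin-weighted formalism using equations \eqref{eq:Maxwell-m}--\eqref{eq:Maxwell-n}: apply $\frac{\Sigma}{\Delta}e_4^{\rm as}$ to \eqref{eq:Maxwell-n} and $\frac{\Delta}{2\Sigma}e_3^{\rm as}$ to \eqref{eq:Maxwell-l}, then use \eqref{eq:Maxwell-m}--\eqref{eq:Maxwell-m-bar} to eliminate the mixed angular-radial derivatives of $\swei{\upalpha}{\pm 1}$, and collect the resulting second-order operator acting on $\swei{\upupsilon}{0}$ into $\Box_g$ plus potential. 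This is essentially the route of Chandrasekhar \cite{Chandrasekhar}, and the commutation identities \eqref{eq:comm-e3eA}--\eqref{eq:comm-e4eA} together with the explicit form of $\kappa$ make the bookkeeping manageable.

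\textbf{Main obstacle.} The genuinely delicate part is the exact cancellation in the third step: verifying that, after the $\kappa^{-1}$-conjugation, every first-order term and every zeroth-order term except $\frac{2M}{\kappa^3}$ disappears. This requires using the precise — not merely schematic — values of the connection coefficients $\eta,\etab,\tr\chi,\tr\chib,\epsuchi,\epschi,\omegah$ and of the Weyl component $\sigma_G$ from Section~\ref{sec:algebraically-special-props}, together with the nontrivial algebraic identities among them (e.g.\ $(\eta,\eta)-(\etab,\etab)=0$ and the various Codazzi relations). Getting the coefficient of the potential to come out as exactly $2M$ and the power of $\kappa$ to come out as exactly $3$ is the sharp consistency check that the whole calculation has been done correctly; the rest is routine.
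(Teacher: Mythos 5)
The paper does not actually prove Proposition~\ref{prop:FI-NP}: it records the classical Fackerell--Ipser identity with a citation to \cite{Fackerell1972} (see also \cite{Chandrasekhar}), the only new content being the dictionary $\swei{\upupsilon}{0}=-\widehat{\bsy\rho}+i\widehat{\bsy\sigma}$. Your second, ``alternative'' route is precisely that classical derivation --- differentiate \eqref{eq:Maxwell-l}/\eqref{eq:Maxwell-n} along the null directions, substitute \eqref{eq:Maxwell-m}--\eqref{eq:Maxwell-m-bar} back in to eliminate the mixed derivatives of $\swei{\upalpha}{\pm 1}$, and assemble the result into $\Box_{g}$ plus a potential --- and it is sound: the \emph{double} use of the first-order system is exactly the mechanism by which the extremal components are eliminated. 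That is the route to take.

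Your primary route, as written, has a genuine gap. The coupled tensorial wave equations you propose to diagonalize are not source-free: their right-hand sides carry the inhomogeneous terms $-\chib\cdot(\eta\otimes(\Sigma\bsy\alpha))$ and $\chib\wedge(\eta\otimes(\Sigma\bsy\alpha))$. Since $\chibh=0$, in the complex combination $\Sigma\bsy\rho-i\Sigma\bsy\sigma$ these assemble into $-\tfrac12(\truchi+i\epsuchi)\bigl[(\eta,\Sigma\bsy\alpha)-i\,\eta\wedge(\Sigma\bsy\alpha)\bigr]=\overline{\kappa}^{\,-1}\bigl[(\eta,\Sigma\bsy\alpha)-i\,\eta\wedge(\Sigma\bsy\alpha)\bigr]$, which does not vanish for $a\neq 0$ (only $\eta\propto a$ kills it in Schwarzschild). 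The $\kappa^{-1}$-conjugation you describe produces only terms in $\swei{\upupsilon}{0}$ and its derivatives, so it cannot cancel a term proportional to $\bsy\alpha$; the cancellation requires re-substituting the first-order (angular) Maxwell equations for $\nablasl\widehat{\bsy\rho},\nablasl\widehat{\bsy\sigma}$ --- i.e.\ exactly the extra ingredient that your route two builds in and your route one never invokes. Your outline lists only the $\sigma_G$, $\epsuchi$ and $(\eta-\etab)$ corrections and is silent on the $\bsy\alpha$ source, so the argument does not close as stated. Separately, note the algebra slip $r^2-a^2\cos^2\theta+2iar\cos\theta=(r+ia\cos\theta)^2=\overline{\kappa}^{\,2}$, not $\kappa^2$; the correct identifications are $\swei{\upupsilon}{0}=-\kappa^2(\bsy\rho-i\bsy\sigma)$ and $\kappa^{-1}\swei{\upupsilon}{0}=-\overline{\kappa}^{\,-1}(\Sigma\bsy\rho-i\Sigma\bsy\sigma)$. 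This only decides which conjugate combination to carry through, but it must be fixed before any of the claimed cancellations can be verified.
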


\subsection{The Maxwell equations for transformed spin-weighted quantities} 
\label{sec:appendix-MaxwellNP}

In light of the definitions \eqref{eq:def-upphi}--\eqref{eq:def-Phi} (originally introduced in \cite{SRTdC2020,SRTdC2023}), it is convenient to recast the Maxwell equations \eqref{eq:Maxwell-m}--\eqref{eq:Maxwell-n} as 
\begin{align}
\sqrt{2}\overline\kappa m (\swei{\upupsilon}{0}) 
&= \frac{\kappa}{\sqrt{2(r^2+a^2)}}\lp(\swei{\Phi}{+1}+\frac{a(a+ir\cos\theta)}{\kappa}\swei{\upphi}{+1}_0\rp),\label{eq:Maxwel-m-2}\\
\sqrt{2}\kappa \overline m (\swei{\upupsilon}{0}) 
&= \frac{\kappa}{\sqrt{2(r^2+a^2)}}\lp(\swei{\Phi}{-1}-\frac{a(a+ir\cos\theta)}{\kappa}\swei{\upphi}{-1}_0\rp),\label{eq:Maxwel-m-bar-2}\\
\frac{\Sigma}{r^2+a^2}\, e_4^{\rm as}(\swei{\upupsilon}{0}) &= \frac{\kappa^2}{\sqrt{2}(r^2+a^2)}(\sqrt  2 \kappa \overline m +\cot\theta)\lp(\frac{\sqrt{r^2+a^2}}{\kappa}\swei{\upphi}{+1}_0\rp),\label{eq:Maxwel-l-2}\\
\frac{\Delta}{r^2+a^2}\, e_3^{\rm as}(\swei{\upupsilon}{0}) &= \frac{\kappa^2}{\sqrt{2}(r^2+a^2)}(\sqrt  2 \overline\kappa  m +\cot\theta)\lp(\frac{\sqrt{r^2+a^2}}{\kappa}\swei{\upphi}{-1}_0\rp) , \label{eq:Maxwel-n-2}
\end{align}
where $m=\frac{1}{\sqrt{2}}(e_1^{\rm as}+ie_2^{\rm as})$ for the choice of $(e_1^{\rm as},e_2^{\rm as})$ given in \eqref{explicit_e1_e2}.

\phantomsection

\bibliographystyle{halpha-abbrv-rita}
{\small \bibliography{Maxwell}}

\end{document}